\documentclass[aop,preprint]{imsart}

\RequirePackage[OT1]{fontenc}
\RequirePackage{amsthm,amsmath}
\RequirePackage[numbers]{natbib}
\RequirePackage[colorlinks,citecolor=blue,urlcolor=blue]{hyperref}

\arxiv{arXiv:1606.03988}

\usepackage[bottom=0.4cm]{geometry}
\topmargin 0cm
\evensidemargin 0.5cm \oddsidemargin 0.5cm
\textwidth 15.6cm \textheight 21.6cm
\usepackage{amsfonts}
\usepackage{color}
\usepackage{xcolor}
\usepackage{amssymb}
\usepackage{graphicx}
\usepackage{soul}
\usepackage{enumitem}
\usepackage{footmisc}

\usepackage{times}

\startlocaldefs

\hypersetup{
	bookmarksnumbered,
	pdfstartview={FitH},
	citecolor={blue},
	linkcolor={blue},
	urlcolor=[rgb]{0,0.55,0},
	pdftitle={Limit\ theory\ for\ geometric\ statistics\ of\ point\ processes\ having\ fast\ decay\ of\ correlations},
	pdfauthor={B.\ Blaszczyszyn\ -\ D.\ Yogeshwaran\ -\ J.\ E.\ Yukich}
}








\interfootnotelinepenalty=1000000

\newcommand{\tm}{\tilde{m}}
\newcommand{\tM}{\tilde{M}}
\newcommand{\tA}{\tilde{A}}
\newcommand{\tB}{\tilde{B}}

\newcommand{\EXCLUDE}[1]{}

\newcommand{\be}{\begin{equation}}
\newcommand{\ee}{\end{equation}}

\newcommand{\bea}{\begin{eqnarray}}
\newcommand{\no}{\nonumber}

\newcommand{\eea}{\end{eqnarray}}
\newcommand\bmodif{\begin{modif}}
	\newcommand\emodif{\end{modif}}

\renewcommand{\qed}{\hfill $\Box$}
\newcommand{\sP}{\mathbb{P}}  
\newcommand{\sE}{\mathbb {E} } 

\newcommand{\E}{\mathbb {E} }
\newcommand{\B}{\mathbb {B} }

\newcommand{\EXP}[1]{\mathbb {E}\!\left(#1\right) }

\newcommand{\Var}{{\rm Var}}

\newcommand{\Vol}{{\rm Vol}}
\newcommand{\1}[1]{\mathsf{1}\!\left[\,#1\,\right] }
\newcommand{\remove}[1]{}
\newcommand{\tod}{\stackrel{{\cal D}}{\longrightarrow}}
\newcommand{\psip}{\psi^!}

\newtheorem{theorem}{Theorem}[section]
\newtheorem{corollary}[theorem]{Corollary}
\newtheorem{lemma}[theorem]{Lemma}
\newtheorem{proposition}[theorem]{Proposition}
\newtheorem{remark}[theorem]{Remark}

\newtheorem{definition}[theorem]{Definition}

\newtheorem{example}[theorem]{ Example}

\numberwithin{equation}{section}

\newcommand{\limn}{\lim_{n \to \infty} }
\def\0{{\bf 0}}

\newcommand{\tps}{ \tilde{\psi}}
\newcommand{\tpsp}{ \tilde{\psi}^!}
\newcommand{\tx}{ \tilde{\xi}}

\newcommand{\lam}{\lambda}
\newcommand{\Lam}{\Lambda}


\def\T{\top}
\def\Del{\Delta}

\def\N{\mathbb{N}}

\def\mR{\mathbb{R}}
\DeclareRobustCommand{\stirling}{\genfrac\{\}{0pt}{}}

\def\mP{\mathbb{P}}

\def\mE{\mathbb {E} \,}

\def\X{{\cal X }}
\def\Y{{\cal Y }}
\def\x{{\bf x}}
\def\a{{\bf a}}
\def\y{{\bf y}}
\def\z{{\bf z}}

\newcommand{\md}{\mathrm{d}}

\newcommand{\cD}{{\mathcal D}}

\newcommand{\cX}{{\mathcal X}}
\newcommand{\cA}{{\mathcal A}}
\newcommand{\cN}{{\mathcal N}}
\newcommand{\cK}{{\mathcal K}}
\newcommand{\cY}{{\mathcal Y}}
\newcommand{\cL}{\mathcal{L}}

\def\R{\mathbb{R}}
\def\N{\mathbb{N}}

\def\P{\mathcal P}
\def\A{\mathcal A}

\def\1{\mathbf{1}}

\def\H{\widehat{H}}

\def\Cech{\v{C}ech\ }
\def\C{{\cal C}}
\def\cCB{{\cal C}_B}




\def\Comment#1{\remove{#1}}

\setcounter{footnote}{1}

\endlocaldefs

\begin{document}

\begin{frontmatter}
\title{Limit theory for geometric statistics of point processes having fast decay of correlations}
\runtitle{Limit theory for geometric statistics of point processes}
\begin{aug}
\author{\fnms{B.} \snm{B{\l}aszczyszyn}\ead[label=e1]{Bartek.Blaszczyszyn@ens.fr},}
\author{\fnms{D.}  \snm{Yogeshwaran} \thanksref{t2}\ead[label=e2]{d.yogesh@isibang.ac.in}}
\and
\author{\fnms{J. E.}   \snm{Yukich} \corref{} \thanksref{t3}\ead[label=e3]{joseph.yukich@lehigh.edu}}
\thankstext{t2}{Research supported by DST-INSPIRE faculty award, CPDA from the Indian Statistical Institute and TOPOSYS grant.}
\thankstext{t3}{Research supported in part by NSF grant DMS-1406410}
\runauthor{B{\l}aszczyszyn, Yogeshwaran and Yukich.}
\affiliation{Inria/ENS, Paris; Indian Statistical Institute, Bangalore and Lehigh University, Bethlehem}
\address{Inria/ENS, 2 rue Simone Iff \\
			CS 42112 75589 Paris Cedex 12. \\
			France.\\
			\printead{e1} }
\address{Statistics and  Mathematics Unit\\
		Indian Statistical Institute\\
		Bangalore - 560059\\
\printead{e2}}

\address{Department of Mathematics\\
		Lehigh University\\
		Bethlehem PA 18015\\
\printead{e3}}
\end{aug}

\renewcommand{\baselinestretch}{1.1}\normalsize			
	\begin{abstract}
		Let $\P$ be  a simple, stationary point process on $\R^d$ having fast decay of correlations, i.e., its correlation functions factorize up to an additive error decaying  faster than any power of the separation distance.
		Let $\P_n:= \P \cap W_n$ be its restriction
		to windows $W_n:= [- {1 \over 2} n^{1/d}, {1 \over 2}  n^{1/d}]^d \subset  \R^d$. We consider the statistic $H_n^\xi:= \sum_{x \in \P_n} \xi(x, \P_n)$ where  $\xi(x, \P_n)$ denotes a score function representing the interaction of $x$ with respect to
		$\P_n$.  When $\xi$ depends on local data in the sense that its radius of stabilization has an exponential tail, we
		establish expectation asymptotics, variance asymptotics, and central limit theorems
		for $H_n^\xi$ and, more generally, for statistics of the  re-scaled, possibly signed,  $\xi$-weighted point  measures
		$\mu_n^{\xi} := \sum_{x \in \P_n} \xi(x, \P_n) \delta_{n^{-1/d} x}$,
		as  $W_n \uparrow  \R^d$.
This gives the limit theory for non-linear geometric statistics (such as clique counts,  the number of Morse critical points, intrinsic volumes of the Boolean model, and total edge length of the $k$-nearest neighbors graph) of  $\alpha$-determinantal point processes (for $-1/\alpha \in \N$) having fast decreasing kernels, including the $\beta$-Ginibre ensembles, extending the Gaussian fluctuation results of Soshnikov \cite{So} to non-linear statistics. It also gives the limit theory for geometric U-statistics of $\alpha$-permanental point processes (for $1/\alpha \in \N$) as well as the zero set of Gaussian entire functions, extending the central limit theorems of Nazarov and Sodin \cite{Nazarov12} and  Shirai and Takahashi \cite{Shirai03}, which are also confined to linear statistics. The proof of the central limit theorem relies on a factorial moment expansion originating in \cite{Blaszczyszyn95, Bartek97} to show the fast decay of the correlations of $\xi$-weighted point measures.
The latter property is shown to imply a condition equivalent to Brillinger mixing and consequently yields the asymptotic normality of $\mu_n^\xi$ via  an extension of the cumulant method.
				\end{abstract}
		
\begin{keyword}[class=MSC]
\kwd[Primary:]{60F05} 
\kwd{60D05}
\kwd[; Secondary:]{60G55} 
\kwd{52A22} 
\kwd{05C80}
\end{keyword}

\begin{keyword}
\kwd{Point processes having fast decay of correlations}
\kwd{determinantal point process}
\kwd{ permanental point process}
\kwd{Gaussian entire functions}
\kwd{Gibbs point process}
\kwd{U-statistics}
\kwd{stabilization}
\kwd{difference operators}
\kwd{cumulants}
\kwd{Brillinger mixing}
\kwd{central limit theorem}
\end{keyword}

\end{frontmatter}
	
	\renewcommand{\baselinestretch}{0.8}\normalsize
	{\hypersetup{linkbordercolor=black}
	\tableofcontents
     }
	\renewcommand{\baselinestretch}{1.1}\normalsize	
\setlength{\abovedisplayskip}{7pt}
\setlength{\belowdisplayskip}{7pt}

\setlength{\abovedisplayshortskip}{3.5pt}
\setlength{\belowdisplayshortskip}{3.5pt}

\section{\bf Introduction and main results}
	
Functionals of  geometric structures on finite point sets $\X \subset \R^d$ often consist of sums of spatially dependent terms admitting the representation
\be \label{basic}\sum_{x \in \X} \xi(x,\X), \ee  where the $\mR$-valued {\em score function} $\xi$, defined on pairs $(x, \X)$,  $x \in \X$, represents the {\em interaction}
of $x$ with respect to $\X$, called the {\em input}.  The sums \eqref{basic} typically describe a global geometric feature of a structure on $\X$  in terms of local contributions $\xi(x,\X)$.

It is frequently the case in stochastic geometry, statistical physics, and spatial statistics that one seeks the large $n$ limit behavior of
\begin{equation}
\label{eqn:total_mass}
H_n^\xi:= H_n^\xi(\P):= \sum_{x \in \P_n} \xi(x, \P_n)
\end{equation}
where $\xi$ is an appropriately chosen score function,  $\P$ is  a simple, stationary point process on $\R^d$, and $\P_n$ is the restriction of $\P$ to  $W_n:= [- {1 \over 2} n^{1/d}, {1 \over 2}  n^{1/d}]^d$. For example if $\P_n$ is either a Poisson or binomial point process and if $\xi$ is either a local $U$-statistic or  an exponentially stabilizing score function, then the limit theory for $H_n^\xi$ is established in \cite{Baryshnikov05, EicTha, Lachieze15, LPS, PeEJP, PY1, PY5, Reitzner13}.  If $\P_n$ is a rarified Gibbs point process on $W_n$ and $\xi$ is exponentially stabilizing, then \cite{SY, XY} treat the limit theory for  $H_n^\xi$.

It is natural to ask whether  the limit theory of these papers extends to more general input satisfying a notion of `asymptotic independence' for point processes. Recall that if $\xi \equiv 1$ and if $\P$ is an $\alpha$-determinantal point process  with $\alpha = -1/m$ or an $\alpha$-permanental point process  with $\alpha = 2/m$ for some $m$ in the set of positive integers $\N$ (respectively $\P$ is the  zero set of a Gaussian entire function), then  remarkable results of Soshnikov \cite{So}, Shirai and Takahashi \cite{Shirai03} (respectively  Nazarov and Sodin \cite{Nazarov12}), show that the counting statistic $\P_n(W_n):= \sum_{x \in \P_n} {\bf 1}[x \in W_n]$ is asymptotically normal. One may ask whether asymptotic normality of  $H_n^\xi$ still holds when $\xi$ is
either a local $U$-statistic or  an exponentially stabilizing score function.  We answer these questions affirmatively.  Loosely speaking, subject to a mild growth condition on $\Var H_n^\xi$,  our approach shows that  $H_n^\xi$ is asymptotically normal whenever  $\P$ is a point process having fast decay of correlations.

Heuristically, when the score functions depend on `local data' and when the input is `asymptotically independent', one might expect that the statistics $H_n^\xi$ obey a strong law and a central limit theorem. The notion of dependency on  `local data'  for score functions is  formalized via stabilization in \cite{Baryshnikov05, EicTha,  PeEJP, PY1, PY5}. 
Here we formalize the idea of asymptotically independent input $\P$  via the notion of `fast decay of correlation functions'.
We thereby extend the limit theory of the afore-mentioned papers to input having fast decay of correlation functions.
A point process $\P$ on $\R^d$ has fast decay of correlations if for all $p, q \in \N$  and all
	$x_1,\ldots,x_{p + q} \in \R^d$, its correlation functions
	$\rho^{(p + q)}(x_1,\ldots,x_{p + q})$ factorize into $\rho^{(p)}(x_1,\ldots,x_{p})\rho^{(q)}(x_{p + 1},\ldots,x_{p + q})$
	up to an additive error decaying faster than any power of the separation distance
	\be \label{defs}
	s := d(\{x_1,\ldots ,x_p\},\{x_{p+1},\ldots ,x_{p+q}\}):= \inf_{i \in \{1,...,p\}, j \in \{p + 1,...,p + q\} } |x_i - x_j|
	\ee
	as at \eqref{eqn:clustering_condition} below. Roughly speaking, such point processes exhibit asymptotic independence at large distances. Examples of such point processes are given in Section \ref{sec:ex_pp}.
Point processes with fast decay of correlations are called `clustering point processes' in statistical physics  \cite{Malyshev75,Martin80,Nazarov12}.  We shall avoid this terminology since, at least from the point of view of spatial statistics, it suggests that the points of $\P$ clump or aggregate together, which need not be the case.

	
  If  either $\P$ has fast decay of correlations and $\xi$ is a local $U$-statistic or if $\P$ has exponentially fast decay of correlations and $\xi$ is an exponentially stabilizing score function, then with $\delta_x$ denoting the point mass at $x$,
   our main  results  establish expectation and variance asymptotics as $n \to \infty$, as well as central limit theorems for the re-scaled, possibly signed, $\xi$-weighted point  measures
	\begin{equation}
	\label{eqn:rand_meas}
	\mu_n^\xi:=  \sum_{x \in \P_n} \xi(x, \P_n) \delta_{n^{-1/d}x},
	\end{equation}
	thereby also establishing the limit theory for  the total mass of $\mu_n^\xi$ given by the non-linear statistics  $H_n^\xi$.

As shown in  Theorems \ref{prop:variance_upper_bound}-\ref{prop:variance_lower_bound} this yields the limit theory for	general non-linear statistics of $\alpha$-determinantal and $\alpha$-permanental point processes, the point process given by the zero set of a Gaussian entire function, as well as rarified Gibbsian input.

The benefit of the general approach taken here is four-fold:
(i) we establish the  asymptotic normality of the random measures $\mu_n^\xi$, with $\P$ either an $\alpha$-permanental point process (with $1/\alpha \in \N$), an $\alpha$-determinantal point process (with $-1/\alpha \in \N$), or the zero set of a Gaussian entire function, thereby extending the work  of  Soshnikov \cite{So}, Shirai and Takahashi \cite{Shirai03}, and Nazarov and Sodin \cite{Nazarov12},  who restrict to linear statistics,  (ii)  we extend the limit theory of \cite{Baryshnikov05, LPS, PeEJP, Pe, PY1, PY5}, which is confined to Poisson and binomial input, to point processes having fast decay of correlations, (iii)  we apply our general results to deduce asymptotic normality  and variance asymptotics for geometric statistics of input having fast decay of correlations, including statistics of simplicial complexes and germ-grain models, clique counts, Morse critical points, as well of statistics of random graphs,
(iv) our general proof of the asymptotic normality of ~$\mu_n^{\xi}$ relates the  fast decay of correlations of the input process $\P$
to a similar fast correlation decay for the family of {\em $\xi$-weighted (point)  measures}
\be \label{xwm}
\sum_{x \in \P_n} \xi(x, \P_n) \delta_{x},
\ee
consequently implying Brillinger mixing of these measures, and thus directly relating the two concepts:  Fast decay of correlations implies Brillinger mixing.

Given input $\P$ having fast decay of correlations, an interesting feature of the measures $\mu_n^\xi$ is that  their variances are at most of order $\Vol_d(W_n)$, the volume of the window $W_n$ (Theorem \ref{prop:variance_upper_bound}). This holds also for the statistic $\hat{H}_n^{\xi} := \sum_{x \in \P_n}\xi(x,\P)$, which involves summands having no boundary effects. An interesting feature of this statistic is that if its variance is $o(\Vol_d(W_n))$ then it has to be $O(\Vol_{d-1}(\partial W_n))$, where $\partial W_n$ denotes the boundary of $W_n$  and $\Vol_{d-1}(.)$ stands for the $(d-1)$th intrinsic volume (Theorem \ref{prop:variance_lower_bound}). In other words, if the fluctuations of $\hat{H}_n^\xi$ are not of volume order, then they are at most of surface order.


	
Coming back to our set-up, when a functional $H_n^\xi(\P)$ is expressible as a sum of local $U$-statistics or,  more generally,  as a sum of exponentially stabilizing score functions $\xi$, then a key step towards proving the central limit theorem is to show that the correlation functions of the $\xi$-weighted measures defined via Palm expectations
		$\sE_{x_1,\ldots,x_k}$ (cf Section~\ref{sec:prelim}) and given by
	\begin{equation}
	m^{(k_1,...,k_{p + q})} (x_1,\ldots,x_{p + q} ; n) :=\sE_{x_1,\ldots,x_{p +
			q}}(\xi(x_1,\P_n)^{k_1} \ldots\xi(x_{p + q},\P_n)^{k_{p + q} })\rho^{(p +
		q)}(x_1,\ldots,x_{p + q}) \label{eqn:mixedmomentn},
	\end{equation}
	similar to those of the input process~$\P$,  approximately factorize into
$$m^{(k_1,...,k_{p})}(x_1,\ldots,x_{p} ; n)  m^{(k_{p +1},...,k_{p + q})}(x_{p + 1},\ldots,x_{p + q} ; n),$$
uniformly in n~$\le\infty$,
up to an additive error decaying faster than any power of the separation distance $s$, defined at \eqref{defs}.
Here $x_1,...,x_{p + q}$ are distinct points in $W_n$ and $k_1,...,k_{p + q}\in\N$. This result, spelled out in Theorem \ref{prop:clustering_gen},	is at the heart of our approach.
We then give two proofs of the central limit theorem (Theorem \ref{thm:main0}) for the purely atomic random measures \eqref{eqn:rand_meas} via the cumulant method and as a corollary derive the asymptotic normality of $H_n^\xi(\P)$ and $\int\!\!f\md\mu_n^\xi$, $f$ a test function, as $n \to \infty$. The proof of expectation and variance asymptotics (Theorem \ref{prop:variance_upper_bound}) mainly relies upon the refined Campbell theorem.

In contrast to the afore-mentioned works, our  proof of the fast decay of correlations of the $\xi$-weighted measures depends heavily on a factorial moment expansion for expected values of functionals of a general point process	$\P$. This expansion, which originates in \cite{Blaszczyszyn95, Bartek97}, is expressed in terms of iterated difference operators 
	of the considered functional on the null configuration of points and integrated against factorial moment measures of the point process. It is valid for  general point processes, in contrast to the  Fock space representation of  Poisson functionals, which involves the same  difference operators but is deeply related to chaos expansions~\cite{LP-Fock}. Further connections with the literature are discussed in the remarks following  Theorems \ref{thm:main1} and \ref{prop:variance_lower_bound}.

Our interest in these issues was stimulated by similarities in the methods of \cite{Malyshev75}, \cite{BY1, Baryshnikov05, SY} and \cite{Nazarov12}. The  articles \cite{Baryshnikov05, SY} prove central limit theorems for stabilizing functionals of Poisson and rarified Gibbsian point processes, respectively, while \cite{Nazarov12} proves central limit theorems for linear statistics $\sum_{x \in \P_n} \xi(x)$ of point processes having fast decay of correlations.
These papers all establish the fast decay of correlations of the  $\xi$-weighted measures as at
 ~\eqref{e.clustering-generalized-mixed-monents} below, and then use the resulting volume order cumulant bounds to show asymptotic normality.
This paper unifies and extends  the results  of \cite{BY1,Baryshnikov05, Nazarov12, SY, Shirai03} to  input having fast decay of correlations.   The idea of using correlation functions to show asymptotic normality via cumulants goes back to \cite{Malyshev75}.	The earlier work of \cite{Martin80} has stimulated our investigation of variance asymptotics.
	
Having described the goals and context of this paper, we  now describe more precisely the assumptions on allowable score and input pairs $(\xi,\P)$ as well as our main results. The generality of allowable pairs $(\xi,\P)$ considered here necessitates several definitions which go as follows.
\subsection{\bf Admissible point processes  having fast decay of correlations}
\label{sec:prelim}
	
Throughout $\P \subset \R^d$ denotes  a simple point process. By a simple
point process we mean a random element taking values in $\mathcal{N}$,
the space of locally finite simple point sets in $\mR^d$ (or
equivalently Radon counting measures $\mu$ such that $\mu(\{x\}) \in
\{0, 1 \}$ for all $x \in \mR^d$)  and equipped with the canonical
$\sigma$-algebra $\mathcal{B}$.  Given a simple point
process $\P$ 
we interchangeably use the following representations of $\P$ :
$$
\P(\cdot) := \sum_i \delta_{X_i}(\cdot) \, \, (\mbox{random measure}) ; \, \, \, \P := \{X_i\}_{i \geq 1} \, \, (\mbox{random set}),$$
where $X_i, i \geq 1,$ are $\mR^d$-valued random variables (given a
measurable numbering  of points, which is irrelevant for the results presented in this paper). Points of $\R^d$ are denoted by $x$ or $y$ whereas points of $(\R^{d})^{k-1}$ are denoted by $\x$ or $\y$.  We let $\0$  denote a point at the origin of $\R^d$.
	
	For a bounded function $f$ on $\mR^d$ and a simple counting measure $\mu$, let $\mu(f)
	:= \langle f,\mu \rangle$ denote the integral of $f$ with respect to
	$\mu$.
	For a bounded set $B
	\subset \mR^d$ we let  $\mu(B) = \mu(\1_B)=\rm{card}(\mu \cap B)$, with
	$\mu$ in the last expression interpreted as the set of its atoms.

	For a simple Radon counting measure $\mu$ and $k \in \N$, its $k$th factorial power is
	$$\mu^{(k)}:=
	\begin{cases}
	\sum_{\text{distinct\,}x_1,\ldots,x_k\in\mu}\delta_{(x_1,\ldots,x_k)}&
	\text{when $\mu(\R^d)\ge k$,}\\
	0&\text{otherwise}.
	\end{cases}
	$$
	Note that $\mu^{(k)}$ is a Radon counting measure  on $(\mR^{d})^k$.
	Consistently, for a set $\X\subset\R^d$, we denote
	$\X^{(k)}:=\{(x_1,\ldots,x_k)\in(\R^d)^k: x_i\in\X, x_i\not=x_j\;\text{for}\; i\not=j\}$.
	The  {\em $k$th order factorial moment measure} of the
	(simple) point process $\P$ is defined as $\alpha^{(k)}(\cdot):=\E (\P^{(k)}(\cdot))$ on
	$(\R^{d})^k$ i.e., $\alpha^{(k)}(\cdot)$ is the intensity measure of the point process $\P^{(k)}(\cdot)$.
	Its  Radon-Nikodyn density $\rho^{(k)}(x_1,...,x_k)$
	(provided it exists) is  the {\em $k$-point correlation
		function}(or $k$th joint intensity)
and is characterized by the relation
	\be \label{alpha-def}
\alpha^{(k)}(B_1\times\dots\times B_k)=
	\E\Bigl(\prod_{1 \leq i \leq k} \P(B_i)\Bigr) = \int_{B_1 \times\dots\times B_k}
	\rho^{(k)}(x_1,...,x_k) \,\md x_1\dots\md x_k,
	\ee
where $B_1,...,B_k$ are mutually disjoint bounded Borel sets in $\R^d$. Since $\P$ is simple, we may put $\rho^{(k)}$ to be zero on the diagonals of $(\R^d)^k$, that is on the subsets of $(\R^d)^k$ where two or more coordinates coincide. The disjointness assumption is crucial as illustrated by the following useful relation: For any bounded Borel set $B \subset \mR^d$ and $k \geq 1$, we have
		\begin{equation}
		\label{eqn:fact_mom}
		\alpha^{(k)}(B^k) = \E \Bigl( \P(B) (\P(B)-1)\ldots (\P(B) - k +1) \Bigr) = \int_{B^k} \rho^{(k)}(x_1,\ldots,x_k) \, \md x_1 \ldots \md x_k.
		\end{equation}
	Heuristically, the {\em $k$th Palm measure} $P_{x_1,\ldots,x_k}$ of $\P$ is the
	probability distribution  of $\P$ conditioned on $\{x_1,\ldots,x_k\}
	\subset \P$. More formally, if $\alpha^{(k)}$ is  locally finite,
	there exists a family of  probability distributions
	$P_{x_1,\ldots,x_k}$ on $(\mathcal{N},\mathcal{B})$,
	unique up to an $\alpha^{(k)}$-null set of   $(\R^d)^k$,
	called the  $k\,$th Palm measures of $\P$, and satisfying the
	disintegration formula
	\be \label{disint}
	\E\Bigl( {\sum_{(x_1,\ldots,x_k) \in \P^{(k)}}\hspace{-1.8em}f(x_1,\ldots,x_k;\P)}\Bigr)
	= \int_{\R^{dk}}\int_{\mathcal{N}}f(x_1,\ldots,x_k;\mu)\,
	P_{x_1,\ldots,x_k}(\md \mu)\alpha^{(k)}(\md x_1\ldots \md x_k)
	\ee
	for any (say non-negative) measurable function $f$ on  $(\R^{d})^k \times\mathcal{N}$. Formula \eqref{disint} is also known as the refined Campbell theorem.
	
	To simplify notation,  write $\int_{\mathcal{N}}f(x_1,\ldots,x_k;\mu) P_{x_1,\ldots,x_k}(\md\mu)=\allowbreak
		\sE_{x_1,\ldots,x_k}(f(x_1,\ldots,x_k;\P))$, where
		$\sE_{x_1,\ldots,x_k}$ is the expectation corresponding to the
		Palm probability $\mP_{x_1,\ldots,x_k}$ on a
		 canonical probability space on which $\P$ is also defined.
	To further simplify notation,   denote by $\mP^!_{x_1,\ldots,x_k}$   the
	{\em reduced  Palm probabilities} and their expectation by $\sE^!_{x_1,\ldots,x_k},$
	which satisfies
	$\sE^!_{x_1,\ldots,x_k}(f(x_1,\ldots,x_k;\P))=\sE_{x_1,\ldots,x_k}(f(x_1,\ldots,x_k;\P\setminus\{x_1,\ldots,x_k\}))$
	\footnote{\label{fn:Palm} It can be shown that $\mP_{x_1,\ldots,x_k}(x_1,\ldots,x_k\in\P)=1$ for $\alpha^{(k)}$ a.e. $x_1,\ldots,x_k \in \mR^d$.}.
	
All Palm probabilities (expectations) are meaningfully defined only for $\alpha^{(k)}$ almost
all  $x_1,\ldots,x_k \in \mR^d$. Consequently, all expressions 	involving these measures should be understood in the $\alpha^{(k)}$ a.e. sense and   suprema
should likewise be understood as {\em essential} suprema with respect to~$\alpha^{(k)}$.
	
The following definition is reminiscent of the so-called weak exponential decrease of correlations introduced in \cite{Malyshev75} and subsequently used in \cite{BY1, Martin80,Nazarov12}.
\begin{definition}[$\omega$-mixing correlation functions]
\label{defn:admissible}
The correlation functions of a point process $\P$ are {\em $\omega$-mixing} if
there exists a decreasing function $\omega : \mathbb{N} \times \mR^+ \to \mR^+$ such that for all $n \in\mathbb{N}$, $\lim_{x \to \infty}\omega(n,x) = 0$ and for all $p,q \in \mathbb{N}, x_1,\ldots,x_{p+q} \in \mR^d$, we have
\[ | \rho^{(p+q)}(x_1,\ldots ,x_{p+q}) - \rho^{(p)}(x_1,\ldots ,x_p)\rho^{(q)}(x_{p+1},\ldots ,x_{p+q}) | \leq \omega(p+q,s),\]
where $s := d(\{x_1,\ldots,x_p\},\{x_{p+1},\ldots,x_{p+q}\})$ is as  at \eqref{defs}.
\end{definition}

By an admissible point process $\P$ on  $\R^d, d \geq 2$, we mean that $\P$ is simple, stationary (i.e., $\P+x \stackrel{d}{=} \P$ for
all $x\in\R^d$, where $\P+x$ denotes the translation of $\P$ by the
vector $x$), with non-null and finite intensity $\rho^{(1)}(\0)=\EXP{\P(W_1)}$, and has $k$-point correlation functions of all orders $k \in \N$.
By a {\em fast decreasing function}  $\phi: \R^+ \to [0,1]$ we mean $\phi$ satisfies $\lim_{x \to \infty}x^m\phi(x) = 0$ for all $m \geq 1$.

\begin{definition}[Admissible point process having fast decay of correlations]\label{def.admissiblePP}  Let $\P$ be an admissible point process.  $\P$ is said to have  {\em fast decay of correlations} if
its correlation functions are $\omega$-mixing as in  Definition~\ref{defn:admissible}
with  $\omega(n,x)=C_{n} \phi(c_{n}x)$ for some  {\em correlation decay constants} $c_{n} > 0$ and  $C_{n} < \infty$
and a fast decreasing function $\phi: \R^+ \to [0,1]$,  called {\em a correlation decay function}.
\end{definition}

More explicitly, an admissible point process has {\em  fast decay of correlations}, if for all $p, q \in \N$ and all $(x_1,\ldots ,x_{p+q}) \in (\mR^d)^{p+q}$
\begin{equation}
\label{eqn:clustering_condition}
| \rho^{(p+q)}(x_1,\ldots ,x_{p+q}) - \rho^{(p)}(x_1,\ldots ,x_p)\rho^{(q)}(x_{p+1},\ldots ,x_{p+q}) | \leq C_{p+q} \phi(c_{p+q}s),
\end{equation}
where $s := d(\{x_1,\ldots ,x_p\},\{x_{p+1},\ldots,x_{p+q}\})$ is as at \eqref{defs} and $C_k,c_k,\phi$ are as in
Definition~\ref{def.admissiblePP}. Without loss of generality, we assume that $c_k$ is non-increasing in $k$, and that $C_k \in [1, \infty)$ is non-decreasing in $k$. 
As a by-product of our proof of the asymptotic normality
of $\mu_n^{\xi}$ in~\eqref{eqn:rand_meas}, we establish that
 the fast decay of correlations of $\P$ implies that it is Brillinger mixing; cf Remark (vi) in Section \ref{sec:main_results} and  Remarks at the end of
 Section~\ref{sec:ursell_bounds}.

Admissible point processes having fast decay of correlations are ubiquitous and include certain determinantal, permanental,  and Gibbs point processes, as explained in  Section \ref{sec:ex_pp}. The $k$-point correlation functions of an admissible point process having fast decay of correlations are bounded i.e.,
	\begin{equation}
	\label{eqn:corr_bounded}
	\sup_{(x_1,\ldots,x_k) \in (\R^{d})^k } \rho^{(k)}(x_1,\ldots,x_k) \leq \kappa_k < \infty, \ 
	\end{equation}
	for some constants $\kappa_k$, which without loss of generality are assumed non-decreasing in~$k$. Also without loss of generality, assume $\kappa_0 :=\max \{\rho^{(1)}(\0),1\}$. For stationary $\P$ with intensity $\rho^{(1)}(\0) \in (0, \infty)$ we have 	that \eqref{eqn:clustering_condition} implies~\eqref{eqn:corr_bounded} with
\be \label{kapbound} \kappa_k \leq (\rho^{(1)}(\0))^k + \sum_{i = 2}^k C_i   (\rho^{(1)}(\0))^{k - i} \leq k C_k \kappa_0^k.
\ee
The bound \eqref{kapbound} helps to determine when point processes having fast decay of correlations also have exponential moments, as in Section 2.1.

	
	\subsection{\bf Admissible score functions}
	\label{sec:score}
	
Throughout we restrict to translation-invariant score functions $\xi : \mR^d \times \mathcal{N} \to \mR,$
i.e., those which are measurable in each coordinate,  $\xi(x,\X) = 0$ if $x \notin \X \in \mathcal{N}$, and for all $y \in \mR^d$, satisfy $\xi(\cdot+ y,\cdot + y)= \xi(\cdot,\cdot)$.
	
We introduce  classes (A1) and (A2) of {\em admissible} score and input pairs $(\xi, \P)$.  Specific examples of admissible input pairs of both classes are provided in Sections \ref{sec:ex_pp} and \ref{sec:applications2}. The first class allows  for admissible input $\P$ as in Definition~\ref{def.admissiblePP} whereas the second
considers  admissible input $\P$ having fast decay of correlations \eqref{eqn:clustering_condition}, subject to $c_k \equiv 1$ and growth conditions
	on the decay constants $C_k$ and the decay function $\phi$.

	\begin{definition}[Class (A1) of admissible score and input pairs $(\xi, \P)$] \label{def.A1}
		Admissible input $\P$ consists of admissible point processes having fast decay of correlations as in Definition~\ref{def.admissiblePP}. Admissible score functions are of the form		
		\begin{equation}
		\label{eqn:score_U_statistic}
		\xi(x,\X) := \frac{1}{k!} \sum_{\x \in \X^{(k-1)}}h(x,\x),
		\end{equation}
		for some $k\in \N$ and a  symmetric, translation-invariant function
		$h : \R^d \times (\mR^{d})^{k - 1} \to \mR$
		such that $h(x_1,\ldots,x_k) = 0$ whenever either $\max_{2 \leq i \leq k}
		|x_i-x_1| > r$ for some given $r>0$  or  when $x_i=x_j$ for some $i\not=j$. When $k = 1$, we set $\xi(x,\X) = h(x)$.
		Further, assume
		$$\|h \|_{\infty} := \sup_{\x \in \mR^{d(k-1)}} |h(\0,\x)| < \infty. $$		
	\end{definition}
	
	The interaction range for $h$ is at most $r$, showing that the functionals $H_n^{\xi}$ defined at  \eqref{eqn:total_mass} generated via scores \eqref{eqn:score_U_statistic}
		are  local {\em $U$-statistics} of order $k$ as in \cite{Reitzner13}.
	Before introducing a more general class of score functions,
	we recall \cite{Baryshnikov05, LPS, PeEJP, PY1, PY5} a few definitions formalizing  the notion of the local
	dependence of $\xi$ on its input. Let $B_r(x):= \{ y:|y-x|\le r\}$ denote the ball of radius $r$ centered at $x$ and $B^c_r(x)$ its complement.
\begin{definition}[Radius of stabilization]\label{def.Stab.Radius}
Given a score function $\xi$, input $\X$, and $x \in \X$,  define the radius of stabilization $R^\xi(x,\X)$ to be the smallest $r \in \N$ such that
$$
\xi(x, \X \cap B_r(x)) = \xi(x, (\X \cap B_r(x)) \cup ( {\cal A} \cap B^c_r(x))) \,
$$
for all ${\cal A} \subset \R^d$  locally finite. If no such finite $r$ exists, we set $R^\xi(x,\X)=\infty$.
\end{definition}
	
	If  $\xi$ is  a translation invariant  function then so
	is $R^\xi(x,\X)$. Score
	functions~\eqref{eqn:score_U_statistic} of class (A1) have
	radius of stabilization upper-bounded by~$r$.
	\begin{definition}[Stabilizing score function]\label{def.stabilizing_score}
		We say that  $\xi$  {\em is stabilizing on $\P$} if for all $l \in \N$ there are constants $a_l > 0$, such that
		\be \label{stab}
		\sup_{1 \leq n \leq \infty} \sup_{x_1,\ldots,x_l \in W_n} \mP_{x_1,\ldots,x_l}
		\bigl(R^\xi(x_1,\P_n) > t\bigr) \leq \varphi(a_lt)
		\ee
		with $\varphi(t) \downarrow 0$ as $t \to \infty$. Without loss of generality the $a_l$ are non-increasing in $l$ and $0 \leq \varphi \leq 1.$
			In \eqref{stab} and elsewhere, we adopt the convention that
			$W_\infty := \R^d$ and $\P_{\infty} := \P$. The second sup in  \eqref{stab}  is understood as
			$\text{ess}\sup$ with respect to the measure $\alpha^{(l)}$ at \eqref{alpha-def}.
	\end{definition}
	\begin{definition}[Exponentially stabilizing score function]
		We say that $\xi$ is {\em exponentially stabilizing} on $\P$ if $\xi$ is  stabilizing on $\P$
		as in Definition~\ref{def.stabilizing_score} with 	$\varphi$ satisfying
		\be \label{varphibd}
		\liminf_{t \to \infty} {\log \varphi(t) \over t^c } \in (-\infty,  0)
		\ee
		for some $c \in (0, \infty)$.
	\end{definition}
	
	We define a general class of score functions exponentially stabilizing on their input.
	
	\begin{definition}[Class (A2) of admissible score and input pairs $(\xi, \P)$]\label{def.A2}
		Admissible input $\P$  consists of admissible point processes having fast decay of correlations as in Definition \ref{def.admissiblePP} with correlation decay constants satisfying  $c_k \equiv 1$,
		\be \label{eqn:sum}
		C_k = O(k^{ak}),
		\ee
		for some $a\in[0,1)$ and correlation decay function $\phi$ satisfying the exponential decay condition
		
		\be \label{phibd}
		\liminf_{t \to \infty} {\log \phi(t) \over t^b } \in (-\infty,  0)
		\ee
		for some constant $b \in (0, \infty)$. 
		Admissible score functions  $\xi$ for this class are exponentially
		stabilizing on the input $\P$ and satisfy a {\em power growth condition}, namely there exists $\hat{c} \in [1, \infty)$ such that for all $r \in (0, \infty)$
			\begin{equation} \label{eqn:xibd}
			|\xi(x,\X \cap B_r(x))|\1[{\rm card}(\X\cap B_r(x)) = n] \leq (\hat{c} \max (r,1) )^n.
			\end{equation}
	\end{definition}
The condition $c_k \equiv 1$ is equivalent to $c_* := \inf c_k > 0$. This follows since we may replace the fast
decreasing function $\phi(.)$ by 
$\phi(c_* \times \cdot)$, with $c_k \equiv 1$ for this new fast decreasing function. Score functions of class (A1) also  satisfy the power growth condition \eqref{eqn:xibd} since in this case the left hand side of \eqref{eqn:xibd} is at most $\|h \|_{\infty} n^{(k-1)}/k$. Thus the generalization from (A1) to  (A2) consists in replacing local U-statistics by
exponentially stabilizing score functions satisfying the power growth condition. This is done at the
price of imposing stronger conditions on the input process, requiring in
particular that it has finite  exponential moments, as explained in Section \ref{momsection}.
	
\subsection{\bf Fast decay of correlations of the $\xi$-weighted  measures}
\label{sec:strong_clustering_mix_mom}
	
	The following  $p$-moment condition involves the score function $\xi$ and the input $\P$.
	We shall describe in Section \ref{momsection} ways to control the $p$-moments of input pairs of
	class (A1) and (A2).
	
	\begin{definition}[Moment condition]
	\label{d.p-moment}
	\Comment{increasing  $\tM_p$ assumption moved to the Def; BB}
		Given $p \in [1, \infty)$, say that the pair $(\xi, \P)$ satisfies the $p$-moment condition if
		\be \label{eqn:mom}
		\sup_{1 \leq n \leq \infty} \sup_{1 \leq p' \leq \lfloor p \rfloor } \sup_{x_1,...,x_{p'} \in W_n} \sE_{x_1,\ldots,x_{p'}}
		\max\{|\xi(x_1, \P_n )|, 1\}^{p}   \le \tM_p < \infty
		\ee
		for some constant $\tM_p:=\tM_p^\xi$, where  $\sup$ signifies
		$\text{ess}\sup$ with respect to $\alpha^{(p)}$. Without loss of generality we assume that $\tM_p$ is increasing in $p$ for all $p$ such that~\eqref{eqn:mom} holds.
	\end{definition}
	
 We next consider the decay of the functions at~\eqref{eqn:mixedmomentn}, the so-called		
correlation functions of the $\xi$-weighted measures  at \eqref{xwm}.
These functions indeed play the same role as the $k$-point correlation functions of the simple point process $\P.$  When $\xi \equiv 1$ they obviously reduce to the correlation functions of $\P$.
For general $\xi$ and  $k_i \equiv 1$   they are densities (`mixed moment densities' in the language of \cite{Baryshnikov05}) of the higher-order  moment measures  of the
$\xi$-weighted measures  with all distinct arguments.
In the case of repeated arguments, the moment measures of a simple point process  ``collapse'' to appropriate lower dimensional ones.
This is neither  the case for non-simple point processes nor  for our $\xi$-weighted measures,
where general exponents $k_i$ are  required to properly take into account repeated arguments.

 When $k_i \equiv 1$ for all $1 \leq i \leq p$, we write $m_{(p)}(x_1,...,x_p;n)$ instead of $m^{(1,...,1)}(x_1,...,x_p;n)$. Abbreviate
 $m^{(k_1,\ldots,k_p)}(x_1,\ldots,x_p;\infty)$ by $m^{(k_1,\ldots,k_p)}(x_1,\ldots,x_p).$	These functions exist whenever \eqref{eqn:mom} is satisfied for $p$ set to  $k_1+\ldots+k_p$ and provided the $p$-point correlation function $\rho^{(p)}$ exists.
As for the input process $\P$ we consider mixing properties and fast decay of correlations for the $\xi$-weighted measures at \eqref{xwm}.

\begin{definition}[$\tilde\omega$-mixing  correlation functions of  $\xi$-weighted measures]
\label{defn:mixing-compound}	
The correlation functions~\eqref{eqn:mixedmomentn} 
are said to be  {\em $\omega$-mixing} if
there exists a decreasing function $\tilde\omega : \mathbb{N} \times \mR^+ \to \mR^+$ such that for all $p \in\mathbb{N}$, $\lim_{x \to \infty}\tilde\omega(p,x) = 0$ and for all $p,q \in \mathbb{N}$, distinct $x_1,\ldots,x_{p+q} \in \mR^d$ and $n \in \N \cup \{\infty\}$
\begin{align}\nonumber
		\Bigl|m^{(k_1,\ldots,k_{p+q})}(x_1,\ldots,x_{p+q};n)- m^{(k_1,\ldots,k_{p})}(x_1,\ldots,x_{p};n)\, m^{(k_{p+1},\ldots,k_{p+q})}(x_{p+1},\ldots,&x_{p+q};n)\Bigr|\\
&\le
		\tilde\omega(K,s)\,,  \label{e.clustering-generalized-omega-mixed-monents}
\end{align}
		where $K:=\sum_{i=1}^{p+q}k_i$ and  $s := d(\{x_1,\ldots,x_p\},\{x_{p+1},\ldots,x_{p+q}\})$ is as at \eqref{defs}.
\end{definition}
\begin{definition}[Fast decay of correlations of the $\xi$-weighted measures]
\label{def.strong-clustering}
The $\xi$-weighted measures are said to have  {\em fast decay of correlations}
if their correlations functions are $\tilde\omega$-mixing as in  Definition~\ref{defn:mixing-compound}
with  $\tilde\omega(n,x)=\tilde C_{n} \tilde\phi(\tilde c_{n}x)$ for some fast decreasing function $\tilde\phi: \R^+ \to [0,1]$ and some constants  $\tilde c_{n} > 0$ and  $\tilde C_{n} < \infty$.
\end{definition}
More explicitly, the $\xi$-weighted measures \eqref{xwm} have fast decay of correlations
 if there exists a fast-decreasing function $\tilde\phi$ and constants $\tilde C_k < \infty$, $\tilde c_k > 0, k \in \N$ such that for all $n \in \N \cup \{\infty\}$, $p, q \in \N$ and any collection of  positive integers $k_1,\ldots,k_{p+q}$, we have
		\begin{align}\nonumber
		\Bigl|m^{(k_1,\ldots,k_{p+q})}(x_1,\ldots,x_{p+q};n)- m^{(k_1,\ldots,k_{p})}(x_1,\ldots,x_{p};n)\, m^{(k_{p+1},\ldots,k_{p+q})}(x_{p+1},\ldots,&x_{p+q};n)\Bigr|\\
&\le		\tilde C_{K}\tilde\phi(\tilde c_{K} s)\,,  \label{e.clustering-generalized-mixed-monents}
		\end{align}
		where $x_1,\ldots,x_{p+q},K$ and $s$ are as in Definition~\ref{defn:mixing-compound}.
	
Our first theorem shows that the fast decay of correlations is inherited from the input process~$\P$ by the $\xi$-weighted measures for a wide class of score functions and input. This key result forms the starting point of our approach.
	
\begin{theorem}
\label{prop:clustering_gen}
Let $(\xi,\P)$ be an admissible score and input pair of class (A1) or (A2) such that the $p$-moment condition  \eqref{eqn:mom} holds for all $p \in (1,\infty)$. Then the correlations of the $\xi$-weighted measures  decay fast as at~\eqref{e.clustering-generalized-mixed-monents}.
\end{theorem}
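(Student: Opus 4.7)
The plan is to reduce, in both classes, to a setting where the score contributions at $A:=\{x_1,\ldots,x_p\}$ and $B:=\{x_{p+1},\ldots,x_{p+q}\}$ depend only on disjoint neighborhoods of $A$ and $B$ in the input, and then to propagate the fast decay of correlations of $\P$ provided by \eqref{eqn:clustering_condition} through the computation. The reduction is exact for class (A1) and approximate, via exponential stabilization, for class (A2).

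For class (A1), $\xi(x,\X)$ at \eqref{eqn:score_U_statistic} depends only on $\X\cap B_r(x)$. Expanding each factor $\xi(x_i,\P_n)^{k_i}$ via the $U$-statistic form and applying the refined Campbell theorem \eqref{disint} a finite number of times, the quantity $m^{(k_1,\ldots,k_{p+q})}(x_1,\ldots,x_{p+q};n)$ becomes a finite sum of integrals of products of $h$-factors against correlation functions $\rho^{(p+q+N)}(x_1,\ldots,x_{p+q},\y)$ of $\P$, where the dummy arguments $\y$ range over tuples confined to $\bigcup_i B_r(x_i)$. When $s>2r$, each such $\y$ decomposes into an $A$-part in $\bigcup_{i\le p}B_r(x_i)$ and a $B$-part in $\bigcup_{i>p}B_r(x_i)$ whose mutual distance is at least $s-2r$, so by \eqref{eqn:clustering_condition} the augmented correlation function factorizes up to an additive error $C_{p+q+N}\phi(c_{p+q+N}(s-2r))$. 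The factorized main terms reassemble to $m^{(k_1,\ldots,k_p)}(x_1,\ldots,x_p;n)\cdot m^{(k_{p+1},\ldots,k_{p+q})}(x_{p+1},\ldots,x_{p+q};n)$, and the error, bounded by a finite sum of the form $\|h\|_\infty^{\sum_i k_i}(\Vol B_r)^{N}C_{p+q+N}\phi(c_{p+q+N}(s-2r))$, has the required $\tilde C_K\tilde\phi(\tilde c_K s)$ form.

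For class (A2) the localization is approximate. Fix $L:=s/3$ and set $\xi^L(x,\X):=\xi(x,\X\cap B_L(x))$. Under $\mP_{x_1,\ldots,x_{p+q}}$, a union bound and \eqref{stab} show that $\xi(x_i,\P_n)=\xi^L(x_i,\P_n)$ for all $i$ except on an event of probability at most $(p+q)\varphi(a_{p+q}L)$; combined with the power growth bound \eqref{eqn:xibd}, the exponential moment control on $\P$ from Section~\ref{momsection}, and the $p$-moment condition \eqref{eqn:mom} applied via H\"older's inequality (taking $p$ arbitrarily large), replacing $\xi$ by $\xi^L$ throughout $m^{(k_1,\ldots,k_{p+q})}$ incurs only an error that is fast-decreasing in $L$, hence in $s$. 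After the replacement one applies the factorial moment expansion of \cite{Blaszczyszyn95, Bartek97} to $\sE^!_{x_1,\ldots,x_{p+q}}\bigl[\prod_i \xi^L(x_i,\P_n)^{k_i}\bigr]$, obtaining a series whose $N$-th term is an integral of iterated difference operators, supported in $\bigcup_i B_L(x_i)$, against $\rho^{(p+q+N)}$. Since $s-2L=s/3$, \eqref{eqn:clustering_condition} factorizes each such correlation function up to an error of order $C_{p+q+N}\phi(c_{p+q+N}s/3)$, and the main terms reassemble into the product of the corresponding expansions on $A$ and $B$.

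The main technical obstacle lies in class (A2): the factorial moment expansion produces infinitely many terms with factorially growing combinatorics, so the decay of $\phi$ must be balanced against the growth of $C_k$, $\kappa_k$ (controlled by \eqref{kapbound}) and $\tM_p$. This balance is delivered by the sub-factorial growth \eqref{eqn:sum} with $a<1$, the stretched-exponential decay \eqref{phibd}, and the hypothesis that \eqref{eqn:mom} holds for every $p\in(1,\infty)$; together they ensure uniform-in-$n$ absolute convergence of the expansion and yield a total error of the form $\tilde C_K\tilde\phi(\tilde c_K s)$ with $\tilde\phi$ fast-decreasing.
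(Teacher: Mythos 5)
Your class (A1) reduction and your overall (A2) strategy --- localize the scores via stabilization, expand the Palm moments via the factorial moment expansion, and feed the clustering \eqref{eqn:clustering_condition} of $\P$ into each term --- are exactly the paper's. The gap is in the one genuinely delicate choice in case (A2): the truncation radius. With $L=s/3$, the $N$-th term of the FME for the localized scores is bounded (via \eqref{eqn:xibd} and the locality of the difference operators) by $\tfrac{C_{N+K}}{N!}\,2^N(\hat{c}L)^{(N+1)K}\bigl((p+q)\theta_d L^d\bigr)^N$, and summing over $N$ using \eqref{eqn:sum} with $a<1$ and Stirling's formula yields a quantity of order $\exp\bigl(c\,L^{(K+d)/(1-a)}\bigr)$. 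The gain from \eqref{eqn:clustering_condition} is only $\phi(c's/3)\le c''\exp\bigl(-(s/3)^{b}/c''\bigr)$, where one may take $b<d$; since $(K+d)/(1-a)\ge K+d>d>b$, the product $\phi(c's/3)\exp\bigl(c\,(s/3)^{(K+d)/(1-a)}\bigr)$ diverges rather than decays as $s\to\infty$. So with a truncation radius proportional to $s$ the balance you assert in your final paragraph fails: the expansion does converge for each fixed $s$, uniformly in $n$, but its size as a function of $s$ overwhelms the clustering bound, and no estimate of the form $\tilde C_K\tilde\phi(\tilde c_K s)$ results.

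The missing idea is to truncate at a sub-linear power of $s$. The paper takes $t=(s/4)^{b(1-a)/(2(K+d))}$ (see \eqref{defsprime}); then the FME series is bounded by $c_7\exp\bigl(c_8(s/4)^{b/2}\bigr)$, which is dominated by $\phi(s/2)$ (the separation $s-2t\ge s/2$ is still of order $s$), while the error from replacing $\xi$ by its truncation, of order $\varphi(a_K t)^{1/(K+1)}$ by \eqref{stab}, H\"older and \eqref{eqn:mom}, remains fast-decreasing in $s$ because $\varphi$ decays at a stretched-exponential rate \eqref{varphibd} and $t$ is a positive power of $s$. With this single change your (A2) argument goes through; your (A1) part, where localization is exact at radius $r$ and the expansion has finitely many terms so no growth/decay balance is needed, is essentially the paper's argument and is fine.
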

	
We prove this theorem in Section \ref{sec:proofs_clustering}, where it is also shown that it subsumes more specialized results of \cite{Baryshnikov05, SY}.
	\subsection{\bf Main results}
	\label{sec:main_results}
	We give the limit theory for the measures $\mu_n^\xi, n \geq 1,$ defined at \eqref{eqn:rand_meas}.
	Given a score function $\xi$  on admissible input $\P$ we set  \footnote{For a stationary point process $\P$, its Palm expectation $\E_{\0}$ (and consequently $m_{(1)}(\0)$, $m_{(2)}(\0,x)\md x$) is meaningfully defined e.g. via the Palm-Matthes approach.}

	\be  \label{eqn:sigdef}
		\sigma^2(\xi) :=
		\E_{\0}\xi^2(\0, \P) \rho^{(1)}(\0) + \int_{\R^d}(m_{(2)}(\0,x) -  m_{(1)}(\0)^2 )\,\md x.
	\ee
	The following result provides expectation and variance asymptotics for ${\mu}^{\xi}_n(f)$, with $f$
	belonging to the space ${\cal B}(W_1)$ of bounded measurable functions on $W_1$.
	\begin{theorem}
		\label{prop:variance_upper_bound}
		Let $\P$ be an admissible point process on $\R^d$. \\
		\noindent (i) If $\xi$ satisfies exponential stabilization \eqref{varphibd} and the $p$-moment condition \eqref{eqn:mom} for some $p \in (1, \infty)$ then for all $f \in  {\cal B}(W_1)$
		\be
		\label{expasy}
		\Big| n^{-1} \E \mu_n^\xi(f) - \E_{\0} \xi(\0, \P) \rho^{(1)}(\0)  \int_{W_1}f(x)\,\md x\Big| = O(n^{-1/d}).
		\ee
		If $\xi$ only satisfies stabilization \eqref{stab} and the $p$-moment condition \eqref{eqn:mom} for some $p \in (1, \infty)$, then the right hand side of \eqref{expasy} is $o(1)$. \\
		\noindent (ii) Assume that the second correlation function $\rho^{(2)}$ of $\P$ exists  and is bounded as in~\eqref{eqn:corr_bounded},
		that $\xi$ satisfies \eqref{stab},
		and  that $(\xi, \P)$  satisfies 
		the $p$-moment   condition \eqref{eqn:mom} for some $p \in (2, \infty)$.  If the second-order  correlations of  the $\xi$-weighted measures decay fast,
 i.e. satisfy ~\eqref{e.clustering-generalized-mixed-monents}
		with $ p =  q = k_1 = k_2 = 1$ and all $n \in \N \cup\{\infty\}$, then  for all $f \in  {\cal B}(W_1)$
		\be
		\label{eqn:var}
		\lim_{n \to \infty} n^{-1} \Var \mu_n^{\xi}(f) =  \sigma^2(\xi) \int_{W_1} f(x)^2 \,\md x \in [0,\infty),
		\ee
		whereas for all $f, g \in {\cal B}(W_1)$
		\be
		\label{eqn:Cov}
		\lim_{n \to \infty} n^{-1}  {\rm{Cov}} ( \mu_n^{\xi}(f) , \mu_n^{\xi}(g) )  =  \sigma^2(\xi) \int_{W_1} f(x) g(x) \,\md x.
		\ee
	\end{theorem}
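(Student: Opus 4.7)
My approach is to expand the first and second moments of $\mu_n^\xi(f)$ via the refined Campbell formula \eqref{disint} in terms of Palm expectations of $\xi$ and of the second-order $\xi$-weighted correlation $m_{(2)}$, then use stationarity to recenter at the origin, stabilization to pass from truncated input $\P_n$ to the full process $\P$, and fast decay of the $\xi$-weighted correlations to integrate out the spatial dependence of the centered covariance.

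\textbf{Expectation asymptotics.} For part (i), applying \eqref{disint} to $(x,\mu)\mapsto \xi(x,\mu)f(n^{-1/d}x)$ and invoking stationarity of $\P$ together with translation invariance of $\xi$ yields
\begin{equation*}
n^{-1}\E\mu_n^\xi(f)=\rho^{(1)}(\0)\int_{W_1}\E_{\0}\bigl[\xi(\0,\P_n - n^{1/d}u)\bigr] f(u)\,\md u.
\end{equation*}
The inner Palm expectation agrees with $\E_{\0}\xi(\0,\P)$ on the event that $R^\xi(\0,\P)$ is smaller than the distance from $\0$ to $\partial(W_n-n^{1/d}u)$; the discrepancy is bounded using \eqref{stab} and controlled via H\"older's inequality from the $p$-moment condition \eqref{eqn:mom}. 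Under exponential stabilization \eqref{varphibd} only the boundary layer of $W_1$ of width $O(n^{-1/d}\log n)$ contributes non-negligibly, giving the $O(n^{-1/d})$ rate. Under mere stabilization, dominated convergence with majorant supplied by \eqref{eqn:mom} yields the $o(1)$ bound.

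\textbf{Variance asymptotics.} For (ii), expanding $\E[\mu_n^\xi(f)^2]$ by separating diagonal ($x=y$) from off-diagonal ($x\ne y$) contributions gives
\begin{align*}
\E[\mu_n^\xi(f)^2]= \rho^{(1)}(\0)&\int_{W_n}\E_{\0}[\xi(\0,\P_n-x)^2]f(n^{-1/d}x)^2\,\md x\\
&+\int_{W_n^2}m_{(2)}(x,y;n)f(n^{-1/d}x)f(n^{-1/d}y)\,\md x\,\md y.
\end{align*}
After subtracting $(\E\mu_n^\xi(f))^2$ and rescaling by $n^{-1}$, the diagonal part converges, by stabilization and uniform integrability from the $p$-moment condition with $p>2$, to $\rho^{(1)}(\0)\E_{\0}\xi(\0,\P)^2\int_{W_1}f(u)^2\,\md u$. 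The off-diagonal part becomes
\begin{equation*}
n^{-1}\int_{W_n^2}\bigl[m_{(2)}(x,y;n)-m_{(1)}(x;n)m_{(1)}(y;n)\bigr]f(n^{-1/d}x)f(n^{-1/d}y)\,\md x\,\md y;
\end{equation*}
substituting $z=y-x$ and $u=n^{-1/d}x$, hypothesis \eqref{e.clustering-generalized-mixed-monents} with $p=q=k_1=k_2=1$ dominates the integrand uniformly in $n$ by a fast-decreasing integrable function of $|z|$. For $u$ in the interior of $W_1$, the inner $z$-integral converges by stationarity to $\int_{\R^d}(m_{(2)}(\0,z)-m_{(1)}(\0)^2)\,\md z$, and $f(n^{-1/d}(x+z))$ is replaced by $f(u)$ modulo negligible error. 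The two contributions sum to $\sigma^2(\xi)\int_{W_1}f(u)^2\,\md u$, giving \eqref{eqn:var}. The covariance \eqref{eqn:Cov} then follows by polarization: $4\,\cov(\mu_n^\xi(f),\mu_n^\xi(g))=\Var(\mu_n^\xi(f+g))-\Var(\mu_n^\xi(f-g))$.

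\textbf{Main obstacle.} The principal technical difficulty lies in replacing $f(n^{-1/d}(x+z))$ by $f(u)$ when $f$ is merely bounded measurable: a direct continuity argument fails, so the plan is to approximate $f\in{\cal B}(W_1)$ by continuous functions in $L^2(W_1)$, use the fast-decreasing $z$-tail to localize the $z$-integration to scales much smaller than $n^{1/d}$, and control the approximation error via the Cauchy--Schwarz inequality and the uniform (in $n$) bound on the centered covariance kernel. A secondary concern is managing the $x$-region near $\partial W_n$, where the $z$-integration domain $W_n-x$ has not yet saturated $\R^d$; fast decay ensures this boundary contribution is of surface order $O(n^{1-1/d})$, hence negligible after division by $n$.
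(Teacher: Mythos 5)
Your proposal is correct and follows essentially the same route as the paper: refined Campbell formula plus stationarity for \eqref{expasy}, with the boundary discrepancy controlled by stabilization, H\"older, and the $p$-moment condition; and for \eqref{eqn:var} the diagonal/off-diagonal split, the substitution $z=y-x$, uniform domination of the centered covariance kernel by a fast-decreasing integrable function of $|z|$ via \eqref{e.clustering-generalized-mixed-monents}, and polarization for \eqref{eqn:Cov}. The one genuine divergence is your treatment of merely bounded measurable $f$: you propose approximating $f$ by continuous functions in $L^2(W_1)$ and controlling the error through the uniform bound on the covariance kernel, whereas the paper invokes the Lebesgue density theorem directly — for a Lebesgue point $x$ one has $n\int_{|z|<n^{-1/d}M}|f(x+z)-f(x)|\,\md z\to 0$, which replaces $f(x+n^{-1/d}z)$ by $f(x)$ without any approximation scheme. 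Both work; the Lebesgue-point argument is shorter and avoids having to verify uniform-in-$n$ $L^2$-continuity of the quadratic form, while your route is more self-contained if one has already recorded the uniform kernel bound. One minor imprecision: asserting that only a boundary layer of width $O(n^{-1/d}\log n)$ contributes and then bounding by its volume would only give $O(n^{-1/d}\log n)$ in \eqref{expasy}; to get the clean $O(n^{-1/d})$ you must actually integrate the exponential tail, i.e.\ use $\int_{W_n}\varphi(a_1 d(u,\partial W_n))^{1/q}\,\md u=O(n^{(d-1)/d})$, which your H\"older step already sets up.
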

	We remark that \eqref{expasy} and \eqref{eqn:var}  together show convergence in probability
	$$n^{-1} \mu_n^\xi(f)
	\ \ {\stackrel{{P}}{\longrightarrow}} \ \  \E_{\0} \xi(\0, \P) \rho^{(1)}(\0) \int_{W_1}f(x)\,\md x \, \, \, \text{as $n \to \infty$.}
	$$
	
	The proof of variance asymptotics \eqref{eqn:var} requires fast decay of the second-order
correlations of the $\xi$-weighted measures. Fast decay of {\em all} higher-order correlations as in Definition~\ref{def.strong-clustering}
 yields Gaussian fluctuations of  $\mu_n^\xi, n \geq 1,$ under moment conditions on the atom sizes (i.e., under moment conditions on $\xi$) and a variance lower bound. Let $N$ denote a mean zero normal random variable with variance $1$.
We write $f(n)=\Omega(g(n))$ when
		$g(n)=O(f(n))$, i.e., when  $\lim\inf_{n\to\infty} |f(n)/g(n)|>0$.
	\begin{theorem}
		\label{thm:main0}
		Let $\P$ be an admissible  point process on $\R^d$ and let the pair $(\xi,\P)$  satisfy the $p$-moment condition \eqref{eqn:mom} for all $p \in (1, \infty)$. If the correlations of the
$\xi$-weighted measures at \eqref{xwm} decay fast as in Definition~\ref{def.strong-clustering}
and if  $f \in  {\cal B}(W_1)$ satisfies
		\begin{equation}
		\label{vlb0}
		\Var{\mu^{\xi}_n(f)} = \Omega(n^{\nu})
		\end{equation}
		for some $\nu \in (0, \infty)$, then as $n \to \infty$
		\begin{equation}
		\label{mainclt0}
		(\Var{\mu^{\xi}_n(f)})^{-1/2} ( \mu^{\xi}_n(f) - \E \mu^{\xi}_n(f))  \tod N.
		\end{equation}
	\end{theorem}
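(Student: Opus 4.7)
My strategy is the method of cumulants: because the variance lower bound \eqref{vlb0} forces $\Var \mu_n^\xi(f) \to \infty$, it suffices to show that for each fixed integer $k \geq 3$,
$$\frac{c_k(\mu_n^\xi(f))}{(\Var \mu_n^\xi(f))^{k/2}} \longrightarrow 0,$$
after which \eqref{mainclt0} follows from the classical cumulant characterization of the Gaussian law.

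First, I would expand $c_k(\mu_n^\xi(f))$ as a finite sum (over compositions $k_1+\cdots+k_p = k$) of multiple integrals against Ursell (truncated) densities $U_n^{(k_1,\ldots,k_p)}$, where each $U_n^{(k_1,\ldots,k_p)}$ is the alternating sum, over set partitions of $\{1,\ldots,p\}$, of products of the mixed moment densities $m^{(\cdot)}(\cdot\,;n)$ from \eqref{eqn:mixedmomentn}. The central (and hardest) task is then to propagate the single-cut decay \eqref{e.clustering-generalized-mixed-monents} of the $\xi$-weighted correlations through this Möbius inversion to a full cluster bound of the form
$$|U_n^{(k_1,\ldots,k_p)}(x_1,\ldots,x_p)| \le C'\, \tilde{\phi}\bigl(c'\,\ell(x_1,\ldots,x_p)\bigr),$$
where $\ell(x_1,\ldots,x_p)$ is the length of a minimum spanning tree on $\{x_1,\ldots,x_p\}$ and $C',c'$ depend on $k_1,\ldots,k_p$. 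This is the standard Malyshev--Minlos combinatorial step, but it must be carried out for the $\xi$-weighted mixed-moment setting and with explicit control on how the constants depend on $k$ (using the moment bounds $\tM_p$). Integrating the spanning-tree decay in $p-1$ coordinates then yields Brillinger mixing of the $\xi$-weighted measures, namely
$$\sup_{n,\, x_1} \int_{(\R^d)^{p-1}} |U_n^{(k_1,\ldots,k_p)}(x_1,x_2,\ldots,x_p)|\, dx_2 \cdots dx_p \le B_{k_1,\ldots,k_p} < \infty.$$

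Fubini then produces a volume-order cumulant bound $|c_k(\mu_n^\xi(f))| \le C_k \|f\|_\infty^k \, n$. Combined with $\Var \mu_n^\xi(f) = \Omega(n^\nu)$, this yields $|c_k(\mu_n^\xi(f))|/(\Var \mu_n^\xi(f))^{k/2} = O(n^{1-k\nu/2})$, which already vanishes for every $k > 2/\nu$. For the finitely many remaining orders $3 \le k \le 2/\nu$ (an issue only when $\nu < 2/3$), I would invoke the extension of the cumulant criterion flagged in the introduction: the Brillinger-mixing bounds, equipped with factorial-in-$k$ growth of the constants $B_{k_1,\ldots,k_p}$, fit a Saulis--Statulevicius-type framework that forces Gaussian fluctuations as soon as the variance diverges, regardless of its polynomial rate.

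The main obstacle is the combinatorial step upgrading the single-cut control \eqref{e.clustering-generalized-mixed-monents} to the spanning-tree decay of the Ursell densities while tracking the explosion of the constants in $k$ carefully enough to both (i) allow integration over $p-1$ variables and (ii) feed into the extended cumulant criterion needed when $\nu$ is small; the rest of the argument is bookkeeping via Fubini and the classical cumulant-to-CLT reduction.
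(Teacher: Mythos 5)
Your overall architecture — expand the $k$th cumulant over set partitions in terms of truncated (Ursell) correlation functions of the $\xi$-weighted measures, upgrade the single-cut decay \eqref{e.clustering-generalized-mixed-monents} to a cluster bound on the Ursell functions, integrate out $p-1$ coordinates to get a Brillinger-mixing bound uniform in $n$ and in the anchored point, and conclude volume-order growth $|S_k(\mu_n^\xi(f))| = O(n)$ — is exactly the second proof given in Section 4.4 of the paper (the paper's first proof reaches the same $O(n)$ bound via semi-cluster measures instead). Your spanning-tree formulation of the cluster bound is interchangeable, for fixed $p$, with the diameter bound \eqref{e.Usell-clustering-bound} actually used (the combinatorial input in both cases is that any configuration admits a two-block partition whose separation is at least a constant times the diameter), and the subsequent integration and Fubini steps are as you describe. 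Up to the point where you obtain $|c_k(\mu_n^\xi(f))|/(\Var \mu_n^\xi(f))^{k/2} = O(n^{1 - k\nu/2})$, the proposal is sound.

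The genuine gap is in your treatment of the orders $3 \le k \le 2/\nu$. You propose to close these via a Saulis--Statuljavi\v{c}ius-type criterion, relying on factorial-in-$k$ growth of the constants $B_{k_1,\ldots,k_p}$. This fails on two counts. First, the factorial growth of the constants is not something your argument (or the paper's) establishes: the constants $\tilde{C}^\T_K$ coming out of the M\"obius inversion and out of Theorem \ref{prop:clustering_gen} are not controlled in $k$, and the paper explicitly states that determining when $D(k) \le (k!)^{1+\gamma}$ holds is beyond its scope. Second, and more fundamentally, even granting such bounds, the Saulis--Statuljavi\v{c}ius lemma requires a normalization $|\Gamma_k| \le (k!)^{1+\gamma}/\Delta(n)^{k-2}$ with $\Delta(n) \to \infty$; with the cumulant bound $O(n^{1-k\nu/2})$ the natural choice is $\Delta(n) = n^{(3\nu-2)/2}$, which diverges only for $\nu > 2/3$, so the claim that this route works ``regardless of the polynomial rate'' is false precisely in the regime where you need it. The correct and much lighter device is the classical Marcinkiewicz argument (Lemma 3 of \cite{So}): once the normalized cumulants of \emph{all sufficiently high} orders $k > 2/\nu$ tend to zero and the second cumulant is $1$, every subsequential limit law would have a characteristic function of the form $\exp(P(it))$ with $\deg P \le \lceil 2/\nu \rceil$, which by Marcinkiewicz's theorem forces $\deg P \le 2$; hence all normalized cumulants of order $k \ge 3$ tend to zero and \eqref{mainclt0} follows. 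With this substitution your proof closes; without it, the case $\nu \le 2/3$ is not covered.
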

	
	\vskip.3cm
	
	Combining Theorem \ref{prop:clustering_gen} and Theorem \ref{thm:main0} yields the following theorem, which is well-suited for off-the-shelf use in applications, as  seen in Section \ref{sec:applications2}.
	\begin{theorem}
		\label{thm:main1}
		Let $(\xi,\P)$ be an admissible  pair of class (A1) or (A2) such that the $p$-moment condition \eqref{eqn:mom} holds for all $p \in (1,\infty)$.
		If $f \in  {\cal B}(W_1)$ satisfies condition \eqref{vlb0} for some $\nu \in (0, \infty)$, then $\mu^{\xi}_n(f)$ is asymptotically normal as in \eqref{mainclt0}, as $n \to \infty$.
	\end{theorem}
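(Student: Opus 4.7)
The plan is to derive Theorem~\ref{thm:main1} as a direct corollary of the two preceding results, Theorem~\ref{prop:clustering_gen} and Theorem~\ref{thm:main0}, by verifying that the hypotheses of the former imply those of the latter on the fast decay of correlations of the $\xi$-weighted measures.

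First, I would invoke Theorem~\ref{prop:clustering_gen}. Its hypotheses are exactly that $(\xi,\P)$ is admissible of class (A1) or (A2) and that the $p$-moment condition \eqref{eqn:mom} holds for every $p \in (1,\infty)$, which are among the standing assumptions of Theorem~\ref{thm:main1}. The conclusion is that the correlation functions $m^{(k_1,\ldots,k_{p+q})}(x_1,\ldots,x_{p+q};n)$ of the $\xi$-weighted measures $\sum_{x\in\P_n}\xi(x,\P_n)\delta_x$ decay fast in the sense of Definition~\ref{def.strong-clustering}, i.e.\ satisfy the factorization estimate \eqref{e.clustering-generalized-mixed-monents} with some constants $\tilde C_k < \infty$, $\tilde c_k > 0$ and fast-decreasing function $\tilde\phi$, uniformly in $n \in \N \cup \{\infty\}$.

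Next, I would check the remaining hypotheses of Theorem~\ref{thm:main0}. By Definition~\ref{def.A1} and Definition~\ref{def.A2}, the input $\P$ is an admissible point process, so the first assumption is met. The $p$-moment condition for all $p \in (1,\infty)$ is assumed outright. The fast decay of correlations of the $\xi$-weighted measures needed in Theorem~\ref{thm:main0} was just obtained in the previous step. Finally, the variance lower bound $\Var\mu_n^\xi(f) = \Omega(n^\nu)$ for some $\nu \in (0,\infty)$ is part of the hypothesis on $f$. Applying Theorem~\ref{thm:main0} then yields the conclusion
\[
(\Var \mu_n^\xi(f))^{-1/2}\bigl(\mu_n^\xi(f) - \E\,\mu_n^\xi(f)\bigr) \tod N,
\]
as $n \to \infty$, completing the proof.

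There is essentially no obstacle specific to this statement: all of the substantive work is packaged into Theorem~\ref{prop:clustering_gen} (transferring fast decay of correlations from the input $\P$ to the $\xi$-weighted measures via the factorial moment expansion of \cite{Blaszczyszyn95, Bartek97}) and into Theorem~\ref{thm:main0} (the cumulant-method central limit theorem under fast decay of correlations). The present theorem merely unifies these two inputs so that verifying classes (A1) or (A2) of admissible pairs together with the variance lower bound is sufficient for asymptotic normality, which is what makes it convenient for the applications developed in Section~\ref{sec:applications2}.
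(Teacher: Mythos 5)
Your proposal is correct and coincides with the paper's own proof: the paper derives Theorem \ref{thm:main1} exactly by feeding the conclusion of Theorem \ref{prop:clustering_gen} (fast decay of correlations of the $\xi$-weighted measures) into Theorem \ref{thm:main0}, with the admissibility of $\P$, the $p$-moment condition, and the variance lower bound \eqref{vlb0} supplying the remaining hypotheses. Nothing further is needed.
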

Theorems \ref{prop:variance_upper_bound} and \ref{thm:main0} are proved in Section 4. We next compare our results with those in the literature. Point processes mentioned below
are defined in Section \ref{sec:ex_pp}.
   \paragraph{\bf Remarks}
	\begin{enumerate}[itemindent=0.8cm,labelwidth=*,label=(\roman*),leftmargin=0cm]
		\item {\em Theorem \ref{prop:variance_upper_bound}.}  In the case of
		Poisson and binomial input $\P$, the limits \eqref{expasy}
		and \eqref{eqn:var} are  shown in \cite{PY4} and \cite{Baryshnikov05, PeEJP}, respectively (the binomial point processes are not the restriction of an infinite point process to windows, but rather a re-scaled binomial point process on $[0,1]^d$).
		In the case of Gibbsian input, the limits \eqref{expasy}
		and \eqref{eqn:var} are established in \cite{SY}. Theorem \ref{prop:variance_upper_bound} shows these limits hold for general stationary input.
The paper \cite{AY1} gives a weaker version of Theorem \ref{prop:variance_upper_bound} for specific $\xi$ and for $f = \1[x \in W_1]$. In full generality, the convergence   rate \eqref{expasy} is new.
		
		\item {\em Theorems \ref{thm:main0} and \ref{thm:main1}.}   Under condition  \eqref{vlb0},Theorems \ref{thm:main0} and \ref{thm:main1}
		provide a central limit theorem for non-linear statistics of  either $\alpha$-determinantal and $\alpha$-permanental input ($|\alpha|^{-1} \in \N$) with a fast-decaying kernel as at
		\eqref{fastdk}, the zero set $\P_{GEF}$ of a Gaussian entire function, or rarified Gibbsian input.  When $\xi \equiv 1$,  then $\mu^{\xi}_n(f)$ reduces to the  {\em linear statistic} $\sum_{x \in \P_n} f(x)$.
		These theorems  extend the central limit theorem for linear statistics of $\P_{GEF}$ as established in \cite{Nazarov12}.
		When  the input is determinantal with a fast decaying kernel as at \eqref{fastdk}, then  Theorems \ref{thm:main0} and \ref{thm:main1}
		also extend the main result of  Soshnikov \cite{So}, whose pathbreaking paper gives a central limit theorem for linear statistics for any determinantal input, provided the  variance grows as least as fast as a power of the expectation. Proposition 5.7 of \cite{Shirai03} shows central limit theorems for linear statistics of $\alpha$-determinantal point processes with $\alpha = -1/m$ or $\alpha$-permanental point processes with $\alpha = 2/m$ for some $m \in \N$.  During the revision of this article, we noticed the recent work \cite{PDL}.  This paper shows that when the kernel satisfies \eqref{fastdk} with $\omega(s) = o(s^{-(d + \epsilon)/2 })$ and when $|\xi|$ is bounded with a deterministic radius of stabilization, then $H_n^\xi$ at \eqref{eqn:total_mass}
is asymptotically normal.  The generality of the score functionals and point processes considered in our article necessitates  assumptions on the determinantal kernel which are more restrictive than those of \cite{PDL, So}.
				
\item {\em Variance lower bounds.}
To prove asymptotic normality it is customary to require  variance lower bounds as at  \eqref{vlb0};  \cite{Nazarov12} and  \cite{So} both require assumptions of this kind.	Showing condition \eqref{vlb0} is a separate problem and it fails in general. For example, the variance of the point count of some determinantal point processes, including the GUE point process, grows at most logarithmically. This phenomena is especially pronounced in dimensions $d = 1, 2$.  Additionally, given input $\P_{GEF}$ and  $\xi \equiv 1$, the bound \eqref{vlb0} may fail even when $f$ is a smooth cut-off that equals one in a neighborhood of the origin (cf. Prop. 5.2 of \cite{NS11}).
On the other hand, if $\xi \equiv 1$, and if the kernel $K$ for a determinantal point process satisfies $\int_{\R^d} |K(\0, x)|^2 \md x < K(\0,\0) = \rho^{(1)}(\0)$, then
recalling the definition of $\sigma^2(\xi)$ at \eqref{eqn:sigdef}, we have $\sigma^2(\xi) = \sigma^2(1) = \rho^{(1)}(\0)  - \int_{\R^d}  |K(\0, x)|^2 \md x > 0$. In the case of rarified Gibbsian input, the bound \eqref{vlb0} holds with $\nu = 1$, as shown in of \cite[Theorem 1.1]{XY}. Theorem \ref{thm:main1} allows for surface-order variance growth, which arises for linear statistics $\sum_{x \in \P_n} \xi(x)$ of determinantal point processes; see \cite[(4.15)]{Forrester99}.
		
\item {\em Poisson, binomial, and Gibbs input}.  When $\P$ is Poisson or binomial input and when $\xi$ is a functional
which stabilizes exponentially fast as at \eqref{varphibd}, then  $\mu^{\xi}_n$ is asymptotically normal \eqref{mainclt0} under  moment conditions on $\xi$;	see the survey \cite{Yukich12}. When $\P$ is a rarified Gibbs point process with `ancestor clans' decaying exponentially fast, and when $\xi$ is an exponentially stabilizing functional, then $\mu^{\xi}_n$ satisfies normal convergence \eqref{mainclt0} as established in \cite{SY, XY}.
		
\remove{  Drawing on some ideas of \cite{BiscioLav}, one can prove that the random measures $\mu_n^\xi$ are Brillinger mixing. }

\item {\em Mixing conditions.}  Central limit theorems for geometric functionals of mixing point processes (random fields) are established in \cite{Baddeley80, Bulinski12, Ivanoff82,Heinrich94,Heinrich99,Heinrich13,PDL}. The geometric functionals considered in these papers are different than the ones considered here; furthermore the relation between the mixing conditions in these papers and $\omega$-mixing correlation functions as in Definition \ref{defn:admissible} is unclear. Though correlation functions are simpler than mixing coefficients, which depend on $\sigma$-algebras generated by the point processes, our decay rates appear more restrictive than those needed in afore-mentioned papers. 
    A careful investigation of the relations between the various notions of mixing and fast decay of correlations lies beyond the scope of our limit results and will be treated in a separate paper. In the case of point processes on discrete spaces, such a study is easier, c.f. \cite{Ramreddy17}.
		
\item {\em Brillinger mixing and fast decay of correlations.} Brillinger mixing \cite[Section 3.5]{Ivanoff82} is defined via finiteness of integrals of the {\em reduced cumulant measures} (see Section \ref{sec:prop_cumulants}). 
           The very definition of Brillinger mixing implies volume-order growth of cumulants; the converse follows using the ideas in the proof of \cite[Theorem 3.2]{BiscioLav}. The key to proving our announced central limit theorems  is to show that the
fast decay of correlations of the $\xi$-weighted measures  \eqref{xwm}  implies volume-order growth of cumulants and hence  Brillinger mixing; see the remarks at the beginning of Section \ref{sec:proof_clt1}  and also those and at the end of Section~\ref{sec:ursell_bounds}.

\item  {\em Multivariate central limit theorem.}
 We may use the Cram\'er-Wold device to extend  Theorems \ref{prop:variance_upper_bound} and \ref{thm:main1} to the multivariate setting as follows.	
		Let $(\xi, \P)$ be a pair satisfying the hypotheses of Theorems \ref{prop:variance_upper_bound} and \ref{thm:main1}.
		If $f_i \in  {\cal B}(W_1),  1 \leq i \leq k,$
		satisfy the variance limit \eqref{eqn:var}  with $\sigma^2(\xi) > 0$,
		then as $n \to \infty$ the fidis
		$$
		\left( \frac{ \mu^{\xi}_n(f_1) - \E{\mu^{\xi}_n(f_1)} }  { \sqrt{ n  } } ,
		\ldots  ,\frac{\mu^{\xi}_n(f_k) - \E{\mu^{\xi}_n(f_k)} } { \sqrt{ n  } } \right)
		$$
		converge to that of a centred Gaussian field having covariance kernel $f,g \mapsto \sigma^2(\xi) \int_{W_1} f(x)\allowbreak g(x) \md x$.
				
		\item {\em Deterministic radius of stabilization}.
		It may be shown that our main results go through without the condition~\eqref{phibd} if the radius of stabilization $R^\xi(x, \P)$ is bounded by a non-random (deterministic) constant	and if \eqref{eqn:sum} and \eqref{eqn:xibd} are satisfied.   However we are unable to find any interesting examples of point processes satisfying \eqref{eqn:clustering_condition} but not \eqref{phibd}.

		\item {\em Fast decay of the correlation of the $\xi$-weighted measures; Theorem \ref{prop:clustering_gen}.} Though the cumulant method is common to \cite{Baryshnikov05, SY, Nazarov12} and this article,
a distinguishing and novel feature of our approach  is the proof of fast decay of correlations of the $\xi$-weighted measures  \eqref{e.clustering-generalized-mixed-monents}, and consequently  their Brillinger mixing,  for a wide class of functionals and point processes. As mentioned in the introduction, the proof of this result is via factorial moment expansions, which  differs from the approach of \cite{Baryshnikov05, SY, Nazarov12} (see the remarks at the beginning of Section \ref{sec:proofs_clustering}). Fast decay of correlations of the $\xi$-weighted measures \eqref{e.clustering-generalized-mixed-monents} appears to be of independent interest.  It features  in the proofs of moderate deviation principles  and laws of the iterated logarithms for stabilizing functionals of Poisson point process \cite{Baryshnikov08}, \cite{ERS}.
Fast decay of correlations \eqref{e.clustering-generalized-mixed-monents} yields volume order cumulant bounds, useful in establishing  concentration inequalities as well as moderate deviations, as explained in \cite[Lemma 4.2]{Grote16}.

\item  {\em Normal approximation.} Difference operators (which appear in our factorial moment expansions) are also a key tool in the Malliavin-Stein method \cite{Nourdin12,Peccati15}. This method yields presumably optimal rates of normal convergence for various statistics (including many considered in Section \ref{sec:applications2}) in stochastic geometric problems \cite{Lachieze15, LPS, Reitzner13,Lachieze17}. However, these methods currently apply only to functionals defined on Poisson and binomial  point processes. It is an open question whether a refined use of these methods  would yield rates of convergence in our central limit theorems.
\item  {\em Cumulant bounds.} As mentioned, we establish that the $k$th order cumulants for $\langle f, \mu_n^\xi \rangle$ grow {\em at most linearly} in $n$ for $k \geq 1$.  Thus, under assumption \eqref{vlb0}, the cumulant $C_n^k$ for $(\Var \langle f, \mu_n^\xi \rangle)^{-1/2} \langle f,  \mu_n^\xi \rangle$ satisfies $C_n^k \leq D(k) n^{1 - (\nu k/2)}$, with $D(k)$ depending only on $k$.
For $k = 3,4,...$ and $\nu > 2/3$, we have  $C_n^k \leq D(k)/( \Delta(n))^{k - 2}$, where $\Delta(n):= n^{(3\nu - 2)/2 }$.  When $D(k)$ satisfies $D(k) \leq (k!)^{1 + \gamma}$, $\gamma$ a constant,  we obtain the Berry-Esseen bound (cf. \cite[Lemma 4.2]{Grote16})
$$
\sup_{t \in \R} \left | \sP \left( \frac{ \mu_n^\xi(f) - \E \mu_n^\xi(f)} { \sqrt{ \Var \mu_n^\xi(f) } } \leq t \right) - \sP(N \leq t) \right | = O( \Delta(n)^{-1/(1 + 2 \gamma) } ).
$$
Determining conditions on input pairs $(\xi, \P)$ insuring the bounds $\nu > 2/3$ and  $D(k) \leq (k!)^{1 + \gamma}$, $\gamma$ a constant, is beyond the scope of this paper. When $\P$ is Poisson input, this issue is addressed by \cite{ERS}.

\end{enumerate}

We next consider the case when the fluctuations of $H_n^\xi(\P)$ are not of volume-order, that is to say $\sigma^2(\xi) = 0$. Though this may appear to be a degenerate condition, interesting examples involving determinantal point processes or zeros of GEF in fact satisfy  $\sigma^2(1) = 0$. Such point processes are termed `super-homogeneous point processes' \cite[Remark 5.1]{Nazarov12}.
	Put
	\be
	\label{eqn:arand_meas}
	\H_n^\xi(\P):= \sum_{x \in \P_n} \xi(x, \P).
	\ee
	
The summands in  $\H_n^{\xi}(\P)$,	in contrast to those of $H_n^\xi(\P)$,
are not sensitive to boundary effects. We shall show that under volume-order scaling the asymptotic variance of $\hat{H}_n^{\xi}(\P)$  also equals $\sigma^2(\xi)$. However, when $\sigma^2(\xi) = 0$ we derive surface-order variance asymptotics for $\hat{H}_n^{\xi}(\P)$. Though a similar result should plausibly hold for $H_n^{\xi}(\P)$, a  proof seems beyond the scope of the current paper.
Letting $\Vol_d$ denote the $d$-dimensional Lebesgue volume,
for $y \in \R^d$ and $W \subset \R^d$, put
\be  \label{gamm}
\gamma_W(y): = \Vol_{d} (W \cap (\mR^d \setminus W  -y)).
\ee
By  \cite[Lemma 1(a)]{Martin80}, we are justified in writing $ \gamma(y) := \limn  \gamma_{W_n}(y)/ n^{(d-1)/d}.$

\begin{theorem}
			\label{prop:variance_lower_bound}
			Under the assumptions of Theorem~\ref{prop:variance_upper_bound}(ii)
			suppose also that the pair $(\xi,\P)$ exponentially stabilizes as in~\eqref{varphibd}. Then
			\be \label{propeq1a}  \lim_{n \to \infty} n^{-1}\Var{H_n^{\xi}(\P)} = \sigma^2(\xi). \ee
			If moreover $\sigma^2(\xi)=0$ in~\eqref{eqn:var}
			then
			\be \label{propeq}  \lim_{n \to \infty} n^{-(d-1)/d} \Var{\H_n^{\xi}(\P)} = \sigma^2(\xi, \gamma) := \int_{\mR^d}(m_{(1)}(\0)^2 - m_{(2)}(\0,x))\gamma(x)\,\md x \in [0,\infty).\ee
\end{theorem}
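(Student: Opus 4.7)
The plan is to handle the two claims of the theorem separately. For~\eqref{propeq1a}, I invoke Theorem~\ref{prop:variance_upper_bound}(ii) with the test function $f = \1_{W_1} \in \mathcal{B}(W_1)$. Since $\mu_n^\xi(\1_{W_1}) = \sum_{x \in \P_n} \xi(x, \P_n) = H_n^\xi(\P)$ and $\int_{W_1} \1_{W_1}(x)^2 \,\md x = 1$, the variance asymptotic~\eqref{eqn:var} gives $n^{-1} \Var H_n^\xi(\P) \to \sigma^2(\xi)$ at once. The extra exponential-stabilization hypothesis of Theorem~\ref{prop:variance_lower_bound} is not needed for this step; it enters only indirectly in part~(ii) below.

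For~\eqref{propeq}, the strategy is to produce an exact identity for $\Var \H_n^\xi(\P)$ by direct application of the refined Campbell theorem~\eqref{disint} at orders $k = 1$ and $k = 2$. Decomposing $\H_n^\xi(\P)^2 = \sum_{x \in \P_n} \xi(x,\P)^2 + \sum_{x \neq y \in \P_n} \xi(x,\P)\, \xi(y,\P)$ and using stationarity of $\P$, I obtain
\[
\E \H_n^\xi(\P) = n\, m_{(1)}(\0), \qquad \E [\H_n^\xi(\P)]^2 = n\, m^{(2)}(\0) + \int_{W_n \times W_n} m_{(2)}(\0, y-x) \,\md x\, \md y,
\]
where $m^{(2)}(\0) := \E_{\0} \xi(\0, \P)^2 \rho^{(1)}(\0)$ is finite by the $p$-moment hypothesis with $p > 2$, and $m_{(2)}(\0,z) := m_{(2)}(\0,z;\infty)$. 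The change of variables $z = y - x$, together with the identity $\Vol_d(W_n \cap (W_n - z)) = n - \gamma_{W_n}(z)$ and the definition~\eqref{eqn:sigdef} of $\sigma^2(\xi)$, yields the exact formula
\[
\Var \H_n^\xi(\P) = n\, \sigma^2(\xi) + \int_{\R^d} \bigl( m_{(1)}(\0)^2 - m_{(2)}(\0, z) \bigr)\, \gamma_{W_n}(z)\, \md z.
\]

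When $\sigma^2(\xi) = 0$, the first summand vanishes and only the boundary correction survives. I then divide by $n^{(d-1)/d}$ and pass to the limit by dominated convergence. The pointwise limit $\gamma_{W_n}(z)/n^{(d-1)/d} \to \gamma(z)$ is \cite[Lemma~1(a)]{Martin80}. The uniform dominating function is assembled from the elementary cube-slab bound $\gamma_{W_n}(z) \le 2d\, |z|\, n^{(d-1)/d}$ (since $W_n \triangle (W_n - z)$ is contained in the $|z|$-neighborhood of $\partial W_n$) and the fast decay of the second-order correlations of the $\xi$-weighted measures, $|m_{(2)}(\0, z) - m_{(1)}(\0)^2| \le \tilde C_2\, \tilde\phi(\tilde c_2 |z|)$, which is part of the hypotheses of Theorem~\ref{prop:variance_upper_bound}(ii). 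This produces the $L^1$ majorant $2d |z|\, \tilde C_2\, \tilde\phi(\tilde c_2 |z|)$, integrable because $\tilde\phi$ is fast decreasing. Non-negativity of $\sigma^2(\xi, \gamma)$ is inherited from $\Var \H_n^\xi \ge 0$, and finiteness follows from the same $L^1$ estimate.

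The main obstacle is the dominated-convergence step: the slab bound on $\gamma_{W_n}(z)$ must be verified to be uniform in both $n$ and $z$, and its linear growth in $|z|$ must be absorbed by the fast decay of the correlation of the $\xi$-weighted measures; this is precisely where the fast-decreasing (rather than merely integrable) property of $\tilde\phi$ is indispensable. Exponential stabilization of $\xi$, while not directly invoked in the Campbell--Mecke computation, secures the required fast decay of $m_{(2)}(\0, z;\infty) - m_{(1)}(\0)^2$ through Theorem~\ref{prop:clustering_gen}, and hence enters the argument only indirectly via the hypotheses transferred from Theorem~\ref{prop:variance_upper_bound}(ii).
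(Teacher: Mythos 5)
Your treatment of \eqref{propeq} is correct and is essentially the paper's own argument: the refined Campbell theorem plus stationarity (legitimate here precisely because the summands of $\H_n^\xi(\P)$ use $\P$ rather than $\P_n$) gives the exact identity $\Var \H_n^{\xi}(\P) = n\,\sigma^2(\xi) + \int_{\R^d}\bigl(m_{(1)}(\0)^2 - m_{(2)}(\0,z)\bigr)\gamma_{W_n}(z)\,\md z$, which is the paper's \eqref{doub2}, and the surface-order limit then follows exactly as in the paper by dominated convergence, using $n^{-(d-1)/d}\gamma_{W_n}(z)\le C|z|$, the pointwise limit $n^{-(d-1)/d}\gamma_{W_n}(z)\to\gamma(z)$ from \cite[Lemma 1]{Martin80}, and the assumed fast decay of $m_{(2)}(\0,z)-m_{(1)}(\0)^2$. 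The one place you diverge is \eqref{propeq1a}: consistent with the discussion preceding the theorem, the paper proves the volume-order limit for the boundary-insensitive statistic $\H_n^{\xi}(\P)$, deducing it from the same exact identity via $n^{-1}\gamma_{W_n}(z)\to 0$ together with $n^{-1}\gamma_{W_n}(z)\le 1$ and integrability of $m_{(2)}(\0,z)-m_{(1)}(\0)^2$, whereas you read \eqref{propeq1a} as concerning $H_n^{\xi}(\P)$ and specialize \eqref{eqn:var} to $f=\1_{W_1}$; both are fine, and your identity yields the hatted version in one extra line, so nothing substantive is missing. A small inaccuracy in your closing remark: the fast decay of the second-order correlations of the $\xi$-weighted measures at $n=\infty$ is a direct hypothesis of Theorem~\ref{prop:variance_upper_bound}(ii) (as you use earlier), not something supplied here by exponential stabilization through Theorem~\ref{prop:clustering_gen}, whose hypotheses (class (A1)/(A2) input and all-order moment bounds) are not assumed in this theorem.
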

			
	\paragraph{\bf Remarks}
	\begin{enumerate}[itemindent=0.8cm,labelwidth=*,label=(\roman*),leftmargin=0cm,topsep=-2ex]
		\item Checking positivity of $\sigma^2(\xi, \gamma)$	is not always straightforward, though we note if $\xi$ has the form \eqref{eqn:score_U_statistic}, then the disintegration formula
		\eqref{disint} yields
		\begin{eqnarray}
	& &	\sigma^2(\xi,\gamma) = \sum_{j=0}^k\left(j!(k-j-1)!(k-j-1)!\right)^{-1}\int_{\mR^d} \md x \, \gamma(x) \left( \int_{(B_r(\0) \cap B_r(x))^j \times B_r(\0)^{k-j-1} \times B_r(x)^{k-j-1}} \right. \no \\
	&  & \left. h(\0,\y,\z)h(x,\x,\z)  \left[\rho^{(k)}(\0,\y,\z)\rho^{(k)}(x,\x,\z) - \rho^{(2k-j)}(\0,\y,\z,x,\x) \right] \,\md \z \md \y \md \x \right) . \no 
		\end{eqnarray}
		%
\item  Theorem \ref{prop:variance_upper_bound} and  Theorem \ref{prop:variance_lower_bound} extend \cite[Propositions 1 and 2]{Martin80}, which are valid only for $\xi \equiv 1$, to general functionals. If an admissible  pair $(\xi, \P)$ of type  (A1) or (A2) is such that $\H_n^{\xi}(\P)$ does not have volume-order variance growth, then Theorems \ref{prop:variance_upper_bound} and  \ref{prop:variance_lower_bound} show that $\H_n^{\xi}(\P)$ has at most surface-order variance growth.
		
\end{enumerate}	
	
\section{\bf Examples and applications}
\label{sec:applications}
	
	Before providing examples and applications of our general results, we briefly discuss the moment assumptions involved in our main theorems.
	\subsection{\bf Moments of point processes having fast decay of correlations}  \label{momsection}
	\Comment{Sect. 2.1 revised.  JY}
	We say that $\P$ has exponential moments if for all bounded Borel $B
	\subset \R^d$ and all $t \in\R^+$ we have
	\be \label{expmom}
	\E[t^{\P(B)}]<\infty \,.
	\ee
	Similarly,  say that $\P$ has all moments if for all bounded Borel $B \subset \R^d$ and all $k \in \N$, we have
	\be \label{allmom}
	\E[\P(B)^k] < \infty \,.
	\ee
	
	\paragraph{\bf Remarks}
	\begin{enumerate}[itemindent=0.8cm,labelwidth=*,label=(\roman*),leftmargin=0cm,topsep=-2ex]
		\item  \label{rem:exp_mom-i} The point process $\P$ has exponential moments whenever	$\sum_{k=1}^\infty \kappa_k t^k/k!<\infty$   for all
		$t \in\R^+$ with $\kappa_k$ as in~\eqref{eqn:corr_bounded} (cf. the expansion
		of the probability generating function of a random variable in terms
		of factorial moments~\cite[Proposition
		5.2.III.]{DVJ}). By \eqref{kapbound}
		an admissible point process having fast decay of correlations has exponential moments
		provided
		\begin{equation}\label {eqn:corr_bounded_exponential-clustering}
		\sum_{k=1}^\infty \frac{C_k t^k}{(k - 1)!} < \infty, \  t \in \R^+.
		\end{equation}
		Note that input of type (A2) has exponential moments
		since by \eqref{eqn:sum}, we have $C_k = O(k^{ak}), a \in [0, 1)$, making \eqref{eqn:corr_bounded_exponential-clustering}
		summable.
		For pairs $(\xi, \P)$  of type (A2) with  radius of stabilization bounded by $r_0 \in [1, \infty)$, by \eqref{eqn:xibd} the $p$-moment in \eqref{eqn:mom} is consequently controlled  by a finite exponential moment, i.e., for $x_1,...,x_{p'} \in W_n$,
		\begin{align}
 \sE_{x_1,\ldots,x_{p'}} \max\{|\xi(x_1, \P_n )|, 1\}^{p} &  \leq
		\E_{x_1,\ldots,x_{p'}} (\max\{\hat{c}r_0,1\}^{p\P(B_{r_0}(x_1))}). \label{eqn:mom-exp}
		\end{align}
		\noindent Finally,  if $\P$ has exponential moments
		under its stationary probability
		$\mP$, the same is true under $\mP_{x_1,\ldots,x_k}$ for
		$\alpha^{(k)}$ almost all $x_1,\ldots,x_k$ \footnote{\label{fn.palm-exponential-moments}
			Indeed, if
				$\E_{x_1,\ldots,x_k}[\rho^{\P(B_r(x_1))}]=\infty$ for
				$x_1,\ldots,x_k\in B'$ for some bounded $B'\in \R^d$
				such that  $\alpha^{(k)}(B'^k)>0$ then
				$\E_{x_1,\ldots,x_k}[\rho^{\P(B_r(x_1))}]\le \E_{x_1,\ldots,x_k}[\rho^{\P(B'_r)}]=\infty$
				with $B'_r=B'\oplus B_r(0)=\{y'+y:y'\in B',y\in B_r(0)\}$ the $r$-parallel set of $B'$. Integrating with respect to $\alpha^{(k)}$ in $B'^k$, by the Campbell formula $\E[(\P(B'_r))^k\rho^{\P(B'_r)}]=\infty$, which contradicts  the existence of exponential moments under~$\mP$.}.

		\item \label{rem:exp_mom-ii}
		For pairs  $(\xi, \P)$ of type (A1), the $p$-moment \eqref{eqn:mom} satisfies for $x_1,...,x_{p'} \in W_n$
		\begin{equation} 
		 \sE_{x_1,\ldots,x_{p'}}
		\max\{|\xi(x_1, \P_n )|, 1\}^{p} \leq
		\Bigl(\frac{\|h\|_{\infty}}{k}\Bigr)^p \sE_{x_1,\ldots,x_{p'}} [(\P(B_r(x_1)))^{(k-1)p}]. 	\label{eqn:mom-exp-U}
		\end{equation}
We next show that \eqref{eqn:mom-exp-U} may be controlled by moments of Poisson random variables.
For any Borel set $B \subset (\R^d)^k$, the definition of factorial moment measures gives $\alpha^{(k)}(B) \leq \kappa_k\Vol_{dk}(B)$. Since moments may be expressed as a linear combination of factorial moments, for $k \in \N$ and a bounded Borel subset $B \subset \R^d$, using \eqref{eqn:fact_mom} we have
		\be \label{Yog5}
		\E[(\P(B))^k] = \sum_{j=0}^k \stirling{k}{j} \alpha^{(j)}(B^j) \leq \kappa_k\sum_{j=0}^k \stirling{k}{j} \Vol_{jd}(B)^j = \kappa_k \E({\rm{Po}}(\Vol_{d}(B))^k),
		\ee
		where $\stirling{k}{j}$ stand for the {\em Stirling numbers of the second kind},  ${\rm{Po}}(\lam)$ denotes a Poisson random variable with mean $\lam$ and where  $\kappa_j$'s are non-decreasing in $j$.
		Thus by \eqref{kapbound}, an admissible point process having fast decay of correlations has all moments, as in \eqref{allmom}. If $\P$ has all moments under its stationary probability $\mP$, the same is true under $\mP_{x_1,\ldots,x_k}$ for
		$\alpha^{(k)}$ almost all $x_1,\ldots,x_k$ (by the same arguments as in Footnote~\ref{fn.palm-exponential-moments}).
		
	\end{enumerate}
	
	\subsection{\bf Examples of point processes having fast decay of correlations}
	\label{sec:ex_pp}
The notion of a stabilizing functional is well established  in the stochastic geometry literature but since the notion of fast decay of correlations for point processes
\eqref{eqn:clustering_condition} is less well studied,  we first establish that some well-known point processes enjoy this property.
 For more details on the first five examples, we refer to \cite{HKPV}.
	
	\subsubsection{Class A1 input}
	\label{sec:classA1}
		
	\paragraph{\bf Permanental input} The point process $\P$ is permanental  if its correlation
		functions are defined by $\rho^{(k)}(x_1,...,x_k) := {\rm{per}}(K(x_i, x_j))_{1 \leq i, j \leq k},$
		where the permanent of an $n \times n$ matrix $M$ is ${\rm{per}}(M) := \sum_{\pi \in S_n} \Pi_{i = 1}^n M_{i, \pi(i)} $, with $S_n$ denoting the
		permutation group of the first $n$ integers and $K(\cdot, \cdot)$ is the Hermitian kernel of a locally trace class integral operator $\cK: L^2(\R^d) \to L^2(\R^d)$  \cite[Assumption 4.2.3]{HKPV}. A kernel $K$ is {\em fast-decreasing} if 
		\be \label{fastdk}
		|K(x,y)| \leq \omega(|x-y|), \ \  x, y \in \R^d, \ee
		for some fast-decreasing $\omega : \mR^+ \to \mR^+$.  Lemma
	\ref{lem:clustering_PermPP} in Section \ref{sec:appendix} shows that if a stationary permanental
	point process has a fast-decreasing kernel as at \eqref{fastdk}, then it is an  admissible  point process having fast decay of correlations with decay function $\phi = \omega$ and with correlation decay constants satisfying
\be
\label{eqn:const_perm}
C_k:= kk! ||K||^{k-1}, c_k\equiv1,
\ee
where $||K||:= \sup_{x,y} |K(x,y)|$ and we can choose $\kappa_k = k! \|K\|^k$.  However, a trace	class permanental point process in general does not have exponential moments, i.e., the right-hand side of  \eqref{expmom} might be infinite for some bounded $B$ and $\rho$ large enough.~\footnote{This is because, the number of points of a (trace-class) permanental p.p. in a compact set $B$ is a sum of independent geometric random variables Geo$(1/(1+\lambda))$ where $\lambda$ runs over all eigenvalues of the integral operator defining the process truncated to $B$.}
	
The permanental point process with kernel $K$ may be represented as a Cox point process (see Section \ref{sec:addex}) directed by the random measure $\eta(B) := \int_B (Z_1(x)^2 + Z_2(x)^2) dx$, $B \subset \R^d$, where the intensity $Z_1(x)^2 + Z_2(x)^2$ is a sum of
i.i.d. Gaussian random fields with zero mean and covariance function $K/2$ \cite[Thm 6.13]{Shirai03}.
Thus mean zero Gaussian random fields with a fast decaying covariance function $K/2$ yield a permanental (Cox) point process with kernel $K$ and having fast decay of correlations.

\paragraph{\bf $\alpha$-Permanental point processes} See \cite[Section 4.10]{HKPV}, \cite{McCullagh06},  and \cite{Shirai03} for more details on this class of point processes which generalize permanental point processes. Given $\alpha \geq 0$ and a kernel $K$ which is Hermitian, non-negative definite and locally trace class, a point process $\P$ is said to be {\em $\alpha$-permanental} \footnote{In contrast to terminology in \cite{HKPV,Shirai03}, here we distinguish the two cases (i) $\alpha \geq 0$ ($\alpha$-permanental) and (ii) $\alpha \leq 0$ ($\alpha$-determinantal)} if its correlation functions satisfy
\be
\label{eqn:alpha_det}
\rho^{(k)}(x_1,\ldots,x_k) = \sum_{\pi \in S_k} \alpha^{k - \nu(\pi)}\prod_{i=1}^kK(x_i,x_{\pi(i)})
\ee
where $S_k$ stands for the usual symmetric group and $\nu(.)$ denotes the number of cycles in a permutation. The right hand side is the $\alpha$-permanent of the matrix $((K(x_i,x_j))_{i,j \leq k}$. The special cases  $\alpha = 0$ and $\alpha = 1$ respectively give the Poisson point process with intensity $K(\0,\0)$) and the permanental point process with kernel $K$. In what follows, we assume $\alpha = 1/m$ for $m \in \mathbb{N}$,  i.e. $1/\alpha$ is a positive integer. Existence of such $\alpha$-permanental point processes is guaranteed by \cite[Theorem 1.2]{Shirai03}. The property of these point processes most important to us is that an $\alpha$-permanental point process with kernel $K$ is a superposition of $1/{\alpha}$ i.i.d. copies of a permanental point process with kernel $\alpha K$ (see \cite[Section 4.10]{HKPV}). Also from definition \eqref{eqn:alpha_det}, we obtain
\[ \rho^{(k)}(x_1,\ldots,x_k) \leq \|K\|^k \alpha^k \sum_{\pi \in S_k}(\alpha^{-1})^{\nu(\pi)}, \]
and so we can take $\kappa_k = \prod_{i=0}^{k-1}(j\alpha + 1)\|K\|^k$ for an $\alpha$-permanental point process. The following result is a consequence of the upcoming Proposition \ref{prop:sum_clustering_pp} and the identity \eqref{eqn:const_perm} for decay constants of a permanental point process with kernel $\alpha K$.
\begin{proposition}
\label{eqn:clustering_alpha_per}
Let $\alpha = 1/m$ for some $m \in \mathbb{N}$ and let $\P_{\alpha}$ be the stationary $\alpha$-permanental point process with a kernel $K$ which is Hermitian, non-negative definite and locally trace class. Assume also that $|K(x,y)| \leq \omega(|x-y|)$ for some fast-decreasing $\omega$. Then $\P_{\alpha}$ is an admissible point process having fast decay of correlations with decay function $\phi = \omega$ and correlation decay constants $C_k = km^{1-k(m-1)} m!(k!)^m\|K\|^{km-1}, c_k = 1$.
\end{proposition}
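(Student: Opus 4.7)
The plan is to exploit the superposition representation of an $\alpha$-permanental point process with $\alpha = 1/m$ noted just before the statement: $\P_\alpha$ is distributed as the union $\bigcup_{i=1}^{m} \P^{(i)}$ of $m$ i.i.d.\ copies of a (genuine) permanental point process $\P^{(i)}$ with kernel $\alpha K = K/m$. Since the kernel of each $\P^{(i)}$ is Hermitian, non-negative definite, locally trace class, and bounded in modulus by $\alpha\,\omega(|\cdot|) \le \omega(|\cdot|)$ (as $\alpha \in (0,1]$), each $\P^{(i)}$ is stationary with a fast-decreasing kernel and hence, by Lemma~\ref{lem:clustering_PermPP}, is an admissible point process having fast decay of correlations with decay function $\phi = \omega$ and, by the identity \eqref{eqn:const_perm} applied to kernel $\alpha K$, with correlation decay constants
\[
C'_k = k\,k!\,\|\alpha K\|^{k-1} = k\,k!\,m^{-(k-1)}\|K\|^{k-1},\qquad c'_k \equiv 1,
\]
together with the bound $\kappa'_k = k!\,\|\alpha K\|^k = k!\,m^{-k}\|K\|^k$ on the correlation functions.

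Next, I would invoke the superposition result, Proposition~\ref{prop:sum_clustering_pp}, which combines fast decay of correlations across an independent superposition of a finite number of admissible point processes. In the present setting the $m$ components are identically distributed, so it suffices to feed in the common constants $(C'_k, c'_k, \kappa'_k)$ above together with the number of components $m$. The decay function is preserved: the output correlation decay function remains $\omega$ and the output decay constants are $c_k \equiv 1$, while $C_k$ picks up purely combinatorial and algebraic factors from the standard binomial-type expansion
\[
\rho^{(k)}_{\P_\alpha}(x_1,\dots,x_k) = \sum_{(S_1,\dots,S_m)} \prod_{j=1}^{m} \rho^{(|S_j|)}_{\P^{(j)}}(x_{S_j}),
\]
summed over ordered partitions of $\{1,\dots,k\}$ into $m$ (possibly empty) blocks.

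The core of the argument, which Proposition~\ref{prop:sum_clustering_pp} carries out once and for all, is the telescoping estimate: when the $p+q$ arguments split into two groups at separation $s$, the difference between $\rho^{(p+q)}_{\P_\alpha}$ and $\rho^{(p)}_{\P_\alpha}\rho^{(q)}_{\P_\alpha}$ can be written as a sum of terms, each of which contains at least one factor of the form $\rho^{(|S_j^{\rm mixed}|)}_{\P^{(j)}}(x_{S_j^{\rm mixed}}) - \rho^{(\cdot)}_{\P^{(j)}}(\cdot)\rho^{(\cdot)}_{\P^{(j)}}(\cdot)$ in which the arguments of a single block $S_j$ are split across the two groups; that factor is then bounded by $C'_{|S_j|}\omega(s)$, while the remaining factors are bounded by the uniform constants $\kappa'_\ell$. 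Counting the ordered partitions (at most $m^k$) and the pairs in which a block is split (at most $k$ positions), and taking the worst-case product of the $\kappa'_\ell$ factors across the remaining $m-1$ components, produces the announced value
\[
C_k = k\,m^{1 - k(m-1)}\,m!\,(k!)^{m}\,\|K\|^{km-1},
\]
in which the factor $k$ records the choice of a split block, $km^{1-k(m-1)}\|K\|^{km-1}$ collects the powers of $\|K\|$ and $m$ coming from $C'_k$ and $m-1$ instances of $\kappa'_k$, and $m!\,(k!)^{m}$ is the combinatorial count of ordered partitions of $\{1,\dots,k\}$ into $m$ labelled blocks (crudely bounded).

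The only delicate step is the explicit arithmetic matching of constants above; once one accepts Proposition~\ref{prop:sum_clustering_pp}, both the admissibility of $\P_\alpha$ (stationarity and finite intensity are immediate from stationarity of the $\P^{(i)}$ and $\rho^{(1)}_{\P_\alpha}(\0) = m\,\rho^{(1)}_{\P^{(1)}}(\0) = K(\0,\0)$) and the fast decay of its correlations with the stated constants follow, completing the proof.
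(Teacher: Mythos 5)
Your proposal is correct and follows essentially the same route as the paper: the paper obtains this proposition precisely by combining the superposition representation of $\P_\alpha$ as $m$ i.i.d.\ permanental processes with kernel $\alpha K$, the constants \eqref{eqn:const_perm} from Lemma~\ref{lem:clustering_PermPP} applied to that kernel (giving $C'_k = k\,k!\,m^{-(k-1)}\|K\|^{k-1}$, $\kappa'_k = k!\,m^{-k}\|K\|^k$), and Proposition~\ref{prop:sum_clustering_pp}, whose output constant $m^k m!(\kappa'_k)^{m-1}C'_k$ indeed equals the stated $k\,m^{1-k(m-1)}m!(k!)^m\|K\|^{km-1}$. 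Your sketch of the telescoping argument inside Proposition~\ref{prop:sum_clustering_pp} matches the paper's appendix proof in spirit, so there is nothing genuinely different to flag.
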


\paragraph{\bf Zero set of Gaussian entire function (GEF)} A Gaussian entire function $f(z)$ is the sum $\sum_{j \geq 0} X_j \frac{z^j}{ \sqrt{j!}}$
where $X_j$ are i.i.d. with the standard normal density on the complex plane. The zero set $f^{-1}(\{0\})$ gives rise to the point process  $\P_{GEF}:=  \sum_{x \in f^{-1}(\{0\}) } \delta_x$ on $\R^2$.
	The point process $\P_{GEF}$ is an admissible point
	process having fast decay of correlations \cite[Theorem 1.4]{Nazarov12}, exhibiting local
	repulsion of points. Though $\P_{GEF}$  satisfies condition \eqref{phibd}, it is unclear whether \eqref{eqn:sum} holds. By \cite[Theorem 1]{Krishnapur06}, $\P_{GEF}(B_r(\0))$ has exponential moments.
	
\vskip.3cm	
	
\noindent{\em Moment conditions.} For $p \in [1, \infty)$, we show that the $p$-moment condition \eqref{eqn:mom} holds when $\xi$ is such that the pair $(\xi, \P_{GEF})$ is of class (A1). By \cite[Theorem 1.3]{Nazarov12}, given  $\P:= \P_{GEF}$, there exists constants $\tilde{D}_k$ such that
		\be \label{Yog1}
		\tilde{D}_k^{-1} \prod_{i < j} \min \{|y_i -y_j|^2,1\} \leq  \rho^{(k)}(y_1,\ldots,y_k) \leq \tilde{D}_k \prod_{i < j} \min \{|y_i -y_j|^2,1\}.
		\ee
		Recall from ~\cite[Lemma 6.4]{Shirai03} (see also \cite[Theorem 1]{Hanisch82}, \cite[Proposition 2.5]{Blaszczyszyn95}),
		that the existence of correlation functions of any point process  implies
		existence of reduced Palm correlation functions
		$\rho^{(k)}_{x_1,\ldots,x_p}(y_1,\ldots,y_k)$, which satisfy the following useful multiplicative identity:
		For Lebesgue a.e. $(x_1,\ldots,x_p)$ and $(y_1,\ldots,y_k)$, all distinct,
		\begin{equation}
		\label{eqn:palm_correlation}
		\rho^{(p)}(x_1,\ldots ,x_p)\rho^{(k)}_{x_1,\ldots,x_p}(y_1,\ldots,y_k) = \rho^{(p+k)}(x_1,\ldots ,x_p,y_1,\ldots,y_k).
		\end{equation}
		Combining \eqref{Yog1} and \eqref{eqn:palm_correlation}, we get for Lebesgue a.e. $(x_1,\ldots,x_p)$ and $(y_1,\ldots,y_k)$,  that
		\be \label{Yog3} \rho^{(k)}_{x_1,\ldots,x_p}(y_1,\ldots,y_k) \leq D_{p+k} \rho^{(k)}(y_1,\ldots,y_k),\ee
		where $D_{p+k} := \tilde{D}_{p+k}\tilde{D}_p\tilde{D}_k$.
		Thus we have shown  there exists constants $D_j, j \in \mathbb{N},$ such that for any bounded Borel subset $B$, $k \in \mathbb{N}$ and Lebesgue a.e. $(x_1,\ldots,x_p) \in (\R^{d})^p$,
		we have
		\be
		\label{eqn:palm_mom_gef}
		\E^!_{x_1,\ldots,x_p}(\P^{(k)}(B^k)) \leq D_{p+k} \E(\P^{(k)}(B^k)).
		\ee	
		By \eqref{eqn:mom-exp-U}, \eqref{eqn:palm_mom_gef}, and \eqref{Yog5} in this order, along with stationarity of $\P_{GEF}$, we have for any $p \in [1, \infty)$,
		\begin{eqnarray}
		& & \sup_{1 \leq n \leq \infty} \sup_{1 \leq p' \leq \lfloor p \rfloor } \sup_{x_1,...,x_{p'} \in W_n} \sE_{x_1,\ldots,x_{p'}} \max\{|\xi(x_1, \P_n )|, 1\}^{p}  \no \\
		&\leq &  \left(\frac{\|h\|_{\infty}}{k}\right)^p\kappa_{(k-1)p}D_{kp} \E[({\rm{Po}}(\Vol_{d} (B_r(\0)))+p)^{(k-1)p}] < \infty,
		\label{eqn:mom-gef}
		\end{eqnarray}
		where as before ${\rm{Po}}(\lam)$ denotes a Poisson random variable with mean $\lam$. Thus the $p$-moment condition \eqref{eqn:mom} holds
		for pairs $(\xi, \P_{GEF})$ of class (A1) for all $p \in [1, \infty)$.
	
\subsubsection{Class A2 input}
	\label{sec:classA2}
	
	\paragraph{\bf Determinantal input}
	The point process $\P$ is determinantal  if its correlation
	functions are defined by $\rho^{(k)}(x_1,\ldots,x_k) = {\rm{det}}(K(x_i, x_j))_{1 \leq i, j \leq k},$
	where $K(\cdot, \cdot)$ is again the Hermitian kernel of a locally trace class integral operator $\cK: L^2(\R^d) \to L^2(\R^d)$. Determinantal point processes exhibit local repulsivity and their structure is preserved when restricting  to subsets of $\R^d$ and as well as when considering their reduced Palm versions. These facts facilitate our analysis of determinantal input;
 the Appendix (Section \ref{sec:appendix}) provides  lemmas further illustrating their tractability.  If a stationary determinantal point process has a fast-decreasing kernel as at \eqref{fastdk}, then Lemma \ref{lem:clustering_DPP} in Section \ref{sec:appendix} shows that it is an admissible point process having fast decay of correlations satisfying   \eqref{eqn:sum} with decay function $\phi = \omega$, with $\omega$ as at \eqref{fastdk}, and correlation decay constants
\be
\label{eqn:const_det}
 C_k:= k^{1 +(k/2)} ||K||^{k-1}, c_k\equiv1.
\ee
Consequently, $\phi$  satisfies the requisite exponential decay \eqref{phibd} whenever $\omega$ itself satisfies \eqref{phibd}. 
	
	The Ginibre ensemble of eigenvalues of $N \times N$ matrices with independent standard complex Gaussian entries is a leading example of a determinantal point process. The limit of the Ginibre ensemble as $N \to \infty$ is the Ginibre point process (or the infinite Ginibre ensemble), here denoted $\P_{GIN}$. It is the prototype of a stationary determinantal point process and has  kernel
      $$
		K(z_1,z_2) := \exp(z_1 \bar{z_2}) \exp\left( - \frac{  |z_1|^2 + |z_2|^2 } {2} \right) = \exp\left(i {\rm{Im}}(z_1 \bar{z_2}) - \frac{  |z_1 - z_2|^2} {2} \right) , \ \ z_1, \ z_2 \in \mathbb{C}.
		$$
	More generally, for $0 < \beta \leq 1,$ the $\beta$-Ginibre (determinantal) point process (see \cite{Goldman10}) has kernel
	$$
	K_{\beta}(z_1,z_2) := \exp(\frac{1}{\beta}z_1 \bar{z_2}) \exp\left( - \frac{ |z_1|^2 + |z_2|^2} { 2\beta } \right), \ \ z_1, \ z_2 \in \mathbb{C}.
	$$
	When  $\beta = 1$, we obtain  $\P_{GIN}$ and as  $\beta \to 0$ we
	obtain the Poisson point process. Thus the $\beta$-Ginibre  point
	process interpolates between the Ginibre and Poisson point processes.
Identifying the complex plane with $\R^2$ we see that all  $\beta$-Ginibre point processes are  admissible point
	processes having fast decay of correlations satisfying \eqref{eqn:sum} and \eqref{phibd}.
	
	\vskip.1cm
	
		\noindent{\em Moment Conditions.} Let $p \in [1, \infty)$  and let $\P$ be a stationary determinantal point process with a continuous and fast-decreasing kernel.
		We now show that the $p$-moment condition \eqref{eqn:mom} holds for pairs $(\xi,\P)$ of class (A1) or (A2), provided $\xi$ has a deterministic
		radius of stabilization, say $r_0 \in [1, \infty)$. First,  for all $(x_1,\ldots,x_p) \in (\mR^d)^p$, all increasing  $F : \mathbb{N} \to \mR^+$ and all
		bounded Borel sets $B$ we have \cite[Theorem 2]{Goldman10}
				$$ \E^!_{x_1,\ldots,x_p}(F(\P(B))) \leq  \E(F(\P(B))).$$
		%
		Thus using \eqref{eqn:mom-exp},
		the above inequality and stationarity of $\P$, we get that for any bounded stabilizing score function $\xi$ of class (A2),
		\be
		\label{eqn:mom-dpp}
		\sup_{1 \leq n \leq \infty} \sup_{1 \leq p' \leq \lfloor p \rfloor } \sup_{x_1,...,x_{p'} \in W_n} \sE_{x_1,\ldots,x_{p'}} \max\{|\xi(x_1, \P_n )|, 1\}^{p} \leq \E(\max\{\hat{c}r_0,1\}^{p\P(B_{r_0}(\0))+p^2}) < \infty.
		\ee
		The finiteness of the last term follows from the fact that determinantal input considered here is of class (A2) and, by Remark~\ref{rem:exp_mom-i} at the beginning of  Section~\ref{momsection}, such input has finite exponential moments.
		
\paragraph{\bf $\alpha$-Determinantal point processes}		

Similar to permanental point processes, we  generalize determinantal point processes to include their $\alpha$-determinantal versions, by requiring that the correlation functions satisfy \eqref{eqn:alpha_det} for some $\alpha \leq 0$. In what follows, we shall assume that $\alpha = -1/m, m \in \mathbb{N}$. Existence of such $\alpha$-determinantal point processes again follows from \cite[Theorem 1.2]{Shirai03}. Likewise, 
an $\alpha$-determinantal point process with kernel $K$ is a superposition of $-1/\alpha$ i.i.d. copies of a determinantal point process with kernel $-\alpha K$ (\cite[Section 4.10]{HKPV}). By \cite[Proposition 4.3]{Shirai03}, we can take $\kappa_k = K(\0,\0)^k$ for an $\alpha$-determinantal point process. Analogously to Proposition \ref{eqn:clustering_alpha_per}, the next result follows from Proposition \ref{prop:sum_clustering_pp} below and the identity  \eqref{eqn:const_det} for correlation decay constants of a determinantal point process with kernel $-\alpha K$.
\begin{proposition}
	\label{prop:clustering_alpha_det}
	Let $\alpha = -1/m$ for some $m \in \mathbb{N}$ and $\P_{\alpha}$ be the stationary $\alpha$-determinantal point process with a kernel $K$ which is Hermitian, non-negative definite and locally trace class. Assume also that $|K(x,y)| \leq \omega(|x-y|)$ for some fast-decreasing function $\omega$. Then $\P_{\alpha}$ is an admissible point process having fast decay of correlations with decay function $\phi = \omega$ and correlation decay constants $C_k = m^{1-k(m-1)} m!{K(\0,\0)}^{k(m-1)}k^{1+(k/2)}\|K\|^{k-1}, c_k = 1$. Further if $w$ satisfies   \eqref{phibd}, then $\P_{\alpha}$ is an admissible input of type (A2).
\end{proposition}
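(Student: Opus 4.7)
The plan is to follow the two-step strategy suggested in the statement and implemented for the analogous Proposition~\ref{eqn:clustering_alpha_per}. First, use the superposition representation from \cite[Section 4.10]{HKPV}: since $\alpha = -1/m$, one has
$$\P_{\alpha} \stackrel{d}{=} \sum_{i=1}^{m} \P^{(i)},$$
where $\P^{(1)},\ldots,\P^{(m)}$ are i.i.d.\ copies of the determinantal point process with kernel $-\alpha K = K/m$. This kernel is still Hermitian, non-negative definite, locally trace class and stationary, and it inherits the fast-decay bound $|K(x,y)/m| \leq \omega(|x-y|)/m \leq \omega(|x-y|)$. Lemma~\ref{lem:clustering_DPP} combined with the identity \eqref{eqn:const_det} therefore applies to each $\P^{(i)}$, yielding fast decay of correlations with decay function $\omega$, single-process decay constants $C'_k = k^{1+(k/2)}(\|K\|/m)^{k-1}$, $c'_k \equiv 1$, and the Hadamard pointwise bound $\kappa'_k = (K(\mathbf{0},\mathbf{0})/m)^k$ on the $k$-point correlation function.

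Second, I would invoke the upcoming Proposition~\ref{prop:sum_clustering_pp}, which transfers fast decay of correlations from the components to their independent superposition. The $k$-point correlation function of $\sum_{i=1}^{m} \P^{(i)}$ expands as a sum, over maps $\phi:\{1,\ldots,k\}\to\{1,\ldots,m\}$, of products of the component correlation functions evaluated on the fibres $\phi^{-1}(i)$. Estimating the factorization error across a separation $s$ between $\{x_1,\ldots,x_p\}$ and $\{x_{p+1},\ldots,x_{p+q}\}$, in each summand at least one component receives points from both sides, to which one applies the single-process bound $C'_{p+q}\,\omega(s)$ via a product-telescoping inequality, while the remaining (at most $m-1$) components contribute bounded factors absorbed into a power of $K(\mathbf{0},\mathbf{0})$. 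Aggregating these contributions with the appropriate combinatorial factors, exactly as in the permanental parallel, produces $\phi = \omega$ together with the announced
$$C_k = m^{1-k(m-1)}\,m!\,K(\mathbf{0},\mathbf{0})^{k(m-1)}\,k^{1+(k/2)}\,\|K\|^{k-1},\qquad c_k \equiv 1.$$

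Admissibility of $\P_{\alpha}$ is then immediate: simplicity follows from simplicity of each $\P^{(i)}$ (their correlation functions vanish on the diagonal by \eqref{eqn:alpha_det}) together with absolute continuity of their intensity measures, which rules out coincidences across the independent components almost surely; stationarity is inherited from the $\P^{(i)}$, and the intensity equals $K(\mathbf{0},\mathbf{0}) \in (0,\infty)$. For the (A2) claim, $\phi = \omega$ inherits the exponential decay \eqref{phibd} by hypothesis, while the announced $C_k$ grows like $k^{k/2}$ times factors that are merely exponential in $k$, so $C_k = O(k^{ak})$ for any $a \in (1/2,1)$, satisfying \eqref{eqn:sum}. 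I expect the main obstacle to lie entirely in Step~2: carefully tracking the combinatorial constants through the partition-expansion of the superposition's correlation functions so that they coalesce into the stated closed form, in particular producing the precise exponent $1-k(m-1)$ on $m$ and the power $K(\mathbf{0},\mathbf{0})^{k(m-1)}$. Once Proposition~\ref{prop:sum_clustering_pp} is in hand, the remainder is a direct substitution of the determinantal constants \eqref{eqn:const_det} in place of the permanental ones used for Proposition~\ref{eqn:clustering_alpha_per}.
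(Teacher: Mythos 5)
Your proposal is correct and follows exactly the paper's route: represent $\P_\alpha$ as the superposition of $m$ i.i.d.\ determinantal processes with kernel $K/m$, apply Lemma~\ref{lem:clustering_DPP} (i.e.\ \eqref{eqn:const_det}) to each copy with $\kappa'_k=(K(\0,\0)/m)^k$, and feed the resulting constants $C'_k=k^{1+(k/2)}(\|K\|/m)^{k-1}$ into Proposition~\ref{prop:sum_clustering_pp}, whose formula $m^k m!(\kappa'_k)^{m-1}C'_k$ simplifies directly to the stated $C_k$. The only remark is that the combinatorial bookkeeping you anticipate as the "main obstacle" is already packaged inside Proposition~\ref{prop:sum_clustering_pp}, so the final step is pure substitution rather than a new partition analysis.
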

From \eqref{eqn:mom-dpp} and \eqref{eqn:mom_bd_superp}, we have that for $\P_{\alpha}, -1/\alpha \in \N$ as above, and for any bounded stabilizing score function $\xi$ of class (A2),
\be
\label{eqn:mom-alphadpp}
\sup_{1 \leq n \leq \infty} \sup_{1 \leq p' \leq \lfloor p \rfloor } \sup_{x_1,...,x_{p'} \in W_n} \sE_{x_1,\ldots,x_{p'}} \max\{|\xi(x_1, \P_n )|, 1\}^{p} < \infty.
\ee
\paragraph{\bf Rarified Gibbsian input}  Consider the class ${\bf
		\Psi}$ of Hamiltonians consisting of pair potentials without  negative part, area interaction Hamiltonians,
	hard core Hamiltonians, and  potentials generating a truncated Poisson point process (see \cite{SY} for  further details of such potentials).
	For $\Psi \in {\bf \Psi}$ and $\beta \in (0, \infty)$, let $\P^{\beta \Psi}$ be the Gibbs point process having Radon-Nikodym derivative
	$\exp(- \beta \Psi( \cdot) )$ with respect to a reference homogeneous Poisson point process on $\R^d$ of intensity
	$\tau \in (0, \infty)$.
	There is a range of inverse temperature and activity parameters
	($\beta$ and $\tau$) such that $\P^{\beta \Psi}$ has fast decay of correlations;
	see the introduction to	Section 3 and \cite{SY} for further details. These rarified
	Gibbsian point processes are admissible  point processes having fast decay of correlations and  satisfy the input conditions~\eqref{eqn:sum} and~\eqref{phibd} of
	class (A2). Setting $\xi(.,.) \equiv 1$ in  Lemma 3.4 of \cite{SY} shows that \eqref{eqn:clustering_condition} holds with
	$C_k$ a scalar multiple of $k$ and $c_k$ a constant.
	
	\subsubsection{Additional input examples}
	\label{sec:addex}

\vskip.1cm

Here we provide a non-exhaustive list of examples of admissible point processes having fast decay of correlations.

\vskip.1cm
	
\noindent{\bf Cox point processes}.  A point process $\P$ is said to be a Cox point process with (random) {\em intensity measure} $\Lam(.)$ if conditioned on $\Lam(.)$, $\P$ is a Poisson point process with intensity measure $\Lam(.)$. We shall assume that the measure $\Lam(.)$ has a (random) density $\lam(.)$ with respect to the Lebesgue measure, called the {\em intensity field}, i.e., $\Lam(B) = \int_B \lam(x) \md x$ for all Borel sets $B$. In such a case, the correlation functions of the Cox point process $\P$ are given by $\rho^{(k)}(x_1,\ldots,x_k) = \E(\prod_{i=1}^k\lam(x_i))$ for $x_1,\ldots,x_k$ distinct. Hence, fast decay of correlations of the random field $\{\lam(x)\}_{x \in \mR^d}$  induce fast decay of correlations of the point process $\P$. Also, if $\lam(.)$ is a stationary random field, then $\P$ is a stationary point process. We have already seen one class of Cox point processes having fast decay of correlations in the permanental point process and we shall see below another class in thinned Poisson point processes. Another tractable class of Cox point processes, called the {\em shot-noise Cox point process} and studied in \cite{Moller03}, includes examples of admissible point processes having fast decay of correlations.

\vskip.3cm
	
\noindent{\bf  Finite-range dependent point processes.} Correlation functions of these processes, when assumed locally finite,  (trivially) have fast decay
with the decay function $\phi(s) = 0$ for all $s \in (r_0, \infty)$, for some $r_0 \in (0, \infty)$, where $r_0$ is the range of dependence, and with constants $c_k=1$.	Whether correlation decay constants $C_k$  satisfy  condition~\eqref{eqn:sum} depends
on the local properties of the correlation functions.~\footnote{A point process consisting of sufficiently heavy-tailed random number of points distributed
independently (and say uniformly) in each hard-ball of a hard-core (say Mat\'ern) model   will  not satisfy~\eqref{eqn:sum}.}
Examples of finite range dependent point processes include  perturbed lattices \cite{Blaszczy14}, Mat\'ern cluster point processes or Mat\'ern hard-core point processes with finite dependence radius.

\vskip.3cm
	
\noindent{\bf Thinned Poisson point processes.} Suppose $\P$ is a Poisson point process, $\xi(.,\P) \in \{0,1\}$, and consider the thinned point process $\tilde{\P} := \sum_{x \in \P}\xi(x,\P)\delta_x$. If  $\xi$ stabilizes exponentially fast then Lemma 5.2 of \cite{Baryshnikov05} shows that $\tilde{\P}$
	is an admissible point process having fast decay of correlations. 
This set-up includes finite-range dependent point processes $\P$ as well as  Mat\'ern cluster and Mat\'ern hard-core point process with exponentially decaying dependence radii. Tractable procedures generating  thinnings of Poisson point processes are in \cite{Baccelli12}.
	
\vskip.3cm
	
\noindent{\bf Thinned general point processes.} Suppose $(\xi, \P)$
is an admissible pair of class A2 and suppose further that $\xi(.,\P) \in \{0,1\}$.
Then $\mu_n^{\xi}$ is a thinned point process and 
the correlation functions of $\mu_n^{\xi}$ coincide with $m^{(1,\ldots,1)}(x_1,\ldots,x_k;\infty)$  
in~\eqref{eqn:mixedmomentn}.  Theorem~\ref{prop:clustering_gen} implies
these functions have fast decay of correlations and hence  $\mu_n^{\xi}$ is an admissible point process having fast decay of correlations. For similar examples and generalizations, termed  {\em generalized shot-noise Cox point process}, see \cite{Moller05}.

\vskip.3cm

\noindent{\bf Superpositions of i.i.d. point processes.} Apart from thinning another natural operation on point processes generating new point processes consists of  independent superposition. We show that this operation preserves fast decay of correlations. 

Let  $\P_1,\ldots,\P_m, m \in \N,$ be i.i.d. copies of an admissible point process $\P$ with correlation functions $\rho$ and having fast decay of correlations. Let $\rho_0$ denote the correlation functions of the point process $\P_0 := \cup_{i=1}^m\P_i$. For any $k \geq 1$ and distinct $x_1,\ldots,x_k \in \R^d$ the following relation holds
\be
\label{eqn:corr_sum_pp}
\rho_0^{(k)}(x_1,\ldots,x_k) = \sum_{\sqcup_{i=1}^mS_i = [k]}\prod_{i=1}^m\rho(S_i),
\ee
where $\sqcup$ stands for disjoint union and where we abbreviate $\rho^{(|S_i|)}(x_j : j \in S_i)$ by $\rho(S_i)$.
Here $S_i$ may be empty, in which case we set $\rho(\emptyset) = 1$.
It follows from \eqref{eqn:corr_sum_pp} that $\P_0$ is an admissible point process with intensity $m\rho^{(1)}(\0)$. Further, we can take $\kappa_{k}(\P_0) = (\kappa_k)^mm^k$.
The proof of the next proposition, which shows that $\P_0$ has fast decay of correlations, is in the Appendix.

\begin{proposition}
\label{prop:sum_clustering_pp}
Let $m \in \N$ and $\P_1,\ldots,\P_m$ be i.i.d. copies of an admissible point process $\P$ having fast decay of correlations with decay function $\phi$ and correlation decay constants $C_k$ and $c_k$. Then $\P_0 := \cup_{i=1}^m\P_i$ is an admissible point process having fast decay of correlations with decay function $\phi$ and correlation decay constants $m^k m! (\kappa_k)^{m-1} C_k$ and $c_k$. Further, if $\P$ is admissible input of type (A2) with $\kappa_k \leq \lam^k$ for some $\lam \in (0,\infty)$, then $\P_0$ is also admissible input of type (A2).
\end{proposition}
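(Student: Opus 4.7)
The plan is to exploit identity~\eqref{eqn:corr_sum_pp} via a telescoping argument. First I would observe that any ordered partition $(S_1,\ldots,S_m)$ of $\{1,\ldots,p+q\}$ splits uniquely as $S_i=A_i\cup B_i$ with $A_i:=S_i\cap\{1,\ldots,p\}$ and $B_i:=S_i\cap\{p+1,\ldots,p+q\}$; this bijects such partitions with ordered pairs of partitions $(A_1,\ldots,A_m)$ of $\{1,\ldots,p\}$ and $(B_1,\ldots,B_m)$ of $\{p+1,\ldots,p+q\}$. Applying~\eqref{eqn:corr_sum_pp} to each of $\rho_0^{(p+q)}$, $\rho_0^{(p)}$ and $\rho_0^{(q)}$, and subtracting, with the convention $\rho(\emptyset)=1$, gives
\begin{equation*}
\rho_0^{(p+q)}(x_1,\ldots,x_{p+q})-\rho_0^{(p)}(x_1,\ldots,x_p)\rho_0^{(q)}(x_{p+1},\ldots,x_{p+q})=\sum_{(A,B)}\Bigl[\prod_{i=1}^{m}\rho(A_i\cup B_i)-\prod_{i=1}^{m}\rho(A_i)\rho(B_i)\Bigr].
\end{equation*}

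Next I would apply the telescoping identity $\prod_i a_i-\prod_i b_i=\sum_{j=1}^{m}(\prod_{i<j}a_i)(a_j-b_j)(\prod_{i>j}b_i)$ with $a_i=\rho(A_i\cup B_i)$ and $b_i=\rho(A_i)\rho(B_i)$. If either $A_j$ or $B_j$ is empty then $a_j=b_j$, so only indices $j$ with both parts non-empty contribute. For such $j$, the separation between $\{x_i:i\in A_j\}\subseteq\{x_1,\ldots,x_p\}$ and $\{x_i:i\in B_j\}\subseteq\{x_{p+1},\ldots,x_{p+q}\}$ is bounded below by $s$, so the fast decay of correlations of $\P$, together with the monotonicity of $C_k$ and $c_k$, yields $|\rho(A_j\cup B_j)-\rho(A_j)\rho(B_j)|\le C_{p+q}\phi(c_{p+q}s)$. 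Each remaining factor is controlled in terms of $\kappa_{p+q}$; combining with the $m^{p+q}$ ordered pairs $(A,B)$, the $m$ contributing telescoping indices, and a careful grouping of the $\kappa$-bounds (tracking that $a_i\le\kappa_{|A_i|+|B_i|}$ and $b_i\le\kappa_{|A_i|}\kappa_{|B_i|}$, and reshuffling so that the total power of $\kappa_{p+q}$ is $m-1$ rather than the naive $2(m-1)$) gives the advertised constant $C_k^{(0)}=m^{k}\,m!\,\kappa_k^{m-1}C_k$ with $k=p+q$.

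For the type-(A2) claim, the decay function $\phi$ is inherited so~\eqref{phibd} is preserved, the decay constants $c_k^{(0)}\equiv 1$ are trivial, and it remains to verify~\eqref{eqn:sum} for $C_k^{(0)}$. Under the assumption $\kappa_k\le\lambda^k$, one has $C_k^{(0)}=O((m\lambda^{m-1})^k k^{ak})$; since the geometric factor $(m\lambda^{m-1})^k$ is eventually dominated by $k^{(a'-a)k}$ for any $a'\in(a,1)$, we conclude $C_k^{(0)}=O(k^{a'k})$ with $a'\in[0,1)$, as required.

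The essential observation is that within any mixed telescoping part $A_j\cup B_j$ the separation distance dominates $s$, so that fast decay of $\rho$ actually transfers to $\rho_0$. The delicate step is the combinatorial bookkeeping needed to obtain the stated exponent $m-1$ on $\kappa_k$ rather than the easier $2(m-1)$ exponent from a direct telescoping bound; everything else is a routine application of the monotonicity conventions and the bijection between ordered partitions.
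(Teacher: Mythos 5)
Your argument is correct and is, at heart, the same as the paper's: both expand $\rho_0^{(p+q)}$, $\rho_0^{(p)}$ and $\rho_0^{(q)}$ via \eqref{eqn:corr_sum_pp}, identify partitions of $[p+q]$ with pairs of partitions of $[p]$ and of $\{p+1,\ldots,p+q\}$, and bound the resulting matched differences by the fast decay of $\rho$ together with the monotonicity of $C_k$, $c_k$ and $\kappa_k$. The difference is organizational, and it is in your favor: the paper writes out the case $m=2$ by explicitly enumerating which parts meet $[p]$ or its complement and asserts that the general case ``follows in the same fashion albeit with considerably more notation,'' whereas your bijection $(S_1,\ldots,S_m)\leftrightarrow(A,B)$ combined with the telescoping identity $\prod a_i-\prod b_i=\sum_j(\prod_{i<j}a_i)(a_j-b_j)(\prod_{i>j}b_i)$ handles all $m$ uniformly and makes transparent why only indices $j$ with both $A_j$ and $B_j$ nonempty contribute. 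Your counting also yields $m^k\cdot m\cdot(\ldots)$ rather than $m^k\cdot m!\cdot(\ldots)$, which is harmless since a smaller constant a fortiori satisfies the stated bound.

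The one step you announce but do not execute is the ``reshuffling'' that replaces the naive bound $\prod_{i>j}\rho(A_i)\rho(B_i)\le\kappa_k^{2(m-1)}$ by $\kappa_k^{m-1}$. Under the paper's standing assumption (only that $\kappa_k$ is nondecreasing) this reduction is not automatic; it does hold when $\kappa_a\kappa_b\le\kappa_{a+b}$, which is true in all the examples treated (determinantal, permanental, etc.), and the paper's own $m=2$ computation is no more careful on this point. More importantly, the discrepancy is immaterial: every downstream use of the proposition, including your verification of \eqref{eqn:sum} for type (A2) input under $\kappa_k\le\lambda^k$, tolerates any fixed power of $\kappa_k$ in the constant, since $(m\lambda^{j(m-1)})^k\,m!\,k^{ak}=O(k^{a'k})$ for any $a'\in(a,1)$ and any fixed $j$. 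So either state the submultiplicativity of $\kappa_k$ as a hypothesis for the exact constant, or simply report the constant with $\kappa_k^{2(m-1)}$; both are fine.
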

We have already used this proposition in the context of fast decay of correlations of $\alpha$-permanental and determinantal point processes.

\subsection{\bf Applications}
\label{sec:applications2}

	Having provided examples of admissible point processes, we  now use Theorems \ref{prop:variance_upper_bound} and \ref{thm:main1}
 to deduce 	the limit theory for geometric and topological statistics of these point processes.  Examples include statistics arising in combinatorial and  differential topology,  integral geometry, and  computational geometry.  When the input is one of the Gibbs point processes described in Section \ref{sec:ex_pp}, then these results can be deduced from \cite{SY,XY}.  The examples are not exhaustive and indeed include functionals in stochastic geometry already discussed in e.g. \cite{Baryshnikov05, PY5}.
 There are further applications to (i) random packing models on input having fast decay of correlations (extending \cite{PY2}), (ii) statistics of percolation models
(extending e.g. \cite{LP, PY1}), and (iii) statistics of extreme points of input having fast decay of correlations (extending \cite{BX1,XY}). Details are left to the reader.

  We shall need to assume  moment bounds and variance lower bounds in the upcoming applications in Sections \ref{sec:examples}- \ref{sec:Boolean}.
  In view of \eqref{eqn:mom-gef} and \eqref{eqn:mom-alphadpp},  the moment conditions are valid for input pairs $(\xi, \P)$ of class (A1) when $\P$ is either $\alpha$-determinantal with $-1/\alpha \in \N$ or $\P_{GEF}$. Similarly, in view of \eqref{eqn:mom-alphadpp}, the moment conditions are valid for input pairs $(\xi, \P)$ of class (A2) when $\xi$ has a bounded radius of stabilization and when $\P$ is $\alpha$-determinantal with $-1/\alpha \in \N$.


	
	\subsubsection{Statistics of simplicial complexes}
	\label{sec:examples} 	
	
	A nonempty family $\Del$ of finite subsets of a set $V$ is {\em an abstract simplicial} complex if $\cY \in \Del$ and $\cY_0 \subset \cY$ implies that $\cY_0 \in \Del$. Elements of $\Del$ are called {\em faces/simplices} and the {\em dimension of a face} is one less than its cardinality. The $0$-dimensional faces are {\em vertices}. The collection of all faces of $\Del$ with dimension less than $k$ is a sub-complex called the {\em $k$-skeleton} of $\Del$ and denoted by $\Del^{\leq k}$. The $1$-skeleton of a simplicial complex is a graph whose vertices are $0$-dimensional faces and whose edges are $1$-dimensional faces. The simplicial complex, or `complex' for short, represents a combinatorial generalization of a graph and is a fundamental object in combinatorial as well as computational topology \cite{EH}

	Given a finite point set $\cX$ in $\mR^d$ (or generally, in a metric space) there are various ways to define a complex that captures some of the geometry/topology of $\cX$. One such complex is the \Cech complex.  Recall that if $\cX \subset\mR^d$ is a finite set of points and $r \in (0, \infty)$, then the	\Cech complex of radius $r$ is the abstract complex
$$\C (\cX ,r) := \{ \sigma\subset\cX: \bigcap_{x\in\sigma} B_r(x)\neq\emptyset \}.$$
	By the nerve theorem \cite[Theorem 10.7]{Bjorner95}, the \Cech complex  is homotopy equivalent (in particular, same topological invariants) to the classical germ-grain model
	\begin{equation}
	\label{defn:boolean}
	\cCB(\cX,r) := \bigcup_{x \in \cX}B_{x}(r).
	\end{equation}
	The $1$-skeleton of the \Cech complex, $\C(\cX,r)^{\leq 1},$ $\X$ random, is the  {\em random geometric graph} \cite{Penrose03}, denoted by $G(\cX,r)$. 
	One can study many geometric or topological statistics similar to those described below for the \Cech complex for other geometric complexes (for example, see \cite[Section 3.2]{Zomorodian12}) or geometric graphs. Indeed, motivated by problems in topological data analysis, random geometric complexes on Poisson or binomial  point processes were studied in \cite{Kahle11} and later were extended to stationary point processes in \cite{AY1}.
	
	We next establish the limit theory for statistics of random \Cech complexes. {\em The central limit theorems are applicable whenever the input $\P$ is either $\alpha$-determinantal ($|\alpha| = \frac{1}{m}, m \in \N$) with kernel as at \eqref{fastdk}, $\P_{GEF}$,  or rarified Gibbsian input.} In all that follows we fix $r \in (0, \infty)$.
	
	\vskip.5cm
	
	\noindent{\bf Simplex counts or clique counts}.	\label{ex:simplex_count}
	Let $\Gamma$ be a complex on $k$-vertices such that $\Gamma^{\leq 1}$ is a connected graph.
	For $x \in \mR^d$ and $\x := (x_1,\ldots,x_{k-1}) \in (\mR^d)^{k-1}$, let
	\[ h^{\Gamma}(x,\x) := \1[\C(\{x,x_1,\ldots,x_{k-1}\},r) \cong \Gamma],\]
	where  $\cong$ denotes simplicial isomorphism. For an admissible point process $\P$ as in Definition \ref{def.admissiblePP},
	we put
$$ \gamma^{(k)}(x,\P) := \frac{1}{k!}\sum_{\x \in (\P \cap B_{r}(x) )^{k-1}}h^{\Gamma}(x,\x),$$
that is $(\gamma^{(k)},\P)$ is an admissible pair of type (A1). If $\Gamma$ denotes  the $(k-1)$-simplex, then $H^{\gamma^{(k)}}_n(\P)$ is the number of $(k-1)$-simplices in $\C(\P_n,r)$  and for $k = 2$, $H^{\gamma^{(k)}}_n(\P)$ is the {\em edge count} in the random geometric graph $G(\P_n,r)$. Theorem 3.4  of \cite{AY1}  establishes expectation asymptotics for  $n^{-1} H^{\gamma^{(k)}}_n(\P)$ for stationary input.  The next result
establishes variance asymptotics and asymptotic normality of $n^{-1} H^{\gamma^{(k)}}_n(\P)$. It is an immediate consequence of Theorem \ref{prop:variance_upper_bound}(ii) and  Theorem \ref{thm:main1}.  Let  $\sigma^2(\gamma^{(k)})$ be as at \eqref{eqn:sigdef},  with $\xi$ put to be $\gamma^{(k)}$.

\begin{theorem} \label{thm:main1simplex}  Let $k \in \N$.
If $\P$ is an admissible point process having fast decay of correlations as at \eqref{eqn:clustering_condition}
and the pair $(\gamma^{(k)},\P)$ satisfies the moment condition \eqref{eqn:mom} for all $p \in (1,\infty)$, then
$$ \lim_{n \to \infty} n^{-1} \Var H^{\gamma^{(k)}}_n(\P) = \sigma^2(\gamma^{(k)}).$$
Additionally,  if $\Var H^{\gamma^{(k)}}_n(\P) = \Omega(n^{\nu} )$ for some $\nu \in (0, \infty)$, then as $n \to \infty$
%
$$		 (\Var H^{\gamma^{(k)}}_n(\P))^{-1/2} \left(H^{\gamma^{(k)}}_n(\P) - \E H^{\gamma^{(k)} }_n(\P)\right) \stackrel{\cD}{\to}  N.
$$
\end{theorem}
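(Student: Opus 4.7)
The plan is to show that this theorem is essentially a specialization of Theorem~\ref{prop:variance_upper_bound}(ii) and Theorem~\ref{thm:main1}, so the work reduces to verifying that the pair $(\gamma^{(k)},\P)$ falls in class (A1) and that the remaining hypotheses of those two theorems are present. First I would check admissibility of $(\gamma^{(k)},\P)$ in the sense of Definition~\ref{def.A1}. The kernel $h^{\Gamma}(x,\x)=\1[\C(\{x,x_1,\ldots,x_{k-1}\},r)\cong\Gamma]$ is $\{0,1\}$-valued (hence bounded by $\|h^{\Gamma}\|_\infty\le 1$), translation invariant, and symmetric in all $k$ arguments because simplicial isomorphism does not see the labeling. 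Moreover, the constraint $\x\in(\P\cap B_r(x))^{k-1}$ together with connectedness of $\Gamma^{\le 1}$ forces the interaction range to be finite: if $\max_{2\le i\le k}|x_i-x_1|>R$ with $R:=2r(k-1)$, then at least one of the conditions defining $h^{\Gamma}$ or the restriction to $B_r(x)$ in the definition of $\gamma^{(k)}$ fails, and the integrand vanishes. Thus $(\gamma^{(k)},\P)$ fits Definition~\ref{def.A1} with parameter $R$ in place of $r$.

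Next I would invoke Theorem~\ref{prop:clustering_gen}: since $(\gamma^{(k)},\P)$ is an admissible pair of class (A1) and the $p$-moment condition \eqref{eqn:mom} holds for every $p\in(1,\infty)$, the $\xi$-weighted correlation functions $m^{(k_1,\ldots,k_{p+q})}(\,\cdot\,;n)$ decay fast as in \eqref{e.clustering-generalized-mixed-monents}. In particular the case $p=q=k_1=k_2=1$ gives the second-order fast decay required in Theorem~\ref{prop:variance_upper_bound}(ii).

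For the variance asymptotics I would then apply Theorem~\ref{prop:variance_upper_bound}(ii) with $f=\1_{W_1}$: the bound \eqref{eqn:corr_bounded} on $\rho^{(2)}$ follows from Definition~\ref{def.admissiblePP}; the stabilization condition \eqref{stab} is trivial because $\gamma^{(k)}$ has a deterministic radius of stabilization at most $R$; the moment condition in some $p>2$ is contained in the assumption; and the decay of second-order $\xi$-weighted correlations was just established. Specializing the conclusion \eqref{eqn:var} to $f=\1_{W_1}$ gives $\lim_n n^{-1}\Var H_n^{\gamma^{(k)}}(\P)=\sigma^2(\gamma^{(k)})\cdot\Vol_d(W_1)=\sigma^2(\gamma^{(k)})$. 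Finally, the central limit theorem is the direct conclusion of Theorem~\ref{thm:main1} applied to the admissible class (A1) pair $(\gamma^{(k)},\P)$ with $f=\1_{W_1}$, using the assumed variance lower bound $\Omega(n^\nu)$ to supply \eqref{vlb0}.

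The only nontrivial bookkeeping is the verification that $(\gamma^{(k)},\P)$ is indeed of class (A1): one must reconcile the symmetry requirement on the kernel in Definition~\ref{def.A1} with the fact that the ball $B_r(x)$ in the definition of $\gamma^{(k)}$ singles out the base point. This is handled by enlarging the interaction range from $r$ to $R=2r(k-1)$, as above; once that is observed, every remaining step is a direct quotation of the general theorems, so no further genuinely new work is required.
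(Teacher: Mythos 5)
Your proposal is correct and follows essentially the same route as the paper, which disposes of this theorem in one line as an immediate consequence of Theorem~\ref{prop:variance_upper_bound}(ii) and Theorem~\ref{thm:main1} applied to the class (A1) pair $(\gamma^{(k)},\P)$ with $f=\1_{W_1}$ (the paper having already asserted, where $\gamma^{(k)}$ is defined, that this pair is admissible of type (A1)). Your extra bookkeeping about symmetry and the interaction range is harmless but not really needed: since the sum defining $\gamma^{(k)}$ is already restricted to $(\P\cap B_r(x))^{k-1}$, the effective kernel is symmetric in the last $k-1$ arguments and vanishes as soon as some $|x_i-x|>r$, so Definition~\ref{def.A1} applies directly with range $r$.
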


Up to now, the central limit theorem for clique counts has been restricted to binomial or Poisson input, cf.
\cite{DFR,  EicTha, Lachieze15, Penrose03, Reitzner13}.   Theorem  \ref{thm:main1simplex} shows that asymptotic normality holds for more general input.

\vskip.5cm	

\noindent{\bf Edge lengths.}	
	For $x,y \in \mR^d$, let $h(x,y) :=  |x - y| {\bf 1}[|x-y| \leq r ] .$
	The $U$-statistic
	$$
	\xi^L(x,\X) := \frac{1}{2}\sum_{y \in \X \cap B_{r}(x) }h(x,y),
	$$
	is of  generic type \eqref{eqn:score_U_statistic} and $H^{\xi^L}_n(\P)$ is the {\em total edge length} of the geometric graph $G(\P_n,r)$.  The following is an immediate
	consequence of Theorems \ref{prop:variance_upper_bound} and \ref{thm:main1}.

	\begin{theorem}
		\label{thm:main1edge}
		For any  admissible point process $\P$ having fast decay of correlations as at \eqref{eqn:clustering_condition}
 with the pair $(\xi^L,\P)$ satisfying the moment condition \eqref{eqn:mom} for all $p \in (1,\infty)$, we have
			$$
			| n^{-1} \E H^{\xi^L}_n(\P) - \E_{\0}\xi^L(\0, \P) \rho^{(1)}(\0) | = O(n^{-1/d}), \, \mbox{and} \,
\lim_{n \to \infty} n^{-1} \Var H^{\xi^L}_n(\P) = \sigma^2(\xi^L).
			$$
Moreover, if $\Var H^{\xi^L}_n(\P) = \Omega(n^{\nu} )$ for some $\nu \in (0, \infty)$  then as $n \to \infty$
$$
		(\Var H^{\xi^L}_n(\P))^{-1/2}  \left( \H^{\xi^L}_n(\P) - \E H^{\xi^L}_n(\P) \right) \tod N.
		$$
	\end{theorem}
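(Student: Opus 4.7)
The plan is to recognize that this result is essentially a direct application of Theorems \ref{prop:variance_upper_bound} and \ref{thm:main1} once we identify $(\xi^L,\P)$ as an admissible pair of class (A1) and then take $f=\1_{W_1}$. First, I would verify the class (A1) membership by reading off the kernel $h(x,y) = |x-y|\1[|x-y|\le r]$: it is symmetric, translation-invariant, vanishes when $|x-y|>r$ or $x=y$, and satisfies $\|h\|_\infty \le r < \infty$. Thus $\xi^L$ is of the form \eqref{eqn:score_U_statistic} with $k=2$, making $(\xi^L,\P)$ an admissible pair of class (A1). Moreover, by construction the score $\xi^L(x,\P_n)$ depends only on $\P_n \cap B_r(x)$, so $R^{\xi^L}(x,\P_n)\le r$ deterministically and exponential stabilization \eqref{varphibd} holds trivially.

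Next, I would check the hypotheses of Theorem \ref{prop:variance_upper_bound}(i). We just noted exponential stabilization, and the $p$-moment condition \eqref{eqn:mom} is assumed for all $p\in(1,\infty)$, in particular for some $p\in(1,\infty)$. Taking $f=\1_{W_1}$ and observing that $\mu_n^{\xi^L}(\1_{W_1}) = H_n^{\xi^L}(\P)$ together with $\int_{W_1}\md x = 1$, the conclusion \eqref{expasy} immediately yields the $O(n^{-1/d})$ expectation rate.

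For the variance asymptotics I would invoke Theorem \ref{prop:variance_upper_bound}(ii). The second correlation function $\rho^{(2)}$ of $\P$ exists (admissibility) and is bounded via \eqref{eqn:corr_bounded} as a consequence of fast decay of correlations; stabilization holds deterministically; the moment condition for some $p>2$ is available since it holds for all $p$. The remaining ingredient, fast decay of the second-order correlations of the $\xi^L$-weighted measures, is precisely what Theorem \ref{prop:clustering_gen} delivers given that $(\xi^L,\P)$ is an admissible class (A1) pair satisfying the full moment condition. Applying Theorem \ref{prop:variance_upper_bound}(ii) with $f=\1_{W_1}$ then gives $\lim_n n^{-1}\Var H_n^{\xi^L}(\P) = \sigma^2(\xi^L)\int_{W_1}1\,\md x = \sigma^2(\xi^L)$.

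Finally, for the central limit theorem I would directly invoke Theorem \ref{thm:main1}, whose hypotheses are: $(\xi,\P)$ an admissible pair of class (A1) or (A2) with the $p$-moment condition for all $p\in(1,\infty)$, plus the variance lower bound \eqref{vlb0} on $\mu_n^{\xi}(f)$. All three are present here with $f=\1_{W_1}$ (the lower bound being the standing assumption $\Var H_n^{\xi^L}(\P)=\Omega(n^\nu)$), so \eqref{mainclt0} yields the asserted asymptotic normality. There is no real obstacle in this proof; the only point requiring a moment's attention is matching $H_n^{\xi^L}(\P)$ with $\mu_n^{\xi^L}(\1_{W_1})$, which is immediate from the definitions and the fact that the rescaling $n^{-1/d}W_n = W_1$ keeps $\P_n$ supported in $W_n$.
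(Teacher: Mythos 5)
Your proposal is correct and follows exactly the paper's route: the paper treats this theorem as an immediate consequence of Theorems \ref{prop:variance_upper_bound} and \ref{thm:main1} (together with Theorem \ref{prop:clustering_gen} for the second-order decay), after observing that $\xi^L$ is a local $U$-statistic of the form \eqref{eqn:score_U_statistic} with $k=2$ and bounded kernel, hence $(\xi^L,\P)$ is a class (A1) pair with deterministic stabilization radius $r$. Your identification $H_n^{\xi^L}(\P)=\mu_n^{\xi^L}(\1_{W_1})$ and the verification of the hypotheses are exactly the details the paper leaves implicit.
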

The central limit theory for $H^{\xi^L}_n(\cdot)$ for Poisson or binomial  input is a consequence of \cite{DFR,  EicTha, Lachieze15, Penrose03, Reitzner13}.
Theorem \ref{thm:main1edge} shows that $H^{\xi^L}_n(\cdot)$ still satisfies a central limit theorem when Poisson and binomial input is replaced by more general input having fast decay of correlations.

	\vskip.5cm
	
	\noindent{\bf Degree  counts}. 
	Define the (down) degree of a $k$-simplex to be the number of $k$-simplices with which the given simplex has a common $(k-1)$-simplex.
	For $x \in \mR^d$ and $\x := (x_1,\ldots,x_{k+1}) \in (\mR^d)^{k+1}$, define the indicator that $(x_1,\ldots,x_k)$ is the common $(k-1)$ simplex between two $k$-simplices:
$$ h(x,\x) := {\bf 1}[C(x,x_1,\ldots,x_k) {\rm{ \ is \ a \ k-simplex}}] \ {\bf 1}[C(x_1,\ldots,x_{k+1}) {\rm{\ is \ a \ k-simplex}}].$$
%
	The total (down) degree of order $k$ of a complex is the sum of the degrees of the constituent $k$-simplices. Consider the $U$-statistic
$$
\xi^{(k)}(x,\X) := \frac{1}{(k+2)!}\sum_{\x \in (\X \cap B_{r}(x) )^{k+1}}h(x,\x)
$$
which is of generic type \eqref{eqn:score_U_statistic}. Then $H^{\xi^{(k)}}_n(\P)$ is the total down degree (of order $k$) of the geometric complex $\C(\P_n,r)$.  Note that $(\xi^{(k)}, \P)$ is of type (A1) whenever $\P$ is admissible in the sense of Definition \ref{def.admissiblePP}. Theorems \ref{prop:variance_upper_bound} and \ref{thm:main1} yield the following limit theory for $H^{\xi^{(k)}}_n(\P)$.
	\begin{theorem}	\label{thm:main1degree}  Let $k \in \N$.
	For any  admissible point process $\P$ having fast decay of correlations as at \eqref{eqn:clustering_condition}
with the pair $(\xi^{(k)},\P)$ satisfying the moment condition \eqref{eqn:mom} for all $p \in (1,\infty)$, we have
$$
| n^{-1} \E H^{\xi^{(k)}}_n(\P) - \E_{\0}\xi^{(k)}(\0, \P) \rho^{(1)}(\0)| = O(n^{-1/d}), \, \mbox{and}
\lim_{n \to \infty} n^{-1} \Var H^{\xi^{(k)}}_n(\P) = \sigma^2(\xi^{(k)}).
$$
Moreover if $\Var H^{\xi^{(k)}}_n(\P) = \Omega(n^{\nu} )$ for some $\nu \in (0, \infty)$  then as $n \to \infty$
$$
 ( \Var H^{\xi^{(k)}}_n(\P))^{-1/2} \left( H^{\xi^{(k)}}_n(\P) - \E H^{\xi^{(k)} }_n(\P) \right) \tod N.
$$
\end{theorem}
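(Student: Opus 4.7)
The plan is to recognize $(\xi^{(k)}, \P)$ as an admissible score-input pair of class (A1) in the sense of Definition~\ref{def.A1}, and then deduce all three conclusions as direct consequences of Theorems~\ref{prop:variance_upper_bound} and~\ref{thm:main1} (exactly as was done for the previous two applications). The only substantive bookkeeping lies in rewriting $\xi^{(k)}$ in the canonical $U$-statistic form~\eqref{eqn:score_U_statistic}.

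The kernel $h(x,x_1,\ldots,x_{k+1})$ is defined via isomorphism classes of \Cech complexes on $\{x,x_1,\ldots,x_k\}$ and $\{x_1,\ldots,x_{k+1}\}$, which is a translation-invariant condition, but as written it is not symmetric in the last $k+1$ arguments. Symmetrizing by setting $\bar h(x,x_1,\ldots,x_{k+1}) := \frac{1}{(k+1)!}\sum_{\pi\in S_{k+1}} h(x,x_{\pi(1)},\ldots,x_{\pi(k+1)})$ yields a function that is symmetric, translation-invariant, vanishes on diagonals, bounded by $1$ (each summand is a product of indicators), and is supported on configurations with $\max_i |x_i-x|\le r$ because the ambient sum restricts to $B_r(x)$. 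Since the sum in the definition of $\xi^{(k)}$ ranges over ordered distinct $(k+1)$-tuples in $B_r(x)$, replacing $h$ by $\bar h$ does not change $\xi^{(k)}$, so $\xi^{(k)}$ has exactly the $U$-statistic form~\eqref{eqn:score_U_statistic} of order $k+2$ with interaction range $r$ and $\|\bar h\|_\infty\le 1$. Hence $(\xi^{(k)}, \P)$ lies in class (A1).

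Given the hypothesized $p$-moment bound~\eqref{eqn:mom} for all $p\in(1,\infty)$ and the fast decay of correlations of $\P$ from Definition~\ref{def.admissiblePP}, I would proceed as follows. For the expectation, the deterministic radius of stabilization $r$ trivially yields exponential stabilization~\eqref{varphibd}, so Theorem~\ref{prop:variance_upper_bound}(i) applied with $f\equiv\1_{W_1}$ gives the rate $O(n^{-1/d})$. For the variance, Theorem~\ref{prop:clustering_gen} guarantees fast decay of correlations of the $\xi^{(k)}$-weighted measure, and then Theorem~\ref{prop:variance_upper_bound}(ii) delivers $n^{-1}\Var H^{\xi^{(k)}}_n(\P) \to \sigma^2(\xi^{(k)})$. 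Under the variance lower bound $\Var H^{\xi^{(k)}}_n(\P)=\Omega(n^\nu)$, Theorem~\ref{thm:main1} applied with $f\equiv\1_{W_1}$ produces the central limit theorem.

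The main obstacle, in principle, is checking the moment condition~\eqref{eqn:mom} for the actual point processes of interest; but because $|\xi^{(k)}(x,\X)|$ is dominated by a polynomial of degree $k+1$ in $\P(B_r(x))$, this reduces to Palm-factorial-moment bounds for $\P$, which are already packaged in~\eqref{eqn:mom-exp-U}, \eqref{eqn:mom-gef}, and~\eqref{eqn:mom-alphadpp} for $\alpha$-determinantal input with $-1/\alpha\in\N$, for $\P_{GEF}$, and (as noted in the paragraph opening Section~\ref{sec:applications2}) for rarified Gibbsian input. Beyond this there is nothing novel to verify; the theorem is essentially a transcription of the general results to the specific functional $\xi^{(k)}$.
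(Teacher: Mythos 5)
Your proposal is correct and follows essentially the same route as the paper: recognize $\xi^{(k)}$ as a bounded local $U$-statistic of order $k+2$ so that $(\xi^{(k)},\P)$ is an admissible pair of class (A1), then read off the expectation rate, variance limit, and central limit theorem from Theorems~\ref{prop:variance_upper_bound} and~\ref{thm:main1} (via Theorem~\ref{prop:clustering_gen}) with $f\equiv\1_{W_1}$. Your explicit symmetrization of the kernel $h$ is a harmless bookkeeping refinement that the paper leaves implicit.
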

	
	
\subsubsection{Morse critical points}
	\label{sec:isol_u_stat}
	
	Understanding the topology of a manifold via smooth functions on the manifold is a classical topic
	in differential topology known as Morse theory \cite{Milnor63}. Among the various extensions of Morse theory to non-smooth functions, the one of interest to us is the `min-type' Morse theory developed in
	\cite{Gershkovich97}. This theory was exploited to study the topology
	of random \v{C}ech complexes on Poisson and binomial  point processes \cite{Bobrowski14} and later on stationary point processes \cite{AY1}.
	
	As above, $\X \subset \R^d$ denotes a locally finite point set and  $k \in \N$. Given $\z \in (\mR^d)^{k+1}$ in general position, let $C(\z)$ denote the center of the unique $(k-1)$-dimensional sphere  containing the points
	of $\z$ and let $R(\z)$ be the radius of this unique ball. The point $C(\z)$ is an index $k$ critical point iff
	$$
	C(\z) \in \mathring{\rm{co}}(\z) \, \, \, \, \mbox{and} \, \, \, \,   \X( B_{R(\z)}(C(\z))) = \z,
	$$
	where $\rm{co}(\z)$ is the convex hull of the points comprising $\z$ and $\mathring{A}$ stands for the interior of a Euclidean set $A$. We are interested in critical points $C(\z)$
	distant at most $r$ from $\X$, that is to say $R(\z) \in (0,r]$.
	To this end,  for $(x, \x) \in (\R^{d})^{k + 1}$ define
	$$g_r(x,\x) := \1[C(x,\x) \in \mathring{\rm{co}}(x,\x)]\1[R(x,\x) \leq r] \, ; \, \, Q(x,\x) := B_{R(\x)}(C(\x))\1[R(x,\x) \leq r].$$
	Thus $Q(x,\x) \subset B_{2r}(x)$. Now, for $\x \in (\R^{d})^k$, we set $h(x,\x) := (k+1)^{-1}g_r(x,\x)$, and, in keeping with \eqref{eqn:score_U_statistic},  define	the Morse score function
	\begin{equation}
	\label{eqn:isolated_U_statmorse}
	\xi^M(x,\X):= \frac{1}{k!}\sum_{\x \in (\X \cap B_{2r}(x))^{k}}h(x,\x) \1[\X(Q(x,\x) \setminus \{x,\x\}) = 0].
	\end{equation}
Then $\xi^M$ satisfies the power growth condition ~\eqref{eqn:xibd} and
	is of type (A2) (and nearly of type (A1)), with the understanding that $h(x,x_1,\ldots,x_k) = 0$ whenever $\max_{1 \leq i \leq k} |x_i-x|  \in (2r, \infty)$.	The statistic $H^{\xi^M}(\P)$ is simply the  number $N_k(\P_n,r)$ of index $k$ Morse critical points generated by $\P_n$ which are within a distance $r$ of $\P_n$, whereas $\mu_n^{\xi^M}$ is the random measure generated by index $k$ critical points. Index $0$ Morse critical points are trivially the points of $\X$ and so in this case $N_0(\X) = {\rm{card}}(\X)$. Thus we shall be interested in asymptotics for  Morse critical points only of index $k \in \N$.

The next result establishes variance asymptotics and asymptotic normality of  $N_k(\P_n,r)$, valid when $\P$  is of class (A2). It is an immediate consequence of Theorem \ref{prop:variance_upper_bound}(ii),  Theorem \ref{thm:main1}, and the fact that for input $\P$ of class (A2),  $(\xi^M,\P)$ is an input pair of class (A2).  Let  $\sigma^2(\xi^{M})$ be as at \eqref{eqn:sigdef},  with $\xi$ put to be $\xi^{M}$.
	
	\begin{theorem}
		\label{thm:main1Morse} For all $k \in \{1,\ldots,d\}$ and class (A2) input $\P$ with the pair $(\xi^M,\P)$ satisfying the moment condition \eqref{eqn:mom} for all $p \in (1,\infty)$, we have
		$$
		\lim_{n \to \infty} n^{-1} \Var  N_k(\P_n,r) = \sigma^2(\xi^{M}).
		$$
		Moreover  if $\Var N_k(\P_n,r) = \Omega(n^{\nu} )$ for some $\nu \in (0,\infty)$, then as $n \to \infty$
$$		(\Var N_k(\P_n,r))^{-1/2} \left( N_k(\P_n,r) - \E N_k(\P_n,r) \right) \tod N.
		$$
	\end{theorem}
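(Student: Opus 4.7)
The plan is to recognize that this theorem is not a genuinely new analytic result but a direct application of the abstract machinery already developed: once one verifies that $(\xi^M,\P)$ is an admissible score and input pair of class (A2) in the sense of Definition~\ref{def.A2}, both conclusions follow by quoting Theorem~\ref{prop:variance_upper_bound}(ii) (variance asymptotics) and Theorem~\ref{thm:main1} (central limit theorem) with test function $f \equiv \1_{W_1}$, since $N_k(\P_n,r) = \mu_n^{\xi^M}(\1_{W_1})$. So the work consists of three short verifications followed by a direct appeal.

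First, observe that $\xi^M$ at \eqref{eqn:isolated_U_statmorse} is translation-invariant, because the geometric objects $C(\z)$, $R(\z)$, $\mathring{\rm{co}}(\z)$ and the ball $Q(x,\x)$ are all equivariant under rigid translations. Second, inspect the support of $h$: the condition $R(x,\x) \le r$ together with $C(x,\x) \in \mathring{\rm{co}}(x,\x)$ forces $\max_i |x_i - x| \le 2r$, and furthermore $Q(x,\x) \subset B_{2r}(x)$. Consequently, $\xi^M(x,\X)$ depends only on $\X \cap B_{2r}(x)$, so the radius of stabilization satisfies the deterministic bound $R^{\xi^M}(x,\X) \le 2r$. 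In particular $\xi^M$ is exponentially stabilizing on $\P$ in the sense of \eqref{varphibd}, with $\varphi$ supported on $[0,2r]$.

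Third, check the power growth condition \eqref{eqn:xibd}: if $\X \cap B_s(x)$ has cardinality $n$, the sum in \eqref{eqn:isolated_U_statmorse} has at most $n^k$ nonzero terms, each bounded in absolute value by $1/(k!(k+1))$, so the left side of \eqref{eqn:xibd} is at most $n^k/(k!(k+1))$, which is polynomial in $n$ and hence dominated by $(\hat c \max(s,1))^n$ for a suitably large $\hat c \in [1,\infty)$. Combined with the hypothesis that $\P$ is class (A2) input, this shows $(\xi^M, \P)$ is admissible of class (A2). The $p$-moment condition \eqref{eqn:mom} is assumed for every $p \in (1,\infty)$.

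With all the hypotheses of class (A2) in hand, Theorem~\ref{prop:clustering_gen} yields fast decay of correlations for the $\xi^M$-weighted measures; in particular, the second-order correlations decay fast, so Theorem~\ref{prop:variance_upper_bound}(ii) applied with $f \equiv \1_{W_1}$ gives $\lim_{n\to\infty} n^{-1}\Var N_k(\P_n,r) = \sigma^2(\xi^M)$. Under the additional hypothesis $\Var N_k(\P_n,r) = \Omega(n^\nu)$, the variance lower bound \eqref{vlb0} of Theorem~\ref{thm:main1} is met with the same $f$, and the asymptotic normality \eqref{mainclt0} specialises to exactly the stated CLT for $N_k(\P_n,r)$. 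The only conceivable obstacle is the class (A2) verification, but the deterministic radius of stabilization $2r$ and the trivial polynomial-in-$n$ size of the sum make both the stabilization and growth conditions essentially immediate; the substantive content has been absorbed upstream into Theorems~\ref{prop:clustering_gen}, \ref{prop:variance_upper_bound}, and \ref{thm:main1}.
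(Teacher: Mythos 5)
Your proposal is correct and follows essentially the same route as the paper: the paper likewise notes that $\xi^M$ has stabilization radius at most $\lceil 2r\rceil$ and satisfies the power growth condition \eqref{eqn:xibd}, so that $(\xi^M,\P)$ is a class (A2) pair, and then deduces the theorem as an immediate consequence of Theorem \ref{prop:variance_upper_bound}(ii) and Theorem \ref{thm:main1} (via Theorem \ref{prop:clustering_gen}) with $f\equiv \1_{W_1}$, since $N_k(\P_n,r)=\mu_n^{\xi^M}(\1_{W_1})$. The only cosmetic point is that to meet \eqref{varphibd} literally you should take, say, $\varphi(t)=\min(1,e^{2r-t})$ rather than a $\varphi$ vanishing beyond $2r$; this changes nothing in the argument.
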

	
	\noindent{\bf Remarks.}
	\begin{enumerate}[itemindent=0.8cm,labelwidth=*,label=(\roman*),leftmargin=0cm,topsep=-2ex]
		\item Theorem 5.2 of  \cite{AY1}  establishes expectation asymptotics for  $n^{-1} N_k(\P_n,r)$ for stationary input, though without a rate of convergence and \cite{Bobrowski14} establishes a central limit theorem but only for the case of Poisson or binomial  point processes.
		
		\item The Morse inequalities relate the Morse critical points (local functionals) to the Betti numbers (global functionals) of the Boolean model and in particular, imply that the changes in the homology of the Boolean model $\cCB(\P,r)$ occurs at radii $r = R(\x)$ whenever $C(\x)$ is a Morse critical point. A trivial consequence is that the $k$th Betti number of $\C(\P_n,r)$ is upper bounded by $N_k(\P_n,r)$.
		
		\item Examples of similar score functions satisfying a modified version of (A1) (similar to \eqref{eqn:isolated_U_statmorse}) include
		component counts of random geometric graphs \cite[Chapter 3]{Penrose03},  simplex counts of degree $k$ in the \Cech complex, and simplex counts in
		either an alpha complex \cite[Sec 3.2]{Zomorodian12} or an appropriate discrete Morse complex on $\P_n$ \cite{Forman02}.
	\end{enumerate}
	
	\subsubsection{Statistics of germ-grain  models}
	\label{sec:Boolean}
	
We furnish two more applications of Theorem \ref{thm:main1} when $\xi$ has a deterministic radius of stabilization. The applications concern the germ-grain model, a classic model in stochastic geometry \cite{Schneider08}.
	
	\vskip.3cm
	
	\noindent{\bf $k$-covered region of the germ-grain model.}	The following is a statistic of interest in coverage processes \cite{Hall88}. For locally-finite $\X \subset \R^d$ and  $x \in \X$, define the score function
	
	$$ \beta^{(k)}(x,\X) := \int_{y \in B_r(x)}\frac{\1[\X(B_r(y)) \geq k]}{\X(B_r(y))} \md y.
	$$
	Clearly, $\beta^{(k)}$ is an exponentially stabilizing score function as in Definition 1.1 
	with stabilization radius $2r$. Define the $k$-covered region of the germ-grain model  $\cCB(\P_n,r)$
	 at \eqref{defn:boolean} by $\cCB^k(\P_n,r) = \{y : \P_n(B_r(y)) \geq k\}$.  Thus $H_n^{\beta^{(k)}}(\P)$ is the volume of $\cCB^k(\P_n,r)$. When $k = 1$, $H_n^{\beta^{(k)}}(\P)$ is the volume of the germ-grain model having germs in $\P_n$. Clearly  $\beta^{(k)}$ is bounded by the volume of a radius $r$ ball and so $\xi$ satisfies the power growth condition ~\eqref{eqn:xibd}.
The following is an immediate
consequence of Theorems \ref{prop:variance_upper_bound} and \ref{thm:main1} and the fact that if $\P$ is of class (A2) then the input pair $(\beta^{(k)}, \P)$ is also of class (A2).
\begin{theorem}
\label{thm:main1Boolean}
For all $k \in \N$ and any point process $\P$ of class $(A2)$ with the pair $(\beta^{(k)},\P)$ satisfying the moment condition \eqref{eqn:mom} for all $p \in (1,\infty)$, we have
$$ | n^{-1} \E\Vol_{d} ( \cCB^k(\P_n,r))  - \E_{\0}\beta^{(k)}(\0, \P) \rho^{(1)}(\0) | = O(n^{-1/d}),$$
and
$$ \lim_{n \to \infty} n^{-1} \Var\Vol_{d} ( \cCB^k(\P_n,r)) = \sigma^2(\beta^{(k)}).$$
Moreover, if $\Var\Vol_{d} ( \cCB^k(\P_n,r)) = \Omega(n^{\nu} )$ for some $\nu \in (0, \infty)$,  then as $n \to \infty$
\begin{equation}
\label{maincltgerm-grain}
\frac{\Vol_{d} ( \cCB^k(\P_n,r)) - \E\Vol_{d} ( \cCB^k(\P_n,r)) } { \sqrt{ \Var\Vol_{d} ( \cCB^k(\P_n,r)) } } \tod N.
\end{equation}
\end{theorem}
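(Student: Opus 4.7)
The plan is to reduce the theorem to Theorems \ref{prop:variance_upper_bound} and \ref{thm:main1} specialized to the test function $f \equiv \1_{W_1}$, after verifying that $(\beta^{(k)}, \P)$ is an admissible score and input pair of class (A2). The algebraic backbone of the reduction is the identity $H_n^{\beta^{(k)}}(\P) = \Vol_d(\cCB^k(\P_n,r))$, obtained via Fubini: interchanging $\sum_{x \in \P_n}$ with the integral over $y \in B_r(x)$ gives $\int_{\R^d} \bigl(\sum_{x\in\P_n} \1[y \in B_r(x)]\bigr) \cdot \1[\P_n(B_r(y)) \geq k]/\P_n(B_r(y)) \, \md y$, the inner sum equals $\P_n(B_r(y))$ which cancels the denominator, and what remains is exactly the indicator of the $k$-covered region. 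With $f = \1_{W_1}$ one has $\mu_n^{\beta^{(k)}}(f) = H_n^{\beta^{(k)}}(\P)$ and $\int_{W_1} f = 1$, so the three conclusions of the theorem are simply \eqref{expasy}, \eqref{eqn:var}, and \eqref{mainclt0} for this choice of $f$.

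To place $(\beta^{(k)}, \P)$ in class (A2) whenever $\P$ is, I would verify two elementary properties of $\beta^{(k)}$. First, if $y \in B_r(x)$ then $B_r(y) \subset B_{2r}(x)$, so the integrand in the definition of $\beta^{(k)}(x, \X)$ depends only on $\X \cap B_{2r}(x)$; this gives a deterministic stabilization radius $R^{\beta^{(k)}}(x, \X) \leq 2r$ and trivially the exponential tail \eqref{varphibd}. Second, since the integrand lies in $[0,1]$ and the integration is over $B_r(x)$, the score is uniformly bounded by $\Vol_d(B_r(\0))$, which makes the power growth condition \eqref{eqn:xibd} immediate. Together with the class (A2) hypothesis on $\P$, these two properties place $(\beta^{(k)}, \P)$ in class (A2).

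With class (A2) membership established and the moment bounds \eqref{eqn:mom} assumed for all $p \in (1, \infty)$, Theorem \ref{prop:clustering_gen} supplies the fast decay of second-order correlations of the $\beta^{(k)}$-weighted measures required by Theorem \ref{prop:variance_upper_bound}(ii). Then Theorem \ref{prop:variance_upper_bound}(i) with $f = \1_{W_1}$ delivers the $O(n^{-1/d})$ expectation rate, Theorem \ref{prop:variance_upper_bound}(ii) with the same $f$ yields the variance limit $\sigma^2(\beta^{(k)})$, and Theorem \ref{thm:main1} with the same $f$ under the assumed variance lower bound $\Var\Vol_d(\cCB^k(\P_n,r)) = \Omega(n^\nu)$ yields the central limit theorem \eqref{maincltgerm-grain}. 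No step should present a serious obstacle: the deterministic stabilization radius and uniform boundedness of $\beta^{(k)}$ render the class (A2) verification essentially immediate, and the general limit theorems do the rest.
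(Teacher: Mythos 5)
Your proposal is correct and follows essentially the same route as the paper: verify that $\beta^{(k)}$ has deterministic stabilization radius $2r$ and is bounded by $\Vol_d(B_r(\0))$ (hence satisfies the power growth condition), note the identity $H_n^{\beta^{(k)}}(\P)=\Vol_d(\cCB^k(\P_n,r))$, and then invoke Theorems \ref{prop:variance_upper_bound} and \ref{thm:main1} with $f=\1_{W_1}$. Your explicit Fubini computation justifying the volume identity is a welcome bit of detail that the paper leaves implicit.
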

%
In the case of Poisson input and $k = 1$, \cite{Hall88} establishes a central limit theorem for $\cCB^1(\P_n,r)$.
For general $k$,
the  central limit theorem for Poisson input can be deduced from the general results in \cite{PY1,Baryshnikov05} with  presumably optimal bounds following from \cite[Proposition 1.4]{LPS}.
	
	\vskip.5cm	
	
\noindent{\bf Intrinsic volumes of the germ-grain model.}
Let $\mathcal{K}$ denote the set of {\em convex bodies}, i.e., compact, convex subsets of  $\mR^d$. The {\em intrinsic volumes} $V_0,\ldots,V_d$ are non-negative functionals on $\cK$ which satisfy {\em Steiner's formula}
$$ V_d(K \oplus B_r(\0)) = \sum_{j=0}^dr^{d-j}\theta_{d-j}V_j(K), \ K \in \mathcal{K}, $$
where $r > 0$, $\oplus$ denotes  Minkowski addition, $V_j$ denotes the $j$-dimensional Lebesgue measure, and $\theta_j := \pi^{j/2}/\Gamma(j/2 + 1)$ is the volume of the unit ball in $\mR^j$. Intrinsic volumes satisfy translation invariance and additivity, i.e., for $K_1,\ldots,K_m \in \cK$,
$$
V_j(\cup_{i=1}^m K_i)  =  \sum_{k=1}^m (-1)^{k+1}\sum_{1 \leq i_1 < \ldots < i_k \leq m}V_j(\cap_{l=1}^kK_{i_l}), \, \, \, \, j \in \{0,\ldots,d\}.
$$
This identity allows an extension of intrinsic volumes to real-valued functionals on 
the family of finite unions of convex bodies \cite[Ch. 14]{Schneider08}. 
Intrinsic volumes coincide  with {\em quer-mass integrals} or {\em Minkowski functional} up to a normalization.
The $V_j$'s define certain $j$-dimensional volumes of $K$, independently of the ambient space. $V_d$ is the $d$-dimensional volume, $2V_{d-1}$ is the surface measure, and $V_0$ is the {\em Euler-Poincar\'{e} characteristic} which may also be expressed as an alternating sum of simplex counts, Morse critical points, or Betti numbers \cite[Sections IV.2, VI.2]{EH}. Save for $V_d$ and $V_{d-1}$, the remaining $V_j$'s may assume negative values on unions of convex bodies.
	
For finite $\X$ and $r > 0$, we express $V_j(\cCB(\X,r)), j \in \{0,\ldots,d\}$, as a sum of
bounded stabilizing scores, which goes as follows. 
For $x_1 \in \X$, define the score
$$ \xi_j(x_1,\X) :=  \sum_{k=1}^{\infty}(-1)^{k+1}\sum_{(x_2,\ldots,x_k) \in (\X \cap B_{2r}(x_1))^{(k)}}\frac{V_j(B_r(x_1) \cap \ldots \cap B_r(x_k))}{k!} .$$
The score $\xi_j$ is translation invariant with radius of stabilization $R^{\xi_j}(x_1, \P) \leq \lceil 3r \rceil$. By additivity, we have  $V_j(\cCB(\P_n,r)) = H^{\xi_j}_n(\P)$ for $j \in \{0,\ldots,d\}$. The homogeneity relation $V_j(B_r(\0)) = r^jV_j(B_1(\0)) = r^j{d \choose j}\frac{\theta_d}{\theta_{d-j}}$ and the monotonicity of $V_j$ on $\cK$ yield
$$ |\xi_j(x_1,\X)|\1[\X(B_{2r}(x_1)) = l] \leq r^j {d \choose j}\frac{\theta_d}{\theta_{d-j}}\sum_{k=1}^l{l \choose k} \leq 2^l r^j{d \choose j}\frac{\theta_d}{\theta_{d-j}}.$$
In other words, $\xi_j, 0 \leq j \leq d,$ satisfy the power growth condition \eqref{eqn:xibd} and are exponentially stabilizing. Theorems \ref{prop:variance_upper_bound} and \ref{thm:main1} yield the following limit theorems, where we note that $(\xi_j, \P)$ is of class (A2) whenever $\P$ is of class (A2).
	
\begin{theorem}
\label{thm:intvolume} Fix $r \in (0, \infty)$ as above. For all  $j \in \{0,\ldots,d\}$ and any point process $\P$ of class $(A2)$ with the pair $(\xi_j,\P)$ satisfying the $p$-moment condition \eqref{eqn:mom} for all $p \in (1,\infty)$, we have
$$ | n^{-1} \E V_j(\cCB(\P_n,r))  - \E_{\0}\xi_j(\0, \P)\rho^{(1)}(\0) | = O(n^{-1/d}), \, \mbox{and} \,
\lim_{n \to \infty} n^{-1} \Var V_j(\cCB(\P_n,r)) = \sigma^2(\xi_j).
$$
Moreover, if $\Var V_j(\cCB(\P_n,r)) = \Omega(n^{\nu} )$ for some $\nu \in (0, \infty)$  then as $n \to \infty$
%
$$
\frac{ V_j(\cCB(\P_n,r)) - \E V_j(\cCB(\P_n,r)) } { \sqrt{ \Var V_j(\cCB(\P_n,r)) } } \tod N.
$$
\end{theorem}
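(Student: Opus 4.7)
The plan is to verify that $(\xi_j,\P)$ is an admissible pair of class (A2) and then apply Theorems \ref{prop:variance_upper_bound} and \ref{thm:main1} with test function $f \equiv 1$ on $W_1$. Most of the bookkeeping has already been done in the paragraphs preceding the theorem; what remains is to (a) justify the identity $V_j(\cCB(\P_n,r)) = H_n^{\xi_j}(\P)$, (b) confirm the stabilization and growth hypotheses of class (A2), and (c) collect the statements.

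First I would establish the representation of $V_j(\cCB(\P_n,r))$ as the total mass $H_n^{\xi_j}(\P)$. Since $V_j$ is additive on the lattice of finite unions of convex bodies (\cite[Ch.~14]{Schneider08}), the inclusion–exclusion identity stated just before the theorem gives
$$V_j(\cCB(\P_n,r)) = \sum_{k=1}^{\infty}(-1)^{k+1}\sum_{\{x_1,\dots,x_k\}\subset\P_n}V_j\bigl(B_r(x_1)\cap\dots\cap B_r(x_k)\bigr),$$
where the sum is finite because only finitely many balls intersect any given $B_r(x_1)$. Writing each unordered $k$-subset as an ordered $k$-tuple divided by $k!$ and peeling off the first coordinate, the right-hand side equals $\sum_{x_1\in\P_n}\xi_j(x_1,\P_n)$; the implicit restriction of the inner sum to $\P_n\cap B_{2r}(x_1)$ is harmless because $V_j(B_r(x_1)\cap B_r(x_i))=0$ whenever $|x_i-x_1|>2r$. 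Translation invariance of $\xi_j$ is inherited from $V_j$, the radius of stabilization satisfies $R^{\xi_j}(x_1,\P)\le\lceil 3r\rceil$ deterministically (whence exponential stabilization \eqref{varphibd} is automatic), and the power growth bound \eqref{eqn:xibd} was checked in the display preceding the theorem via $|\xi_j(x_1,\X)|\1[\X(B_{2r}(x_1))=l]\le 2^l r^j\binom{d}{j}\theta_d/\theta_{d-j}$. Consequently, whenever $\P$ is of class (A2), so is $(\xi_j,\P)$.

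With these verifications in hand, the three conclusions follow immediately. Choosing $f\equiv 1$ in Theorem~\ref{prop:variance_upper_bound}(i) (whose $p$-moment hypothesis holds by assumption and whose exponential stabilization hypothesis holds by the deterministic bound on $R^{\xi_j}$) yields the expectation asymptotics with rate $O(n^{-1/d})$, since $\int_{W_1}f=1$. For variance asymptotics, Theorem~\ref{prop:clustering_gen} applied to the class (A2) pair $(\xi_j,\P)$ supplies fast decay of the second-order correlations of the $\xi_j$-weighted measures, so Theorem~\ref{prop:variance_upper_bound}(ii) with $f\equiv 1$ gives $n^{-1}\Var V_j(\cCB(\P_n,r))\to\sigma^2(\xi_j)\int_{W_1}1\,\md x=\sigma^2(\xi_j)$. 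Finally, under the variance lower bound $\Var V_j(\cCB(\P_n,r))=\Omega(n^\nu)$, Theorem~\ref{thm:main1} applied to $f=\1[\cdot\in W_1]$ delivers the asymptotic normality \eqref{mainclt0} of $\mu_n^{\xi_j}(\1_{W_1})=H_n^{\xi_j}(\P)=V_j(\cCB(\P_n,r))$. There is no real obstacle; the only step that requires a little care is the inclusion–exclusion identification of $V_j(\cCB(\P_n,r))$ with $H_n^{\xi_j}(\P)$, which uses the additivity extension of intrinsic volumes from $\cK$ to finite unions of convex bodies together with the observation that distant balls contribute nothing.
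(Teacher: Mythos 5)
Your proposal is correct and follows essentially the same route as the paper: the paper's proof consists precisely of the inclusion–exclusion/additivity identification $V_j(\cCB(\P_n,r))=H_n^{\xi_j}(\P)$, the observation that $\xi_j$ is translation invariant with deterministic stabilization radius $\lceil 3r\rceil$ and satisfies the power growth bound (so $(\xi_j,\P)$ is of class (A2)), followed by an appeal to Theorems \ref{prop:variance_upper_bound} and \ref{thm:main1}. Your write-up merely makes explicit the combinatorial bookkeeping behind the identity, which the paper leaves implicit.
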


\noindent{\bf Remarks.}
\begin{enumerate}[itemindent=0.8cm,labelwidth=*,label=(\roman*),leftmargin=0cm,topsep=-2ex]
\item Theorem \ref{thm:intvolume} extends the analogous central limit theorems of \cite{Hug15}, which are confined to Poisson input, to any point process of class (A2).
		
\item We may likewise prove central limit theorems for other functionals of germ-grain models, including mixed volumes, integrals of surface
area measures \cite[Chapters 4 and 5]{Schneider93}, and total measures of translative integral geometry \cite[Section 6.4]{Schneider08}.  These functionals, like intrinsic volumes, are expressed as sums of bounded stabilizing scores and thus, under suitable assumptions, the limit theory for these functionals follows from Theorems \ref{prop:variance_upper_bound} and \ref{thm:main1}.
		
\end{enumerate}		

\subsubsection{Edge-lengths in $k$-nearest neighbors graph}
\label{sec:knnd}
	
We now use the full force of Theorems \ref{prop:variance_upper_bound} and \ref{thm:main1}, applying them to sums of score functions whose radius of stabilization has an exponentially decaying tail.
	
Statistics of the Voronoi tessellation as well as of graphs in computational geometry such as the $k$-nearest neighbors graph and sphere of influence graph  may be expressed as sums of exponentially stabilizing score functionals \cite{PY1} and hence via Theorems \ref{prop:variance_upper_bound} and \ref{thm:main1}, we may  deduce the limit theory for these statistics. To illustrate, we establish a weak law of large numbers, variance asymptotics, and a central limit theorem for the total edge-length of the $k$-nearest neighbors graph on a $\alpha$-determinantal point process $\P_{\alpha}$ with $-1/\alpha \in \N$ and a fast-decreasing kernel as in \eqref{fastdk}. As noted in Proposition \ref{prop:clustering_alpha_det} , such an $\alpha$-determinantal point process is of class (A2) as in Definition \ref{def.A2}.
	
As shown in Corollary \ref{cor:palm-void-aDPP}, we may explicitly upper bound void probabilities for $\P$, allowing us to deduce exponential stabilization for score functions on $\P$. This is a recurring phenomena, and it is often the case that to show exponential stabilization of statistics, it suffices to control the Palm probability content of large Euclidean balls. This opens the way towards showing that other relevant statistics of random graphs exhibit exponential stabilization on $\P$.  This includes intrinsic volumes of faces of Voronoi tessellations \cite[Section 10.2]{Schneider08}, edge-lengths in a radial spanning tree  \cite[Lemma 3.2]{Schulte16}, proximity graphs including the Gabriel graph, and global Morse critical points i.e., critical points as defined in Section \ref{sec:isol_u_stat} but without the restriction ${\bf 1}[R(x,\x) \leq r]$.
	
	Given locally finite $\X \subset \R^d$ and $k \in \N$, the (undirected) $k$-nearest neighbors graph $NG(\X)$ is the graph with vertex set $\X$ obtained by including  an edge $\{x,y\}$  if $y$ is one of the $k$ nearest neighbors of $x$ and/or $x$ is one of the $k$ nearest neighbors of $y$. In the case of a tie
	we may break the tie via some pre-defined total order (say lexicographic order) on $\mR^d$. For any finite $\X \subset \R^d$ and $x \in \X$, we let
	${\cal E}(x)$ be the edges $e$ in  $NG(\X)$ which are incident to $x$.  Defining
$$ \xi_L(x, \X) :=  {1 \over 2}  \sum_{e \in {\cal E}(x)} |e|, $$
we write the total edge length of $NG(\X)$ as  $L(NG(\X)) = \sum_{x \in \X} \xi_L(x, \X).$  Let  $\sigma^2(\xi_L)$ be as at \eqref{eqn:sigdef}, with $\xi$ put to be $\xi_L$.
\begin{theorem}
\label{thm:knng}
Let $\P := \P_{\alpha}$ be a stationary $\alpha$-determinantal point process on $\mR^d$ with $-1/\alpha \in \N$ and a fast-decreasing kernel $K$ as at \eqref{fastdk}.
We have
$$
| n^{-1} \E L(NG(\P_n))  - \E_{\0}\xi_L(\0, \P) K(\0, \0)| = O(n^{-1/d}), \, \mbox{and} \,
\lim_{n \to \infty} n^{-1} \Var L(NG(\P_n)) = \sigma^2(\xi_L).
$$
If $\Var L(NG(\P_n)) = \Omega(n^{\nu} )$ for some $\nu \in (0, \infty)$  then as $n \to \infty$
$$
\frac{ L(NG(\P_n)) - \E L(NG(\P_n)) } { \sqrt{ \Var L(NG(\P_n)) } } \tod N.
$$
\end{theorem}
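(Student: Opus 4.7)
The plan is to deduce Theorem \ref{thm:knng} as a direct application of Theorems \ref{prop:variance_upper_bound} and \ref{thm:main1}, once I verify that $(\xi_L, \P_\alpha)$ is an admissible pair of class (A2) in the sense of Definition \ref{def.A2} and that the $p$-moment condition \eqref{eqn:mom} holds for all $p \in (1, \infty)$. That $\P_\alpha$ itself is admissible input of class (A2) is given by Proposition \ref{prop:clustering_alpha_det} (with the fast-decreasing kernel implicitly satisfying the exponential decay hypothesis \eqref{phibd}, as signalled in the remarks preceding the theorem). So the task reduces to three properties of the score $\xi_L$: the power-growth bound \eqref{eqn:xibd}, exponential stabilization \eqref{varphibd} on $\P_\alpha$, and the Palm moment bound \eqref{eqn:mom}.

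The power-growth bound is essentially immediate. A standard cone-covering argument shows that the number of edges of $NG(\X)$ incident to any vertex is at most a constant $c(d, k)$ depending only on $d$ and $k$. Since each such edge is of length at most $2r$ when restricted to $\X \cap B_r(x)$, I obtain $|\xi_L(x, \X \cap B_r(x))| \leq c(d,k)\max(r,1)$, which fits \eqref{eqn:xibd} for any $\hat c \geq \max(c(d,k), 1)$.

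The main obstacle is showing exponential stabilization of $\xi_L$ on $\P_\alpha$ under all Palm distributions. Following the classical construction for $k$-nearest-neighbors functionals, I would define $R^*(x, \X)$ to be the smallest integer $r \geq 1$ such that each of $2^d$ pre-selected cones with apex $x$ contains at least $6k$ points of $\X \cap B_r(x)$. A geometric argument in the spirit of Penrose--Yukich yields $R^{\xi_L}(x, \X) \leq 3 R^*(x, \X)$: any point added outside $B_{3R^*(x,\X)}(x)$ can be neither a $k$-th nearest neighbor of $x$ nor of any existing $k$-nearest neighbor of $x$, so $\xi_L(x, \cdot)$ is unchanged. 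To control the tail of $R^*$ under the Palm measures $\mP_{x_1, \ldots, x_l}$, I would invoke Corollary \ref{cor:palm-void-aDPP} as announced by the authors: since $\P_\alpha$ is a superposition of $-1/\alpha$ i.i.d.\ copies of a determinantal point process with kernel $-\alpha K$, one obtains exponential-in-volume upper bounds on $\mP_{x_1, \ldots, x_l}(\P_\alpha(B) < 6k)$ for balls $B$ of sufficiently large volume. A union bound over the $2^d$ cones then delivers
\[
\sup_l \sup_{x_1, \ldots, x_l \in W_n} \mP_{x_1, \ldots, x_l}\bigl(R^{\xi_L}(x_1, \P_n) > t\bigr) \leq C_1\exp(-C_2 t^d),
\]
uniformly in $n$, which is \eqref{varphibd}.

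For the moment condition, the pointwise bound $|\xi_L(x_1, \P_n)| \leq c(d,k) R^*(x_1, \P_n)$ combined with the above exponential Palm tail on $R^*$ gives $\sE_{x_1, \ldots, x_{p'}}|\xi_L(x_1, \P_n)|^p < \infty$ uniformly in $n \leq \infty$ and $p' \leq \lfloor p \rfloor$ for every $p \in (1, \infty)$, verifying \eqref{eqn:mom}. With all hypotheses in place, Theorem \ref{prop:variance_upper_bound}(i) yields the expectation asymptotics with convergence rate $O(n^{-1/d})$; Theorem \ref{prop:variance_upper_bound}(ii), together with the fast decay of correlations of the $\xi_L$-weighted measures supplied by Theorem \ref{prop:clustering_gen}, yields the variance asymptotics; and Theorem \ref{thm:main1} yields the central limit theorem under the assumed variance lower bound $\Var L(NG(\P_n)) = \Omega(n^\nu)$.
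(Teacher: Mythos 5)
Your proposal is correct and follows essentially the same route as the paper: verify that $(\xi_L,\P_\alpha)$ is an admissible pair of class (A2) by checking the power-growth bound via a degree bound, exponential stabilization via a cone/triangle construction whose tail is controlled by the Palm void-probability bound of Corollary \ref{cor:palm-void-aDPP}, and the $p$-moment condition from the degree bound times the stabilization radius, and then invoke Theorems \ref{prop:variance_upper_bound}, \ref{prop:clustering_gen} and \ref{thm:main1}. The only slip is cosmetic: the cones must have aperture strictly below $\pi/3$ for the nearest-neighbor comparison lemma, so their number is a dimension-dependent constant (six in $d=2$, as in the paper's six triangles) rather than $2^d$, but this affects only constants and not the argument.
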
  		
	\noindent{\bf Remark}. Theorem \ref{thm:knng} extends Theorem 6.4 of \cite{PeEJP} which is confined to Poisson input.  In this context, the work  \cite{LPS} provides  a rate of
	normal approximation.
	\vskip.3cm
	\noindent{\em  Proof.}  We want to show that $(\xi_L, \P)$ is an admissible score and input pair of type (A2) and then apply
	Theorem \ref{thm:main1}. Note that $\P$ is an admissible point process which has fast decay of correlations satisfying  \eqref{eqn:sum}
	and \eqref{phibd}.  Thus we only need to show that $\xi_L$ is exponentially stabilizing, that $\xi_L$ satisfies
	the power growth condition \eqref{eqn:xibd}, and the $p$-moment condition \eqref{eqn:mom}.
	When $d = 2$, we show exponential stabilization of $\xi_L$ by closely following the proof of Lemma 6.1  of \cite{PY4}.  This goes as follows.
	For each $t > 0$, construct six disjoint equal triangles $T_j(t), 1 \leq j \leq 6$, such that $x$ is a vertex of each triangle and each edge
	has length $t$.  Let the random variable $R$ be the minimum $t$ such that $\P_n(T_j) \geq k + 1$ for all $1 \leq j \leq 6$.  Notice that
	$R \in [r, \infty)$ implies that there is a ball inscribed in some $T_j(t)$ with center $c_j$ of radius $\gamma r$ which does not contain
	$k + 1$ points. Combining Corollary \ref{cor:palm-void-aDPP} in the Appendix and the fact that $\P$ has kernel $K$, the probability of this event satisfies
	$$\mP_{x_1,\ldots,x_p}[R > r] \leq 6 \mP_{x_1,\ldots,x_p}[\P(B_{\gamma r}(c_1)) \leq k-1] \leq 6 \mP^!_{x_1,\ldots,x_p}[\P(B_{\gamma r}(c_1)) \leq k-1]  $$ $$
	\leq 6e^{m(2k+p-2)/8}e^{- K(\0,\0)  \pi \gamma^2 r^2/8},$$
	that is to say that $R$ has exponentially decaying tails.  As in Lemma 6.1 of \cite{PY4}, we find that $R^\xi(x, \P_n):=4 R$ is a radius of
	stabilization for $\xi_L$, showing that \eqref{varphibd}  holds with $c = 2$.
	For $d > 2$, we may extend these geometric arguments (cf. the proof of Theorem 6.4 of \cite{PeEJP})  to define a random variable $R$ serving as a radius of
	of stabilization.  Mimicking the above arguments we may likewise show that $R$ has exponentially decaying tails.

	For all $r \in (0,  \infty)$ and $l \in \N$  we notice that \eqref{eqn:xibd} holds because
		$$ |\xi_L(x,\X \cap B_r(x) )|\1[\X(B_r(x)) = l] \leq r \cdot \min(l, 6) \leq (cr)^l.$$
		  Since vertices in the $k$-nearest neighbors graph have degree bounded by
		$kC(d)$ as in Lemma 8.4 of \cite{Yubook}, and since each edge incident to $x$ has length at most $4R$, it follows
		that $|\xi_L(x, \P_n)| \leq k \cdot C(d) \cdot 4R.$ Since $R$ has moments of all orders,
		$(\xi_L, \P)$ satisfies the $p$-moment condition \eqref{eqn:mom} for all $p \geq 1.$
		Thus $\xi_L$ satisfies all conditions of Theorem \ref{thm:main1} and
		we deduce Theorem \ref{thm:knng} as desired.  \qed

  \section{\bf Proof of the fast decay  ~\eqref{e.clustering-generalized-mixed-monents} for correlations of the $\xi$-weighted measures}
	\label{sec:proofs_clustering}
	
We show the  decay bound ~\eqref{e.clustering-generalized-mixed-monents} via a factorial moment expansion for the expectation of functionals of
point processes. 	Notice	that \eqref{e.clustering-generalized-mixed-monents}	holds for any exponentially stabilizing score function  $\xi$ satisfying the $p$-moment condition \eqref{eqn:mom}	for all $p \in [1, \infty)$ on a Poisson point process~$\P$.  Indeed if  $x, y \in \R^d$ and $r_1, r_2>0$ satisfy $r_1+ r_2<|x-y|$ then 	$\xi(x, \P)\1[R^\xi(x, \P)\le r_1]$ and $\xi(y, \P)\1[R^\xi(y, \P)\le r_2]$
are independent random variables.  This yields the fast decay \eqref{e.clustering-generalized-mixed-monents}	with $k_1=\ldots = k_{p+q}=1$ and	$\tilde{C}_{n} \leq c_1^{n}$  with  $c_1$ a constant,  as in \cite[Lemma 5.2]{Baryshnikov05}.	On the other hand, if $\P$ is rarified Gibbsian input and $\xi$ is exponentially stabilizing, then  \cite[Lemma 3.4]{SY} shows the fast decay bound \eqref{e.clustering-generalized-mixed-monents}
with $k_1=\ldots = k_{p+q}=1$. These methods depend on quantifying the region of spatial dependencies of Gibbsian points via exponentially decaying diameters of their ancestor clans.  Such methods apparently neither extend to determinantal input nor to the zero set $\P_{GEF}$ of a Gaussian entire function.  On the other hand, for $\P_{GEF}$  and for $\xi \equiv 1$,
the paper \cite{Nazarov12} uses the Kac-Rice-Hammersley formula and complex analysis tools to show \eqref{e.clustering-generalized-mixed-monents} with $k_1=\ldots = k_{p+q}=1$.
 All three proofs are specific to either the underlying point process or to the score function $\xi$. The following more general and considerably different approach includes these results as special cases.
	
\subsection{\bf Difference operators and factorial moment expansions}
\label{ss.FME}	
	We  introduce some notation and collect auxiliary results required for an application of the much-needed factorial moment expansions \cite{Blaszczyszyn95,Bartek97} for general point processes. Equip $\mR^d$ with a total order $\prec$ defined using the lexicographical ordering of the polar	co-ordinates. For $\mu \in \cN$ and $x \in \mR^d$, define the measure
	$\mu_{|x}(.) := \mu(. \cap \{y : y \prec x\})$. Note that since $\mu$
	is a locally finite measure and the ordering is defined via polar
	co-ordinates $\mu_{|x}$ is a finite measure for all $x \in
	\mR^d$. Let $o$ denote the null-measure i.e., $o(B) = 0$ for all
	Borel subsets $B$ of $\mR^d$.
	For a measurable function $\psi : \cN \to
	\mR$, $l \in \N \cup \{0\}$,
	and $x_1,...,x_l \in \R^d$, we define the factorial moment expansion
	(FME) kernels \cite{Blaszczyszyn95,Bartek97} as follows.
For $l \geq 1$,
	\begin{equation}
	\label{eqn:FME_Kernels1}
	D^{l}_{x_1,\ldots,x_l}\psi(\mu) = \sum_{i=0}^l(-1)^{l-i}\sum_{J \subset \binom{[l]}{i}}\psi(\mu_{|x_*}
	+ \sum_{j \in J}\delta_{x_j}) = \sum_{J \subset [l]}(-1)^{l-|J|}\psi(\mu_{|x_*} + \sum_{j \in J}\delta_{x_j}),
	\end{equation}
	where $\binom{[l]}{j}$ denotes the collection of all subsets of $[l]: =
	\{1,\ldots,l\}$ with cardinality $j$ and  $x_* := \min
	\{x_1,\ldots,x_l\}$, with  the minimum  taken with respect to
	the order $\prec$. 	For $l = 0$, put $D^{0}\psi(\mu) := \psi(o)$. Note that
	$D^{(l)}_{x_1,\ldots,x_l}\psi(\mu)$ is a symmetric function of
	$x_1,\ldots,x_l$.\footnote{For $x_l \prec x_{l-1} \prec \ldots \prec x_1$ the functional $D^{l}_{x_1,\ldots,x_l}\psi(\mu)$
		is equal to the iterated difference operator:
		$D^1_{x_1} \psi(\mu)= \psi(\mu_{|x_1}+ \delta_{x_1})- \psi(\mu_{|x_1})$,  $D^{l}_{x_1,\ldots,x_l}\psi(\mu)=
		D^1_{x_l}(D^{l-1}_{x_1,\ldots,x_{l-1}}  \psi(\mu) )$.}
	
We say that $\psi$ is $\prec$-continuous at $\infty$ if for all $\mu \in \cN$ we have
	$$
	\lim_{x \uparrow \infty }  \psi(\mu_{|x} ) = \psi(\mu).
	$$
	We first recall the FME expansion proved
	in~\cite[cf. Theorem 3.2]{Blaszczyszyn95} for dimension one and then extended to
	higher-dimensions  ~\cite[cf. Theorem 3.1]{Bartek97}. Recall that
	$\sE^{!}_{y_1,\ldots,y_l}$ denote expectations with respect to reduced Palm probabilities.
\begin{theorem}
\label{FMEthm}
Let $\P$ be a simple point process and let $\psi : \cN \to \mR$ be $\prec$-continuous at $\infty$. Assume that for all $l \geq 1$
\be \label{hyp1}
		\int_{\mR^{dl}}\E^!_{y_1,\ldots,y_l}[| D^{l}_{y_1,\ldots,y_l}\psi(\P)|]\rho^{(l)}(y_1,\ldots,y_l)\,\md y_1\ldots \md y_l < \infty
\ee
and
\be \label{hyp2}
		\frac{1}{l!}\int_{\mR^{dl}}\E^!_{y_1,\ldots,y_l}[D^{l}_{y_1,\ldots,y_l}\psi(\P)]\rho^{(l)}(y_1,\ldots,y_l)\,\md y_1\ldots \md y_l \to 0 \ {\rm{as}} \ l \to \infty.
\ee
Then $\E[\psi(\P)]$ has the following {\em factorial moment expansion }
\be
\label{FME}
		\E[\psi(\P)] = \psi(o) +  \sum_{l=1}^{\infty}\frac{1}{l!}\int_{ \mR^{dl} }D^{l}_{y_1,\ldots,y_l}\psi(o)\rho^{(l)}(y_1,\ldots,y_l)\,\md y_1\ldots \md y_l.
\ee
\end{theorem}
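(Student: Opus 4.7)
The plan is to derive \eqref{FME} by iterated telescoping along the order $\prec$, interleaved with applications of the refined Campbell theorem \eqref{disint}. First I would enumerate the atoms of $\P$ in increasing $\prec$-order as $X_1 \prec X_2 \prec \cdots$ (possible because $\P$ is locally finite and $\prec$ orders points primarily by distance to the origin), so that $\P_{|X_1}=o$ and $\P_{|X_{i+1}} = \P_{|X_i} + \delta_{X_i}$ for each $i \geq 1$. Telescoping $\sum_{i=1}^{N}[\psi(\P_{|X_{i+1}})-\psi(\P_{|X_i})]$ and letting $N\to\infty$, the $\prec$-continuity at infinity produces the almost-sure identity
\[
\psi(\P) \;=\; \psi(o) \;+\; \sum_{X \in \P} D^1_X\psi(\P),
\]
in which $D^1_X\psi(\mu)$ depends on $\mu$ only through $\mu_{|X}$. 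Taking expectations and applying \eqref{disint} with $k=1$---justified by \eqref{hyp1} at $l=1$---gives
\[
\E[\psi(\P)] \;=\; \psi(o) \;+\; \int_{\R^d}\E^!_y\bigl[D^1_y\psi(\P)\bigr]\,\rho^{(1)}(y)\,\md y.
\]

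Next I would iterate. A direct expansion of \eqref{eqn:FME_Kernels1}, together with $\min\{y,Z\}=Z$ whenever $Z \prec y$, yields the combinatorial identity
\[
D^1_y\psi(\mu_{|Z}+\delta_Z)-D^1_y\psi(\mu_{|Z}) \;=\; D^2_{y,Z}\psi(\mu) \qquad(Z \prec y),
\]
and since $\P_{|y}$ is a.s.\ a \emph{finite} measure (as $\{z : z \prec y\}\subset\overline{B_{|y|}(\0)}$), telescoping this identity over the atoms of $\P_{|y}$ produces
\[
D^1_y\psi(\P) \;=\; D^1_y\psi(o) \;+\; \sum_{Z \in \P,\,Z \prec y} D^2_{y,Z}\psi(\P).
\]
Substituting back, noting that $D^1_y\psi(o)$ is deterministic, and applying \eqref{disint} with $k=2$ after symmetrizing the ordered double sum over $\P^{(2)}$ (which supplies the factor $1/2!$), completes one step of the recursion. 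Iterating $l$ times and using the symmetry $D^k_{y_1,\ldots,y_k}\psi = D^k_{y_{\pi(1)},\ldots,y_{\pi(k)}}\psi$ to convert nested ordered sums into unordered sums over $\P^{(k)}$ with prefactor $1/k!$, one arrives at the finite expansion
\[
\E[\psi(\P)] = \psi(o) + \sum_{k=1}^{l-1}\frac{1}{k!}\int_{\R^{dk}} D^k_{y_1,\ldots,y_k}\psi(o)\,\rho^{(k)}(y_1,\ldots,y_k)\,\md y_1\cdots\md y_k \;+\; R_l,
\]
where
\[
R_l \;=\; \frac{1}{l!}\int_{\R^{dl}} \E^!_{y_1,\ldots,y_l}\bigl[D^l_{y_1,\ldots,y_l}\psi(\P)\bigr]\,\rho^{(l)}(y_1,\ldots,y_l)\,\md y_1\cdots\md y_l.
\]

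Finally, hypothesis \eqref{hyp1} justifies every interchange of summation and integration in the recursion (via Fubini--Tonelli) and ensures absolute convergence of each term of the resulting series, while \eqref{hyp2} is exactly the statement that $R_l \to 0$ as $l\to\infty$; passing to the limit yields \eqref{FME}. The main obstacle is the inductive bookkeeping in the second step: one must verify at each stage that telescoping the kernel $D^k_{y_1,\ldots,y_k}\psi$, viewed as a functional of $\P_{|x_*}$ with $x_* = \min\{y_1,\ldots,y_k\}$, produces precisely the symmetric kernel $D^{k+1}_{y_1,\ldots,y_k,Z}\psi$, and that the ordering constraint $Z \prec x_*$ combines with the earlier constraints so that, after full symmetrization, the unconstrained integral over $\R^{d(k+1)}$ with prefactor $1/(k+1)!$ appearing in \eqref{FME} emerges.
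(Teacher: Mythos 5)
The paper does not actually prove Theorem \ref{FMEthm}: it is recalled from \cite[Theorem 3.2]{Blaszczyszyn95} and \cite[Theorem 3.1]{Bartek97}, so there is no internal proof to compare against. Your argument is correct and reconstructs essentially the argument of those references: telescoping along the order $\prec$ using that $\mu_{|x}$ is finite, the iterated-difference identity $D^{k+1}_{y_1,\ldots,y_k,Z}\psi(\mu)=D^{k}_{y_1,\ldots,y_k}\psi(\mu_{|Z}+\delta_Z)-D^{k}_{y_1,\ldots,y_k}\psi(\mu_{|Z})$ for $Z\prec\min_{\prec}\{y_1,\ldots,y_k\}$ (the iterated-difference-operator property noted in the footnote to \eqref{eqn:FME_Kernels1}), symmetrization of $D^{k}$ producing the $1/k!$, and the refined Campbell theorem \eqref{disint} applied under Palm measures, with \eqref{hyp1} justifying the interchanges (absolute integrability of the deterministic kernels $D^{k}\psi(o)$ against $\rho^{(k)}$ follows by differencing \eqref{hyp1} at consecutive levels) and \eqref{hyp2} being precisely the vanishing of the remainder $R_l$.
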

Consider now admissible pairs $(\xi, \P)$ of type (A1) or (A2) and $x_1,\ldots,x_p\in\R^d$.	The proof of~\eqref{e.clustering-generalized-mixed-monents} given in the next sub-section is based on the FME expansion for $\mE_{x_1,\ldots,x_{p}}[\psi(\P_n)]$, where $\psi(\mu)$ is  the following product of the score functions
	\begin{equation}\label{JYpsi}
	\psi(\mu):= \psi_{k_1,\ldots,k_p}(x_1,\ldots,x_p;\mu):=\prod_{i=1}^p\xi(x_i,\mu)^{k_i}
	\end{equation}
with $k_1,\ldots,k_p\ge 1$. However, under $\mP_{x_1,\ldots,x_{p}}$ the point process $\P_n$ has fixed atoms at $x_1,\ldots,x_{p}$, which complicates the form of its
	factorial moment measures. It is more handy to consider these points as parameters of the following  modified functional
	\begin{equation}\label{e.psip}
	\psip(\mu):=\psip_{k_1,\ldots,k_p}(x_1,\ldots,x_p;\mu):=
	\prod_{i=1}^p\xi\Bigl(x_i,\mu+\textstyle{\sum_{j=1}^p}\delta_{x_j}\Bigr)^{k_i}\,
	\end{equation}
	and to not count points $x_1,\ldots,x_{p}$ in $\P$, i.e., consider $\P$ under the reduced Palm probabilities $\mP^!_{x_1,\ldots,x_{p}}$.
	Obviously $\E_{x_1,\ldots,x_p}[\psi(\P_n)]=\E^!_{x_1,\ldots,x_{p}}[\psip(\P_n)]$
	and the latter expectation is more suitable for FME expansion with respect to the correlation functions
	$\rho^{(l)}_{x_1,\ldots,x_p}(y_1,\ldots,y_l)$ 
	of $\P$ with respect to the	Palm probabilities $\mP^!_{x_1,\ldots,x_p}$. The following consequence of Theorem \ref{FMEthm} allows us to use  FME expansions to prove \eqref{e.clustering-generalized-mixed-monents}.
	
\begin{lemma}\label{l.FMEp}  Assume that either (i) $(\xi, \P)$ is  an admissible score and input pair  of type  (A1)
		or (ii) $(\xi, \P)$ satisfies the power growth
		condition~\eqref{eqn:xibd}, with $\xi$ having a radius of
		stabilization satisfying
		$ \sup_{x \in \P} R^\xi(x,\P) \leq r$ a.s. for some $r \in (1, \infty)$ and with $\P$ having exponential moments. Then for distinct $x_1,\ldots,x_p\in \R^d$, non-negative integers $k_1,\ldots,k_p$ and $n\le\infty$
		the functional $\psip$ at \eqref{e.psip} admits the FME
		\begin{align}\nonumber
		&\mE_{x_1,\ldots,x_p}[\psi_{k_1,\ldots,k_p}(x_1,\ldots,x_{p};\P_n)] = \mE^!_{x_1,\ldots,x_p}[\psip_{k_1,\ldots,k_p}(x_1,\ldots,x_p;\P_n)] \no \\
		&= \psip_{k_1,\ldots,k_p}(x_1,\ldots,x_p;o)\nonumber \\
		&\;+\sum_{l=1}^{\infty}\frac{1}{l!}\int_{\mR^{dl}} D^{l}_{y_1,\ldots,y_l}\psip_{k_1,\ldots,k_p}(x_1,\ldots,x_p;o)
		\rho^{(l)}_{x_1,\ldots,x_p}(y_1,\ldots,y_l)\,\md y_1\ldots \md y_l.
		\label{FMEpsip}
		\end{align}
		When $(\xi, \P)$ is of type (A1), the series \eqref{FMEpsip} has at
		most $(k-1)\sum_{i=1}^pk_i$ non-zero terms, where $k$ is as in \eqref{eqn:score_U_statistic}.
	\end{lemma}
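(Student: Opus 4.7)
The plan is to apply Theorem \ref{FMEthm} to the functional $\psi^!$ under the reduced Palm probability $\mP^!_{x_1,\ldots,x_p}$, for which the correlation functions of $\P_n$ are precisely $\rho^{(l)}_{x_1,\ldots,x_p}$. The first equality in \eqref{FMEpsip} follows from the Palm/reduced-Palm correspondence (footnote~\ref{fn:Palm}): under $\mP^!_{x_1,\ldots,x_p}$ the atoms $x_1,\ldots,x_p$ are absent from $\P_n$, and $\psi^!$ has been designed in \eqref{e.psip} precisely to reinsert them before evaluating the product of scores. The $\prec$-continuity of $\psi^!$ at infinity is immediate in both cases: $\xi(x_i,\cdot)$ depends only on points of its argument inside $B_r(x_i)$, so $\psi^!(\mu_{|x})=\psi^!(\mu)$ once $x$ exceeds in the $\prec$ ordering every atom of $\mu$ inside $V:=\bigcup_{i=1}^p B_r(x_i)$. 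It then suffices to verify the hypotheses \eqref{hyp1} and \eqref{hyp2}.

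For case (i), observe that $\xi(x_i,\mu+\sum_j\delta_{x_j})$ is a local $U$-statistic of order $k-1$ in the atoms of $\mu$, so $\psi^!$ is a polynomial in $\mu$ of total degree $M:=(k-1)\sum_{i=1}^p k_i$. A routine induction on $l$ from \eqref{eqn:FME_Kernels1} shows that $D^l_{y_1,\ldots,y_l}$ annihilates every polynomial in $\mu$ of degree strictly less than $l$, hence $D^l\psi^!\equiv 0$ for $l>M$. The expansion \eqref{FMEpsip} thus terminates with at most $M$ non-zero terms, and \eqref{hyp1}--\eqref{hyp2} hold trivially.

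For case (ii), the idea is to localize the integration to $V^l$ using the deterministic stabilization radius, bound the integrand via the power growth condition, and then identify the resulting series as a reduced-Palm exponential moment of $\P(V)$. If some $y_{j_0}$ lies outside $V$ then the $2^l$ terms in \eqref{eqn:FME_Kernels1} defining $D^l_{y_1,\ldots,y_l}\psi^!$ pair up (indexed by whether $j_0\in J$) into cancelling differences, so the integrand in \eqref{hyp1}--\eqref{hyp2} is supported on $V^l$. On this set, the power growth condition \eqref{eqn:xibd}, combined with the bound $\sum_i k_i\mu(B_r(x_i))\le K\mu(V)$ where $K:=\sum_{i=1}^p k_i$, yields
\[
|D^l_{y_1,\ldots,y_l}\psi^!(\mu)|\le 2^l(\hat{c}\max(r,1))^{K(\mu(V)+l+p)}.
\]
Applying the refined Campbell formula under $\mP^!_{x_1,\ldots,x_p}$ converts the integral of this quantity against $\rho^{(l)}_{x_1,\ldots,x_p}$ into a reduced Palm expectation of a sum over ordered $l$-tuples of distinct atoms of $\P$ in $V$, yielding for the $l$-th term of \eqref{FMEpsip} the absolute bound
\[
\tfrac{1}{l!}\,\E^!_{x_1,\ldots,x_p}\!\left[(2\P(V))^l(\hat{c}\max(r,1))^{K(\P(V)+p)}\right].
\]
Summing in $l$ produces a factor $e^{2\P(V)}$, and the total contribution is controlled by $\E^!_{x_1,\ldots,x_p}[e^{(2+K\log(\hat{c}\max(r,1)))\P(V)}]$, which is finite because $\P$ has exponential moments under $\mP$ and this property transfers to $\mP^!_{x_1,\ldots,x_p}$ for $\alpha^{(p)}$-a.e.\ $(x_1,\ldots,x_p)$ by footnote~\ref{fn.palm-exponential-moments}. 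The same estimate forces the $l$-th term to vanish, which is \eqref{hyp2}.

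The main obstacle I anticipate is the bookkeeping in case (ii): one has to track three interlocking facts simultaneously---the deterministic stabilization radius localizes the integration, the power growth \eqref{eqn:xibd} bounds $|\psi^!|$ in terms of point counts in $V$, and the transfer of exponential moments of $\P$ to the reduced Palm measure---and verify that the refined Campbell identity re-packages everything as a single finite exponential moment. The (A1) case, by contrast, reduces to the polynomial termination observation and is essentially immediate.
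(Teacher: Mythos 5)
Your proposal follows essentially the same route as the paper's proof: reduce to Theorem \ref{FMEthm} for $\psip$ under the reduced Palm measure, kill the difference operator whenever some $y_j$ lies outside $\bigcup_{i=1}^p B_r(x_i)$ via the cancellation-in-pairs argument, bound $|D^l\psip|$ on the remaining region by the power growth condition, and dominate the resulting series by an exponential moment of $\P(\bigcup_{i=1}^p B_r(x_i))$ under the reduced Palm measure, whose finiteness transfers from $\mP$; your case (ii) is complete as written. The one overstatement is in case (i): the termination of the expansion at $l=M:=(k-1)\sum_{i=1}^p k_i$ makes \eqref{hyp2} and the instances of \eqref{hyp1} with $l>M$ trivial, but \eqref{hyp1} still has to be verified for $1\le l\le M$, and this is not free. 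There the paper localizes exactly as in \eqref{eqn:FME_local}, bounds $|D^l\psip(\mu)|$ by $2^l\,\|h\|_\infty^{\sum_i k_i}\,\bigl(\mu(\cup_{i=1}^p B_r(x_i))+l+p\bigr)^{M}$ as in \eqref{e.DLbound_U}, and then invokes the fact that an admissible point process with fast decay of correlations has \emph{all} moments under its Palm measures (Remark (ii) of Section \ref{momsection}) — a short but non-vacuous step that your phrase ``hold trivially'' elides.
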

	
	\begin{proof}
Throughout we fix non-negative integers $k_1,\ldots,k_p$	and suppress them when writing  $\psip$; i.e.,		  $\psip(x_1,\ldots,x_p;\P_n):=\psip_{k_1,\ldots,k_p}(x_1,\ldots,x_p;\P_n)$. 	
The bounded radius of stabilization for $\xi$ implies $\psip$ is $\prec$-continuous at~$\infty$.
		
Consider first $\psip$ at~\eqref{e.psip} with $\xi$ as in case (ii); later we  consider the simpler case (i). We  show  the validity
of the expansion~\eqref{FMEpsip} as follows. Let $y_1,\ldots,y_l \in \mR^d$.  The difference operator $D^{l}_{y_1,\ldots,y_l}$ vanishes as soon as $y_k \notin \cup_{i=1}^pB_r(x_i)$ for some $k \in \{1,\ldots,l\}$, that is to say
		\be\label{eqn:FME_local}
		D^{l}_{y_1,\ldots,y_l}\psip(x_1,\ldots,x_p;\mu) = 0.
		\ee
		To prove this, set $\mu_J := \mu|_{y_*}+\sum_{j \in J}\delta_{y_j}$ for
		$J \subset [l]$ and $y_* := \min\{y_1,\ldots,y_l\}$, with  the minimum  taken with respect to $\prec$
		order.
		From \eqref{eqn:FME_Kernels1} we obtain
		\begin{align*}
		&D^{l}_{y_1,\ldots,y_l}\psip(x_1,\ldots,x_p;\mu)\\
		& = \sum_{J \subset [l], k \notin J} (-1)^{l-|J|}\psip(x_1,\ldots,x_p;\mu_J) + \sum_{J \subset [l], k \notin J}(-1)^{l-|J|-1}\psip(x_1,\ldots,x_p;\mu_{J \cup\{k\}}) = 0,
		\end{align*}
where the last equality follows by noting that for $J \subset [l]$
		with $k \notin J$, $\psip(x_1,\ldots,x_p;\mu_J) =
		\psip(x_1,\ldots,x_p;\mu_{J \cup\{k\}})$ because $R^\xi(x,\P) \in [1, r]$ by assumption.

Henceforth we put
\be \label{defKp}
K_p:=\sum_{i=1}^pk_i, \ K_q:=\sum_{i=1}^q k_{p + i},  \ K:= \sum_{i=1}^{p + q} k_i.
\ee
		
Consider now $y_1,\ldots,y_l \in \cup_{i=1}^pB_r(x_{i})$. 
		For $J \subset [l]$, from $1 \leq R^\xi(x,\P)\le r$ and \eqref{eqn:xibd} we have

		\be \label{BD}
		\psip(x_1,\ldots,x_p;\mu_J)  \leq
		(\hat{c}r)^{K_p|J|+pK_p+\sum_{i=1}^pk_i\mu(B_r(x_i))}.\ee
		The term $pK_p$ in the exponent of \eqref{BD}  is due to
		$\sum_{j=1}^p \delta_{x_j}$ in the argument of $\xi$
		in~\eqref{e.psip}.
		Substituting this  bound in \eqref{eqn:FME_Kernels1} yields
		\begin{align}
		|D^{l}_{y_1,\ldots,y_l}\psip(x_1,\ldots,x_p;\mu)|& \leq
		(\hat{c}r)^{pK_p+\sum_{i=1}^pk_i\mu(B_r(x_i))}\sum_{J \subset
			[l]}(\hat{c}r)^{K_p|J|} \no  \\
		\label{e.DLbound} & = (\hat{c}r)^{pK_p+\sum_{i=1}^pk_i\mu(B_r(x_i))}(1+(\hat{c}r)^{K_p})^l.
		\end{align}
Consider $\psip(x_1,\ldots,x_p;\P_n)$, with $\P_n:= \P \cap W_n$ and $\psip$
defined as above. The bound~\eqref{e.DLbound} yields
\begin{align}
		& \frac{1}{l!}\int_{\mR^{dl}} (\sE^!_{x_1,\ldots,x_p})^!_{y_1,\ldots,y_l}[ |D^{l}_{y_1,\ldots,y_l}\psip(x_1,\ldots,x_p;\P_n)|] \rho^{(l)}_{x_1,\ldots,x_p}(y_1,\ldots,y_l)\,\md y_1 \ldots \md y_l  \no\\
		=& \frac{1}{l!}\int_{\mR^{dl}} \sE^!_{x_1,\ldots,x_p,y_1,\ldots,y_l}[ |D^{l}_{y_1,\ldots,y_l}\psip(x_1,\ldots,x_p;\P_n)|]
		\rho^{(l)}_{x_1,\ldots,x_p}(y_1,\ldots,y_l)\,\md y_1 \ldots \md y_l  \no \\		
\le & \frac{(1 + (\hat{c}r)^{K_p})^l(\hat{c}r)^{pK_p}}{l!} \sE^!_{x_1,\ldots,x_p}		\left[(\P_n(\cup_{i=1}^{p} B_r(x_i))^l(\hat{c}r)^{\sum_{i=1}^pk_i\P_n(B_r(x_i))}\right] \no \\
\le & \frac{(1 + (\hat{c}r)^{K_p})^l(\hat{c}r)^{pK_p}}{l!} \sE^!_{x_1,\ldots,x_p} \left[(\P_n(\cup_{i=1}^pB_r(x_i))^l(\hat{c}r)^{K_p\P_n(\cup_{i=1}^pB_r(x_i))}\right] \nonumber \\
\le & \frac{(1 + (\hat{c}r)^{K_p})^l}{l!} \sE_{x_1,\ldots,x_p}
\left[(\P_n(\cup_{i=1}^pB_r(x_i))^l(\hat{c}r)^{K_p\P_n(\cup_{i=1}^pB_r(x_i))}\right]\,,\label{e.l-bound}
\end{align}
where the last inequality follows since the distribution of $\P$ under $\mP_{x_1,\ldots,x_p}$ is equal to
		that of $\P+\sum_{i=1}^p\delta_{x_i}$ under $\mP^!_{x_1,\ldots,x_p}$.  Defining  $N:=\P_n(\cup_{i=1}^pB_r(x_i))$,  we bound \eqref{e.l-bound}  by
\begin{align*}
\sE_{x_1,\ldots,x_p}\left[(\hat{c}r)^{K_pN}
\sum_{m=l}^\infty \frac{(1 + (\hat{c}r)^{K_p})^l}{l!}N^l\right]\le
\sE_{x_1,\ldots,x_p}\left[ (\hat{c}r)^{(1 + (\hat{c}r)^{K_p}+K_p)N}\right]<\infty\,,
\end{align*}
where the last inequality follows since $\P$ has exponential moments under the Palm measure as well (see Remark~\ref{rem:exp_mom-i} at the beginning of Section~\ref{momsection}).  Consequently, by the Lebesgue dominated convergence theorem, the expression~\eqref{e.l-bound} converges to $0$ as	$l\to\infty$. Thus conditions \eqref{hyp1} and  \eqref{hyp2} hold and \eqref{FMEpsip} follows by Theorem  \ref{FMEthm}.

Now we consider case (i), that is to say $\psip$ is as at~\eqref{e.psip} with $\xi$ a U-statistic of type (A1). By Lemma~\ref{l.U-stats}, with $k$ as in \eqref{eqn:score_U_statistic}, $\psip$ is a sum of $U$-statistics of orders not larger than~$K_p(k-1)$. Consequently, for $l \in (K_p(k-1), \infty)$ we have
		\be
		\label{eqn:FME_U}
		D^{l}_{y_1,\ldots,y_l}\psip(x_1,\ldots,x_p;\mu) = 0  \, \,   \, \forall y_1,\ldots,y_l \in \mR^d,
		\ee
as shown in \cite[Lemma 3.3]{Reitzner13} for Poisson point processes (the proof for general simple counting measures $\mu$ is identical).
This implies that conditions ~\eqref{hyp1} for  $l \in (K_p(k-1), \infty)$ and  \eqref{hyp2} are trivially satisfied for $\psip$  as at \eqref{e.psip}. Now, we need to verify the condition~\eqref{hyp1} for $l \in [1,  K_p(k-1)]$. For $y_1,\ldots,y_l \in \mR^d$, set as before $\mu_J = \mu|_{y_*}+\sum_{j \in J}\delta_{y_j}$ for
$J \subset [l]$ and $y_* := \min\{y_1,\ldots,y_l\}$, with  the minimum  taken with respect to the order $\prec$.  Since $\xi$ has a bounded stabilization radius, by \eqref{eqn:FME_local} and \eqref{eqn:mom-exp-U}, we have

\begin{eqnarray}
\psip(x_1,\ldots,x_p;\mu_J) & \leq & \prod_{i=1}^p \|h\|_{\infty}^{k_i}(\mu(\cup_{i=1}^pB_r(x_i))+|J|+p)^{k_i(k-1)} \no \\
& \leq & \|h\|_{\infty}^{K_p}(\mu(\cup_{i=1}^pB_r(x_i))+|J|+p)^{K_p(k-1)}. \label{BD_U}
\end{eqnarray}
The number of subsets of $[l]$ is $2^l$ and so by \eqref{eqn:FME_Kernels1}, we obtain
\begin{align} |D^{l}_{y_1,\ldots,y_l}\psip(x_1,\ldots,x_p;\mu)|  & \leq  \|h\|_{\infty}^{K_p} \sum_{J \subset [l]} (\mu(\cup_{i=1}^pB_r(x_i))+|J|+p)^{K_p(k-1)}	\no \\
& \leq  \|h\|_{\infty}^{K_p} 2^l (\mu(\cup_{i=1}^pB_r(x_i))+l+p)^{K_p(k-1)}.  \label{e.DLbound_U}
\end{align}
Consider $\psip(x_1,\ldots,x_p;\P_n)$ with $\psip$ defined as above. Using the refined Campbell theorem \eqref{disint}, the bound~\eqref{e.DLbound_U}, and following the calculations as in \eqref{e.l-bound}, we obtain
\begin{eqnarray*}
				& & \frac{1}{l!}\int_{\mR^{dl}} (\sE^!_{x_1,\ldots,x_p})^!_{y_1,\ldots,y_l}[ |D^{l}_{y_1,\ldots,y_l}\psip(x_1,\ldots,x_p;\P_n)|] \rho^{(l)}_{x_1,\ldots,x_p}(y_1,\ldots,y_l)\,\md y_1 \ldots \md y_l
							\end{eqnarray*}
										\begin{eqnarray*}
				& \leq &  \|h\|_{\infty}^{K_p} 2^l \E_{x_1,\ldots,x_p}[\P(\cup_{i=1}^pB_r(x_i))^l(\P(\cup_{i=1}^pB_r(x_i))+l+p)^{K_p(k-1)}].
			\end{eqnarray*}
Since $\P$ has all moments under the Palm measure (see Remark~\ref{rem:exp_mom-ii} at the beginning of Section~\ref{momsection}), the finiteness of the last term and hence the validity of the condition~\eqref{hyp1} for  $l \in [1,  K_p(k-1)]$ follows. This justifies the FME expansion~\eqref{FMEpsip}, with finitely many non-zero terms, when $\psip$ is the product of score functions of class (A1).
\end{proof}		
\subsection{\bf Proof of  Theorem~\ref{prop:clustering_gen}} \label{ss.clustering-proof}
First assume that $(\xi, \P)$ is of class (A2). Later we consider the simpler case that $(\xi, \P)$ is of class (A1).
	For fixed $p,q,k_1,\ldots,k_{p+q} \in \mathbb{N}$,
	consider correlation functions $m^{(k_1,\ldots,k_{p+q})}(x_1,\ldots,x_{p+q};n)$, $m^{(k_1,\ldots,k_{p})}(x_1,\allowbreak\ldots,x_{p};n)$, $m^{(k_{p+1},\ldots,k_{p+q})}\allowbreak(x_{p+1},\ldots,x_{p+q};n)$
of the $\xi$-weighted measures at \eqref{eqn:mixedmomentn}.
		We abbreviate $\psi_{k_1,\ldots,k_p}(x_1,\allowbreak \ldots,x_p;\mu)$ by
		$\psi(x_1,\allowbreak \ldots,x_p;\mu)$
		as at  \eqref{JYpsi}, and similarly for $\psi(x_{p+1},\allowbreak \ldots,x_{p+q};\mu)$
		and $\psi(x_{1},\ldots,x_{p+q};\mu)$.
	
Given  $x_1,\ldots,x_{p+q} \in W_n$ we recall  $s:= d(\{x_1,\ldots,x_p\},\{x_{p+1},\ldots,x_{p+q}\})$.  Without loss of generality we assume $s \in (4, \infty)$.
	Recalling the definition of $b$ at  \eqref{phibd} and that of $K$ at \eqref{defKp}, we may assume without loss of generality that $b \in (0, d)$.
	Put
	\be \label{defsprime}
	t:=t(s):= (\frac{ s} {4})^{b(1-a)/(2(K + d))},
	\ee
	where  $a \in [0,1)$ is at \eqref{eqn:sum}. Since $s \in (4, \infty)$ and $K \geq 2$,  we easily have $t \in (1, s/4)$.	
	Given stabilization radii $R^\xi(x_i,\P_n), 1 \leq i \leq p + q$, we put  $$\tx(x_i,\P_n)
	:= \xi(x_i, \P_n \cap B_{R^\xi(x_i,\P_n)}(x))\1[R^\xi(x_i,\P_n) \leq
	t]$$
	considered under $\mE_{x_1,\ldots,x_p}$. We denote by $\tm^{(k_1,\ldots,k_p)}$ the correlation functions of the $\tx$-weighted atomic measure, that is
	$$
	\tm^{(k_1,\ldots,k_p)}(x_1,\ldots,x_p ; n) := \sE_{x_1,\ldots,x_p} [\tx(x_1,\P_n)^{k_1}\ldots\tx(x_p,\P_n)^{k_p}] \rho^{(p)}(x_1,\ldots,x_p).
	$$
Write
\be \label{psitilde}
\tps(x_1,\ldots,x_{p};\P_n) = \psi(x_1,\ldots,x_p;\P_n) \1[ \max_{ i \leq p} R^\xi(x_i, \P_n) \leq t] = \prod_{i=1}^p \tx(x_i,\P_n)^{k_i}.\ee
Next, write $\E_{x_1,\ldots,x_{p}}\psi(x_1,\ldots,x_{p};\P_n)$ as a sum of
	$$
	\E_{x_1,\ldots,x_p}[\psi(x_1,\ldots,x_{p};\P_n) \1[ \max_{ i \leq p} R^\xi(x_i, \P_n) \leq t ] ]
	$$
	and
	$$
	\E_{x_1,\ldots,x_{p}} [\psi(x_1,\ldots,x_{p};\P_n)\1 [ \max_{ i \leq p} R^\xi(x_i, \P_n) > t] ].$$
	 The bounds  \eqref{eqn:corr_bounded}, \eqref{stab}, the moment
		condition \eqref{eqn:mom}, H\"older's inequality, and $p\le \sum_{i=1}^p k_i = K_p$ give for Lebesgue almost all  $x_1,\ldots,x_{p}$
	\begin{align}
	&  \Bigl| \E_{x_1,\ldots,x_{p}}\psi(x_1,\ldots,x_{p};\P_n) - \E_{x_1,\ldots,x_{p}}
	\tps(x_1,\ldots,x_{p};\P_n) \Bigr|\rho^{(p)}(x_1,\ldots,x_{p}) \no\\
	& \le p \kappa_{p}(\tM_{K_p +
		1})^{K_p/(K_p+1)}
	\varphi(a_{p}t)^{1/(K_p  + 1)} \no \\
	& \le K_p  \kappa_{K_p}(\tM_{K_p +	1})^{K_p/(K_p+1)}
	\varphi(a_{K_p}t)^{1/(K_p  + 1)} \le  c_1(K_p)  \varphi(a_{K_p}t)^{1/(K_p+1)}.
	\label{e.psi-psit}
	\end{align}
	Here $c_1(m):= m \kappa_{m}\tM_{m + 1} \ge m \kappa_{m}(\tM_{m+ 1})^{m/(m+1)}$,	as $\tM_m\ge 1$ by assumption.
	Similarly, condition \eqref{eqn:mom} yields
	$|\E_{x_1,\ldots,x_{p}}\psi(x_1,\ldots,x_{p};\P_n)|\rho^{(p)}(x_1,\ldots,x_{p})
	\le c_1(K_p) \,.$		
	Using~\eqref{e.psi-psit} with $p$ replaced by $p+q$, we find 	$m^{(k_1,\ldots,k_{p+q})}(x_1,\ldots,x_{p+q};n)$
	differs from $\tm^{(k_1,\ldots,k_{p+q})}(x_1,\ldots,x_{p+q};n)$ by
	$c_1(K)\varphi(a_{K}t)^{1/(K+1)}$,	which is  fast-decreasing by \eqref{varphibd}.
	
	For any reals $A, B, \tA, \tB$, with $|\tB|\le |B|$ we have
	$|AB - \tA \tB| \leq |A(B - \tB)| + |(A - \tA)\tB|\le(|A|+|B|)(|B-\tB|+|A-\tA|)$.
	Hence, it follows that
	\begin{align*}
	& |m^{(k_1,\ldots,k_{p})}(x_1,\ldots,x_p;n)m^{(k_{p+1},\ldots,k_{q})}(x_{p+1},\ldots,x_{p+q};n)\\
	& -\tm^{(k_1,\ldots,k_{p})}(x_1,\ldots,x_p;n)\tm^{(k_{p+1},\ldots,k_{q})}(x_{p+1},\ldots,x_{p+q};n)| \\
	&\leq  (c_1(K_p)+c_1(K_q))\Big(c_1(K_p)\varphi(a_{K_p}t)^{1/(K_p+1)}+c_1(K_q)\varphi(a_{K_q}t)^{1/(K_q+1)}\Big) \\
	&\leq  c_2(K)\varphi(a_{K}t)^{1/(K+1)},
	\end{align*}
	with $c_2(m):=4(c_1(m))^2$  and where we note that $\varphi(a_{m}t)^{1/(m+1)}$
	is also fast-decreasing by \eqref{varphibd}.  The difference of correlation functions of the $\xi$-weighted measures is thus bounded by
	\begin{eqnarray}
	&  &\Bigl|m^{(k_1,\ldots,k_{p+q})}(x_1,\ldots,x_{p+q};n)- m^{(k_1,\ldots,k_{p})}(x_1,\ldots,x_{p};n)\, m^{(k_{p+1},\ldots,k_{p+q})}(x_{p+1},\ldots,x_{p+q};n) \Bigr| \no \\
	& \le & (c_1(K) + c_2(K))\varphi(a_kt)^{1/(K+1)} \no \\
	&  & + |\tm^{(k_1,\ldots,k_{p+q})}(x_1,\ldots,x_{p+q};n)-\tm^{(k_1,\ldots,k_p)}(x_1,\ldots,x_{p};n)\tm^{(k_{p+1},\ldots,k_{p+q})}(x_{p+1},\ldots,x_{p+q};n)|. \no \\ \label{eqn:bound_m_tm}
	\end{eqnarray}
The rest of the proof consists of bounding $|\tm^{(k_1,\ldots,k_{p+q})}-\tm^{(k_1,\ldots,k_p)}\tm^{(k_{p+1},\ldots,k_{p+q})}|$ by 
a  fast-decreasing function of $s$. In this regard we will consider the expansion ~\eqref{FMEpsip}
with $\psi(x_1,\ldots,x_{p};\P_n)$ replaced by $\tps(x_1,\ldots,x_{p};\P_n)$ as at \eqref{psitilde}
and similarly for $\tps(x_{p+1},\ldots,x_{p + q};\P_n)$ and $\tps(x_1,\ldots,x_{p+q};\P_n)$.
By Lemma~\ref{l.Radius} in the Appendix, $\tx(x_i,\P_n), 1 \leq i \leq p,$ have radii of stabilization bounded above by $t$
and also satisfy the power-growth condition \eqref{eqn:xibd}  since $|\tx | \leq |\xi|$. Thus	the pair $(\tx, \P)$ satisfies the assumptions of Lemma~\ref{l.FMEp}. 	The corresponding version of $\tps$, accounting for the fixed atoms of $\P_n$ is
\begin{equation*}
\tpsp(x_1,\ldots,x_p;\mu):=\prod_{i=1}^p \tx(x_i,\mu+\textstyle{\sum_{i=1}^p\delta_{x_i}})^{k_i}\,
\end{equation*}
and similarly for $\tpsp(x_{p+1},\ldots,x_{q};\P_n)$ and $\tpsp(x_1,\ldots,x_{p+q};\P_n)$.

Put $B_{t,n}(x_i):= B_t(x_i) \cap W_n$. Applying  \eqref{FMEpsip}, the multiplicative identity \eqref{eqn:palm_correlation} and \eqref{eqn:FME_local}, we obtain
\begin{align*}
	& \tm^{(k_1,\ldots,k_{p+q})}(x_1,\ldots,x_{p+q}) = \E^!_{x_1,\ldots,x_{p+q}} [\tpsp(x_1,\ldots,x_{p+q};\P_n)] \rho^{(p+q)}(x_1,\ldots,x_{p+q}) \no \\
	&= \sum_{l=0}^{\infty}\frac1{l!}\int_{(W_n)^l}
	D^{l}_{y_1,\ldots,y_l}\tpsp(o) \rho^{(l+p+q)}(x_1,\ldots,x_{p+q},y_1,\ldots,y_l)\, \md y_1\ldots \md y_l \no \\
	&= \sum_{l=0}^{\infty}\frac1{l!}\int_{(\cup_{i=1}^{p+q}
		B_{t,n}(x_i))^l}
	D^{l}_{y_1,\ldots,y_l}\tpsp(o)
	\rho^{(l+p+q)}(x_1,\ldots,x_{p+q},y_1,\ldots,y_l) \,\md y_1\ldots \md y_l\,.
\end{align*}
Applying  \eqref{eqn:FME_Kernels1} when $\mu$ is the null measure, this gives for $\alpha^{(p+q)}$ almost all $x_1,\ldots,x_{p+q}$
\begin{align}
	& \tm^{(k_1,\ldots,k_{p+q})}(x_1,\ldots,x_{p+q}) \no  \\ 
	&=  \sum_{l=0}^{\infty}\frac1{l!}\sum_{j=0}^l\frac{l!}{j!(l-j)!}\int_{(\cup_{i=1}^p B_{t,n}(x_i))^j \times(\cup_{i=1}^q B_{t,n}(x_{p+i}))^{l-j}}
	D^{l}_{y_1,\ldots,y_l}\tpsp(x_1,\ldots,x_{p+q};o)  \no \\
	&\hspace{6em}\times \rho^{(l+p+q)}(x_1,\ldots,x_{p+q},y_1,\ldots,y_l) \md y_1\ldots \md y_l  \no \\
	&=\sum_{l=0}^{\infty}\sum_{j=0}^l\frac{1}{j!(l-j)!}\int_{(\cup_{i=1}^p
		B_{t,n}(x_i))^j \times(\cup_{i=1}^q B_{t,n}(x_{p+i}))^{l-j}}
	\no \\[0.5ex]
	&\hspace{2em}\times\sum_{J \subset [l]}(-1)^{l-|J|}\tpsp(x_1,\ldots,x_{p+q};\textstyle{\sum_{j \in J}\delta_{y_j}}) \times \rho^{(l+p+q)}(x_1,\ldots,x_{p+q},y_1,\ldots,y_l) \md y_1 \ldots \md y_l. \label{eqn:m_p+q}
	\end{align}

To compare the $(p+q)$th correlation functions of the $\xi$-weighted measures with the product of their $p$th and $q$th correlation functions, we shall use the fact  that
	$R^{\tilde\xi}(x_i, \P_n) \in (0, t]$ (cf. Lemma~\ref{l.Radius}) implies the following factorization, which holds for  $y_1,\ldots,y_j \in \cup_{i=1}^pB_{t}(x_i)$ and $y_{j+1},\ldots,y_l \in \cup_{i=1}^qB_{t}(x_{p+i})$,
	with $t \in (1,s/4)$ (making $\cup_{i=1}^pB_{t}(x_i)$ and $\cup_{i=1}^qB_{t}(x_{p+i})$ disjoint):
	\begin{equation}
	\tpsp(x_1,\ldots,x_{p+q};\textstyle{\sum_{i=1}^l\delta_{y_i}}) = \tpsp(x_1,\ldots,x_p;\textstyle{\sum_{i=1}^j\delta_{y_i}})\tpsp(x_{p+1},\ldots,x_{p+q};\textstyle{\sum_{i=j+1}^l\delta_{y_i}})\,. \label{eqn:prod_psi}
	\end{equation}
	
	Using the expansion \eqref{FMEpsip} along with \eqref{eqn:prod_psi}, we next derive an expansion for the product of $p$th and $q$th correlation functions of the  $\xi$-weighted measures.
	Recalling the multiplicative identity \eqref{eqn:palm_correlation} as well as the identity  $\E_{x_1,\ldots,x_p}[\psi(\P_n)]=\E^!_{x_1,\ldots,x_{p}}[\psip(\P_n)]$ (cf. \eqref{e.psip}), we obtain
	\begin{align}
	& \tm^{(k_1,\ldots,k_{p})}(x_1,\ldots,x_p)\tm^{(k_{p+1},\ldots,k_{q})}(x_{p+1},\ldots,x_{p+q}) \no\\
	&=  \E^!_{x_1,\ldots,x_p}[\tpsp(x_1,\ldots,x_p;\P_n)]
	\E^!_{x_{p+1},\ldots,x_{p+q}}[\tpsp(x_{p+1},\ldots,x_{p+q};\P_n)] \no \\
	&\hspace{20em}\times \rho^{(p)}(x_1,\ldots,x_p)\rho^{(q)}(x_{p+1},\ldots,x_{p+q}) \no
	\end{align}
\begin{align}
	& =  \sum_{l_1,l_2 = 0}^{\infty}\frac1{l_1!l_2!}\int_{(\cup_{i=1}^pB_{t,n}(x_i))^{l_1} \times(\cup_{i=1}^qB_{t,n}(x_{p+i}))^{l_2}}
	\hspace{-3em}	D^{l_1}_{y_1,\ldots,y_{l_1}}\tpsp(x_1,\ldots,x_p;o) \ D^{l_2}_{z_1,\ldots,z_{l_2}}\tpsp(x_{p+1},\ldots,x_{p+q};o) \no \\[0.5ex]
	&\hspace{1em}\times \rho^{(l_1+p)}(x_1,\ldots,x_p,y_1,\ldots,y_{l_1})\rho^{(l_2+q)}(x_{p+1},\ldots,x_{p+q},z_1,\ldots,z_{l_2})\, \md y_1\ldots \md y_{l_1} \md z_1 \ldots \md z_{l_2}\,. \no \\
	\label{e.tmp-series}	
	\end{align}
Applying  \eqref{eqn:FME_Kernels1} once more for  $\mu$ the null measure, this gives
\begin{align}
	& \tm^{(k_1,\ldots,k_{p})}(x_1,\ldots,x_p)\tm^{(k_{p+1},\ldots,k_{q})}(x_{p+1},\ldots,x_{p+q})\no \\
	& = \sum_{l_1,l_2=0}^{\infty}\frac{1}{l_1!l_2!}\int_{(\cup_{i=1}^pB_{t,n}(x_i))^{l_1} \times(\cup_{i=1}^qB_{t,n}(x_{p+i}))^{l_2}}\sum_{J_1 \subset [l_1],J_2 \subset [l_2]}(-1)^{l_1+l_2-|J_1|-|J_2|}\no \\[0.2ex]
	&\hspace{1em} \times \tpsp(x_1,\ldots,x_p;\textstyle{\sum_{i\in J_1}\delta_{y_i}}) \tpsp(x_{p+1},\ldots,x_{p+q};\textstyle{\sum_{i\in J_2}\delta_{z_i}})\no \\
	&\hspace{1em} \times  \rho^{(l_1+p)}(x_1,\ldots,x_p,y_1,\ldots,y_{l_1}) \rho^{(l_2+q)}(x_{p+1},\ldots,x_{p+q},z_1,\ldots,z_{l_2})\, \md y_1\ldots \md y_{l_1} \md z_1 \ldots \md z_{l_2} \no \\
		& = \sum_{l=0}^{\infty}\sum_{j=0}^l\frac{1}{j!(l-j)!}\int_{(\cup_{i=1}^pB_{t,n}(x_i))^j \times(\cup_{i=1}^qB_{t,n}(x_{p+i}))^{l-j}}\sum_{J_1 \subset [j],J_2 \subset [l]\setminus[j]}(-1)^{l-|J_1|-|J_2|}\no \\[0.5ex]
	&\hspace{1em}\times \tpsp(x_1,\ldots,x_p;\textstyle{\sum_{i \in J_1}\delta_{y_i}}) \tpsp(x_{p+1},\ldots,x_{p+q};\textstyle{\sum_{i \in J_2}\delta_{y_i}}) \no \\
	&\hspace{1em}\times \rho^{(j+p)}(x_1,\ldots,x_p,y_1,\ldots,y_j)\rho^{(l-j+q)}(x_{p+1},\ldots,x_{p+q},y_{j+1},\ldots,y_l)\, \md y_1\ldots \md y_l \no \\
	& =  \sum_{l=0}^{\infty}\sum_{j=0}^l\frac{1}{j!(l-j)!}\int_{(\cup_{i=1}^pB_{t,n}(x_i))^j \times(\cup_{i=1}^qB_{t,n}(x_{p+i}))^{l-j}}\sum_{J \subset [l]}(-1)^{l-|J|}\tpsp(x_1,\ldots,x_{p+q};\textstyle{\sum_{i \in J}\delta_{y_i}}) \no \\[0.5ex]
	&\hspace{2em}\times \rho^{(j+p)}(x_1,\ldots,x_p,y_1,\ldots,y_j)\rho^{(l-j+q)}(x_{p+1},\ldots,x_{p+q},y_{j+1},\ldots,y_l)\, \md y_1\ldots \md y_l, \label{eqn:m_p,q}
\end{align}
where we have used \eqref{eqn:prod_psi} in the last equality.

Now we estimate the difference of \eqref{eqn:m_p+q} and \eqref{eqn:m_p,q}. Applying~\eqref{eqn:clustering_condition} and replacing $B_{t,n}(x_i)$ with $B_{t}(x_i)$, we obtain
\begin{align}
	&|\tm^{(k_1,\ldots,k_{p+q})}(x_1,\ldots,x_{p+q})-\tm^{(k_1,\ldots,k_{p})}(x_1,\ldots,x_p)\tm^{(k_{p+1},\ldots,k_{q})}(x_{p+1},\ldots,x_{p+q})| \no \\
	& \leq  \phi({s\over 2} )\sum_{l=0}^{\infty}\sum_{j=0}^l\frac{C_{l+p+q}}{j!(l-j)!}\no \\
	&\hspace{2em}\times\int_{(\cup_{i=1}^pB_{t}(x_i))^j \times(\cup_{i=1}^qB_{t}(x_{p+i}))^{l-j }}\sum_{J \subset [l]}|\tpsp(x_1,\ldots,x_{p+q};\textstyle{\sum_{i \in J}\delta_{y_i}})|\, \md y_1\ldots \md y_l.
\end{align}
Recalling \eqref{eqn:prod_psi}  and \eqref{BD}, as well as the definitions of $K_p, K_q,$ and $K$ at \eqref{defKp},
we bound \\ $\sum_{J \subset [l]}|\tpsp(x_1,\ldots,x_{p+q};\sum_{i \in J}\delta_{y_i})|$  by
	$2^l(\hat{c}t)^{jK_p+(l-j)K_q+K}$, where $\hat{c}t  \in [1, \infty)$  holds since $\hat{c}\in [1, \infty)$ in \eqref{eqn:xibd}.  This gives
	\begin{align}
	&|\tm^{(k_1,\ldots,k_{p+q})}(x_1,\ldots,x_{p+q})-\tm^{(k_1,\ldots,k_{p})}(x_1,\ldots,x_p)\tm^{(k_{p+1},\ldots,k_{q})}(x_{p+1},\ldots,x_{p+q})| \no
\end{align}
\begin{align}
	& \leq \phi({s\over 2} )\sum_{l=0}^{\infty}\sum_{j=0}^l\frac{C_{l+p+q}}{j!(l-j)!}\int_{(\cup_{i=1}^pB_{t}(x_i))^j
		\times(\cup_{i=1}^qB_{t}(x_{p+i}))^{l-j}}2^l(\hat{c}t)^{jK_p+(l-j)K_q+K}\, \md y_1\ldots \md y_l.
		\end{align}
Consequently, bounding $K_p$ and $K_q$ by $K$ we obtain
\begin{align}
	&|\tm^{(k_1,\ldots,k_{p+q})}(x_1,\ldots,x_{p+q})-\tm^{(k_1,\ldots,k_{p})}(x_1,\ldots,x_p)\tm^{(k_{p+1},\ldots,k_{q})}(x_{p+1},\ldots,x_{p+q})| \no \\
		& \leq \phi({s\over 2} )\sum_{l=0}^{\infty}C_{l+p+q}2^l(\hat{c}t)^{(l+1)K}((p+q)\theta_dt^d)^l\sum_{j=0}^l\frac{1}{j!(l-j)!} \no \\[1ex]
	& \leq \phi({s\over 2}
	)\sum_{l=0}^{\infty}\frac{C_{l+p+q}}{l!}4^l(\hat{c}t)^{(l+1)K}((p+q)\theta_dt^d)^l  \leq \phi({s\over 2}  )\sum_{l=0}^{\infty}\frac{C_{l+K}}{l!}4^l(\hat{c}t)^{(l+1)K}(K\theta_dt^d)^l,
	\label{eqn:clustering_moments_proof}
	\end{align}
where $\theta_d := \pi^{d/2}/\Gamma(d/2 + 1)$ is the volume of the unit ball in $\R^d$ 
and where the last inequality uses $p+q\le K$.   The bound \eqref{eqn:sum} yields $C_{l + K} = O((l + K)^{a(l + K)})$.  Thus there
are constants $c_1, c_2$ and $c_3$ depending only on $a, d$ and $K$ such that	
	$$
	\sum_{l=0}^{\infty}\frac{C_{l+K}}{l!}4^l(\hat{c}t)^{(l+1)K}(K\theta_dt^d)^l  \leq t^K\sum_{l=0}^{\infty} \frac{ c_1 c_2^l l^{c_3} (t^{K + d})^l \cdot l^{al} }  {l! }.
	$$
	By Stirling's formula,  there are constants $c_4, c_5$
	and $c_6$ depending only on $a, d$ and $K$ 	such that
	$$
	t^K\sum_{l=0}^{\infty}\frac{C_{l+K}}{l!}4^l(\hat{c}t)^{lK}(K\theta_dt^d)^l
	\leq  t^K\sum_{l=0}^{\infty} \frac{ c_4 c_5^l l^{c_6} (t^{K + d})^l }  { (\lfloor l(1 - a) \rfloor )!   },
	$$
	where for   $r \in \R$, $\lfloor r \rfloor$ is the greatest integer less than $r$.   We compute
	\begin{eqnarray}
	&  & t^K	\sum_{l=0}^{\infty}\frac{C_{l+K}}{l!}4^l(\hat{c}t)^{lK}(K\theta_dt^d)^l
	\leq t^K \sum_{n=0}^{\infty}  \sum_{ \{l: \ \lfloor l(1 - a) \rfloor = n  \}}   \frac{ c_4 c_5^l l^{c_6} (t^{K + d})^l  }  {n! }
	\no
	\end{eqnarray}
	\begin{eqnarray}
	& \leq &  t^K \sum_{n=0}^{\infty}   \frac{ c_4 c_5^n n^{c_6} (t^{K + d})^{(n + 1)/( 1 - a) } }  { (1 - a)  n! } \leq c_7 \exp(c_8 t^{(K + d)/( 1 - a) }) \label{3.25}
	\end{eqnarray}
	where $c_7$ and $c_8$ depend only on $a, d$ and $K$.
	
	Recalling from \eqref{defsprime} that $t:= (s/4)^{b(1 - a)/(2(K + d))}$  we obtain
	$$ \sum_{l=0}^{\infty}\frac{C_{l+K}}{l!}4^l(\hat{c}t)^{(l+1)K}(K\theta_dt^d)^l  \leq c_7 \exp\left(c_8 ( {s \over 4})^{ {b \over 2}}\right).$$
	By \eqref{phibd}, there is a constant $c_9$ depending only on $a$ such that for all $s$ we have $\phi(s) \leq c_9 \exp(-s^b/c_9)$. Combining this with \eqref{eqn:clustering_moments_proof}
		and \eqref{3.25} gives
	$$
	|\tm^{(k_1,\ldots,k_{p+q})}-\tm^{(k_1,\ldots,k_p)}\tm^{(k_{p+1},\ldots,k_{p+q})}|
	\leq c_7 c_9 \exp \left( \frac{ - (s/2)^b} {c_9}  +   c_8 ( {s \over 4})^{ {b \over 2}} \right).
	$$
	This along with \eqref{eqn:bound_m_tm} establishes \eqref{e.clustering-generalized-mixed-monents} when $(\xi, \P)$ is an admissible pair of class (A2).
	
Now we establish  \eqref{e.clustering-generalized-mixed-monents} when $(\xi, \P)$ is  of class (A1).  Let $k$ be as in  \eqref{eqn:score_U_statistic}.
 Follow the  arguments for case (A2)  word for word using that $\sup_{x \in \P} R^{\xi}(x,\P) \leq r$.  Notice  that for $l \in ((k-1)K, \infty)$ the summands in ~\eqref{eqn:m_p+q} vanish.  Likewise, when
  $l_1 \in ((k-1)K_p,  \infty)$ and $l_2 \in ((k-1)K_q, \infty),$ the respective summands in \eqref{e.tmp-series} vanish. It follows that for $l \in ((k-1)K, \infty)$ the summands in
\eqref{eqn:clustering_moments_proof} all vanish.
 The finiteness of $\tilde{C}_{K}$ in expression \eqref{e.clustering-generalized-mixed-monents}
  is immediate, without requiring decay rates for $\phi$ or growth-rate bounds on $C_k$.  Thus \eqref{e.clustering-generalized-mixed-monents} holds when $(\xi, \P)$ is of class (A1).	\qed

\section{\bf Proof  of main results}
\label{sec:proofs_main}
	
We provide the proofs of Theorems \ref{prop:variance_upper_bound},  \ref{prop:variance_lower_bound}, and \ref{thm:main0} in this order.
\subsection{\bf Proof of Theorem \ref{prop:variance_upper_bound}}
\label{sec:proof_expectation}

\subsubsection{Proof of expectation asymptotics \eqref{expasy}}
The definition of the Palm probabilities gives
$$
\E \mu_n^\xi(f) = \int_{W_n} f(n^{-1/d} u) \E_u\xi(u,\P_n) \rho^{(1)}(u)  \,\md u.
$$
By the stationarity of $\P$ and translation invariance of $\xi$,
		we have $\E_{\0} \xi(\0,\P) = \E_u \xi(u, \P)$.   Thus,
		\begin{align*}
	& \Big| n^{-1}\E \mu_n^\xi(f) - \E_{\0} \xi(\0, \P) \rho^{(1)}(\0) \int_{W_1}f(x)\,\md x\Big| \no \\[1ex]
	& = \Big| n^{-1} \int_{W_n} f(n^{-1/d} u) \{\E_u \xi(u, \P_n) \rho^{(1)}(u) - \E_{\0} \xi(\0,\P)\rho^{(1)}(\0) \}  \,\md u \Big|\no \\
	& = \Big| n^{-1} \int_{W_n} f(n^{-1/d} u) \E_u[ (\xi(u, \P_n) - \xi(u,\P)) \rho^{(1)}(u) ]  \md u \Big|  \no \\
	& \leq   \|f \|_{\infty}  n^{-1}    \int_{W_n}  \E_u[ |\xi(u, \P_n) - \xi(u,\P)|
        \1[  \max( R^\xi(u, \P),  R^\xi(u, \P_n))     \geq d(u, \partial W_n)]] \rho^{(1)}(u) \md u \no  \\
    & \leq \|f\|_{\infty}   n^{-1}   \int_{W_n}  \md u \,  \rho^{(1)}(u) \,  \times  \no \\
    & \qquad \E_u \bigl[ |\xi(u, \P_n) - \xi(u,\P)| \times \bigl( \1[ R^\xi(u, \P)  \geq d(u, \partial W_n)] +  \1[ R^\xi(u, \P_n)  \geq d(u, \partial W_n)] \bigr) \bigr]  \no \\
	& \leq 4 \kappa_1 \|f \|_{\infty}  n^{-1}  \tM_{p} \int_{W_n}  (\varphi(a_1d(u,\partial W_n)))^{1/q} \md u,
	\end{align*}
	where the last inequality follows from the H\"older inequality, \eqref{stab}, the bound  \eqref{eqn:corr_bounded},  the $p$-moment
		condition~\eqref{eqn:mom} (recall $p \in (1, \infty)$ and $\tilde M_p \in [1, \infty)$) and where $1/p + 1/q = 1$.
	 By \eqref{varphibd}, the bound \eqref{expasy} follows at once from
	$$ \int_{W_n}  (\varphi(a_1d(u,\partial W_n)))^{1/q} \md u 
	= O(n^{(d-1)/d}).$$
	 If $\xi$ satisfies \eqref{stab}, but not \eqref{varphibd}, then by  the bounded convergence theorem, we have
	\begin{align*} 
	 \limsup_{n \to \infty} n^{-1}  \int_{W_n} ( \varphi(a_1 d(u, \partial W_n)))^{1/q} \md u
	 = \limsup_{n \to \infty} \int_{W_1} (\varphi(a_1 n^{1/d} d(z, \partial W_1)))^{1/q} \md z = 0.
	\end{align*}
	Consequently,  we have expectation asymptotics under \eqref{stab} as follows :
	$$ \Big| n^{-1}\E \mu_n^\xi(f) - \E_{\0} \xi(\0, \P) \rho^{(1)}(\0) \int_{W_1}f(x)\,\md x\Big| = o(1).$$

\subsubsection{Proof of variance  asymptotics  \eqref{eqn:var}}
\label{sss.proof-variance}
Recall the definition of the correlation functions~\eqref{eqn:mixedmomentn} of the $\xi$-weighted measures.  We have

	\begin{align}
	& \Var{\mu_n^{\xi}(f)} =
	\E{\sum_{x \in \P_n}f(n^{-1/d} x)^2\xi^2(x,\P_n})\no \\
	&\hspace{1em} + \E{\!\!\!\sum_{x,y \in \P_n, x \neq
			y}f(n^{-1/d}x)f(n^{-1/d}y)\xi(x,\P_n)\xi(y,\P_n)} -
	\Bigl(\E{\sum_{x \in \P_n}f(n^{-1/d}x)\xi(x,\P_n)}\Bigr)^2 \no	\\
		& =  \int_{W_n} f(n^{-1/d}u)^2 \sE_u(\xi^2(u, \P_n ))  \rho^{(1)}(u)  \, \md u
	\label{eqn:variance_exp0} \\
		&\hspace{1em} +
	\int_{W_n \times W_n}f(n^{-1/d}u) f(n^{-1/d}v)\bigl( m_{(2)}(u,v;n) - m_{(1)}(u;n)m_{(1)}(v;n)\bigr) \md u \md v.  \label{eqn:variance_exp}
	\end{align}
	Since $\xi$ satisfies  
		the  $p$-moment condition \eqref{eqn:mom} for $p > 2$, we
		have that $\xi^2$ satisfies the $p$-moment condition for $p > 1$.
		Also, $\xi$ and $\xi^2$ have the same radius of
		stabilization. 
	Thus, the proof of expectation asymptotics, with $\xi$ replaced by $\xi^2$, shows that the first term in~\eqref{eqn:variance_exp0}, multiplied by $n^{-1}$, converges to~
	$$\E_{\0} \xi^2(\0, \P) \rho^{(1)}(\0) \int_{W_1}f(x)^2\,\md x\,;$$
	cf. expectation asymptotics ~\eqref{expasy}. Setting $x = n^{-1/d}u$ and $z =v-u= v-n^{1/d}x$,
the second term in~\eqref{eqn:variance_exp}, multiplied by $n^{-1}$, may  be
	rewritten as
	\begin{align}\label{e.integral_Wnx}
	\int_{W_1} \int_{W_n-n^{1/d}x} &f(x + n^{-1/d}z) f(x)\\
	&\times [m_{(2)}(n^{1/d}x,n^{1/d}x+z;n) -
	m_{(1)}(n^{1/d}x;n)m_{(1)}(n^{1/d}x+z;n)]\,\md z\md x. \no
	\end{align}

	Setting  $\P_n^x:=\P\cap (W_n - n^{1/d}x)$,
	the translation invariance of $\xi$ and stationarity of $\P$ yields	
	
	\begin{align*}
	m_{(2)}(n^{1/d}x,n^{1/d}x+z;n)&=m_{(2)}(\0,z;\P^x_n)\\
	m_{(1)}(n^{1/d}x;n)&= m_{(1)}(\0;\P_n^x)\\
	m_{(1)}(n^{1/d}x+z;n)&=m_{(1)}(z;\P_n^x)\,.
	\end{align*}
	Putting aside for the moment technical details, one expects that the
	above moments  converge to $m_{(2)}(\0,z)$, $m_{(1)}(\0)$ and
	$m_{(1)}(z)=m_{(1)}(\0)$, respectively, when $n\to\infty$.
	Moreover, splitting  the inner integral
	in~\eqref{e.integral_Wnx} into two terms
	\begin{equation}\label{e.z<>M}
	\int_{W_n-n^{1/d}x}(\dots)\md z=
	\int_{W_n-n^{1/d}x }\1[|z|\le M](\dots)\md z+
	\int_{W_n-n^{1/d}x }\1[|z|> M](\dots)\md z\,
	\end{equation}
	for any $M>0$, we see  (at least when  $f$ is continuous)
	that the first term in the right-hand side of~\eqref{e.z<>M} converges  to the desired value
	$$\int_{\R^d}f(x)^2[m_{(2)}(\0,z)-m_{(1)}(\0)^2]\,\md z$$
	when first $n\to\infty$ and then  $M\to\infty$.
	By the fast decay of the second-order correlations of the $\xi$-weighted measures, i.e., by \eqref{e.clustering-generalized-mixed-monents} with  $ p =  q = k_1 = k_2 = 1$ and all $n \in \N \cup\{\infty\}$, the absolute  value of the second term in~\eqref{e.z<>M}
	can be bounded uniformly in $n$ by
	
	$$\|f\|^2_\infty\tilde C_{2}\int_{|z|> M}\tilde \phi(\tilde c_2z)\,\md z\,,$$
	which goes to~0 when  $M\to\infty$ since $\tilde\phi(\cdot)$ is
	fast-decreasing (and thus integrable).
	
	To formally justify the  above statements we need the
	following lemma. Denote
	$$h_n^\xi(x, z):=m_{(2)}(\0,z;\P^x_n)-m_{(1)}(\0;\P_n^x)\,m_{(1)}(z;\P_n^x)\,.$$
	\begin{lemma}\label{l.lemma-convergence-h}
		Assume that translation invariant score function  $\xi$ on the input
		process $\P$ satisfies~\eqref{stab} and the $p$-moment condition~\eqref{eqn:mom} for  $p \in (2, \infty).$
		Then $h_n^\xi(x, z)$ is uniformly bounded
		$$\sup_{n \le\infty}\sup_{x\in W_1}\sup_{z\in W_n-n^{1/d}x}|h_n^\xi(x, z)|\le C_h<\infty$$
		for some constant $C_h$ and
		$$\lim_{n\to\infty} h_n^\xi(x, z)=h_\infty^\xi(x, z)=m_{(2)}(\0,z)-(m_{(1)}(\0))^2\,.$$
	\end{lemma}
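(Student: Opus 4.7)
The plan is to establish the uniform bound and the pointwise convergence of $h_n^\xi(x,z)$ separately, using Cauchy--Schwarz and the $p$-moment condition for the bound, and then combining stabilization with uniform integrability to pass to the limit.

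First, for the uniform bound, I would apply Cauchy--Schwarz under the second-order reduced Palm measure together with the $p$-moment condition \eqref{eqn:mom} (which, since $p>2$ and $\tM_p$ is non-decreasing with $\tM_p\ge 1$, dominates both the first and second absolute moments of $\xi$ under Palm) to obtain
\[
\sE_{u,v}|\xi(u,\P_n^x)\xi(v,\P_n^x)|\le \tM_p,\qquad \sE_u|\xi(u,\P_n^x)|\le \tM_p.
\]
Combined with $\rho^{(1)}\le \kappa_1$ and $\rho^{(2)}\le\kappa_2$ from~\eqref{eqn:corr_bounded}, this gives
\[
|m_{(2)}(\0,z;\P_n^x)|\le \tM_p\kappa_2,\qquad |m_{(1)}(u;\P_n^x)|\le \tM_p\kappa_1,
\]
and hence $|h_n^\xi(x,z)|\le C_h:=\tM_p\kappa_2+\tM_p^2\kappa_1^2$, uniformly in $n$, $x\in W_1$, and $z\in W_n-n^{1/d}x$.

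Next, for the pointwise convergence, fix $x\in W_1$ and $z\in\R^d$. Since $x$ lies in the unit window centered at the origin, $n^{1/d}x\in W_n$ and $d(\0,\partial(W_n-n^{1/d}x))\to\infty$, so $W_n-n^{1/d}x\uparrow \R^d$. By the stabilization hypothesis \eqref{stab} applied with $l=1$ and $l=2$, the radii $R^\xi(\0,\P)$ and $R^\xi(z,\P)$ are finite under $\mP_\0$ and $\mP_{\0,z}$ respectively. Thus on the events $\{R^\xi(\0,\P)\le T\}$ and $\{R^\xi(\0,\P)\vee R^\xi(z,\P)\le T\}$, for all $n$ large enough (depending on $T$, $x$, $z$) Definition~\ref{def.Stab.Radius} gives
\[
\xi(\0,\P_n^x)=\xi(\0,\P),\qquad \xi(z,\P_n^x)=\xi(z,\P).
\]
Letting $T\to\infty$ shows that $\xi(\0,\P_n^x)\to\xi(\0,\P)$ in $\mP_\0$-probability, and analogously that $\xi(\0,\P_n^x)\xi(z,\P_n^x)\to\xi(\0,\P)\xi(z,\P)$ in $\mP_{\0,z}$-probability. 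Since $p>2$, the $p$-moment condition \eqref{eqn:mom} provides uniform integrability of these sequences, hence convergence of the Palm expectations. Combining with the fact that $\rho^{(1)}(u)\equiv\rho^{(1)}(\0)$ and $\rho^{(2)}(\0,z)$ are independent of $n$ by stationarity, and that $m_{(1)}(z)=m_{(1)}(\0)$ again by stationarity, we conclude
\[
\lim_{n\to\infty}h_n^\xi(x,z)=m_{(2)}(\0,z)-m_{(1)}(\0)^2=h_\infty^\xi(x,z).
\]

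The main obstacle is a careful handling of the Palm framework: one must verify that the stabilization radii are finite $\mP_\0$- and $\mP_{\0,z}$-a.s.\ (which follows directly from \eqref{stab}), and that the uniform integrability argument transfers cleanly from stationary to Palm expectations. Once these technical points are settled, the rest amounts to a standard dominated convergence argument.
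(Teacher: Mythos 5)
Your proposal is correct and follows essentially the same route as the paper: the uniform bound comes from Cauchy--Schwarz under the Palm measures together with the $p$-moment condition \eqref{eqn:mom} and the correlation bound \eqref{eqn:corr_bounded}, and the convergence comes from the stabilization estimate \eqref{stab}, which forces $\xi(\0,\P_n^x)=\xi(\0,\P)$ and $\xi(z,\P_n^x)=\xi(z,\P)$ outside an event of probability at most $O(\varphi(a_2 n^{1/d}d(x,\partial W_1)))$ for $x$ in the interior of $W_1$. The only (cosmetic) difference is that you invoke convergence in probability plus uniform integrability, whereas the paper carries out the same step quantitatively via H\"older's inequality applied to $\E_{\0,z}(X_n-X)^2$ on the event $\{X_n\neq X\}$.
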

	\begin{proof}
		Denote  $X_n := \xi(\0, \P_n^x)$, $Y_n : = \xi(z, \P_n^x)$,
		$X:= \xi(\0, \P )$,  and $Y:= \xi(z, \P )$.
		We shall prove first that all expectations
		$\E_{\0,z}(X_n^2)$, $\E_{\0,z}(Y_n^2)$, $\E_{\0,z} (X^2)$
		$\E_{\0,z}(Y^2)$, $\E_{\0}|X_n|$, $\E_{z}|Y_n|$, $\E_{\0} |X|$ and $\E_{z}|Y|$ are uniformly bounded.
		Indeed, by the H\"older inequality
		\begin{equation}\label{e.3moment}
		\E_{\0,z}(X_n^2)\le (\E_{\0,z}|X_n|^p)^{2/p}=
		(\E_{n^{1/d}x,z} |\xi(n^{1/d}x,\P_n)|^p)^{2/p}
		\le \tilde{M}_p^{2/p}
		\end{equation}
		where in the last inequality we have used  $p$-moment condition~\eqref{eqn:mom} for  $p > 2$.
		Similarly  $\E_{\0,z}(Y_n^2)$ and $\E_{\0,z} (X^2)$, $\E_{\0,z}(Y^2)$ are
		bounded by $\tilde{M}_p^{2/p}$. Again using $p$-moment condition~\eqref{eqn:mom}, we obtain
		$$
		\E_{\0}|X_n|\le(\E_{\0}(X_n)^2)^{1/2}\le
		(\E_{n^{1/d}x}|\xi^2(n^{1/d}x,\P_n)|)^{1/2}
		\le \tilde{M}_p^{1/2}
		$$
		and similarly for $\E_{z}|Y_n|$, $\E_{\0} |X|$ and $\E_{z}|Y|$.
		This proves the uniform bound of $|h_n^\xi(x, z)|$.
		To prove the convergence notice that
		\begin{align}
		|m_{(2)}(\0,z;\P^x_n)-&m_{(2)}(\0,z)|=|\E_{\0,z} (X_n Y_n) - \E_{\0,z}
		(XY)|\rho^{(2)}(\0,z)\\
		&\leq \kappa_2\bigr(\E_{\0,z} |X_n Y_n - X_n Y| +
		\E_{\0,z} |X_nY -XY|\bigl) \no \\
		& \leq \kappa_2(\E_{\0,z}(X^2_n)\E_{\0,z}(Y_n - Y)^2)^{1/2} +
		\kappa_2(\E_{\0,z}(Y^2)\E_{\0,z}(X_n - X)^2)^{1/2}\,, \label{e.m2limit}
		\end{align}
		where $\kappa_2$ bounds the second-order correlation function as at~\eqref{eqn:corr_bounded}.
		We have already proved that   $\E_{\0,z}(X^2_n)$, $\E_{\0,z}(Y^2)$ are
		bounded.
		Moreover
		\begin{align*}
		\E_{\0,z}(X_n - X)^2&=\E_{\0,z}((X_n - X)^2\1[X_n\not=X])\\
		&\le \E_{\0,z}(X_n^2\1[X_n\not=X])+
		2\E_{\0,z}(|X_nX|\1[X_n\not=X])+\E_{\0,z}(X^2\1[X_n\not=X])\,.
		\end{align*}
		The H\"older inequality gives for $p > 2$ and $2/p + 1/q = 1$,
		\begin{align*}
		\E_{\0,z}(X_n^2\1[X_n\not=X])&\le
		(\E_{\0,z}(X_n^p))^{2/p}(\mP_{\0,z}(X_n\not=X))^{1/q} \\
		\E_{\0,z}(|X_nX|\1[X_n\not=X])&\le
		(\E_{\0,z}(X_n^p)\E_{\0,z}(X^p))^{1/p}(\mP_{\0,z}(X_n\not=X))^{1/q}\, \\
		\E_{\0,z}(X^2\1[X_n\not=X])&\le
		(\E_{\0,z}(X^p)^{2/p}(\mP_{\0,z}(X_n\not=X))^{1/q}\,.
		\end{align*}
		The $p\,$th   moment of $X_n$ and $X$ under $\E_{\0,z}$ can be
		bounded by $\tilde{M}_p$ using the $p$-moment condition~\eqref{eqn:mom} with
		$p > 2$ as in~\eqref{e.3moment}.
		Stabilization  \eqref{stab} with $l=2$ gives
		\begin{equation}\label{e:varphi-bound}
		\mP_{\0,z}(X_n\not=X)\le \mP_{\0,z}(  ( \max( R^\xi(u, \P),  R^\xi(u, \P_n))> n^{1/d}
		d(x, \partial W_1))\le 2\varphi (a_2  n^{1/d}
		d(x, \partial W_1))
		\end{equation}
		with the right-hand side converging to~0 for all $x\not\in\partial
		W_1$.
		This proves that $\E_{\0,z}(X_n-X)^2$ and (by the very same arguments)
		$\E_{\0,z}(Y_n-Y)^2$ converge to 0 as $n\to\infty$ for all
		$x\not\in\partial W_1$.
		Concluding this part of the proof, we have shown that the expression in~\eqref{e.m2limit}
		converges to~0 and thus $m_{(2)}(\0,z;\P^x_n)$  converges to $m_{(2)}(\0,z)$. Using similar  arguments, we derive
		\begin{align*}
		|m_{(1)}(\0,\P^x_n)-m_{(1)}(\0)|&=|\E_{\0} (X_n) - \E_{\0}(X)|\rho^{(1)}(\0)\\
		&\le \kappa_1((\E_{\0} (X_n)^2)^{1/2}+(\E_{\0} (X^2))^{1/2})
		(\mP_{\0}(X_n\not=X)))^{1/2},
		\end{align*}
		by the $p$-moment condition~\eqref{eqn:mom} and the stabilization property \eqref{stab} for  $p=1$
		one can show that $m_{(1)}(\0,\P^x_n)$ converges to $m_{(1)}(\0)$
		uniformly in $x$ for all $x\in W_1\setminus\partial W_1$.
		Exactly the same arguments assure convergence of  $m_{(1)}(z,\P^x_n)$
		to $m_{(1)}(z)=m_{(1)}(\0)$. This concludes the proof of
		Lemma~\ref{l.lemma-convergence-h}.
	\end{proof}

	In order to complete the proof of variance asymptotics
	for general $f \in {\cal B} (W_1)$ (not necessarily continuous) we
	use arguments borrowed from the  proof of
	\cite[Theorem~2.1]{PeEJP}.
	Recall that $x \in W_1$  is a Lebesgue point for $f$ if $ ( {\rm{Vol}_d } B_{\epsilon}(x))^{-1} \int_{B_{\epsilon}(x)} |f(z) - f(x)| \md z \to 0$  as
	$\epsilon \to 0$.
	Denote by $\C_f$ all Lebesgue points of $f$ in $ W_1$.
	By the Lebesgue density theorem almost every $x \in W_1$ is a
	Lebesgue point of $f$
	and thus for any $M>0$ and $n$ large enough the double integral in~\eqref{e.integral_Wnx} is equal to
	\begin{align*}
	& \int_{W_1} \1[x\in\C_f] f(x) \int_{W_n-n^{1/d}x} f(x + n^{-1/d}z)h_n^\xi(x,z)\,\md z\md x \\
	&=\int_{W_1} \1[x\in\C_f] f(x) \int_{|z|\le M} f(x + n^{-1/d}z)h_n^\xi(x,z)\,\md z\md x \\
	&\hspace{1em}+
	\int_{W_1}\1[x\in\C_f] f(x) \int_{W_n-n^{1/d}x}1(|z|>M) f(x +
	n^{-1/d}z)h_n^\xi(x,z)\,\md z\md x\,.
	\end{align*}
 As already explained, by the fast decay of the  second-order correlations of the $\xi$-weighted measures,
	the second term converges to~0 as first $n \to \infty$ and then $M
		\to \infty$.
	Considering  the first term, by  the  uniform boundedness of
	$h_n^\xi(x,z)$, using the  dominated
	convergence theorem,  it is enough to prove for any Lebesgue point $x$
	of  $f$ and  fixed $M$ that
	$$\lim_{n \to \infty} \int_{|z|<M } h_n^\xi(x, z) f( x +
	n^{-1/d}z)\,\md z=
	f(x) \int_{|z|<M } h_\infty^\xi(x, z)\,\md z\,.
	$$
	In this regard notice that
	\begin{align*}
	&\int_{|z|<M } |h_n^\xi(x, z)f( x +
	n^{-1/d}z)-  h_\infty^\xi(x, z) f( x)|\,\md z\\
	&\le \int_{|z|<M } C_h\times|f(x + n^{-1/d}z)- f( x)|+
	|h_n^\xi(x, z)-h_\infty^\xi(x,z)|\times\|f\|_\infty
	\,\md z \\
	&\le C_h n \int_{|z|<n^{-1/d}M} |f(x + z)- f( x)|\,\md z+
	\|f\|_\infty\times  \int_{|z|<M }  |h_n^\xi(x,
	z)-h_\infty^\xi(x,z)|\,\md z\,.
	\end{align*}
	Both terms converge to 0 as $n\to\infty$: the first  since $x$ is a
	Lebesgue point of $x$,  the second  by
	the dominated convergence of $h_n^\xi(x,
	z)$; cf. Lemma~\ref{l.lemma-convergence-h}.
	Note that $\int_{W_1}\int_{\R^2}|h^\xi_\infty(x,z)|\,\md z\md
	x<\infty$, which follows again from the fast decay of the second-order correlations of the $\xi$-weighted measure (\eqref{e.clustering-generalized-mixed-monents} with  $ p =  q = k_1 = k_2 = 1$ and all $n \in \N \cup\{\infty\}$). Letting $M$ go 	to infinity	in $\int_{W_1}f^2(x)\int_{|z|<M}h^\xi_\infty(x,z)\,\md z\md x$  completes the proof of
	variance asymptotics.
	\qed
	
	\subsection{\bf Proof of Theorem \ref{prop:variance_lower_bound}}  The proof is inspired by the proofs
of \cite[Propositions 1 and 2]{Martin80}. By the refined Campbell theorem and stationarity of $\P$, we have
	\begin{align}
	n^{-1}\Var \H_n^\xi(\P) & = \int_{W_n} \E_{x}\xi^2(x; \P)\rho^{(1)}(x)  \md x + \int_{W_n} \int_{W_n}[m_{(2)}(x,y) - m_{(1)}(x) m_{(1)}(y)] \md y \md x \no \\
    \label{doub1}
	& = \E_{\0} \xi^2(\0, \P) \rho^{(1)}(\0)
	+ n^{-1} \int_{W_n} \int_{W_n}(m_{(2)}(x,y) - m_{(1)} (x) m_{(1)} (y)) \md y \md x.
	\end{align}
	Writing $c(x,y):= m_{(2)}(x,y) - m_{(1)}(x) m_{(1)}(y)$, the double integral in \eqref{doub1} becomes ($z = y - x$)
	\begin{align*}
	n^{-1}\int_{W_n} \int_{W_n}(m_{(2)}(x,y) - m_{(1)} (x) m_{(1)} (y)) \md y \md x
	& =  n^{-1}\int_{W_n} \int_{\R^d}  c(\0,z) \1[x + z \in W_n] \md z \md x \\
	& =  n^{-1}\int_{W_n} \int_{\R^d}  c(\0,z) \1[x \in W_n - z] \md z \md x.
	\end{align*}	
	Write $\1[x \in W_n - z]$ as $1 - \1[x \in (W_n - z)^c]$ to obtain
	\begin{align*}
	&n^{-1}\int_{W_n} \int_{W_n}(m_{(2)}(x,y) - m_{(1)} (x) m_{(1)} (y)) \md y \md x \\
	& =  \int_{\R^d} c(\0, z) \md z -   n^{-1} \int_{\R^d} \int_{W_n}  c(\0,z) \1[x \in \R^d \setminus (W_n- z)] \md x \md z.
	\end{align*}
	From  \eqref{gamm}, we have that  $\gamma_{W_n}(z) := {\rm{Vol}}_d(W_n \cap  (\R^d \setminus (W_n- z)))$ and thus rewrite \eqref{doub1} as
		\be \label{doub2}
	n^{-1} \Var \H_n^\xi(\P) = \E_{\0} \xi^2(\0, \P) \rho^{(1)}(\0) +  \int_{\R^d}c(\0, z) \md z   - n^{-1} \int_{\R^d} c(\0, z) \gamma_{W_n}(z)\md z.
	\ee
Now we claim that
$$ \lim_{n \to \infty}  n^{-1} \int_{\R^d} c(\0, z) \gamma_{W_n}(z)\md z = 0. $$
	Indeed, as noted in  Lemma 1  of  \cite{Martin80},  for all $z \in \R^d$ we have  $\lim_{n \to \infty}  n^{-1}  \gamma_{W_n}(z) = 0.$
	Since  $n^{-1}c(\0, z) \gamma_{W_n}(z)$ is dominated by the fast-decreasing function $c(\0, z)$, the dominated convergence theorem gives the	claimed limit.  Letting $n \to \infty$ in \eqref{doub2} gives
	\be \label{doub3}
	\lim_{n \to \infty} n^{-1} \Var \H_n^\xi(\P) = \E_{\0} \xi^2(\0, \P)\rho^{(1)}(\0)  +  \int_{\R^d}c(\0, z) \md z = \sigma^2(\xi),
	\ee
where the last equality follows by the definition of $\sigma^2(\xi)$ in \eqref{eqn:sigdef} and the finiteness follows by the fast-decreasing property of $c(\0, z, \P)$ (which follows from the assumption of fast decay of the second mixed moment density).\\
Now if $\sigma^2(\xi) = 0$  then the right hand side of \eqref{doub3} vanishes, i.e.,
$$ \E_{\0} \xi^2(\0, \P) \rho^{(1)}(\0) +  \int_{\R^d}c(\0, z) \md z = 0. $$
	Applying this identity to the right hand side of \eqref{doub2}, then multiplying \eqref{doub2}
	by $n^{1/d}$ and taking limits we obtain
	\be \label{doub4}
	\lim_{n \to \infty} n^{-(d-1)/d} \Var \H_n^\xi(\P) = - \lim_{n \to \infty} n^{-(d-1)/d}    \int_{\R^d} c(\0, z) \gamma_{W_n}(z)\md z .
	\ee
	As in \cite{Martin80},  we have  $n^{-(d-1)/d} \gamma_{W_n}(z) \leq C|z|$, and therefore again, by the fast-decreasing property of
	$c(\0, z)$  we conclude that  $n^{-(d-1)/d}c(\0, z) \gamma_{W_n}(z)$ is dominated by an integrable function of $z$.
	Also, as in  \cite[Lemma 1]{Martin80},  for all $z \in \R^d$ we have  $\lim_{n \to \infty}  n^{-(d-1)/d} \gamma_{W_n}(z) = \gamma(z).$
	The dominated convergence theorem yields \eqref{propeq}  as desired,
	
	$$
	\lim_{n \to \infty} n^{-(d-1)/d} \Var \H_n^\xi(\P) = - \int_{\R^d} c(\0, z) \gamma(z)\md z. \, \, \, \, \, \, \square $$

\subsection{\bf \bf First proof of the central limit theorem}
\label{sec:proof_clt1}

\subsubsection{The method of cumulants}
\label{sec:method_cumulants}
We use the method of cumulants to prove Theorem \ref{thm:main0}. We shall define cumulants precisely in Section \ref{sec:prop_cumulants}. Write  $ \overline{\mu}_{n}^\xi$ for the centered  measure $\mu_n^\xi - \E \mu_n^\xi$ and recall that we write $\langle f, \mu \rangle$ for $\int f d \mu.$
	The guiding principle is that as soon as the $k$th order cumulants $C_n^k$ for $(\Var \langle f, \mu_n^\xi \rangle)^{-1/2} \langle f,  \overline{\mu}_{n}^\xi \rangle $ vanish as $n \to \infty$ for $k$ large, then
	\be \label{1CLT}  (\Var \langle f, \mu_n^\xi \rangle)^{-1/2} \langle f,  \overline{\mu}_{n}
	\rangle \tod N.\ee
	We establish the vanishing of $C_n^k$ for $k$ large by showing that the fast decay of correlation functions for the $\xi$-weighted measures at \eqref{xwm} implies volume order growth (i.e., growth of order $O(n)$) for the
 $k$th order cumulant  for $ \langle f,  \overline{\mu}_{n}^\xi \rangle $,  $k \geq 2$,
	and then use the assumption $\Var \langle f, \mu_n^\xi \rangle = \Omega(n^{\nu})$.
	
	\paragraph{Our approach}  The $O(n)$ growth of the $k$th order cumulant for $ \langle f,  \overline{\mu}_{n}^\xi \rangle $  is established by controlling the growth of $k$th order cumulant measures for $\mu_n^\xi$, here denoted by $c_n^k$, and which are defined analogously to moment measures.
	We first prove a general result (see \eqref{cluster} and \eqref{cumbound} below) showing that integrals of the cumulant measures $c_n^k$ may be controlled by a finite sum of integrals of so-called $(S,T)$ semi-cluster measures, where $(S,T)$ is a generic partition of $\{1,...,k \}$.
	This result holds for any $\mu_n^\xi$ of the form \eqref{eqn:rand_meas} and depends neither on choice of input $\P$ nor on the localization properties of
	$\xi$. Semi-cluster measures for $\mu_n^\xi$ have the appealing property that they involve differences of measures on product spaces with product measures,
	and thus their Radon-Nikodym derivatives involve differences of correlation functions of the $\xi$-weighted measures.
	
	In general, bounds on cumulant measures  in terms of semi-cluster measures are not terribly informative.
	However, when $\xi$, together with $\P$, satisfy moment bounds and
	fast decay of correlations~\eqref{e.clustering-generalized-mixed-monents}, then the situation changes.  First,
	integrals of $(S,T)$ semi-cluster measures on properly chosen subsets $W(S,T)$ of
	$W_n^k$, with $(S,T)$ ranging over partitions of $\{1,...,k \}$, exhibit $O(n)$ growth.
	This is because the subsets $W(S,T)$ are chosen so that the Radon-Nikodym derivative of the $(S,T)$ semi-cluster measure, being a difference of
	the correlation functions of the $\xi$-weighted measures, may be controlled by~\eqref{e.clustering-generalized-mixed-monents} for points $(v_1,...,v_k) \in W(S,T)$.
	Second, it conveniently happens that $W_n^k$ is precisely the union of
	$W(S,T)$, as $(S,T)$ ranges over partitions of $\{1,...,k \}$.  Therefore, combining these observations, we see that every cumulant measure on $W_n^k$ is a sum ranging over partitions $(S,T)$ of $\{1,...,k \}$ of linear combinations of $(S,T)$ semi-cluster measures on $W(S,T)$, each of which exhibits $O(n)$ growth.
	
	Thus cumulant measures $c_n^k$ exhibit growth {\em proportional to $\Vol_d(W_n)$} carrying $\P_n$, namely
	\be \label{cumbounds}
	\langle f^k,c^k_{n} \rangle = O(n), \ \ f \in {\cal B}(W_1), \ \ k = 2, 3,...
	\ee
	The remainder of Section \ref{sec:proof_clt1} provides the details justifying  \eqref{cumbounds}.
		
	\paragraph{Remarks on related work}
	\label{para:rel_work}
	(a) The estimate \eqref{cumbounds} first appeared in \cite[Lemma 5.3]{Baryshnikov05}, but the work of \cite{ERS} (and to some extent \cite{Yukich12}) was the first to rigorously
	control the growth of $c^k_{n}$  on the diagonal subspaces, where two or more coordinates coincide.  In fact Section 3 of \cite{ERS} shows the estimate $\langle f^k,c^k_{n} \rangle \leq L^k (k!)^{\beta} n,$
where $L$ and $\beta$ are constants independent of $n$ and $k$.
We assert that the  arguments behind \eqref{cumbounds} are not restricted to Poisson input, but depend only on
the fast decay of correlations ~\eqref{e.clustering-generalized-mixed-monents} of the $\xi$-weighted measures   and moment bounds \eqref{eqn:mom}.   Since these arguments are not well known we present them in a way which is hopefully  accessible and reasonably self-contained.  Since we do not care about the constants in \eqref{cumbounds},
	we shall suitably adopt the arguments of  \cite[Lemma 5.3]{Baryshnikov05} and \cite{Yukich12},  taking the  opportunity to make those arguments more rigorous. Indeed those arguments did not adequately explain the fast decay of the correlations of the $\xi$-wighted measures of the $\xi$-weighted measures on diagonal subspaces.
	
\noindent(b) The breakthrough paper \cite{Nazarov12} shows that the $k$th order cumulant for the {\em linear statistic} \\ $ ( \Var \langle f,  \sum_{x} \delta_{n^{-1/d} x} \rangle )^{-1/2} \langle f, \sum_{x} \delta_{n^{-1/d} x} \rangle $
	vanishes as $n \to \infty$ and $k$ large. This approach is extended to $\langle f, \mu_n^{\xi} \rangle$ in  Section \ref{sec:clt_proof2} thereby giving a second proof of the central limit theorem.
		
\subsubsection{Properties of  cumulant and semi-cluster measures}
\label{sec:prop_cumulants}
\paragraph{Moments and cumulants}
	For a random variable $Y$ with all finite moments, expanding the logarithm of
	the Laplace transform (in the negative domain) in a formal power series  gives
	\begin{equation}\label{e.cumulants-series}
	\log\E(e^{tY})=\log\bigl(1+\sum_{k=1}^\infty\frac{M_kt^k}{k!}\bigr)=\sum_{k=1}^\infty\frac{S_kt^k}{k!}\,,
\end{equation}
where $M_k=\E(Y^k)$ is the $k\,$th moment of $Y$ and $S_k=S_k(Y)$ denotes the $k$\,th cumulant of $Y$. Both series in~\eqref{e.cumulants-series} can be  considered as formal ones and no additional condition (on exponential moments of $Y$) are required for the cumulants to exist. Explicit relations between cumulants and moments  may be established by formal manipulations of these series, see
	e.g.~\cite[Lemma 5.2.VI]{DVJ}. In particular
	\begin{equation}\label{e.cumulants-moments}
	S_k = \sum_{\gamma\in\Pi[k]} (-1)^{|\gamma|-1}
	(|\gamma|-1)!\prod_{i=1}^{|\gamma|} M^{|\gamma(i)|}\,,
	\end{equation}
where $\Pi[k]$ is the set of all unordered partitions of the set
	$\{1,...,k\}$, and for a partition $\gamma=\{\gamma(1),\ldots,\allowbreak\gamma(l)\}\in \Pi[k]$,
	$|\gamma|=l$ denotes the number of its elements, while $|\gamma(i)|$ the number of elements of
	subset $\gamma(i)$. (Although elements of $\Pi[k]$ are unordered
	partitions, we need to adopt some convention for the labeling of
	their elements: let  $\gamma(1), \ldots,\gamma(l)$ correspond to the ordering of the smallest elements in
	the partition sets.) In view of~\eqref{e.cumulants-moments}  the existence of the $k$th cumulant
	$S_k$ follows from the finiteness of the  moment $M_k$.
	
\paragraph{Moment measures}
	Given a random  measure  $\mu$ on $\R^d$, the $k$-th moment measure
	$M^k = M^k(\mu)$ is the one (Sect 5.4 and Sect 9.5 of \cite{DVJ}) satisfying
	$$
	\langle f_1 \otimes... \otimes f_k, M^k(\mu) \rangle = \E [ \langle f_1, \mu \rangle ...  \langle f_k, \mu \rangle ] = \E [ \sum_{x \in \P_n} f_1( { x \over n^{1/d} }) \xi(x, \P_n) \cdot \cdot \cdot \sum_{x \in \P_n} f_k( { x \over n^{1/d} }) \xi(x, \P_n)]
	$$
	for all $f_1,...,f_k \in \B(\R^d)$,  where $f_1 \otimes... \otimes f_k: \ (\R^d)^k \to \R$
	is given by $ f_1 \otimes... \otimes f_k(x_1,...,x_k) = f_1(x_1) ... f_k(x_k).$
	
	As  on p. 143 of \cite{DVJ}, when $\mu$ is a counting measure,
	$M^k$ may be expressed as a sum of factorial moment measures $M_{[j]}, 1 \leq j \leq k,$
	(as defined on p. 133 of \cite{DVJ}):
$$
	M^k(\md (x_1 \times\dots \times x_k)) = \sum_{j = 1}^k \sum_{\cal V} M_{[j]}( \Pi_{i = 1}^j \md y_i( \cal V)) \delta(\cal V),
$$
	where, to quote from \cite{DVJ}, the inner sum is taken over all partitions $\cal V$ of the $k$ coordinates into $j$ non empty disjoint subsets, the
	$y_i( {\cal V}), 1 \leq i \leq j,$ constitute an arbitrary selection of one coordinate from each subset, and $\delta(\cal V)$ is a $\delta$ function
	which equals zero unless equality holds among the coordinates in each non-empty subset of $\cal V$.
	
	When $\mu$ is  the atomic measure $\mu_n^\xi$, we write $M_n^k$ for $M^k(\mu_n^\xi)$.
	By the Campbell formula,  considering repetitions in the $k$-fold product of $\R^d$, and putting
	$\tilde{y}_i := y_i({\cal V})$ and ${\cal V}:= ( {\cal V}_1, ... ,{\cal V}_j)$
	we have that
\begin{align*}
& \langle f \otimes... \otimes f, M_n^k \rangle  = \E [ \langle f, \mu_n^\xi \rangle ...  \langle f, \mu_n^\xi \rangle ] \\
&= \sum_{j = 1}^k \sum_{\cal V} \int_{ (W_n)^j }  \Pi_{i = 1}^k f( {y_i \over n^{1/d}} ) \E_{\tilde{y}_1....\tilde{y}_j} [ \Pi_{i = 1}^j  \xi^{ | {\cal V}_i | } (\tilde{y}_i, \P_n)] \rho^{(j)}(\tilde{y}_1,...,\tilde{y}_j)  \Pi_{i = 1}^j
	\md y_i( \cal V) \delta( \cal V).
\end{align*}
	In other words, recalling Lemma 9.5.IV of \cite{DVJ} we get
	\be \label{Mk}
	dM_n^k(y_1,...,y_k) = \sum_{j = 1}^k \sum_{\cal V}m^{( |{\cal V}_1|,..., |{\cal V}_j| )}   (\tilde{y}_1,...,\tilde{y}_j;n) \Pi_{i = 1}^j
	\md y_i( {\cal V}) \delta( {\cal V}).
	\ee
	\paragraph{Cumulant measures}
	
The $k$th cumulant measure $c_n^k:= c^k(\mu_n)$ is defined analogously to the $k$th moment measure via
	$$
	\langle f_1 \otimes... \otimes f_k, c^k(\mu_n) \rangle = c( \langle f_1, \mu_n \rangle ...  \langle f_k, \mu_n \rangle )
	$$
	where $c(X_1,...,X_k)$ denotes the joint cumulant of the random variables $X_1,...,X_k$.
		
	The existence of the cumulant measures
	$c_{n}^l, \ l = 1,2,...$ follows from the existence of moment measures
	in view of the representation \eqref{e.cumulants-moments}. Thus, we have the following representation for cumulant measures :
	$$
	c_{n}^l = \sum_{T_1,...,T_p} (-1)^{p-1} (p-1)! M_n^{T_1} \cdots
	M_n^{T_p},
	$$
	where $T_1,...,T_p$ ranges over	all unordered partitions of the set $1,...,l$ (see p. 30 of \cite{MM}).
	Henceforth for $T_i \subset \{1,...,l \}$, let  $M_{n}^{T_i}$ denote a copy of the moment measure $M^{|T_i|}$
	on the product space $W^{T_i}$.  Multiplication denotes the usual product of measures: For $T_1, T_2$ disjoint sets of integers and for
	measurable $B_1 \subset (\R^d)^{T_1}, B_2 \subset (\R^d)^{T_2}$ we have $M_n^{T_1} M_n^{T_2}(B_1 \times B_2) = M_n^{T_1}(B_1) M_n^{T_2}(B_2)$. The first cumulant measure coincides with the expectation measure and the second cumulant measure coincides with the covariance measure.
	
\paragraph{Cluster and semi-cluster measures}
We show that every cumulant measure $c_n^k$  is a linear combination of products of  moment and cluster measures. We first  recall the definition of cluster and semi-cluster measures. A cluster measure $U_n^{S,T}$ on $W_n^S \times W_n^T$ for non-empty $S, T \subset \{1,2,... \}$ is defined by
$$
	U_n^{S,T}(B \times D) = M_n^{S \cup T} (B \times D) - M_n^S(B)
	M_n^T (D)
	$$
for  Borel sets $B$ and $D$ in $W_n^S$ and $W_n^T$, respectively, and where multiplication means product measure.
	
	Let $S_1, S_2$ be a partition of $S$ and let $T_1, T_2$ be a
	partition of $T$.  A product of a cluster measure
	$U_n^{S_1,T_1}$ on $W_n^{S_1} \times W_n^{T_1}$ with products of
	moment measures $M_n^{|S_2|}$ and $M_n^{|T_2|}$ on $W_n^{S_2} \times
	W_n^{T_2}$ is an $(S,T)$ {\em semi-cluster measure}.
	
	For each non-trivial partition $(S,T)$ of $\{1,...,k\}$ the $k$-th
	cumulant $c_n^k$ measure is represented as
	\be \label{cluster}c_n^k =
	\sum_{(S_1,T_1),(S_2,T_2) } \alpha((S_1,T_1),(S_2,T_2) ) U_n^{S_1,T_1}
	M_n^{|S_2|} M_n^{|T_2|}, \ee
	where the sum ranges over  partitions  of
	$\{1,...,k\}$ consisting of pairings $(S_1,T_1)$, $(S_2,T_2)$, where
	$S_1, S_2 \subset S$ and $T_1, T_2 \subset T$,
	where {\em $S_1$ and $T_1$ are non-empty}, and where \\
	$\alpha((S_1,T_1),(S_2,T_2) )$ are integer valued pre-factors. In
	other words, for any non-trivial partition $(S,T)$ of $\{1,...,k\}$,
	$c_n^k$ is a linear combination of $(S,T)$ semi-cluster measures. We prove this exactly as in the proof of
	Lemma 5.1 of \cite{Baryshnikov05}, as that proof involves only combinatorics and does not depend on the nature of the input.
	For an alternate proof, with good growth bounds on the integer
	pre-factors $\alpha((S_1,T_1),(S_2,T_2) )$, we refer to Lemma 3.2 of \cite{ERS}.
	
Let $\Xi(k) $ be the collection of partitions of $\{1,...,k\}$ into two subsets $S$ and $T$.  Whenever $W_n^k$ may be expressed as the union of  sets
$W(S,T), \ (S,T) \in \Xi(k)$, then we may write
\begin{align}
	& | \langle f^k, c_n^k \rangle | \leq \sum_{(S,T) \in \Xi(k) } \int_{W(S,T)} |f(v_1)...f(v_k) | |dc_n^k(v_1,...,v_k) | \label{cumbound} \\
& \leq ||f||^k_{\infty} \sum_{(S,T) \in \Xi(k) }  \sum_{(S_1,T_1),(S_2,T_2) }  |  \alpha((S_1,T_1),(S_2,T_2) )| \int_{W(S,T)} d( U_n^{S_1,T_1}
	M_n^{|S_2|} M_n^{|T_2|}) (v_1,...,v_k), \no
\end{align}
where the last inequality follows by \eqref{cluster}.
	As noted at the outset, this bound is valid for any $f \in \B(\R^d)$ and any measure  $\mu_n^\xi$ of the form \eqref{eqn:rand_meas}.
	
	We now specify the collection of sets $W(S,T), \ (S,T) \in \Xi(k)$, to be  used in \eqref{cumbound} as well as in all that follows.
	Given $v:= (v_1,...,v_k) \in W_n^k$, let $$D_k(v):= D_k(v_1,...,v_k):=
	\max_{i \leq k} (|v_1 - v_i| + ... + |v_k - v_i|)$$
	be the $l^1$ diameter for $v$.  For all such
	partitions consider the subset $W(S,T)$ of  $W_n^{S} \times
	W_n^{T}$ having the property that $v \in  W(S,T)$ implies $d(v^S, v^T) \geq D_k(v)/k^2,$ where $v^S$ and $v^T$ are the projections
	of $v$ onto $W_n^S$ and $W_n^T$, respectively, and where $d(v^S, v^T)$
	is the minimal Euclidean distance between pairs of points from
	$v^S$ and $v^T$.
	
It is easy to see that for every $v := (v_1,...,v_k) \in W_n^k$, there is a partition $(S,T)$ of $\{1,...,k\}$ such that $d(v^S, v^T) \geq D_k(v)/k^2.$ If this were not the case then given $v:= (v_1,...,v_k)$, the distance between any two components of $v$ must be strictly less than $D_k(v)/k^2$ and we would get  $\max_{i \leq k} \sum_{j = 1}^k |v_i - v_j| \leq (k -1)k D_k/k^2 < D_k$, a contradiction. Thus $W_n^k$ is the union of sets $W(S,T), \ (S,T) \in \Xi(k)$, as asserted.	We next describe the behavior of the differential $d( U_n^{S_1,T_1}M_n^{|S_2|} M_n^{|T_2|})$ on $W(S,T)$.
	
\paragraph{Semi-cluster measures on $W(S,T)$}
	
	Next, given $S_1 \subset S$ and $T_1 \subset T$, notice that $d(v^{S_1}, v^{T_1}) \geq d(v^S, v^T)$ where $v^{S_1}$ denotes the projection of $v^S$ onto $W_n^{S_1}$ and
	$v^{T_1}$ denotes the projection of $v^T$ onto $W_n^{T_1}$.
	Let $\Pi(S_1, T_1)$ be the partitions of $S_1$ into $j_1$ sets ${\cal V}_1,...,{\cal V}_{j_1}$, with $1 \leq j_1 \leq |S_1|$,
	and the partitions of $T_1$ into $j_2$ sets ${\cal V}_{j_1 + 1},...,{\cal V}_{j_1 + j_2}$, with $1 \leq j_2 \leq |T_1|$.
	Thus an element of $\Pi(S_1, T_1)$ is a partition of $S_1 \cup T_1$.
	
	If a partition $\cal V$ of $S_1 \cup T_1$ does not belong to $\Pi(S_1, T_1)$, then there is a partition element of
	$\cal V$ containing points in $S_1$ and $T_1$ and thus, recalling \eqref{Mk}, we have $\delta({\cal V}) = 0 $ on the set $W(S,T)$.  Thus we make the	crucial observation that, {\em on the set $W(S,T)$  the differential $d(M_n^{S_1 \cup T_1})$ collapses into a sum over partitions in
		$\Pi(S_1, T_1)$.}
	Thus $d( M_n^{S_1 \cup T_1})$ and $d (M_n^{S_1}M_n^{T_1})$ both involve sums of measures on common diagonal subspaces, as does their difference,
	made precise as follows.
	
	\begin{lemma} \label{wst}  On the set $W(S,T)$ we have
		\be \label{wsta}
		d(U_n^{S_1,T_1}) = \sum_{j_1 = 1}^{|S_1|} \sum_{j_2 = 1}^{|T_1|} \sum_{ {\cal V} \in \Pi(S_1, T_1)}  [ .... ] \Pi_{i = 1}^{j_1 + j_2} \md y_i({\cal V})
		\delta ({\cal V})
		\ee
		where
\begin{align*}
		[....] &:= m^{( |{\cal V}_1|,...,|{\cal V}_{j_1}|, |{\cal V}_{j_1 + 1}|,...,|{\cal V}_{j_1 + j_2}| )}(\tilde{y}_1,....,\tilde{y}_{j_1}\tilde{y}_{j_1 + 1},
		...\tilde{y}_{j_1 + j_2};n) \\
	& \, \,	 \,- m^{( |{\cal V}_1|,...,|{\cal V}_{j_1}| )} (\tilde{y}_1,....,\tilde{y}_{j_1};n ) m^{( |{\cal V}_{j_1 + 1}|,...,|{\cal V}_{j_1 + j_2}| )}(\tilde{y}_{j_1 + 1},
		...\tilde{y}_{j_1 + j_2};n).
\end{align*}
\end{lemma}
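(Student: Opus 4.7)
The plan is to apply the expansion \eqref{Mk} separately to $dM_n^{S_1\cup T_1}$ and to the product differential $d(M_n^{S_1}\,M_n^{T_1})$, and then to observe that on the set $W(S,T)$ both sums reduce to an index set indexed exactly by $\Pi(S_1,T_1)$, so their difference takes the announced form.

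First, I apply \eqref{Mk} to $M_n^{S_1\cup T_1}$ to obtain
$$
dM_n^{S_1\cup T_1}(y) \;=\; \sum_{j=1}^{|S_1|+|T_1|}\sum_{{\cal V}}\; m^{(|{\cal V}_1|,\ldots,|{\cal V}_j|)}(\tilde y_1,\ldots,\tilde y_j;n)\prod_{i=1}^{j}dy_i({\cal V})\,\delta({\cal V}),
$$
where ${\cal V}$ ranges over all partitions of $S_1\cup T_1$. The key observation is that on $W(S,T)$ the distance between the projections satisfies $d(v^{S_1},v^{T_1})\ge d(v^S,v^T)\ge D_k(v)/k^2>0$ for all points at which the diameter $D_k(v)$ is strictly positive. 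Consequently $\delta({\cal V})$ vanishes on $W(S,T)$ whenever ${\cal V}$ contains a block meeting both $S_1$ and $T_1$, since nonvanishing of $\delta({\cal V})$ requires equality of all coordinates within each block. The surviving partitions are precisely those whose blocks lie entirely in $S_1$ or entirely in $T_1$, i.e. the elements of $\Pi(S_1,T_1)$. This gives
$$
dM_n^{S_1\cup T_1}\big|_{W(S,T)} \;=\; \sum_{j_1=1}^{|S_1|}\sum_{j_2=1}^{|T_1|}\sum_{{\cal V}\in\Pi(S_1,T_1)}m^{(|{\cal V}_1|,\ldots,|{\cal V}_{j_1+j_2}|)}(\tilde y_1,\ldots,\tilde y_{j_1+j_2};n)\prod_{i=1}^{j_1+j_2}dy_i({\cal V})\,\delta({\cal V}).
$$

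Second, I expand the product $d(M_n^{S_1}\,M_n^{T_1})$ using \eqref{Mk} on each factor. Since the two factors live on disjoint coordinate spaces, the sum over joint partitions is automatically indexed by pairs $({\cal V}^{(1)},{\cal V}^{(2)})$ with ${\cal V}^{(1)}$ a partition of $S_1$ and ${\cal V}^{(2)}$ a partition of $T_1$, and such pairs are in bijection with $\Pi(S_1,T_1)$. The corresponding density factorises into
$$
m^{(|{\cal V}_1|,\ldots,|{\cal V}_{j_1}|)}(\tilde y_1,\ldots,\tilde y_{j_1};n)\,m^{(|{\cal V}_{j_1+1}|,\ldots,|{\cal V}_{j_1+j_2}|)}(\tilde y_{j_1+1},\ldots,\tilde y_{j_1+j_2};n),
$$
while the differential $\prod_i dy_i({\cal V})\,\delta({\cal V})$ is identical to the one appearing in the first expansion (the product $\delta$-factors equal the joint $\delta({\cal V})$ on $W(S,T)$, as no block straddles the $S_1/T_1$ boundary).

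Finally, subtracting the two expansions term by term over the common index set $\Pi(S_1,T_1)$ yields \eqref{wsta}, with the bracket $[\cdots]$ being precisely the difference of the joint and factorised correlation functions of the $\xi$-weighted measures. The only real subtlety, which I expect to be the main bookkeeping obstacle, is verifying that the choice of representatives $\tilde y_i=y_i({\cal V})$ can be taken identically in both expansions so that the differentials truly match; this is guaranteed because in both cases one selects one coordinate per block of the same partition ${\cal V}\in\Pi(S_1,T_1)$.
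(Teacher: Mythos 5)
Your argument is correct and coincides with the paper's own reasoning, which appears in the discussion immediately preceding the lemma rather than in a separate proof: one expands $dM_n^{S_1\cup T_1}$ via \eqref{Mk}, observes that on $W(S,T)$ the factors $\delta({\cal V})$ annihilate every partition having a block that meets both $S_1$ and $T_1$ (because $d(v^{S_1},v^{T_1})\ge d(v^S,v^T)\ge D_k(v)/k^2$), so only ${\cal V}\in\Pi(S_1,T_1)$ survive, and then subtracts the product expansion of $d(M_n^{S_1}M_n^{T_1})$, whose densities factorize over the same index set with the same choice of representatives $\tilde y_i$. The caveat you flag about configurations with $D_k(v)=0$ (the full diagonal, where the separation argument is vacuous) is equally present in the paper's treatment and is harmless for the ensuing integral bound in Lemma \ref{UMN}.
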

	The representations of  $dM_n^{|S_2|}$   and  $dM_n^{|T_2|}$ follow from \eqref{Mk}, that is to say
	\be \label{wstb}
	d M_n^{|S_2|} =  \sum_{j_3 = 1}^{|S_2|} \sum_{{\cal V} \in \Pi(S_2) }  m^{( |{\cal V}_1|,..., |{\cal V}_{j_3}| )}   (\tilde{y}_1,...,\tilde{y}_{j_3};n) \Pi_{i = 1}^{j_3}
	\md y_i( {\cal V}) \delta( {\cal V}),
	\ee
	where  $\Pi(S_2)$ runs over partitions of $S_2$ into $j_3$ sets, $1 \leq j_3 \leq |S_2|$. Similarly
	\be \label{wstc}
	d M_n^{|T_2|} =  \sum_{j_4 = 1}^{|T_2|} \sum_{{\cal V} \in \Pi(T_2) }  m^{( |{\cal V}_1|,..., |{\cal V}_{j_4}| )}   (\tilde{y}_1,...,\tilde{y}_{j_4};n) \Pi_{i = 1}^{j_4}
	\md y_i( {\cal V}) \delta( {\cal V}),
	\ee
	where  $\Pi(T_2)$ runs over partitions of $T_2$ into $j_4$ sets,  $1 \leq j_4 \leq |T_2|$.
	
	\subsubsection{Fast decay of correlations and semi-cluster measures}
	\label{sec:semi_cluster}
	
	The previous section established properties of semi-cluster and cumulant measures valid for any $\mu_n^\xi$ of the form \eqref{eqn:rand_meas}.
	If  $\xi$ with $\P$ exhibit fast decay of correlations~\eqref{e.clustering-generalized-mixed-monents} of the $\xi$-weighted measures
  and satisfies moment bounds, we now assert that each integral in \eqref{cumbound}  is  $O(n)$.
	
\begin{lemma} \label{UMN}  Assume $\xi$ satisfies  moment bounds \eqref{eqn:mom} for all $p \geq 1$ and
exhibits fast decay of correlations~\eqref{e.clustering-generalized-mixed-monents} in its
$\xi$-weighted measure.
		 For each partition element $(S,T)$ of $\Xi(k)$ we have
		\be \label{umnintegral}
		\int_{W(S,T) \subset W_n^S \times W_n^T}   | d(U_n^{S_1,T_1} M_n^{|S_2|}  M_n^{|T_2|} )| = O(n).
		\ee
	\end{lemma}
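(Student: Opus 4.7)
The plan is to unpack the integrand via the representation in Lemma \ref{wst}. Using the product structure $d(U_n^{S_1,T_1} M_n^{|S_2|} M_n^{|T_2|}) = d(U_n^{S_1,T_1}) \cdot d(M_n^{|S_2|}) \cdot d(M_n^{|T_2|})$ on disjoint coordinate groups and applying \eqref{wsta}--\eqref{wstc}, the total differential becomes a finite sum (indexed by $(\mathcal V, \mathcal V', \mathcal V'')$ in $\Pi(S_1,T_1) \times \Pi(S_2) \times \Pi(T_2)$) of products of mixed moment densities $m^{(\ldots)}(\cdots;n)$ times Lebesgue differentials on diagonal subspaces. The key factor coming from $dU_n^{S_1,T_1}$ is precisely the difference appearing in \eqref{e.clustering-generalized-mixed-monents}, while the factors from $dM_n^{|S_2|}$ and $dM_n^{|T_2|}$, as well as one of the two product terms coming from $dU_n^{S_1,T_1}$, are uniformly bounded: by H\"older's inequality, the $p$-moment condition \eqref{eqn:mom}, and the uniform bound \eqref{eqn:corr_bounded} on $\rho^{(p)}$, one has $|m^{(k_1,\ldots,k_p)}(\cdot;n)| \le \kappa_p \tilde M_K$ uniformly in $n$ and in the arguments, where $K := k_1+\cdots+k_p \le k$.

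Next, I apply Theorem \ref{prop:clustering_gen} to the cluster factor. With $s := d(\tilde y^{S_1},\tilde y^{T_1})$, the difference from $dU_n^{S_1,T_1}$ is bounded by $\tilde C_K \tilde\phi(\tilde c_K s)$. On $W(S,T)$ one has $s \ge d(v^S,v^T) \ge D_k(v)/k^2$, so the entire integrand is uniformly bounded by $C \cdot \tilde\phi(\tilde c_K D_k(v)/k^2)$ for a constant $C=C(k)$ depending only on $k$, the moment bounds, and the correlation bounds.

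The final step is to integrate. After the $\delta(\mathcal V),\delta(\mathcal V'),\delta(\mathcal V'')$ collapses, there are $j \le k$ distinct points $\tilde y_1,\ldots,\tilde y_j$. Since $S_1$ and $T_1$ are both nonempty, fix $\tilde y_1 \in S_1$ as anchor and let $\tilde y_\ell \in T_1$ be any $T_1$-representative. Translation invariance together with $\tilde y_1 \in W_n$ yields the factor $n$. Translate so $\tilde y_1 = 0$ and let $r := |\tilde y_\ell|$. Because $\tilde y_1 \in S_1$ and $\tilde y_\ell \in T_1$ we have $s \le r$, whereas $D_k(v) \ge r$ gives $s \ge D_k(v)/k^2 \ge r/k^2$; hence $\tilde\phi(\tilde c_K s) \le \tilde\phi(\tilde c_K r/k^2)$. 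Moreover every other remaining point $\tilde y_i$ satisfies $|\tilde y_i| \le D_k(v) \le k^2 s \le k^2 r$, so each such $\tilde y_i$ lies in a ball of radius $k^2 r$ about the origin. Integrating the $j-2$ remaining points over that ball against the uniform bound contributes $(c\, r^d)^{j-2}$, and integrating $\tilde y_\ell$ in polar coordinates yields
\[
\int_{\R^d} \tilde\phi(\tilde c_K |y|/k^2) \, |y|^{d(j-2)} \,\md y \;<\; \infty,
\]
because $\tilde\phi$ is fast-decreasing. Summed over the finitely many partitions $(\mathcal V,\mathcal V',\mathcal V'')$, this gives \eqref{umnintegral}.

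The main obstacle I expect is bookkeeping the collapsed diagonal structure: equations \eqref{wsta}--\eqref{wstc} produce sums over many partitions, and one must verify for each that the constraint $D_k(v) \le k^2 s$ inherited from $W(S,T)$ still ties all remaining coordinates to the chosen anchor. Once this geometric comparison is made explicit via the inequality $\sum_{i\ne 1}|\tilde y_i - \tilde y_1| \le D_k(v) \le 2k\sum_{i\ne 1}|\tilde y_i - \tilde y_1|$ (so that $D_k(v)$ and the sum of radii from $\tilde y_1$ are comparable up to a factor of $k$), the fast-decreasing property of $\tilde\phi$ absorbs every polynomial in $r$ and produces a constant independent of $n$.
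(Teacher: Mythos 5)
Your proposal is correct and follows essentially the same route as the paper: decompose the semi-cluster differential via \eqref{wsta}--\eqref{wstc}, bound the cluster factor on $W(S,T)$ by the fast decay \eqref{e.clustering-generalized-mixed-monents} (i.e.\ by $\tilde C_k\tilde\phi(\tilde c_k D_k(v)/k^2)$), bound the remaining moment-density factors uniformly via \eqref{eqn:mom} and \eqref{eqn:corr_bounded}, and integrate a fast-decreasing function of the inter-point distances to extract the single volume factor $n$. The only difference is cosmetic: in the last integral the paper distributes the decay over all relative coordinates via $D_k(\0,w_2,\ldots,w_k)\ge\sum_i|w_i|$ and the AM--GM inequality, whereas you anchor at an $S_1$-representative, keep the decay in the distance $r$ to a $T_1$-representative, and confine the remaining points to a ball of radius $k^2 r$ using the $W(S,T)$ constraint, which is an equally valid way to finish.
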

	
	\begin{proof} The differential  $d(U_n^{S_1,T_1}M_n^{|S_2|}  M_n^{|T_2|}   )$ is a  sum
		$$\sum_{j_1 = 1}^{|S_1|} \sum_{j_2 = 1}^{|T_1|}  \sum_{j_3 = 1}^{|S_2|}\sum_{j_4 = 1}^{|T_2|} [...][...][...]$$
		of products of three factors,
		one factor coming from each of the summands in \eqref{wsta}-  \eqref{wstc}.
		By Theorem \ref{prop:clustering_gen}, on the set $W(S,T)$ the factor arising
		from \eqref{wsta} is bounded  in absolute value by
		$$
		   \tilde{C}_k \tilde{\phi}(  \frac{ \tilde{c}_k D_k(y) } {k^2 }).
		$$
		By the moment bound \eqref{eqn:mom} the two remaining factors arising from  summands in \eqref{wstb}-  \eqref{wstc}
		are bounded by a constant $M'(k)$ depending only on $k$.

Thus we have
\begin{align*}
 \int_{W(S,T)}    | d(U_n^{S_1,T_1}M_n^{|S_2|}  M_n^{|T_2|}  )|
		& \leq \tilde{C}_k  (M'(k))^2 \sum_{j = 1}^k \sum_{{\cal V}} \int_{W(S,T)}   \tilde{\phi}(  \frac{ \tilde{c}_k D_k(y) } {k^2 })
		\Pi_{i = 1}^{j} \md y_i( {\cal V}) \delta( {\cal V}) \\
		& \leq \tilde{C}_k  (M'(k))^2 \sum_{j = 1}^k \sum_{{\cal V}} \int_{(W_n)^j } \tilde{\phi}(  \frac{ \tilde{c}_k D_k(y) } {k^2 })
		\Pi_{i = 1}^{j} \md y_i( {\cal V}) \delta( {\cal V}).
		\end{align*}
		Here ${\cal V}$ runs over all partitions of the $k$ coordinates into $j$ non-empty disjoint subsets.
		We assert that all summands are $O(n)$.  We show this when $j = k$, as the proof for the remaining indices $j \in \{1,...,k-1\}$ is similar.
		Write
		\begin{align*}
		& \int_{y_1 \in W_n} ... \int_{y_k \in W_n} \tilde{\phi}(  \frac{ \tilde{c}_k D_k(y)} {k^2}  ) \md y_1 ... \md y_k \\
		& = \int_{y_1 \in W_n} \int_{w_2 \in W_n - y_1}... \int_{w_k \in W_n - y_1} \tilde{\phi}( \frac{ \tilde{c}_k D_k(\0, w_2,...,w_k)} {k^2} ) \md y_1 \md w_2... \md w_k.
		\end{align*}
		Now $D_k(\0, w_2,...,w_k) \geq \sum_{i = 2}^k |w_i|$.  Letting $e_k := \tilde{c}_k (k-1)/k^2$ gives
		\begin{align*}
		& \int_{y_1 \in W_n} ... \int_{y_k \in W_n} \tilde{\phi}(  \frac{ \tilde{c}_k D_k(y)} {k^2} ) \md y_1 ... \md y_k \leq n  \int_{w_2 \in \R^d }... \int_{w_k \in \R^d} \tilde{\phi}( \frac{ e_k} {k -1}   \sum_{i = 2}^k |w_i| ) \md w_2... \md w_k \\
       & \leq n  \int_{w_2 \in \R^d }... \int_{w_k \in \R^d} \tilde{\phi}(    \Pi_{i = 2}^k |w_i|^{1/(k-1)} ) \md w_2... \md w_k = O(n),
				\end{align*}
		where the first inequality follows from the decreasing behavior of $\tilde{\phi}$, the second inequality follows from the arithmetic geometric mean inequality, and the last equality follows since $\tilde{\phi}$ is decreasing faster than any polynomial. We similarly bound the other summands for $j \in \{1,...,k-1\}$, completing the proof of Lemma \ref{UMN}.
	\end{proof}
	
	\subsubsection{Proof of Theorem \ref{thm:main0}}  By the bound \eqref{cumbound} and Lemma \ref{UMN} we obtain
	\eqref{cumbounds}.
	Letting $C_n^k$  be  the $kth$ cumulant for
	$( \Var \langle f, \mu_n^\xi \rangle)^{-1/2} \langle f, \mu_n^\xi \rangle $, we obtain $C_n^1 = 0,  C_n^2 = 1$, and for all $k = 3,4,....$
	$$
	C_n^k = O(n (\Var \langle f, \mu_n^\xi \rangle)^{-k/2} ).
	$$
	
	Since $\Var \langle f, \mu_n^\xi \rangle = \Omega(n^{\nu} )$ by assumption, it follows that
	if $k \in (2/\nu, \infty)$, then  the $k$th cumulant tends  $C_n^k$ to zero as $n \to \infty$. By a classical result of Marcinkiewicz 
	(see e.g.   \cite[Lemma 3]{So}), we get that all cumulants
	$C_n^k, \ k \geq 3$, converge to zero as $n \to \infty$.  This gives \eqref{1CLT} as desired and completes the proof of Theorem \ref{thm:main0}.  \qed
	
	\vskip.5cm

	\subsection{\bf Second proof of the central limit theorem}
	\label{sec:clt_proof2}
	
We  now give a second proof of the central limit theorem which we believe is of independent interest. Even though this proof is also based on the cumulant method used in Section \ref{sec:method_cumulants}, we shall bound the cumulants using a different approach, using Ursell functions of the $\xi$-weighted measure and establishing a property equivalent to Brillinger mixing; see Remarks at the end of Section \ref{sec:ursell_bounds}.
 Though much of this proof can be read independently of the proof in Section \ref{sec:proof_clt1}, we repeatedly  use the definition of moments and cumulants from Section \ref{sec:prop_cumulants}.

\paragraph{Our approach} We shall adapt the approach in~\cite[Sec. 4]{Nazarov12}
	replacing  $\P_{GEF}$  by our  $\xi$-weighted measures, which are purely atomic measures.
As noted  in Section~\ref{sec:strong_clustering_mix_mom},
the correlation functions of the $\xi$-weighted measure are generalizations of the
 correlations functions of the simple point process,
but the extension  of the approach  used in~\cite[Sec. 4]{Nazarov12}
requires some care regarding the  repeated arguments captured by general exponents $k_i$ in~\eqref{eqn:mixedmomentn}.

	\subsubsection{Ursell functions of the $\xi$-weighted measures}
	\label{sec:ursell}
	Recall the definition of the correlation functions~\eqref{eqn:mixedmomentn}
of the $\xi$-weighted measures
	$$ m^{(k_1,\ldots,k_p)}(x_1,\ldots,x_p;n):= \E_{x_1,\ldots,x_p}\bigl((\xi(x_1,\P_n))^{k_1}\ldots
	(\xi(x_p,\P_n))^{k_p}\bigr) \rho^{(p)}(x_1,\ldots,x_p). $$
	We will drop dependence on $n$, i.e., $m^{(k_1,\ldots,k_p)}(x_1,\ldots,x_p;n)=m^{(k_1,\ldots,k_p)}(x_1,\ldots,x_p)$
	unless asymptotics in $n$ is considered.
	
Inspired by the approach in~\cite[Section~2]{Baumann85} we now introduce {\em Ursell functions}
   $m_\T^{(k_1,\ldots,k_p)}$ (sometimes called {\em truncated correlation function})
of the $\xi$-weighted measures.
Define  $m_\T^{(k_1,\ldots,k_p)}$
	by taking $m_\T^{(k)}(x):=m^{(k)}(x)$ for all $k \in  \N$ and  inductively
\begin{equation}\label{e.Ursel-induction}
	m_\T^{(k_1,\ldots,k_p)}(x_1,\ldots,x_p):=
	m^{(k_1,\ldots,k_p)}(x_1,\ldots,x_p)-
	\sum_{\gamma\in\Pi[p]\atop |\gamma|>1}\prod_{i=1}^{|\gamma|}
	m_\T^{(k_j:j\in \gamma (i))}(x_j:j\in\gamma(i))\,.
	\end{equation}
	for distinct $x_1,\ldots,x_p\in W_n$
	and all integers $k_1,\ldots,k_p$,  $p\ge 1$, and (implicitly)
	$n\le\infty$.
	It is straightforward to prove that these functions satisfy
	the following relations. They extend the  known relations for point processes, where $m^{(k_1,\ldots,k_p)}(x_1,\ldots,x_p)=\rho^{(p)}(x_1,\ldots,x_p)$ depend only on $p$, but we were unable to
	find them in the literature for (signed) purely atomic random measures, as our $\xi$-weighted measures.
	Assuming $1 \in \gamma(1)$ in ~\eqref{e.Ursel-induction} and summing over partitions of $\{1,\ldots,p\} \setminus \gamma(1)$, we get the following relation :
	\begin{align}
	m^{(k_1,\ldots,k_p)}(x_1,\ldots,x_p) & =  m_\T^{(k_1,\ldots,k_p)}(x_1,\ldots,x_p) + \sum_{I\subsetneqq\{1,\ldots,p\}\atop 1 \in I} m_\T^{(k_j:j\in I)}(x_j:j\in I)\,
m^{(k_j:j\in I^c)}(x_j:j\in I^c)\,, \label{e.Ursel-j0}
	\end{align}
	where $I^c:=\{1,\ldots,p\}\setminus I$.
	Using~\eqref{e.Ursel-j0}, by induction with respect to $p$,
	one obtains the direct relation to the correlation functions
	\begin{equation}\label{e.Ursel-direct}
	m_\T^{(k_1,\ldots,k_p)}(x_1,\ldots,x_p)
	=\sum_{\gamma\in\Pi[p]}(-1)^{|\gamma|-1}(|\gamma|-1)!
	\prod_{i=1}^{|\gamma|} m^{(k_j:j\in \gamma(i))}(x_j:j\in\gamma(i))\,.
	\end{equation}
This extends the relation~\cite[(27)]{Nazarov12}, valid for point processes. We say that a partition $\gamma = \{\gamma(1),\ldots,\gamma(l)\} \in \Pi(p)$ {\em refines} partition $\sigma = \{\sigma(1),\ldots,\sigma(l_1)\} \in \Pi(p)$ if for all $i \in \{1,\ldots,l\}$, $\gamma(i) \subset \sigma(j)$ for some $j \in \{1,\ldots,l_1\}$. Otherwise, the partition $\gamma$ is said to {\em mix} partition $\sigma$. Now using~\eqref{e.Ursel-induction}, we get for any $I\subsetneqq\{1,\ldots,p\}$
	\begin{equation}\label{e.Ursel-product}
	m^{(k_j:j\in I)}(x_j:j\in I)\,
	m^{(k_j:j\in I^c)}(x_j:j\in I^c)=
	\sum_{\gamma\in\Pi[p]\atop \gamma\,\text{refines}\,\{I,I^c\}}
	\prod_{i=1}^{|\gamma|} m_\T^{(k_j:j\in \gamma(i))}(x_j:j\in\gamma(i))\,,
	\end{equation}
	and therefore, again in view of~\eqref{e.Ursel-induction}
	\begin{align}
	m_\T^{(k_1,\ldots,k_p)}(x_1,\ldots,x_p)&=
	m^{(k_1,\ldots,k_p)}(x_1,\ldots,x_p)-
	m^{(k_j:j\in I)}(x_j:j\in I)\,
	m^{(k_j:j\in I^c)}(x_j:j\in I^c) \no \\
	&+\sum_{\gamma\in\Pi[p], |\gamma|>1 \atop
		\gamma\,\text{mixes}\,\{I,I^c\}}
	\prod_{i=1}^{|\gamma|} m_\T^{(k_j:j\in
		\gamma(i))}(x_j:j\in\gamma(i))\,\label{e.Ursel-cluster}.
	\end{align}
This extends the relation \cite[last displayed formula in the proof of Claim 4.1]{Nazarov12} valid for point processes.
	
	\subsubsection{Fast decay of correlations and bounds for Ursell functions}
	\label{sec:ursell_bounds}
	We show now that fast decay of correlations~\eqref{e.clustering-generalized-mixed-monents}  of the $\xi$-weighted measures  implies
	some bounds on the Ursell functions of these measures. Since $m^{(k_1,\ldots,k_p)}(x_1,\ldots,x_p;n)$ is invariant with respect to any joint permutation of its arguments $(k_1,\ldots,k_p)$ and $(x_1,\ldots,x_p)$, fast decay of correlations~\eqref{e.clustering-generalized-mixed-monents} of the $\xi$-weighted measures may be rephrased as follows : 	There exists a fast-decreasing function $\tilde\phi$ and constants
	$\tilde C_k$, $\tilde c_k$, such that
	for any collection of  positive integers $k_1,\ldots,k_p$, $p\ge2$,
	satisfying $k_1+\ldots+k_p=k$, for any nonempty, proper subset
	$I\subsetneqq \{1,\ldots,p\}$,
	for all  $n\le\infty$ and all configurations  $x_1,\ldots,x_p\in W_n$ of distinct points
	we have
	\begin{equation}\label{e.clustering-generalized-mixed-monents1}
	\Bigl|m^{(k_1,\ldots,k_p)}(x_1,\ldots,x_p;n)-
	m^{(k_j: j\in I)}(x_j:j\in I; n)\, m^{(k_j: j\in I^c)}(x_j:j\in
	I^c;n)\Bigr|\le
	\tilde C_{k}\tilde\phi(\tilde c_{k} s)\,,
	\end{equation}
	where $s:=d\bigl(\{x_j:j\in I\}, \{x_j:j\in I^c\}\bigr)$.

Now we consider the bounds of Ursell functions  of the $\xi$-weighted measures.
Following the idea of~\cite[Claim~4.1]{Nazarov12} one
proves that fast decay of correlations~\eqref{e.clustering-generalized-mixed-monents} of the $\xi$-weighted measures	and the $p$-moment condition \eqref{eqn:mom} imply
	that there exists a fast-decreasing function $\tilde\phi_\T$ and constants
	$\tilde C^\T_k$, $\tilde c^\T_k$, such that
	for any collection of  positive integers $k_1,\ldots,k_p$, $p\ge2$,
	satisfying $k_1+\ldots+k_p=k$,
	for all  $n\le\infty$ and all configurations  $x_1,\ldots,x_p\in W_n$ of distinct points
	we have
	\begin{equation}\label{e.Usell-clustering-bound}
	|m_{\T}^{(k_1,\ldots,k_p)}(x_1,\ldots,x_p;n)|\le
	\tilde
	C^\T_k\tilde\phi_\T\bigl(c^\T_{k}\mathrm{diam}(x_1,\ldots,x_p)\bigr)
	\,,
	\end{equation}
where $\mathrm{diam}(x_1,\ldots,x_p):=\max_{i,j=1\ldots p}(|x_i-x_j|)$.
	The proof uses the representation~\eqref{e.Ursel-cluster}, fast decay of correlations~\eqref{e.clustering-generalized-mixed-monents} of the $\xi$-weighted measures, together with the fact that there exist constants  $c^\T_p$ (depending on the dimension $d$) such that for each configuration $x_1,\ldots,x_p\in W_n$ , there exists a partition
	$\{I,I^c\}$ of $\{1,\ldots,p\}$ such that $d(\{x_j:j\in I\}, \{x_j:j\in I^c\})\ge \tilde c^\T_p\mathrm{diam}(x_1,\ldots,x_p)$.
	
	Next, inequality ~\eqref{e.Usell-clustering-bound} allows one to bound integrals
	\begin{equation}\label{e.Ursel-integral-bound}
	\sup_{n\le\infty}\sup_{x_1\in W_n}\sup_{k_1+\ldots+k_p=k\atop k_i>0}
	\int_{(W_n)^{p-1}}|m_\T^{(k_1,\ldots,k_p)}(x_1,\ldots,x_p;n)|\,\md x_2\cdots \md x_p< \infty.
	\end{equation}
Indeed, for a fixed point $x_1\in W_n$, we split $(W_n)^{p-1}$ into disjoint sets:
\begin{align*}G_0 &:= \{(x_2,\ldots,x_p)\in(W_n)^{p-1} : \text{diam}(x_1,\ldots,x_p)\le 1\}\\
G_l &:= \{(x_2,\ldots,x_p)\in(W_n)^{p-1} : 2^{l-1}<\text{diam}(x_1,\ldots,x_p)\le 2^l\},\quad l\ge 1
\end{align*}
and use estimate \eqref{e.Usell-clustering-bound} to bound the integral on the left-hand side of (~\ref{e.Ursel-integral-bound}) by
	$$\tilde C^\T_k+\tilde C^\T_k\sum_{l=1}^\infty 2^{dl(k-1)}\tilde \phi_\T(\tilde c_k^\T 2^{l-1})<\infty$$
	since $\tilde \phi_\T$ is fast-decreasing; cf. \cite[Claim~4.2]{Nazarov12}.

\paragraph{\bf Remarks}\ \\
(i)  A careful inspection of the relation~\eqref{e.Ursel-cluster} 
shows that in fact the  fast decay of correlations~\eqref{e.clustering-generalized-mixed-monents} of the $\xi$-weighted measures
{\em is equivalent} to the bound~\eqref{e.Usell-clustering-bound} on Ursell functions of these measures.\\
(ii) Condition~\eqref{e.Ursel-integral-bound}, implied by~\eqref{e.Usell-clustering-bound}, can be interpreted as the Brillinger mixing condition of the $\xi$-weighted measures.
In fact it is slightly stronger in the sense that the bound on the  Ursell functions
integrated over $\md x_2\cdots \md x_p$ in the entire space (corresponding to the total reduced cumulant measures)
is uniform for the whole family of the  $\xi$-weighted measuress considered on $W_n$, parametrized by $n\le\infty$ and,
for $n<\infty$ the bound is also uniform over $x_1\in W_n$ (which is immediate for reduced cumulant measures in the stationary case $n=\infty$).

\subsubsection{Proof of Theorem \ref{thm:main0}}
	
	The cumulant of order one is equal to the expectation and hence
	disappears for the considered (centered) random variable $\overline\mu_n^\xi(f)$.
	The cumulant
	of order $2$ is equal to the variance and hence equal to $1$ in our case.
	For $k\ge 2$, note the following relation between the normalized and the unnormalized cumulants :
	\begin{equation}
	\label{e.cum_reln}
	S_k((\Var\ \overline\mu_n^\xi(f))^{-1/2} \mu_n^\xi(f)) = (\Var\  \mu_n^\xi(f))^{-k/2} \times S_k(\mu_n^\xi(f)).
	\end{equation}
	
We establish the vanishing of~\eqref{e.cum_reln} for $k$ large by showing that the $k$th order cumulant $S_k(\mu_n^\xi(f)$ is of order $O(n)$, $k \geq 2$, and then use assumption \eqref{vlb0}, i.e., $\Var \langle f, \mu_n^\xi \rangle = \Omega(n^{\nu})$. We have
	\begin{align*}
	M_n^k&:=\E(\langle f, \mu_n^\xi \rangle)^k
	=\E\Bigl(\sum_{x_i\in\P_n} f_n(x_i)\xi(x_i,\P_n) \Bigr)^k,
	\end{align*}
        where $f_n(\cdot) = f(\cdot/n^{1/d})$. Considering appropriately the repetitions of points $x_i$ in the $k\,$th  product of the sum and using  the Campbell theorem at \eqref{disint}, one obtains
      \be \label{momentformula}
	M_n^k=\sum_{\sigma\in\Pi[k]}\langle \bigotimes_{i=1}^{|\sigma|}f_n^{|\sigma(i)|}m^{(\sigma)},\lambda_n^{|\sigma|}\rangle\,,\ee
where $\lambda_n^l$ denotes the Lebesgue measure on $(W_n)^l$ and $\bigotimes$ denotes the tensor product of functions
$$(\bigotimes_{i=1}^pf_n^{k_j})(x_1,\ldots,x_{p}) = \prod_{i=1}^p(f_n)^{k_j}(x_j)\,, \, m^{(\sigma)}(x_1,\ldots,x_{|\sigma|};n):=m^{(|\sigma(1)|,\ldots,|\sigma(|\sigma|)|)}(x_1,\ldots,x_{|\sigma|};n)\,$$
Using the above representation and~\eqref{e.cumulants-moments} the $k$th cumulant $S_k(\mu^\xi_n(f))$ can be expressed as follows
        \begin{align}
	S_k(\mu^\xi_n(f))& = \sum_{\gamma\in\Pi[k]} (-1)^{|\gamma|-1}
	(|\gamma|-1)!\sum_{\sigma\in\Pi[k]\atop \sigma\, \text{refines}\,\gamma}\prod_{i=1}^{|\gamma|}
	\langle \bigotimes_{j=1}^{|\gamma(i)/\sigma|} f_n^{(\gamma(i)/\sigma)(j)}m^{(\gamma(i)/\sigma)},\lambda_n^{|\gamma(i)/\sigma|}\rangle  \no \\
        &=\sum_{\sigma\in\Pi[k]} \sum_{\gamma \in\Pi[k]\atop \sigma\, \text{refines}\,\gamma}
	(-1)^{|\gamma|-1}(|\gamma|-1)!
	\prod_{i=1}^{|\gamma|}
        \langle  \bigotimes_{j=1}^{|\gamma(i)/\sigma|} f_n^{(\gamma(i)/\sigma)(j)}m^{(\gamma(i)/\sigma)},\lambda_n^{|\gamma(i)/\sigma|}\rangle\,,\label{e.S_k-Mk}
	\end{align}
where $\gamma(i)/\sigma$ is the partition of $\gamma(i)$ induced by $\sigma$. Note that for any partition $\sigma\in\Pi[k]$, with $|\sigma(j)|=k_j$, $j=1,\ldots, |\sigma|=p$, the inner sum in~\eqref{e.S_k-Mk} can be rewritten as follows:
\be
	\label{e.tensor}
	\sum_{\gamma \in\Pi[p]}
	(-1)^{|\gamma|-1}(|\gamma|-1)!
	\prod_{i=1}^{|\gamma|}
	\langle \bigotimes_{j\in\gamma(i)}f_n^{k_j}m^{(k_j:j\in\gamma(i))},\lambda_n^{|\gamma(i)|}\rangle=
	\langle \bigotimes_{j=1}^pf_n^{k_j}m_\T^{(k_1,\ldots,k_p)},\lambda_n^p\rangle\,,
	\ee
where the equality is due to~\eqref{e.Ursel-direct}. Consequently
\begin{equation}\label{e.cumulant-Ursell}
S_k(\mu^\xi_n(f))=\sum_{\sigma\in\Pi[k]} \langle \bigotimes_{j=1}^{|\sigma|}f_n^{|\sigma(j)|}m_\T^{(|\sigma(1)|,\ldots,|\sigma(|\sigma|)|)},\lambda_n^{|\sigma|}\rangle\,,
	\end{equation}
which extends the relation \cite[Claim 4.3]{Nazarov12} valid for point processes. The formula~\eqref{e.cumulant-Ursell}, which expresses the $k$th cumulant in terms of the Ursell functions, is the counterpart to the standard formula \eqref{momentformula} expressing $k$th moments in terms of correlation functions. Now, using \eqref{e.Ursel-integral-bound} and denoting the supremum therein by $\hat{C_k}$, we have that
	\begin{eqnarray*}
&  & |\langle \bigotimes_{j=1}^pf_n^{k_j}m_\T^{(k_1,\ldots,k_p)},\lambda_n^p\rangle\ | \leq \int_{W_n^p} |\bigotimes_{j=1}^pf_n^{k_j}| |m_\T^{(k_1,\ldots,k_p)}(x_1,\ldots,x_p)| \md x_1 \ldots \md x_p \\
		& \leq & \|f\|^k_{\infty} \int_{W_n} \md x_1 \int_{W_n^{p-1}} |m_\T^{(k_1,\ldots,k_p)}(x_1,\ldots,x_p)| \md x_2 \ldots \md x_p
		\, \, \leq  \|f\|^k_{\infty} \hat{C}_k\Vol_{ d} (W_n).
	\end{eqnarray*}

So, the above bound along with \eqref{e.S_k-Mk} and \eqref{e.tensor} gives us that $S_k(\mu^\xi_n(f)) = O(n)$ for all $k \geq 2$. Thus, using the variance lower bound condition \eqref{vlb0} and the relation \eqref{e.cum_reln}, we get for large enough $k$, that $S_k((\Var \mu_n^\xi(f))^{-1/2} \mu_n^\xi(f)) \to 0$ as $n \to \infty$. Now, as discussed in \eqref{1CLT}, this suffices to guarantee normal convergence.  \qed
		
\section{\bf Appendix}
\label{sec:appendix}

	
\subsection{\bf Facts needed in the proof of fast decay of correlations of the $\xi$-weighted measures}	
	
The following facts about U-statistics and the radius of stabilization are used in the proof		
of Lemma~\ref{l.FMEp}.

\begin{lemma}\label{l.U-stats}
Let $f,g$ be two real valued, symmetric functions defined on $(\R^d)^k$
and $(\R^d)^l$ respectively. Let $F:= \frac{1}{k!}\sum_{\x \in \X^{(k)}}f(\x)$ and $G:=\frac{1}{l!}\sum_{\x' \in
\X^{(l)}}g(\x')$ be the corresponding U-statistics of order $k$ and $l$
respectively, on the input $\X\subset\R^d$. Then we have: \\
\noindent(i) The product $F\,G$ is a sum of U-statistics of order not greater than $k+l$.\\
\noindent(ii) Let $\A$ be a fixed, finite subset of $\R^d$. The statistic $F_\A:=\frac{1}{k!}\sum_{\x \in (\X\cup\A)^{(k)}}f(\x)$  is a
sum of U-statistics of $\X$ of order not greater than $k$.
\end{lemma}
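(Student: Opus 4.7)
My plan is to prove both parts by partitioning the summation indices according to their combinatorial overlap structure, which in each case produces a sum of U-statistics of the advertised orders.

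For part (i), I would expand
$$F\cdot G = \frac{1}{k!\,l!}\sum_{\x\in\X^{(k)}}\sum_{\x'\in\X^{(l)}} f(\x)\,g(\x')$$
and partition the double sum according to $j:=|\{x_1,\ldots,x_k\}\cap\{x'_1,\ldots,x'_l\}|$, which ranges over $\{0,1,\ldots,\min(k,l)\}$. For each such $j$, the union $\{\x\}\cup\{\x'\}$ consists of $m:=k+l-j$ distinct points. Re-indexing the pair $(\x,\x')$ by the underlying $m$-tuple $\y\in\X^{(m)}$ together with the choice of which $k$ coordinates of $\y$ constitute $\x$ and which $l$ constitute $\x'$ (with overlap size exactly $j$), the $j$-th block becomes $\frac{1}{m!}\sum_{\y\in\X^{(m)}}h_j(\y)$ with the symmetric kernel
$$h_j(\y):=\frac{m!}{k!\,l!}\sum_{\substack{I,I'\subset[m]\\|I|=k,\;|I'|=l\\|I\cap I'|=j,\;I\cup I'=[m]}} f(y_I)\,g(y_{I'}),$$
where $y_I$ denotes the sub-tuple indexed by $I$ (well-defined up to order since $f,g$ are symmetric). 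Thus $F\cdot G=\sum_{j=0}^{\min(k,l)}\frac{1}{m!}\sum_{\y\in\X^{(m)}}h_j(\y)$ is a sum of U-statistics of orders $k,k+1,\ldots,k+l$.

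For part (ii), I would write $\X\cup\A=\A\sqcup(\X\setminus\A)$ and partition the sum $\sum_{\x\in(\X\cup\A)^{(k)}}f(\x)$ according to the set $\A'\subset\A$ of coordinate-values of $\x$ lying in $\A$. For fixed $j:=|\A'|\in\{0,\ldots,\min(k,|\A|)\}$ the remaining $k-j$ coordinates traverse $(\X\setminus\A)^{(k-j)}$, so after collecting the $\binom{k}{j}$ positional choices and using the symmetry of $f$, the $\A'$-block becomes a U-statistic on $\X\setminus\A$ of order $k-j$ with symmetric kernel $h_{\A'}(\y):=\binom{k}{j}f(\A',\y)$. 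Finally, by inclusion--exclusion over the finite set $\A\cap\X$, any U-statistic on $\X\setminus\A$ of order $m$ can be rewritten as a finite linear combination of U-statistics on $\X$ of orders $\le m$. Assembling these contributions over $\A'\subset\A$ exhibits $F_\A$ as a finite sum of U-statistics of $\X$ of orders $0,1,\ldots,k$.

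There is no substantive obstacle; both parts are purely combinatorial bookkeeping once the correct partition (by intersection size in (i), by the $\A$-part in (ii)) is chosen. The only mild nuisance is item (ii) when $\A\cap\X\ne\emptyset$, handled by the inclusion--exclusion step above.
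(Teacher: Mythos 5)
Your argument is correct and, at its core, is the same as the paper's: for (i) the paper also groups the pair of index tuples by the size $m=k+l-j$ of their union and rewrites each group as a $U$-statistic of order $m$ over $\X^{(m)}$ (after symmetrizing the kernel), and for (ii) it also splits each tuple according to which coordinates come from $\A$, arriving at the identity $F_\A=\sum_{m=0}^{\min(|\A|,k)}\sum_{\a\in\A^{(m)}}\sum_{\z\in\X^{(k-m)}}\frac{f(a_1,\ldots,a_m,z_1,\ldots,z_{k-m})}{m!(k-m)!}$. The one place you diverge is the final inclusion--exclusion step in (ii): the paper's displayed identity tacitly treats $\A$ and $\X$ as disjoint, which is all that is needed in its application (there $\A$ is the set of Palm atoms, removed from the reduced process, and the kernels vanish on diagonals), so no conversion from $U$-statistics of $\X\setminus\A$ to $U$-statistics of $\X$ is required; your extra step handles the general overlapping case, and if you wanted it fully rigorous you would have to note that the pinned kernels carry indicators $\1[b\in\X]$, $b\in\A\cap\X$, which are themselves order-one $U$-statistics, so the reduction still lands in the claimed class. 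Two harmless bookkeeping slips: in (i) the orders produced run from $\max(k,l)$ (not $k$) up to $k+l$, and in (ii) the factor $\binom{k}{j}$ in your kernel is extraneous once the $1/(k-j)!$ normalization is used; neither affects the conclusion that the orders do not exceed $k+l$ and $k$, respectively.
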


\noindent {\em Proof.} The two statements follow from symmetrizing the inner summands in the below representations
$$F\,G=\sum_{m=\max(k,l)}^{k+l} \sum_{\z\in\X^{(m)}}\frac{ f(z_1,\ldots,z_k)g(z_{m-l+1},\ldots,z_m)}{(k+l-m)!(m-k)!(m-l)!}\,,$$
%
%
%
%
$$F_\A=\sum_{m=0}^{\min(|\A|,k)}\sum_{\a\in\A^{(m)}} \sum_{\z\in\X^{(k-m)}}\frac{f(a_1,\ldots,a_m,z_1,\ldots,z_{k-m})}{m!(k-m)!}\,,$$
%
%
%
For a proof of the first representation note that by symmetry of $f$, $F(\Y) = f(y_1,\ldots,y_k)$ if $Y = \{y_1,\ldots,y_k\}$ and similarly for $G$. Thus, we derive that
\begin{eqnarray*}
F\,G & = & \sum_{\Y_i \subset \X, i=1,2}F(\Y_1)G(\Y_2) \1[|\Y_1| = k, |\Y_2| = l] \\
& = & \sum_{m = \max(k,l)}^{k+l} \sum_{\Y_i \subset \X, i=1,2}F(\Y_1)G(\Y_2) \1[|\Y_1 \cup \Y_2| = m,|\Y_1| = k, |\Y_2| = l] \\
&  = &  \sum_{m = \max(k,l)}^{k+l} \sum_{\Y \subset \X} \1[|\Y| = m] \sum_{\Y_i  \subset \Y, i=1,2}F(\Y_1)G(\Y_2) \1[\Y_1 \cup \Y_2 = \Y,|\Y_1| = k, |\Y_2| = l] \\
& = & \sum_{m = \max(k,l)}^{k+l} \sum_{\z \in \X^{(m)}} \frac{1}{m!}  \sum_{\Y_1,\Y_2}F(\Y_1)G(\Y_2) \1[\Y_1 \cup \Y_2 = \{z_1,\ldots,z_m\},|\Y_1| = k, |\Y_2| = l] \\
& = &  \sum_{m = \max(k,l)}^{k+l} \sum_{\z \in \X^{(m)}} \frac{f(z_1,\ldots,z_k)g(z_{m-l+1},\ldots,z_m)}{m!}  \sum_{\Y_1,\Y_2} \1[\Y_1 \cup \Y_2 = \{z_1,\ldots,z_m\},|\Y_1| = k, |\Y_2| = l] \\
& = &\sum_{m=\max(k,l)}^{k+l} \sum_{\z\in\X^{(m)}}\frac{ f(z_1,\ldots,z_k)g(z_{m-l+1},\ldots,z_m)}{(k+l-m)!(m-k)!(m-l)!}\,,
\end{eqnarray*}
thus proving the first representation above. The second representation follows similarly. \, \, \, $\square$

%
\begin{lemma}\label{l.Radius}
Let $\xi$ be a score function on  locally finite input $\X$ and $R^{\xi}:= R^\xi(x, \X)$ its radius of stabilization. Given $t>0$ consider the score function
		$\tilde\xi(x,\X):=\xi(x,\X)\1[R^{\xi}(x,\X)\le t]$. Then the radius of stabilization $R^{\tilde\xi}:= R^{\tilde\xi}(x, \X)$ of $\tilde\xi$ is bounded by $t$, i.e.,
		$R^{\tilde\xi}(x,\X)\le t$ for any  $x\in\X$.
	\end{lemma}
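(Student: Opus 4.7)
The plan is to argue that $\tilde\xi(x,\cdot)$ depends on its configuration argument only through its restriction to $B_t(x)$; this will be exactly the stabilization condition at radius $t$. The key observation I would exploit is that the event $\{R^\xi(x,\Y)\le t\}$ depends on $\Y$ only through $\Y \cap B_t(x)$: by Definition \ref{def.Stab.Radius}, this event is characterized by the identity $\xi(x,\Y\cap B_t(x)) = \xi(x, (\Y\cap B_t(x)) \cup (\B \cap B_t(x)^c))$ holding for every locally finite $\B$, and this characterization references $\Y$ solely through $\Y\cap B_t(x)$.

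Next I would fix an arbitrary locally finite $\mathcal{A} \subset \R^d$ and set $\X_1 := \X \cap B_t(x)$ and $\X_2 := \X_1 \cup (\mathcal{A} \cap B_t(x)^c)$, noting that $\X_1 \cap B_t(x) = \X_2 \cap B_t(x) = \X_1$. The observation above then yields $R^\xi(x,\X_1) \le t$ if and only if $R^\xi(x,\X_2) \le t$. If both radii exceed $t$, then $\tilde\xi(x,\X_1) = 0 = \tilde\xi(x,\X_2)$ directly from the truncation indicator, and there is nothing more to check. Otherwise both radii are at most $t$, so $\tilde\xi(x,\X_i) = \xi(x,\X_i)$ for $i=1,2$, and the stabilization property for $\xi$ at $\X_1$ (applied with outer configuration $\B := \mathcal{A}$) gives $\xi(x,\X_1) = \xi(x,\X_2)$. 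Hence $\tilde\xi(x,\X_1) = \tilde\xi(x,\X_2)$ in either case.

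Since $\mathcal{A}$ was arbitrary, this is precisely the assertion that $t$ is a valid stabilization radius for $\tilde\xi$ at $x$ with input $\X$, giving $R^{\tilde\xi}(x,\X) \le t$ as claimed. The entire argument is a direct unpacking of Definition \ref{def.Stab.Radius} combined with the locality of both the truncation indicator and the stabilized value of $\xi$, so I anticipate no genuine obstacle; the only minor bookkeeping concern, should one wish to respect the $\mathbb{N}$-valued convention for $R$, is to replace $t$ by $\lceil t \rceil$ throughout, which does not affect any of the restrictions appearing in the argument.
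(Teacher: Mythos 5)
Your argument is correct and is essentially the paper's own proof: both rest on the two observations that the event $\{R^\xi(x,\cdot)\le t\}$ is determined by the configuration inside $B_t(x)$, and that on this event the value of $\xi$ is likewise determined by the configuration inside $B_t(x)$; the paper merely writes the case split as a single product identity $\tilde\xi(x,\X_2)=\xi(x,\X_1)\,\1[R^\xi(x,\X_1)\le t]=\tilde\xi(x,\X_1)$. Your closing remark about replacing $t$ by $\lceil t\rceil$ to respect the $\N$-valued convention for $R$ is the right way to handle the only notational wrinkle.
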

\begin{proof}
		Let  $\X,\A$ be  locally finite subsets of $\R^d$ with $x\in\X$. We have
		\begin{align*}
		&\tilde\xi(x,(\X\cap B_t(x))\cup (\cA\cap B_t^c(x)))\\
		&=\xi(x,(\X\cap B_t(x))\cup (\cA\cap B_t^c(x)))
		\,\1\bigl[R^\xi(x,(\X\cap B_t(x))\cup (\cA\cap B_t^c(x)))\le t\bigr]\\
		&=\xi(x,\X\cap B_t(x))
		\,\1\bigl[R^\xi(x,(\X\cap B_t(x))\cup (\cA\cap B_t^c(x)))\le t\bigr]\,,
		\end{align*}
where the last equality follows from the definition of $R^\xi$.
Notice
$$\1\bigl[R^\xi(x,(\X\cap B_t(x))\cup (\cA\cap B_t^c(x)))\le t\bigr]
		=\1\bigl[R^\xi(x,\X\cap B_t(x))\le t\bigr]$$
and so $\tilde\xi(x,(\X \cap B_t(x))\cup (\cA \cap B_t^c(x))) =
		\tilde\xi(x, \X \cap B_t(x)) $, which was to be shown.
	\end{proof}
\subsection{\bf Determinantal and permanental point process lemmas}	

The following facts illustrate the tractability of determinantal and permanental point processes and are of independent interest. If determinantal and  permanental point processes have a kernel $K$ decreasing fast enough, then they generate admissible point processes having fast decay of correlations as well as satisfying conditions \eqref{eqn:sum} and \eqref{eqn:clustering_condition} respectively. We are indebted to Manjunath Krishnapur for sketching  the proof of this result.	
	
	\begin{lemma}
		\label{lem:clustering_DPP}
		Let $\P$ be a stationary 
		determinantal point process on $\mR^d$ with a kernel satisfying
		$K(x,y) \leq \omega(|x-y|)$, where  $\omega$  is at \eqref{fastdk}. Then
		\be \label{dppestimate}
		| \rho^{(n)}(x_1,\ldots,x_{p+q}) - \rho^{(p)}(x_1,\ldots,x_{p})\rho^{(q)}(x_{p+1},\ldots,x_{p+q})|  \leq n^{1+\frac{n}{2}}\omega(s) \|K\|^{n-1},
		\ee
		where $\|K\| := \sup_{x,y \in \mR^d}|K(x,y)|$, $s$ is at \eqref{defs},  and $n = p + q$.
	\end{lemma}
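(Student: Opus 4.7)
The plan is to compare the two sides as determinants of the kernel Gram matrix and quantify the difference using a Laplace-type expansion. Writing $M=\bigl(K(x_i,x_j)\bigr)_{1\le i,j\le n}$ in block form
$$M=\begin{pmatrix}A & B\\ B^{*} & D\end{pmatrix}$$
with $A$ the $p\times p$ diagonal block and $D$ the $q\times q$ one, the determinantal identity gives $\rho^{(n)}(x_1,\ldots,x_n)=\det M$ and $\rho^{(p)}(x_1,\ldots,x_p)\rho^{(q)}(x_{p+1},\ldots,x_{p+q})=\det A\det D$. Since any point in $\{x_1,\ldots,x_p\}$ is at distance $\ge s$ from any point in $\{x_{p+1},\ldots,x_{p+q}\}$, all entries of $B$ (and of $B^{*}$) are bounded in absolute value by $\omega(s)$, whereas all entries of $M$ are bounded by $\|K\|$.

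My first step is to Laplace-expand $\det M$ along the first $p$ rows,
$$\det M=\sum_{\substack{I\subset\{1,\ldots,n\}\\|I|=p}}\varepsilon_{I}\,\det M_{[\{1..p\},I]}\,\det M_{[\{p+1..n\},I^{c}]}\,,$$
and isolate the term $I=\{1,\ldots,p\}$, which reproduces exactly $\det A\det D$. Any other $I$ has the form $I=(\{1..p\}\setminus S_{1})\cup S_{2}$ with $S_{1}\subset\{1..p\}$, $S_{2}\subset\{p+1..n\}$, and $|S_{1}|=|S_{2}|=:k\ge 1$; both submatrices that appear then contain exactly $k$ columns drawn from the off-diagonal block (whose entries are $\le\omega(s)$), and the remaining columns have entries $\le\|K\|$.

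Next I would apply Hadamard's inequality column-by-column to each submatrix. The $p-k$ ``diagonal'' columns of $M_{[\{1..p\},I]}$ have Euclidean norm at most $\sqrt{p}\,\|K\|$, and the $k$ ``off-diagonal'' columns have norm at most $\sqrt{p}\,\omega(s)$, giving $|\det M_{[\{1..p\},I]}|\le p^{p/2}\|K\|^{p-k}\omega(s)^{k}$; a symmetric estimate holds for $|\det M_{[\{p+1..n\},I^c]}|$. Multiplying these, and using the elementary bounds $p^{p/2}q^{q/2}\le n^{n/2}$ and (assuming without loss of generality that $\omega(s)\le\|K\|$) $\omega(s)^{2k}\|K\|^{n-2k}\le \omega(s)\|K\|^{n-1}$ for $k\ge 1$, every nontrivial summand in the Laplace expansion is bounded by $n^{n/2}\omega(s)\|K\|^{n-1}$.

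The final step is to sum over the admissible $I$ and recover the combinatorial prefactor $n^{1+n/2}$. The hard part will be this last count: the naive bound $\binom{n}{p}-1\le 2^{n}-1$ on the number of bad $I$ only produces the weaker estimate $\sim 2^{n} n^{n/2}\omega(s)\|K\|^{n-1}$, whereas the lemma asserts the sharper prefactor $n\cdot n^{n/2}$. Tightening this will either require exploiting sign cancellations among the $\varepsilon_I$ in the Laplace expansion, grouping bad index sets sharing a common $(S_1,S_2)$-skeleton and bounding the resulting aggregates via singular-value inequalities, or switching to the Schur-complement identity $\det M=\det A\,\det(D-B^{*}A^{-1}B)$ and combining the PSD inequality $D-B^{*}A^{-1}B\preceq D$ with a perturbation-of-determinant bound controlled by $\|B\|_{F}\le\sqrt{pq}\,\omega(s)$.
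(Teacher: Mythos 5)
Your reduction to $\rho^{(n)}=\det M$, $\rho^{(p)}\rho^{(q)}=\det A\det D$, together with the observation that every entry of the off-diagonal blocks is bounded by $\omega(s)$, matches the paper's starting point exactly. The gap is the one you yourself flag at the end: the block Laplace expansion generates $\binom{n}{p}-1$ nontrivial terms, each of which Hadamard only bounds by $n^{n/2}\omega(s)\|K\|^{n-1}$, so you land on a prefactor of order $2^{n}n^{n/2}$ rather than the claimed $n^{1+n/2}$, and none of the three repairs you list is carried out. As written the argument therefore proves only a strictly weaker inequality. (For what it is worth, $2^{n}n^{n/2}\|K\|^{n-1}=O(n^{(1/2+\epsilon)n})$ still satisfies the growth condition \eqref{eqn:sum}, so the downstream application to class (A2) input would survive; but the lemma as stated is not established, and there is no obvious sign cancellation among the Laplace terms to rescue the sharper constant.)

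The missing idea is to use multilinearity of the determinant in its \emph{columns} instead of a block expansion. With $L$ the block-diagonal matrix with blocks $A$ and $D$, telescope
\begin{equation*}
\det M-\det L=\sum_{i=1}^{n}\det\bigl(L_{1},\ldots,L_{i-1},\,M_{i}-L_{i},\,M_{i+1},\ldots,M_{n}\bigr),
\end{equation*}
which has exactly $n$ terms. In the $i$-th term the single column $M_{i}-L_{i}$ is supported on the off-diagonal blocks, so its entries are bounded by $\omega(s)$ and its Euclidean norm by $\sqrt{n}\,\omega(s)$, while every other column has norm at most $\sqrt{n}\,\|K\|$; Hadamard then bounds each term by $n^{n/2}\omega(s)\|K\|^{n-1}$, and summing over $i$ yields $n^{1+n/2}\omega(s)\|K\|^{n-1}$. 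This is precisely the determinant perturbation inequality $|\det M-\det L|\le n^{1+n/2}\|M-L\|_{\infty}\max(\|M\|_{\infty},\|L\|_{\infty})^{n-1}$ of \cite[(3.4.5)]{Anderson10}, which is all the paper invokes. Your Schur-complement alternative would additionally have to deal with singular $A$, so the telescoping route is the one to take.
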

		
	\noindent{\em Proof.}  Define the matrices $K_0 := (K(x_i,x_j))_{1 \leq i,j \leq n}, K_1 := (K(x_i,x_j))_{1 \leq i,j \leq p}$,
	and $K_2 := (K(x_i,x_j))_{p+1 \leq i,j \leq n}$.
	Let $L$ be the block diagonal matrix with blocks $K_1,K_2$. We define $\|K_0\| := \sup_{1 \leq i,j \leq n} |K_0(x_i,x_j)|$ and similarly for the other matrices. Then
	\begin{eqnarray}
| \rho^{(n)}(x_1,\ldots,x_{p+q}) & - & \rho^{(p)}(x_1,\ldots,x_{p})\rho^{(q)}(x_{p+1},\ldots,x_{p+q})| \no \\
	& = & | \det(K_0) - \det(K_1)\det(K_2)| \, = \,  |\det(K_0) - \det(L) | \no \\
		& \leq & n^{1+\frac{n}{2}}\|K_0-L\| \|K_0\|^{n-1} \, \leq \, n^{1+\frac{n}{2}}\omega(s) \|K\|^{n-1} \label{eqn:clustering_DPP}
	\end{eqnarray}
	where the inequality follows by \cite[(3.4.5)]{Anderson10}.  This gives \eqref{dppestimate}.  \qed
	
As a first step to prove the analogue of Lemma~\ref{lem:clustering_DPP} for permanental point processes, we prove an analogue of \eqref{eqn:clustering_DPP}.  We follow verbatim the proof of \eqref{eqn:clustering_DPP} as given in
	\cite[(3.4.5)]{Anderson10}.  Instead of using Hadamard's inequality for determinants as in \cite{Anderson10}, we use  the following version of Hadamard's inequality for permanents (\cite[Theorem 1.1]{CLL}): For any column vectors $v_1,\ldots,v_n$ of length $n$ with complex entries, it holds that

	$$ | {\rm{per}}([v_1,\ldots,v_n])| \leq \frac{n!}{n^{\frac{n}{2}}}\prod_{i=1}^n\sqrt{\bar{v_i}^Tv_i} \leq n! \prod_{i=1}^n\|v_i\|,  $$	
where $\|v_i\|$ is the $l_{\infty}$-norm of $v_i$ viewed as an $n$-dimensional complex vector.
\begin{lemma}
\label{lem:estimate_perm}
Let $n \in \N$. For any two matrices $K$ and $L$, we have
$$ |{\rm{per}}(K) - {\rm{per}}(L)| \leq n n! \|K-L\|\max\{\|K\|,\|L\|\}^{n-1}  .$$
\end{lemma}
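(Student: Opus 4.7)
The plan is to mimic exactly the telescoping argument used in the Anderson--Guionnet--Zeitouni proof of the analogous determinantal estimate \cite[(3.4.5)]{Anderson10}, swapping the Hadamard inequality for determinants with its permanental analogue just recalled before the lemma statement.

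First I would form an interpolating sequence of matrices $M_0, M_1, \ldots, M_n$, where $M_0 := K$, $M_n := L$, and $M_i$ agrees with $L$ in its first $i$ columns and with $K$ in its remaining $n-i$ columns. Then I would write the telescoping identity
\[
\mathrm{per}(K)-\mathrm{per}(L) \;=\; \sum_{i=1}^n \bigl(\mathrm{per}(M_{i-1}) - \mathrm{per}(M_i)\bigr).
\]
Since $M_{i-1}$ and $M_i$ differ only in column $i$, multilinearity of the permanent in its columns gives $\mathrm{per}(M_{i-1})-\mathrm{per}(M_i) = \mathrm{per}(N_i)$, where $N_i$ coincides with $M_{i-1}$ (equivalently $M_i$) outside column $i$, and whose $i$-th column is the difference of the $i$-th columns of $K$ and $L$.

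Next I would apply the permanental Hadamard inequality $|\mathrm{per}([v_1,\ldots,v_n])| \leq n!\prod_{i=1}^n\|v_i\|_\infty$ recorded just above the lemma statement. The $i$-th column of $N_i$ has entries bounded in modulus by $\|K-L\|$, while every other column comes from either $K$ or $L$, hence has entries bounded by $\max\{\|K\|,\|L\|\}$. Therefore
\[
|\mathrm{per}(N_i)| \;\leq\; n!\,\|K-L\|\,\max\{\|K\|,\|L\|\}^{n-1}.
\]
Summing these $n$ estimates in the telescoping identity yields the claimed bound $nn!\,\|K-L\|\,\max\{\|K\|,\|L\|\}^{n-1}$.

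I do not anticipate any serious obstacle: the argument is a direct transcription of the determinant case, and the only non-trivial ingredient, the Hadamard-type bound for permanents, has already been stated. The one place to be slightly careful is to remember to interpret $\|\cdot\|$ consistently as the maximum entry modulus, so that the column $\ell^\infty$-norms used in Hadamard's inequality are controlled by the matrix norms appearing in the target bound.
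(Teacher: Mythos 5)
Your proposal is correct and is exactly the argument the paper intends: the paper's proof consists of the instruction to follow the telescoping/multilinearity proof of the determinantal bound \cite[(3.4.5)]{Anderson10} verbatim, substituting the permanental Hadamard inequality of \cite{CLL} for the determinantal one, which is precisely what you carry out. Your explicit write-up of the interpolating matrices $M_i$ and the column-difference matrices $N_i$, together with the remark on interpreting $\|\cdot\|$ as the maximum entry modulus, fills in the details the paper leaves implicit.
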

 Now, in the proof of Lemma \ref{lem:clustering_DPP}, using the above estimate instead of \eqref{eqn:clustering_DPP}, we
establish fast decay of correlations \eqref{eqn:clustering_condition} of permanental point processes with fast-decreasing kernels $K$.
	\begin{lemma}
		\label{lem:clustering_PermPP}
		Let $\P$ be a stationary  permanental  point process on $\mR^d$ with a fast-decreasing kernel satisfying
		$K(x,y) \leq \omega(|x-y|)$  where $\omega$ is  at \eqref{fastdk}. With	$s$ as at \eqref{defs} and $n = p + q$, we have
\begin{equation*}
| \rho^{(n)}(x_1,\ldots,x_{p+q}) - \rho^{(p)}(x_1, \ldots,x_{p})\rho^{(q)}(x_{p+1},\ldots,x_{p+q})|  \leq n n! \omega(s) \|K\|^{n-1}.
\end{equation*}
\end{lemma}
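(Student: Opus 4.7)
The plan is to mirror the argument given for the determinantal case in Lemma~\ref{lem:clustering_DPP}, substituting the Hadamard-type bound for permanents supplied by Lemma~\ref{lem:estimate_perm} in place of the determinantal bound from \cite[(3.4.5)]{Anderson10}. Since $\P$ is permanental, its correlation functions have the form $\rho^{(k)}(y_1,\ldots,y_k) = {\rm per}(K(y_i,y_j))_{1 \leq i,j \leq k}$, so the quantity we wish to estimate becomes a difference of two permanents. This is precisely the situation handled by Lemma~\ref{lem:estimate_perm}.

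Concretely, I would introduce the $n \times n$ matrix $K_0 := (K(x_i,x_j))_{1 \leq i,j \leq n}$ together with the block-diagonal matrix $L$ whose upper-left $p \times p$ block is $K_1 := (K(x_i,x_j))_{1 \leq i,j \leq p}$ and whose lower-right $q \times q$ block is $K_2 := (K(x_i,x_j))_{p+1 \leq i,j \leq n}$, with all other entries zero. Using the multiplicativity of the permanent on block-diagonal matrices, one has ${\rm per}(L) = {\rm per}(K_1)\,{\rm per}(K_2)$, so
\begin{equation*}
|\rho^{(n)}(x_1,\ldots,x_n) - \rho^{(p)}(x_1,\ldots,x_p)\rho^{(q)}(x_{p+1},\ldots,x_n)| = |{\rm per}(K_0) - {\rm per}(L)|.
\end{equation*}
Applying Lemma~\ref{lem:estimate_perm} yields the bound $nn!\,\|K_0-L\|\,\max\{\|K_0\|,\|L\|\}^{n-1}$.

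It remains to control the two norm factors. Clearly $\max\{\|K_0\|,\|L\|\} \leq \|K\|$, since the entries of $K_0$ and $L$ are each entries of $K$ (or zero). For $\|K_0 - L\|$, observe that $K_0 - L$ vanishes on both diagonal blocks, so its nonzero entries are precisely $K(x_i,x_j)$ with either $i \leq p < j$ or $j \leq p < i$. For each such pair $|x_i - x_j| \geq s$ by the definition of $s$ at \eqref{defs}, and therefore $|K(x_i,x_j)| \leq \omega(|x_i-x_j|) \leq \omega(s)$ using the hypothesis \eqref{fastdk} together with the monotonicity of the fast-decreasing function $\omega$. Combining these estimates gives the desired inequality, completing the proof.

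No step is a genuine obstacle here: the combinatorial work is already packaged in Lemma~\ref{lem:estimate_perm}, and the geometric observation that the off-diagonal blocks of $K_0-L$ involve only well-separated points is the same one used in the determinantal case. The only point requiring mild care is verifying that $\omega$ may be taken non-increasing so that $\omega(|x_i-x_j|) \leq \omega(s)$ for all such pairs; this is implicit in the definition of a fast-decreasing function on $\mathbb{R}^+$, or else can be achieved by replacing $\omega$ by $t \mapsto \sup_{u \geq t}\omega(u)$ without loss.
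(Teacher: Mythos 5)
Your proposal is correct and follows essentially the same route as the paper: the paper's proof likewise repeats the determinantal argument of Lemma~\ref{lem:clustering_DPP} verbatim, replacing the determinant estimate from \cite[(3.4.5)]{Anderson10} by the permanent estimate of Lemma~\ref{lem:estimate_perm}, with the same block-diagonal matrix $L$, the bound $\|K_0-L\|\le\omega(s)$ from the separation of the two point groups, and $\max\{\|K_0\|,\|L\|\}\le\|K\|$. Your remark on the monotonicity of $\omega$ is a reasonable (and harmless) extra precaution.
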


To bound the radius of stabilization of geometric functionals on determinantal point processes, we rely on  the following exponential decay of Palm void probabilities. Though the proof is inspired by that of a similar estimate in \cite[Lemma 2]{Miyoshi15}, we derive a more general and explicit bound.

\begin{lemma}
\label{lem:palm-void-DPP}
Let $\P$ be a stationary determinantal point process on $\mR^d$. Then for $p,k \in \N$,
$\x \in (\mR^{d})^p$, and any bounded Borel subset $B \subset \R^d,$ we have
\be
\label{eqn:palm-void-DPP}
\sP^!_{\x}(\P(B) \leq k) \leq e^{(2k+p)/8}e^{-K(\0,\0)\Vol_d(B)/8}.
\ee
\end{lemma}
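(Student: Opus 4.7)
The plan is to exploit the classical fact that the reduced Palm version of a determinantal point process (DPP) is again a DPP, combine this with the Bernoulli representation of point counts, and apply a Chernoff bound tuned so that the exponents work out to $\tfrac{1}{4},\tfrac{1}{8},\tfrac{1}{8}$. By the results of Shirai and Takahashi, under $\sP^!_{\x}$ the process $\P$ is a DPP whose kernel $K^!_{\x}$ is obtained from $K$ by a rank-$p$ Schur complement correction. Consequently the restricted integral operator $\cK^!_{\x,B}$ on $L^2(B)$ is trace-class, and by the Macchi--Soshnikov theorem $\P(B)$ under $\sP^!_{\x}$ has the same distribution as a sum $\sum_i \mathrm{Ber}(\mu_i)$ of independent Bernoulli random variables, where $\{\mu_i\}\subset[0,1]$ are the eigenvalues of $\cK^!_{\x,B}$.

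For any $t > 0$, Markov's inequality together with the Bernoulli product formula yields
\begin{equation*}
\sP^!_{\x}(\P(B)\le k) \le e^{tk}\,\E^!_{\x}\bigl[e^{-t\P(B)}\bigr] = e^{tk}\prod_i\bigl(1-(1-e^{-t})\mu_i\bigr) \le \exp\!\bigl(tk-(1-e^{-t})\mathrm{tr}(\cK^!_{\x,B})\bigr),
\end{equation*}
where the last inequality uses $1-u \le e^{-u}$. Everything reduces to a lower bound on $\mathrm{tr}(\cK^!_{\x,B}) = \int_B K^!_{\x}(y,y)\,\md y$.

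The central estimate I will prove is $\mathrm{tr}(\cK^!_{\x,B}) \ge K(\0,\0)\Vol_d(B) - p$. For a single Palm point $x_1$, the rank-one update gives $K^!_{x_1}(y,y) = K(y,y) - |K(y,x_1)|^2/K(x_1,x_1)$; the operator bound $0 \le \cK \le I$ then implies $\int_{\R^d} |K(y,x_1)|^2\,\md y = (\cK^2)(x_1,x_1) \le K(x_1,x_1)$, so integrating over $B$ loses at most one unit from the trace. Iterating the Palm conditioning one point at a time gives the claimed total loss of at most $p$.

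Finally, choose $t = \log(8/7)$ so that $1 - e^{-t} = 1/8$; since $\log(8/7) \le 1/4$, one obtains
\begin{equation*}
\sP^!_{\x}(\P(B) \le k) \le \exp\!\bigl(\tfrac{k}{4} + \tfrac{p}{8} - \tfrac{K(\0,\0)\Vol_d(B)}{8}\bigr) = e^{(2k+p)/8}\, e^{-K(\0,\0)\Vol_d(B)/8},
\end{equation*}
which is the desired bound. The main obstacle will be the iterated trace estimate: at each of the $p$ stages one must verify that the partially Palm-ed DPP kernel remains a positive contraction on $L^2(\R^d)$, so that the Cauchy--Schwarz-type estimate $\int |K^{!}_{x_1,\ldots,x_{j-1}}(y,x_j)|^2\,\md y \le K^{!}_{x_1,\ldots,x_{j-1}}(x_j,x_j)$ is still available at stage $j$; this preservation property ultimately follows from the Shirai--Takahashi analysis of Palm distributions of DPPs.
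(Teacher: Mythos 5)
Your argument is correct and follows essentially the same route as the paper: the reduced Palm process is again a determinantal process, the Palm conditioning on $p$ points decreases $\int_B K(y,y)\,\md y$ by at most $p$ (via $\int|K(x,y)|^2\,\md y\le K(x,x)$, which is exactly the paper's inequality), and the count is a sum of independent Bernoullis to which an exponential Chernoff bound is applied. The only difference is cosmetic: you fix $t=\log(8/7)$ in a direct Chernoff computation, which gives the stated constants uniformly and avoids the paper's appeal to the packaged Chernoff--Hoeffding bound together with its case split on whether $K(\0,\0)\Vol_d(B)\ge 2k+p$.
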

\noindent {\em Proof.}  
 For any determinantal point process $\P$ (even a non-stationary one), let $\P_{x}$ be the reduced Palm point process with respect to $x \in \mR^d$. From \eqref{eqn:palm_correlation} (see also \cite[Theorem 6.5]{Shirai03}), we
have that $\P_{x}$ is also a determinantal point process and its kernel $L$ is given by
\be
\label{eqn:palm_kernel}
		L(y_1,y_2) = K(y_1,y_2) - \frac{K(y_1,x)K(x,y_2)}{K(x,x)}.
		\ee
Next we assert that

\be \label{simpleineq}
\int_{\mR^d} |K(x,y)|^2 \md y \leq K(x,x),  \ x \in \R^d.
\ee
\remove{To see this, from Campbell's theorem \eqref{disint} and the determinantal property, we derive that for all bounded sets $B$,
\begin{eqnarray*}
\Var(\P(B)) &=&  \int_B \rho^{(1)}(x) \md x - \int_{B \times B} (\rho^{(2)}(x,y) - \rho^{(1)}(x)\rho^{(1)}(y)) \md x \md y \\
& = & \int_{B} K(x,x) \md x - \int_{B \times B} |K(x,y)|^2 \md x \md y.
\end{eqnarray*}
We obtain the desired inequality by the non-negativity of the variance and stationarity of $\P$.}
To see this, write $K(x,y) = \sum_j \lambda_j \phi_j(x) \bar{\phi}_j(y)$, $\lambda_j \in [0,1]$, where $\phi_j, j \geq 1$, is an orthonormal basis for $L^2(\R^d, dx)$ (cf. Lemma 4.2.2 of \cite{HKPV}). In view of $\overline{ {K(x,y)}} = K(y,x)$ we get $\int_{\mR^d} |K(x,y)|^2 \md y = \int_{\mR^d} K(x,y) K(y,x) \md y  = \sum_j \lambda_j^2   \phi_j(x) \bar{\phi}_j(x)  \leq  \sum_j \lambda_j \phi_j(x) \bar{\phi}_j(x) = K(x,x)$, whence the assertion \eqref{simpleineq}.
The bound \eqref{simpleineq}  shows for any bounded Borel subset $B$ and $x \in \R^d$ that
		\begin{eqnarray}
		\sE^!_{x}(\P(B)) & = & \int_B L(y,y) \md y \no = \int_B K(y,y) \md y - \frac{1}{K(x,x)}\int_B |K(x,y)|^2 \md y
		\geq \sE(\P(B)) - 1.\no
		\end{eqnarray}
		Re-iterating the above inequality, we get that for all $\x \in \mR^{dp}$ and any bounded Borel subset $B$

		\be
		\label{eqn:palm-dpp-p}
		\sE^!_{\x}(\P(B)) \geq \sE(\P(B)) - p.
		\ee
Since the point count of a determinantal point process in a given set is a sum of independent Bernoulli random variables  \cite[Theorem 4.5.3]{HKPV},  the Chernoff-Hoeffding bound \cite[Theorem 4.5]{Mitzenmacher05} yields
		\be\label{eqn:C-H}
		\sP^!_{\x}(\P(B) \leq \sE^!_{\x}(\P(B))/2) \leq e^{-\sE^!_{\x}(\P(B))/8}.
		\ee
Now we return to our stationary determinantal point process $\P$ and note that $\E(\P(B)) = K(\0,\0)\Vol_{ d} (B)$.
Suppose first that $B$ is large enough so that $K(\0,\0)\Vol_{d}(B) \geq 2k + p$.
Thus combining \eqref{eqn:palm-dpp-p} and \eqref{eqn:C-H}, we have
		\[ \sP^!_{\x}(\P(B) \leq k) \leq \sP^!_{\x}(\P(B) \leq \sE^!_{\x}(\P(B))/2 ) \leq e^{-(K(\0,\0)\Vol_{ d}(B) - p)/8}. \]
On the other hand, if $B$ is small and satisfies  $K(\0,\0)\Vol_{ d} (B_{r_0}) < 2k + p$, then
		 the right-hand side of \eqref{eqn:palm-void-DPP} is larger than $1$ and hence it is a trivial bound. \qed

\vskip.3cm			
	Inequality \eqref{eqn:palm-dpp-p}  can also be deduced from the stronger coupling result of \cite[Prop. 5.10(iv)]{Pemantle14} for determinantal point processes with a continuous kernel but we have given an elementary proof. Given Ginibre input, we may improve the exponent in the void probability bound \eqref{eqn:palm-void-DPP}.  We believe this result to be of independent interest, as it generalizes \cite[Lemma 6.1]{AY1}, which treats the case $k = 0$.
		
	%
	\begin{lemma}
		\label{lem:palm-void_Ginibre}
		Let $B_r:= B_r(\0) \subset \mR^2$ and $\P$ be the Ginibre point process. Then for $p,k \in \N$ and $\x \in \mR^{2p}$,
		\be
		\label{eqn:void_ineq_gin}
		\sP^!_{\x}(\P(B_r) \leq k) \leq \exp\{p(k+1)r^2\}\sP(\P(B_r) \leq k) \leq kr^{2k}\exp\{(p(k+1)+k)r^2-\frac{1}{4}r^4(1+o(1))\}.
		\ee
\end{lemma}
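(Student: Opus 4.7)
The plan is to prove the two inequalities in \eqref{eqn:void_ineq_gin} in turn, generalizing the void-probability argument of \cite[Lemma 6.1]{AY1} (which covers the $k=0$ case). The structural tool is Kostlan's theorem combined with the Shirai--Takahashi Bernoulli decomposition of determinantal point counts: for the Ginibre ensemble, $\P(B_r)$ has the same distribution as $\sum_{j \geq 1} Y_j$, with the $Y_j$ independent Bernoulli random variables of parameter $\lambda_j := \gamma(j,r^2)/\Gamma(j)$. Since Ginibre's reduced Palm process at $\x$ is again determinantal with kernel $L_{\x}$ obtained by iterating the rank-$p$ formula \eqref{eqn:palm_kernel}, an analogous representation $\P^!_{\x}(B_r) \stackrel{d}{=} \sum_j Y'_j$ holds with Bernoulli parameters $\lambda'_j$ equal to the eigenvalues of $L_{\x}|_{B_r}$.

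For the second inequality I would write
\[
\sP(\P(B_r) \leq k) = \prod_{j \geq 1}(1-\lambda_j) \cdot \sum_{i=0}^{k} e_i(\mu_1, \mu_2, \ldots),
\]
with $\mu_j := \lambda_j/(1-\lambda_j)$ and $e_i$ the $i$th elementary symmetric polynomial. The first factor is the classical Ginibre gap probability $\sP(\P(B_r)=0)$, and its asymptotic $\exp(-r^4/4(1+o(1)))$ is exactly the content of the $k=0$ case in \cite[Lemma 6.1]{AY1}. For the remaining factor I would apply Maclaurin's inequality $e_i(\mu) \leq (\sum_j \mu_j)^i/i!$ together with the explicit identity $1-\lambda_j = e^{-r^2}\sum_{\ell=0}^{j-1} r^{2\ell}/\ell!$; a Stirling-type analysis then gives $\sum_j \mu_j \leq C e^{r^2}$ (dominated by $\mu_1 = e^{r^2}-1$), so $\sum_{i=0}^{k} e_i(\mu) \leq C'k e^{kr^2}$, which is comfortably absorbed into the factor $k r^{2k} e^{kr^2}$ claimed in the lemma.

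For the first inequality, I would express $\sP^!_{\x}(\P(B_r) \leq k) = \prod_j(1-\lambda'_j) \sum_{i=0}^{k} e_i(\mu'_1, \mu'_2,\ldots)$ and compare termwise with the analogous expression for $\sP(\P(B_r) \leq k)$. The Palm kernel $L_{\x}$ is a rank-$p$ negative-semidefinite perturbation of $K$, so the Weyl interlacing inequalities applied to the trace-class restrictions $K|_{B_r}$ and $L_{\x}|_{B_r}$ give $\lambda^K_{j+p} \leq \lambda'_j \leq \lambda^K_j$ in decreasing order. Monotonicity of $x \mapsto x/(1-x)$ then yields $\mu'_j \leq \mu_j$ and hence $e_i(\mu') \leq e_i(\mu)$, while the lower-interlacing bound gives
\[
\frac{\prod_j(1-\lambda'_j)}{\prod_j(1-\lambda_j)} \leq \frac{1}{\prod_{j=1}^{p}(1-\lambda_j)} = \frac{e^{pr^2}}{\prod_{j=1}^{p}\sum_{\ell=0}^{j-1}r^{2\ell}/\ell!} \leq e^{pr^2}.
\]
The claimed factor $e^{p(k+1)r^2}$ absorbs this bound with slack.

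The main obstacle will be the operator-interlacing step, because $K$ is only locally trace-class on $\mR^2$, so one must apply Weyl to the restrictions $K|_{B_r}$ and $L_{\x}|_{B_r}$ and verify that their difference is a positive-semidefinite, rank-$p$ Gramian-type operator consistent with iteration of \eqref{eqn:palm_kernel}, as well as verify that the Shirai--Takahashi Bernoulli-sum representation applies to $\P^!_{\x}(B_r)$ with the correct restricted Palm kernel. Once these analytic points are settled, the combinatorial estimates above assemble into the claimed bounds.
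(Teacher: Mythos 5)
Your proposal is correct, and it rests on the same two structural pillars as the paper's proof --- the Bernoulli decomposition of determinantal point counts on $B_r$ plus eigenvalue interlacing for the rank-$p$ Palm perturbation for the first inequality, and the Kostlan representation $R_i^2\sim\Gamma(i,1)$ underlying the hole-probability asymptotics for the second --- but the bookkeeping is genuinely different and in places sharper. For the first inequality the paper treats $p=1$ and iterates, deriving the interlacing $\lambda_{i+1}\le\mu_i\le\lambda_i$ from a generalized Cauchy interlacing theorem of \cite{Dancis87} combined with the trace inequality $\sum_i\mu_i\le\sum_i\lambda_i$, and then compares the two subset sums termwise at a cost of $(1-\lambda_1)^{-(k+1)}=e^{(k+1)r^2}$ per Palm point; your factorization $\sP(\cdot\le k)=\prod_j(1-\lambda_j)\sum_{i\le k}e_i(\mu)$, with $e_i(\mu')\le e_i(\mu)$ and the product ratio controlled by the lower interlacing bound, handles all $p$ at once and even yields the better constant $e^{pr^2}$ in place of $e^{p(k+1)r^2}$. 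Your route to the interlacing (Weyl's inequality for a positive-semidefinite rank-$p$ perturbation, after checking that $K-L_{\x}$ is a sum of $p$ psd rank-one Gramians obtained by iterating \eqref{eqn:palm_kernel} one point at a time) is sound and self-contained. For the second inequality the paper redoes the Chernoff computation of \cite[Prop. 7.2.1]{HKPV} over subsets $J$ with $|J|\le k$, whereas you factor out the gap probability and import its known asymptotic, paying only for $\sum_{i\le k}e_i(\mu)$ via Maclaurin's inequality. The one step you should write out explicitly is the bound $\sum_j\mu_j=O(e^{r^2})$: the crude estimate $\mu_j\le(1-\lambda_1)^{-1}=e^{r^2}$ summed over the $O(r^2)$ relevant indices only gives $O(r^2e^{r^2})$, while the sharper claim uses $1-\lambda_j=e^{-r^2}\sum_{\ell=0}^{j-1}r^{2\ell}/\ell!\ge e^{-r^2}r^{2(j-1)}/(j-1)!$ and a short summation; either version suffices since the discrepancy is absorbed by the $r^{2k}$ factor and the $o(1)$ in the exponent. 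Neither point is a gap.
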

We remark that stationarity shows the above bound holds for any radius $r$ ball.
	\vskip.3cm
	\noindent{\em Proof.} We shall prove the result for $p = 1$ and use induction to deduce the general case.
	
	Let $\cK_{B_r}$ be the restriction to $B_r$ of the integral operator $\cK$ (generated by kernel $K$) corresponding to Ginibre point process and $\cL_{B_r}$ be the restriction to $B_r$ of the integral operator $\cL$ (generated by kernel $L$) corresponding to the reduced Palm point process (also a determinantal point process). Let $\lam_i, i =1,2,\ldots$ and $\mu_i, i=1,2,\ldots$ be the eigenvalues of $\cK_{B_r}$ and $\cL_{B_r}$ in decreasing order respectively.
	
	Then from \eqref{eqn:palm_kernel} we have that the rank of the operator $\cK_{B_r} - \cL_{B_r}$ is one. Secondly, note that
	$$\sum_i \mu_i = \sE_x(\P(B_r)) = \int_{B_r} L(y,y) \md y \leq \int_{B_r} K(y,y) \md y = \sE(\P(B_r)) = \sum_i \lam_i.$$
	Hence, by a generalisation of Cauchy's interlacing theorem \cite[Theorem 4]{Dancis87} combined with the above inequality, we get the interlacing inequality $\lam_i \geq \mu_i \geq \lam_{i+1}$ for $i =1,2,\ldots$.
	
	Now, fix $\x = x \in \mR^2$. Again by \cite[Theorem 4.5.3]{HKPV}, we have that $P(B_r) \stackrel{d}{=} \sum_i {\rm{Bernoulli}}(\lam_i)$ and under Palm measure, $P(B_r) \stackrel{d}{=} \sum_i {\rm{Bernoulli}}(\mu_i)$ where both the sums involve independent Bernoulli random variables. Independence of the Bernoulli random variables gives
	\begin{eqnarray*}
		\sP_x(\P(B_r) \leq k) & = &  \sum_{J \subset \mathbb{N}, |J| \leq k}\prod_{j \in J}\mu_j\prod_{j \notin J}(1-\mu_j) \, \leq \, \sum_{J \subset \mathbb{N}, |J| \leq k}\prod_{j \in J}\lam_j\prod_{j \notin J}(1-\lam_{j+1}) \\
		& \leq & \sum_{J \subset \mathbb{N}, |J| \leq k}\prod_{j \in J}\lam_j \prod_{j \notin J}(1-\lam_j) \prod_{j-1 \in J \cup \{0\}, j \notin J}(1-\lam_j)^{-1} \\
		& \leq & (1 - \lam_1)^{-k-1}\sum_{J \subset \mathbb{N}, |J| \leq k}\prod_{j \in J}\lam_j\prod_{j \notin J}(1-\lam_j) = (1-\lam_1)^{-k-1}\sP[\P(B_r) \leq k].\\
	\end{eqnarray*}
	The proof of the first inequality in \eqref{eqn:void_ineq_gin} for the case $p =1$ is complete by noting that $\lam_1 = \sP(EXP(1) \leq r^2)$ (see \cite[Theorems 4.7.1 and 4.7.3]{HKPV}), where $EXP(1)$ stands for an exponential random variable with mean $1$. As said before, iteratively the first inequality in \eqref{eqn:void_ineq_gin} can be proven for an arbitrary $p$.  To complete the proof of the second inequality, we  bound $\sP(\P(B_r) \leq k)$ in a manner similar to the proof of \cite[Proposition 7.2.1]{HKPV}.
	
	Let $\P^* := \{R_1^2,R_2^2,\ldots,\} = \{ |X|^2 : X \in \P\}$ be the point process of squared modulii of the Ginibre point process. Then, from \cite[Theorem 4.7.3]{HKPV}, it is known that $R_i^2 \stackrel{d}{=} \Gamma(i,1)$ ($\Gamma(i,1)$ denotes a gamma random variable with parameters $i,1$) and are independently distributed.
There is a constant $\beta \in (0,1)$ such that
	\[ \sP(R_i^2 \geq r^2) \leq e^{-\beta r^2}\sE(e^{\beta R_i^2}) \leq e^{-\beta r^2}(1- \beta)^{-i}, \ i \geq 1. \]
	%
	 For $i < r^2$, the bound is optimal for $\beta = 1 - \frac{i}{r^2}$. For $r$, set $r_* := \ulcorner r^2 \urcorner$, the ceiling of $r^2$. Then,
	\begin{eqnarray*}
		\sP(\P(B_r) \leq k) & = & \sP(\sharp \{i : R_i^2 \leq r^2\} \leq k) \,  \leq  \, \sP(\sharp \{i \leq  r_* : R_i^2 \leq r^2\} \leq k) \no \\
		& \leq  & \sum_{J \subset [r_*], |J| \leq k}\prod_{i \in J}\sP(R_j^2 \leq r^2) \prod_{i \notin J}\sP(R_j^2 > r^2) \no \\
		& \leq &  \sum_{J \subset [r_*], |J| \leq k}\prod_{i \in J}e^{r^2}e^{-\beta r^2}(1-\beta )^{-i} \prod_{i \notin J} e^{-\beta r^2}(1-\beta)^{-i} \, \, \\
		& \leq & kr^{2k}e^{kr^2} \prod_{i=1}^{r_*}e^{-ar^2}(1-\beta)^{-i}  =  kr^{2k}e^{kr^2}e^{-\frac{1}{4}r^4(1+o(1))}, \no
	\end{eqnarray*}
where equality follows by substituting the optimal $\beta$ for each $i$, as in \cite[Section 7.2]{HKPV}. \qed


\subsection{\bf Facts about superposition of independent point processes}	

The following facts on superposition of independent point processes were useful in the applications involving $\alpha$-determinantal point processes, $|\alpha| = 1/m, m \in \N$.
\vskip .3cm

\noindent {\em Proof of Proposition \ref{prop:sum_clustering_pp}.} We shall prove the proposition in the case $m = 2$; the general case follows in the same fashion albeit with
considerably  more notation.  	  Let $x_1,\ldots,x_{p+q}$ be distinct points in $\mR^d$ with $s$ at \eqref{defs} as usual. For a subset $S \subset [p+q]$, we abbreviate $\rho^{|S|}(x_j : j \in S)$ by $\rho(S)$. Using \eqref{eqn:corr_sum_pp} we have that
\begin{eqnarray*}
	& & \rho_0^{(p+q)}([p+q]) = \sum_{S_1 \sqcup S_2 = [p+q]}\rho(S_1)\rho(S_2) = 2\rho([p+q]) + 2\rho([p])\rho([q])
\end{eqnarray*}
\begin{eqnarray*}
	& & +  \sum_{S_1 \sqcup S_2 = [p+q], S_2 \cap [p] = \emptyset,  S_i \neq \emptyset}\rho(S_1)\rho(S_2) + \sum_{S_1 \sqcup S_2 = [p+q], S_1 \cap [p] = \emptyset,  S_i \neq \emptyset}\rho(S_1)\rho(S_2) \\
	& & +  \sum_{S_1 \sqcup S_2 = [p+q], S_2 \cap [q] = \emptyset,  S_i \neq \emptyset}\rho(S_1)\rho(S_2) + \sum_{S_1 \sqcup S_2 = [p+q], S_1 \cap [q] = \emptyset, S_i \neq \emptyset}\rho(S_1)\rho(S_2) \\
	& & + \sum_{S_1 \sqcup S_2 = [p+q], S_i \cap [p] \neq \emptyset, S_i \cap [q] \neq \emptyset}\rho(S_1)\rho(S_2) \\
	&=& 2\rho([p+q]) + 2\rho([p])\rho([q]) + \sum_{S_{21} \sqcup S_{22} = [q], S_{ij} \neq \emptyset}(\rho(S_{21} \cup [p])\rho(S_{22})+\rho(S_{22} \cup [p])\rho(S_{21})) \\
	&  & + \sum_{S_{11} \sqcup S_{12} = [p], S_{ij} \neq \emptyset}(\rho(S_{11} \cup [q])\rho(S_{12}) + \rho(S_{12} \cup [q])\rho(S_{11})) \\
	& & + \sum_{S_{21} \sqcup S_{22} = [q],S_{11} \sqcup S_{12} = [p],S_{ij} \neq \emptyset} \rho(S_{11} \cup S_{21})\rho(S_{12} \cup S_{22}).
\end{eqnarray*}
On the other hand the product of correlation functions is
\begin{eqnarray*}
	& & \rho_0([p])\rho_0([q]) = (\sum_{S_{11} \sqcup S_{12} = [p]}\rho(S_{11})\rho(S_{12}))(\sum_{S_{21} \sqcup S_{22} = [q]}\rho(S_{21})\rho(S_{22})) \\
	&=& (2\rho([p]) +   \sum_{S_{11} \sqcup S_{12} = [p],S_{ij} \neq \emptyset} \rho(S_{11})\rho(S_{12}))  \times (2\rho([q]) +   \sum_{S_{21} \sqcup S_{22} = [q],S_{ij} \neq \emptyset} \rho(S_{21})\rho(S_{22})) \\
	& = & 4\rho([p])\rho([q])  + 2 \sum_{S_{21} \sqcup S_{22} = [q], S_{ij} \neq \emptyset} \rho(S_{21})\rho([p])\rho(S_{22}) \\
	&  & + 2\sum_{S_{11} \sqcup S_{12} = [p], S_{ij} \neq \emptyset}\rho(S_{11})\rho([q])\rho(S_{12}) + \sum_{S_{21} \sqcup S_{22} = [q],S_{11} \sqcup S_{12} = [p],S_{ij} \neq \emptyset} \rho(S_{11})\rho( S_{21})\rho(S_{12})\rho(S_{22}).
\end{eqnarray*}	
Now, we shall match the two summations term-wise and bound the differences using correlation bound \eqref{eqn:corr_bounded} and fast decay of correlations condition \eqref{eqn:clustering_condition}:
\begin{eqnarray*}
	& & |\rho_0([p+q]) - \rho_0([p])\rho_0([q])| \leq 2|\rho([p+q]) - \rho([p])\rho([q])| \\
	& & + \sum_{S_{21} \sqcup S_{22} = [q], S_{ij} \neq \emptyset}|\rho(S_{21} \cup [p])\rho(S_{22}) - \rho(S_{21})\rho([p])\rho(S_{22})| \\
	& & + \sum_{S_{21} \sqcup S_{22} = [q], S_{ij} \neq \emptyset}|\rho(S_{22} \cup [p])\rho(S_{21}) - \rho(S_{21})\rho([p])\rho(S_{22})| \\
	& & + \sum_{S_{11} \sqcup S_{12} = [p], S_{ij} \neq \emptyset}|\rho(S_{11} \cup [q])\rho(S_{12}) - \rho(S_{11})\rho([q])\rho(S_{12})|  \\
	& & + \sum_{S_{11} \sqcup S_{12} = [p], S_{ij} \neq \emptyset}|\rho(S_{12} \cup [q])\rho(S_{11}) - \rho(S_{11})\rho([q])\rho(S_{12})|  \\
	& & + \sum_{S_{21} \sqcup S_{22} = [q],S_{11} \sqcup S_{12} = [p],S_{ij} \neq \emptyset} |\rho(S_{11} \cup S_{21})\rho(S_{12} \cup S_{22}) - \rho(S_{11})\rho( S_{21})\rho(S_{12})\rho(S_{22})| \\
		& \leq & 2\kappa_{p+q}C_{p+q}\phi(c_{p+q}s)\sum_{S_1 \sqcup S_2 = [p+q]}1 \, \, \, \, = \, \, \, \, 2\kappa_{p+q}C_{p+q}\phi(c_{p+q}s)2^{p+q}. \, \, \, \, \, \, \, \square
\end{eqnarray*}
%
%
We now provide void probability bounds for superposition of independent point processes.
\begin{proposition}
\label{prop:void_prob_sum}
Let $\P_1,\ldots,\P_m,  m \in \N$, be independent admissible point processes. For $p,k \in \N$ and a bounded Borel set $B$, set
$$\nu_{p,k}(B) := \sup_{i = 1,\ldots,m}\sup_{0 \leq p' \leq p} \sup_{x_1,...,x_{p'}}\sP_{x_1,...,x_{p'}}(\P_i(B) \leq k).$$
Let $\P := \cup_{i=1}^m \P_i$ be the independent superposition. Then, 
$\alpha^{(k)}$ a.e. $x_1,\ldots,x_p$, we have
$$ \sP_{x_1,\ldots,x_p}(\P(B) \leq k) \leq \nu_{p,k}(B)^m.$$
\end{proposition}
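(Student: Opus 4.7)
The plan is to express the Palm distribution of the superposition $\P=\cup_{i=1}^m \P_i$ as an explicit mixture over the $m^p$ ways of assigning the $p$ conditioning points to the $m$ component processes, and then exploit independence of the components together with the trivial containment $\{\P(B)\le k\}\subset \bigcap_{i=1}^m\{\P_i(B)\le k\}$.

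More precisely, I would first derive the following disintegration. Writing $\rho_i$ (resp.\ $\rho_0$) for the correlation functions of $\P_i$ (resp.\ $\P$), and using the identity \eqref{eqn:corr_sum_pp} together with the refined Campbell theorem \eqref{disint} applied simultaneously to the factorial sums $\sum_{(x_j)_{j\in S_i}\in\P_i^{(|S_i|)}}$ for each assignment $\sqcup_{i=1}^m S_i=[p]$, the $\alpha^{(p)}$-a.e.\ identity
\begin{equation*}
\rho_0^{(p)}(x_1,\ldots,x_p)\,\sE_{x_1,\ldots,x_p}[\,f(\P)\,]
=\sum_{\sqcup_{i=1}^m S_i=[p]}\Bigl(\prod_{i=1}^m\rho_i(S_i)\Bigr)\,
\sE_{(x_j)_{j\in S_1}}^{\P_1}\!\otimes\cdots\otimes\sE_{(x_j)_{j\in S_m}}^{\P_m}\!\bigl[f(\textstyle\cup_i\P_i)\bigr]
\end{equation*}
follows for any non-negative measurable $f$ on $\cN$; here $\sE^{\P_i}_{(x_j)_{j\in S_i}}$ denotes Palm expectation for $\P_i$ and the tensor product reflects the independence of the $\P_i$'s, which is preserved under the joint conditioning since the conditioning event factorizes across the components. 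A careful verification of this identity (matching coefficients of the polynomial $\rho_0^{(p)}$ via \eqref{eqn:corr_sum_pp}) is the only nontrivial bookkeeping step.

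Applying this identity to $f(\P)=\mathbf{1}[\P(B)\le k]$ and using $\{\textstyle\cup_i\P_i(B)\le k\}\subset\bigcap_{i=1}^m\{\P_i(B)\le k\}$ together with the fact that under the tensor-product (independent) law the events $\{\P_i(B)\le k\}$ remain independent, each factor $\sP^{\P_i}_{(x_j)_{j\in S_i}}(\P_i(B)\le k)$ is bounded by $\nu_{p,k}(B)$ since $|S_i|\le p$. Consequently
\begin{equation*}
\rho_0^{(p)}(x_1,\ldots,x_p)\,\sP_{x_1,\ldots,x_p}(\P(B)\le k)
\le\nu_{p,k}(B)^{m}\sum_{\sqcup_{i=1}^m S_i=[p]}\prod_{i=1}^m\rho_i(S_i)
=\nu_{p,k}(B)^{m}\,\rho_0^{(p)}(x_1,\ldots,x_p),
\end{equation*}
and dividing through (on the set of full $\alpha^{(p)}$-measure where $\rho_0^{(p)}>0$) yields the claim.

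The main obstacle is the first step: rigorously justifying the Palm disintegration of the superposition as the displayed mixture. Everything after it is essentially bookkeeping. One clean route is to verify the identity pointwise via the definition \eqref{disint}, exploiting independence of $\P_1,\ldots,\P_m$ and simplicity of $\P$ (the set $\{\exists i\ne j,\ \P_i\cap\P_j\ne\emptyset\}$ has probability zero under the standing assumptions since each $\rho_i^{(k)}$ is a bounded density, so the assignment $S_i\mapsto\{x_j:x_j\in\P_i\}$ is a.s.\ well defined). Once that identity is in hand, the proof collapses to a one-line application of independence and the definition of $\nu_{p,k}(B)$.
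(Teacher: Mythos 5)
Your proposal is correct and follows essentially the same route as the paper: the paper likewise decomposes over the assignments of the $p$ conditioning points to the $m$ components via the refined Campbell theorem \eqref{disint}, factorizes using independence, bounds each component's Palm void probability by $\nu_{p,k}(B)$, and reassembles with \eqref{eqn:corr_sum_pp} before dividing by $\rho^{(p)}$. The only difference is organizational: the paper applies the bound $\1[\P(B)\le k]\le\prod_{i}\1[\P_i(B)\le k]$ inside the Campbell identity evaluated on disjoint product test sets and never isolates the exact Palm mixture formula you propose, which spares it the extra (but, as you correctly note, standard) justification of that disintegration via the a.s.\ disjointness of the components.
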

\begin{proof}
We shall show the proposition for $m = 2$ and the general case follows similarly. Further, we use $\rho,\rho_1,\rho_2$ to denote the correlation functions of $\P,\P_1,\P_2$ respectively. Let $A = A_1 \times \ldots \times A_p$ where $A_1,\ldots,A_p$ are disjoint bounded Borel subsets. By setting $f(x_1,\ldots,x_p;\P) = \1[\P(B) \leq k]$ in the refined Campbell theorem in \eqref{disint} and using the independence of $\P_1,\P_2$, we derive that
\begin{align*}
& \, \, \int_{A} \sP_{x_1,\ldots,x_p}(\P(B) \leq k)\rho^{(p)}(x_1,\ldots,x_p) \md x_1 \ldots \md x_p =  \sE(\1[\P(B) \leq k]\P(A_1)\ldots\P(A_p)) \\
& \leq  \sum_{S \subset [p]} \sE\left(\1[\P_1(B) \leq k]\prod_{i \in S}\P_1(A_i)\right) \, \sE\left(\1[\P_2(B) \leq k]\prod_{i \notin S}\P_2(A_i)\right) \\
& = \int_{A} \sum_{S \subset [p]}\sP_{x_i; i \in S}(\P_1(B) \leq k)\rho_1^{(|S|)}(x_i;i \in S) \sP_{x_i; i \notin S}(\P_2(B) \leq k)\rho_2^{(p-|S|)}(x_i;i \notin S)  \md x_1 \ldots \md x_p
\end{align*}
Thus, by definition of $\nu_{p,k}(B)$, we get that for a.e. $x_1,\ldots,x_p$
\begin{align*}
&	\sP_{x_1,\ldots,x_p}(\P(B) \leq k)\rho^{(p)}(x_1,\ldots,x_p) \\
& \leq \sum_{S \subset [p]}\sP_{x_i; i \in S}(\P_1(B) \leq k)\rho_1^{(|S|)}(x_i;i \in S) \sP_{x_i; i \notin S}(\P_2(B) \leq k)\rho_2^{(p-|S|)}(x_i;i \notin S)  \\
	& \leq \nu_{p,k}(B)^2 \sum_{S \subset [p]}\rho_1^{(|S|)}(x_i;i \in S)\rho_2^{(p-|S|)}(x_i;i \notin S) = \nu_{p,k}(B)^2 \rho^{(p)}(x_1,\ldots,x_p),
\end{align*}
where the last equality follows from \eqref{eqn:corr_sum_pp}. The proposition now follows from the above inequality.
\end{proof}
Now, as a trivial corollary of Lemma \ref{lem:palm-void-DPP} and Proposition \ref{prop:void_prob_sum}, we obtain the following useful result.
\begin{corollary}
\label{cor:palm-void-aDPP}
Let $\P$ be a stationary $\alpha$-determinantal point process on $\mR^d$ with $\alpha = -1/m, m \in \N$. Then for $p,k \in \N$,
$\x \in (\mR^{d})^p$, and any bounded Borel subset $B \subset \R^d,$ we have
\be
\label{eqn:palm-void-DPP}
\sP^!_{\x}(\P(B) \leq k) \leq e^{m(2k+p)/8}e^{-K(\0,\0)\Vol_d(B)/8}.
\ee
\end{corollary}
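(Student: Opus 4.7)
The plan is to combine the kernel-decomposition of an $\alpha$-determinantal point process with $\alpha = -1/m$, the single-copy bound of Lemma \ref{lem:palm-void-DPP}, and the superposition bound of Proposition \ref{prop:void_prob_sum}. Recall from Section~\ref{sec:classA2} that $\P$ can be written as $\P = \sqcup_{i=1}^m \tilde\P_i$, where $\tilde\P_1,\ldots,\tilde\P_m$ are i.i.d.\ stationary determinantal point processes with common kernel $\tilde K := -\alpha K = K/m$, so in particular $\tilde K(\0,\0) = K(\0,\0)/m$.

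First, I will apply Lemma~\ref{lem:palm-void-DPP} to each determinantal component $\tilde\P_i$. Since the lemma's bound depends on the kernel only through its diagonal value, it yields, for every $0 \le p' \le p$, every $x_1,\ldots,x_{p'}\in\R^d$, and every $i$,
$$
\sP^!_{x_1,\ldots,x_{p'}}(\tilde\P_i(B) \le k) \;\le\; e^{(2k+p)/8}\,\exp\!\Bigl(-\tfrac{K(\0,\0)\Vol_d(B)}{8m}\Bigr).
$$
This is a uniform upper bound on the quantity $\nu_{p,k}(B)$ from the statement of Proposition~\ref{prop:void_prob_sum} (interpreted in the reduced-Palm sense; see below). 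Plugging into the superposition estimate $\sP^!_{\x}(\P(B)\le k)\le \nu_{p,k}(B)^m$ and raising to the $m$-th power gives the desired
$$
\sP^!_{\x}(\P(B)\le k) \;\le\; e^{m(2k+p)/8}\,\exp\!\Bigl(-\tfrac{K(\0,\0)\Vol_d(B)}{8}\Bigr),
$$
where the factor $m$ in the first exponent arises from iterating the single-copy bound and the factor $m$ in the denominator of the kernel cancels against the $m$-th power in the second exponent.

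The only point that deserves care is the discrepancy between the reduced Palm probability $\sP^!_{\x}$ appearing in the statement (and in Lemma~\ref{lem:palm-void-DPP}) and the non-reduced Palm probability $\sP_{\x}$ appearing in Proposition~\ref{prop:void_prob_sum}; this is what I expect to be the main obstacle to the word ``trivial''. The resolution, which I will make explicit, is that the proof of Proposition~\ref{prop:void_prob_sum} relies only on (a) the superposition identity \eqref{eqn:corr_sum_pp} for correlation functions and (b) independence of the subprocesses, both of which are equally valid for the reduced Palm measures of a superposition of independent point processes. Consequently the same bookkeeping over partitions $S \sqcup S^c = [p]$ carries over verbatim when $\sP_{\x}$ is replaced by $\sP^!_{\x}$ throughout, and the corollary follows.
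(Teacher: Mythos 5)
Your proof follows the paper's intended route exactly: write the $\alpha$-determinantal process ($\alpha=-1/m$) as a superposition of $m$ i.i.d.\ determinantal copies with kernel $K/m$, bound each copy's Palm void probability by Lemma~\ref{lem:palm-void-DPP}, and raise to the $m$-th power via Proposition~\ref{prop:void_prob_sum}, which is all the paper itself does in calling this a trivial corollary. Your remark that Proposition~\ref{prop:void_prob_sum} must be read in the reduced-Palm sense — and that its proof via the refined Campbell formula, independence, and the superposition identity \eqref{eqn:corr_sum_pp} carries over verbatim with $\sP_{\x}$ replaced by $\sP^!_{\x}$ — is the one point the paper glosses over (the stated Proposition bounds the smaller, non-reduced Palm probability), and you identify and resolve it correctly, with the exponent bookkeeping $e^{(2k+p)/8}e^{-K(\0,\0)\Vol_d(B)/(8m)}$ raised to the $m$-th power matching the claimed bound.
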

Consider the same assumptions as in Proposition \ref{prop:void_prob_sum}. For a bounded Borel subset $B$, set
$$M_p(B) :=   \sup_{i = 1,\ldots,m}\sup_{0 \leq p' \leq p} \sup_{x_1,...,x_{p'}}\sE_{x_1,...,x_{p'}}(t^{\P_i(B)}), \, \, \, t \geq 0, p \geq 1.$$
Now setting $f(x_1,\ldots,x_p ; \P) = t^{\P(B)}$ in the proof of the proposition, we may deduce that
\begin{equation}
\label{eqn:mom_bd_superp}
\sup_{0 \leq p' \leq p} \sup_{x_1,...,x_{p'}}\sE_{x_1,...,x_{p'}}(t^{\P(B)}) \leq M_p(B)^m.
\end{equation}
\section*{Acknowledgements}
The work benefitted from DY's visits to Lehigh University and IMA, Minneapolis supported in part by the respective institutions. Part of this work was done when DY was a post-doc at Technion, Israel. He is thankful to the institute for its support and to his host Robert Adler for many discussions. 
 The authors thank Manjunath Krishnapur for numerous inputs, especially those related to determinantal point processes and Gaussian analytic functions. The authors thank Jesper M\o ller and G\"{u}nter Last for useful comments on the first draft of this article and Christophe Biscio for discussions related to Brillinger mixing. Finally, this paper benefitted from a thorough reading by an anonymous referee, whose numerous comments improved the exposition.
	

\end{document}